\newtheorem{theorem}{Theorem}[section]
\newtheorem{lemma}[theorem]{Lemma}
\newtheorem{proposition}[theorem]{Proposition}
\newtheorem{corollary}[theorem]{Corollary}
\theoremstyle{definition}
\newtheorem{remark}[theorem]{Remark}
\theoremstyle{definition}
\newtheorem{definition}[theorem]{Definition}
\theoremstyle{definition}
\theoremstyle{definition}
\newtheorem{example}[theorem]{Example}
\theoremstyle{definition}
\newtheorem{notation}[theorem]{Notation}
\theoremstyle{definition}
\newtheorem{algorithm}[theorem]{Algorithm}
\crefname{figure}{Figure}{Figures}
\def\Acal{\mathcal{A}}\def\Fcal{\mathcal{F}}\def\Ucal{\mathcal{U}}
\def\C{\mathbb{C}}
\def\R{\mathbb{R}}
\def\Z{\mathbb{Z}}
\def\<{{\langle}}
\def\>{{\rangle}}
\def\la{{\lambda}}
\def\RR{{\mathbb R}}
\def\CC{{\mathbb C}}
\def\GL{\operatorname{GL}}
\def\SL{\operatorname{SL}}
\def\Z{{\mathbb Z}}
\def\R{{\mathbb R}}
\def\GL{{\rm GL}}
\def\id{{\operatorname{id}}}
\newcounter{todobackgr}[section]
\newcounter{todofigure}[section]
\newcommand{\smat}[1]{\left[\begin{smallmatrix}
      #1
    \end{smallmatrix}\right]}
\def\t{{\mathbf{t}}}
\def\d#1{\dot{#1}}
\def\ds{\d{s}}
\def\dw{\d{w}}
\def\du{\d{u}}
\def\alphacheck{\alpha^\vee}
\DeclareRobustCommand{\cev}[1]{%
  \mathpalette\do@cev{#1}%
}
\newcommand{\do@cev}[2]{%
  \fix@cev{#1}{+}%
  \reflectbox{$\m@th#1\vec{\reflectbox{$\fix@cev{#1}{-}\m@th#1#2\fix@cev{#1}{+}$}}$}%
  \fix@cev{#1}{-}%
}
\newcommand{\fix@cev}[2]{%
  \ifx#1\displaystyle
    \mkern#23mu
  \else
    \ifx#1\textstyle
      \mkern#23mu
    \else
      \ifx#1\scriptstyle
        \mkern#22mu
      \else
        \mkern#22mu
      \fi
    \fi
  \fi
}
\def\Rich_#1^#2{\accentset{\circ}{R}_{#1,#2}}
\def\Rtp_#1^#2{R_{#1,#2}^{>0}}
\def\Rtnn_#1^#2{R_{#1,#2}^{\geq0}}
\def\PR_#1^#2{\accentset{\circ}{\Pi}_{#1,#2}}%
\def\PRtp_#1^#2{\Pi_{#1,#2}^{>0}}%
\def\PRtnn_#1^#2{\Pi_{#1,#2}^{\geq0}}%
\def\PRcl_#1^#2{\Pi_{#1,#2}}%
\def\PRR_#1^#2{\accentset{\circ}{\Pi}_{#1,#2}^\R}%
\def\PRRcl_#1^#2{\Pi_{#1,#2}^\R}%
\def\bt{{\mathbf{t}}}
\def\bw{{\mathbf{w}}}
\def\bv{{\mathbf{v}}}
\def\bu{{\mathbf{u}}}
\def\ba{{\mathbf{a}}}
\def\Fcal{\mathcal{F}}
\def\pd#1{_{#1}}
\def\pu#1{^{(#1)}}
\def\BR{B(\R)}
\def\hjmap{\kappa}
\def\hjmp_#1{\hjmap_{#1}}
\def\Uom_#1{U^{\diamond,-}_{#1}}
\def\xrasim{\xrightarrow{\sim}}
\def\U{{\mathcal U}}
\def\Richaff_#1^#2{\accentset{\circ}{\mathcal{R}}_{#1}^{#2}}
\def\U{\Ucal}
\def\Povar_#1{\accentset{\circ}{\Pi}_{#1}}
\def\Povarcl_#1{\Pi_{#1}}
\def\RPovar_#1{\accentset{\circ}{\Pi}^\R_{#1}}
\def\RPovarcl_#1{\Pi^\R_{#1}}
\def\Povtp_#1^#2{\Pi_{#1,#2}^{>0}}
\def\Povtnn_#1{\Pi_{#1}^{\geq0}}
\def\Star_#1{\operatorname{Star}_{#1}}
\def\Startnn_#1{\operatorname{Star}^{\geq0}_{#1}}
\def\Link{\operatorname{Lk}}
\def\Lkx_#1{\Link_{#1}}
\def\Lkxx_#1^#2{\accentset{\circ}{\Link}_{#1}^{#2}}
\def\Lktxx_#1^#2{\Link^{>0}_{#1,#2}}
\def\Starxx_#1^#2{\operatorname{Star}_{#1,#2}}
\def\Startxx_#1^#2{\operatorname{Star}^{\geq0}_{#1,#2}}
\def\sctnn_#1{\sc^{\geq0}_{#1}}
\def\sctp_#1^#2{\sc^{>0}_{#1,#2}}
\def\Seps_#1{S_{#1}}
\def\Lktpe_#1^#2{\Link^{>0}_{#1,#2}}
\def\Lktnne_#1{\Link^{\geq0}_{#1}}
\def\Lktp_#1^#2{\Link^{>0}_{#1,#2}}
\def\Lktnn_#1{\Link^{\geq0}_{#1}}
\def\sc{Z}
\def\sco_#1^#2{\accentset{\circ}{\sc}_{#1,#2}}
\def\sccl_#1^#2{\sc_{#1}^{#2}}
\def\Y{\mathcal{Y}}
\def\Yo_#1{\accentset{\circ}{\Y}_{#1}}
\def\Ycl_#1{\Y_{#1}}
\def\Ytp_#1{\Y_{#1}^{>0}}
\def\strg(#1){\normg{#1}}
\def\normg#1{\|#1\|}
\def\Spec{\operatorname{Spec}}
\def\int{{\operatorname{init}}}
\def\DOM^#1_#2{\Delta^{#1 \omega_i}_{#2 \omega_i}}
\def\DOMr^#1_#2{\Delta^{#1 \omega_r}_{#2 \omega_r}}
\def\DOMir^#1_#2{\Delta^{#1 \omega_{i_r}}_{#2 \omega_{i_r}}}
\def\om{\omega}
\def\sv{s^{\mathbf{v}}}
\newcommand*{\smallcap}{{\mathbin{\scalebox{0.5}{\ensuremath{\cap}}}}}%
\def\RLtp_#1^#2{\cev R_{#1,#2}^{>0}}
\def\Rsf_#1^#2{\Rich_{#1}^{#2}(K)}
\def\LRsf_#1^#2{\LRich_{#1}^{#2}(K)}
\def\pre{{\,\operatorname{pre}}}
\def\tpre_#1^#2{\vec\twistop^\pre_{#1,#2}}
\def\Ltpre_#1^#2{\cev\twistop^\pre_{#1,#2}}
\def\tpreL_#1^#2{\Ltpre_{#1}^{#2}}
\def\twistop{\tau}
\def\twist_#1^#2{\vec\twistop_{#1,#2}}
\def\twistL_#1^#2{\cev\twistop_{#1,#2}}
\def\Sn{S_n}
\def\om{\omega}
\def\sv_#1{s^{\bv}_{#1}}
\def\capBWB(#1){(#1)^{\smallcap w}}
\def\capBWoB(#1){(#1)^{\smallcap w_0}}
\def\capBWBnopar(#1){#1^{\smallcap w}}
\def\Pio_#1^#2{\Pi^\circ_{#1,#2}}
\def\mis{\phi} 
\def\chmnrR_#1{\vec\Delta_{#1}}
\def\chmnrL_#1{\cev\Delta_{#1}}
\def\Yo{y_0}
\def\H{H}
\def\Dpm_#1{\Delta^{\pm}_{#1}}
\def\Dmp_#1{\Delta^{\mp}_{#1}}
\def\MS{\operatorname{MS}}
\def\TMSR_#1^#2{\vec \tau^{\,\MS}_{#1,#2}}
\def\TMSL_#1^#2{\cev \tau^{\,\MS}_{#1,#2}}
\def\Vlax_#1{V(\la)_{#1}}
\def\A{{\mathcal A}}
\def\C{{\mathbb C}}
\def\Z{{\mathbb Z}}
\def\x{{\mathbf{x}}}
\def\Y{{\overline{Y}}}
\def\ord{{\mathrm{ord}}}
\def\dlog{{\rm dlog}}
\newcommand{\Rrel}[1]{\stackrel{#1}{\longrightarrow}}
\newcommand{\Rwrel}[1]{\stackrel{#1}{\Longrightarrow}}
\newcommand{\Lrel}[1]{ \stackrel{#1}{\longleftarrow}}
\newcommand{\Lwrel}[1]{\stackrel{#1}{\Longleftarrow}}
\title{Braid variety cluster structures, I: 3D plabic graphs}
\author{Pavel Galashin}
\address{Department of Mathematics, University of California, Los Angeles, 520 Portola Plaza,
Los Angeles, CA 90025, USA}
\email{\href{mailto:galashin@math.ucla.edu}{galashin@math.ucla.edu}}
\author{Thomas Lam}
\address{Department of Mathematics, University of Michigan, 2074 East Hall, 530 Church Street, Ann Arbor, MI 48109-1043, USA}
\email{\href{mailto:tfylam@umich.edu}{tfylam@umich.edu}}
\author{Melissa Sherman-Bennett}
\address{Massachusetts Institute of Technology, 77 Massachusetts Avenue, Cambridge, MA 02139}
\email{\href{mailto:msherben@mit.edu}{msherben@mit.edu}}
\author{David Speyer}
\address{Department of Mathematics, University of Michigan, 2844 East Hall, 530 Church Street, Ann Arbor, MI 48109-1043, USA}
\email{\href{mailto:speyer@umich.edu}{speyer@umich.edu}}
\thanks{P.G.\ was supported by an Alfred P. Sloan Research Fellowship and by the National Science Foundation under Grants No.~DMS-1954121 and No.~DMS-2046915. T.L.\ was supported by Grant No.~DMS-1953852 from the National Science Foundation. M.S.B. was supported by the National Science Foundation under Award
	No.~DMS-2103282. D.E.S was supported by Grants No.~DMS-1854225 and No.~DMS-1855135. Any opinions, findings, and conclusions or recommendations expressed in this material are
	those of the authors and do not necessarily reflect the views of the National Science
	Foundation.}
\date\today
\def\Quw{Q_{u,\br}}
\def\Guw{G_{u,\br}}
\def\Cycle{C}
\def\Surfaceuw{\widehat{S}_{u,\br}}
\def\Suw{\Surfaceuw}
\def\CV(#1){x_{#1}}
\def\BR{\accentset{\circ}{R}}
\def\wo{w_\circ}
\def\pmn{\pm I}
\def\pmnm{(\pmn)^m} %
\def\DRW{(\pmn)^m} %
\def\Demprod{\pi}
\def\br{\beta}  %
\def\bigBR{\accentset{\circ}{\mathcal{Y}}} %
\def\BigBR{\mathcal{Y}} %
\def\torus{T}
\def\ttorus{\tilde{T}}
\def\torus(#1){T_{#1}}
\def\ttorus(#1){\tilde{T}_{#1}}
\def\Cx{{\C^\times}}
\newcommand{\apd}[1]{^{\< #1\>}}
\newcommand{\crossing}[1]{\Delta_{#1}}
\def\grid_#1^#2{\Delta_{#1}^{#2}}
\def\grid_#1{\Delta_{#1}}
\def\bmref#1{{\normalfont(\hyperref[#1]{B\ref*{#1}})}\xspace}
\def\z{\bar{z}}
\newcommand{\seed}[1]{\Sigma_{#1}}
\def\seedubr{\seed{u, \br}}
\def\seedbr{\seed{\br}}
\def\xubr{\x_{u,\br}}
\def\xbr{\x_{\br}}
\def\ubr{{u, \br}}
\def\ubrp{{u, \br'}}
\def\Qubrp{\widetilde{Q}_{u, \br'}}
\def\pbr{\mathbf{p}_{\brp}}
\def\Aubr{\A_{\ubr}}
\def\Aubrp{\A_{\ubrp}}
\def\Disk{D}
\newcommand*\bigcdot{\mathpalette\bigcdot@{.4}}
\newcommand*\bigcdot@[2]{\mathbin{\vcenter{\hbox{\scalebox{#2}{$\m@th#1\bullet$}}}}}
\def\demR{\vartriangleright}
\def\demL{\vartriangleleft}
\def\upu#1{u_{#1}}
\def\PD{\Gamma}
\def\bridge{b}
\def\fro{\operatorname{fro}}
\def\mut{\operatorname{mut}}
\def\Jfro{J_{\bu}^{\fro}}
\def\Jmut{J_{\bu}^{\mut}}
\def\Cycle{C}
\def\MP{\mathbf{M}} %
\def\Cham{{\mathcal{R}}}
\def\Hollow{[m]\setminus\Jo}
\def\Gubar{\overline{G}_{u,\br}}
\def\Surf{\mathbf{S}}
\def\Subr{\Surf_{u,\br}}
\def\Surfaceuw{\Surf_{u,\bw}}
\def\Suw{\Surfaceuw}
\def\Lmut{\Lambda^{\operatorname{closed}}_{u,\br}}
\def\Lubr{\Lambda_{u,\br}}
\def\Lubrst{\Lambda^\ast_{u,\br}}
\def\strand{S}
\def\bgamma{\bm{\gamma}}
\def\Qice{\widetilde{Q}}
\def\BQice{\widetilde{B}(\Qice)}
\def\Vice{\widetilde{V}}
\def\fro{\operatorname{fro}}
\def\mut{\operatorname{mut}}
\def\Vfro{V^{\fro}}
\def\Vmut{V^{\mut}}
\def\Jfro{J_{\bu}^{\fro}}
\def\Jmut{J_{\bu}^{\mut}}
\def\Quw{\Qice_{u,\bw}}
\def\Guw{G_{u,\bw}}
\def\Qubr{\Qice_{u,\br}}
\def\Gubr{G_{u,\br}}
\def\Gbr{G_{\br}}
\def\bice{\tilde{b}}
\def\Bice{\widetilde{B}}
\numberwithin{equation}{section}
\def\todoflr#1(#2){\textcolor{green!70!black}{\bf \{#1\}(#2)}\xspace}
\def\bmref#1{{\normalfont(\hyperlink{#1}{B\ref*{#1}})}\xspace}
\def\brp{\br'}
\def\Gubrp{G_{u,\brp}}
\def\Gbrp{G_{u,\brp}}
\def\Subrp{\Surf_{u,\brp}}
\def\Dot{D}
\def\cumv{{\normalfont(\hyperlink{fig:moves}{M1})}\xspace}
\def\sqmv{{\normalfont(\hyperlink{fig:moves}{M2})}\xspace}
\def\Qubrmut{Q_{u,\br}}
\def\Qubrpmut{Q_{u,\brp}}
\def\Qubrppmut{Q_{u,\brpp}}
\def\brpp{\br''}
\def\Gubrpp{G_{u,\brpp}}
\def\sinkrec{sink-recurrent\xspace}
\def\NtQ{N^{\operatorname{in}}_s(Q)}
\def\Skn{S_n^{(k)}}
\def\Guwar{\overline{G}_{u,\bw}}
\def\Seif{S}
\def\qq{q^{\frac12}}
\def\qqi{q^{-\frac12}}
\def\HOMP{P}
\def\top{{\operatorname{top}}}
\def\Ptop_#1(#2){P^\top(#1;#2)}
\def\conn{\operatorname{c}}
\def\gswap{\sigma}
\def\vd#1{v\apd{d}\pd{#1}}
\def\Chminsetr{\nabla^+}
\def\Chminsetb{\nabla^-}
\def\Chminsetp{\nabla_p}
\def\redop{\operatorname{red}}
\def\blueop{\operatorname{blue}}
\def\pired{\pi_{\redop}}
\def\piblue{\pi_{\blueop}}
\def\Luw{\Lambda_{u,\bw}}
\def\Luwst{\Lambda^\ast_{u,\bw}}
\def\igrw[#1]#2{\includegraphics[width=#1\textwidth]{#2}}
\def\figref#1(#2){Figure~\hyperref[#1]{\ref*{#1}(#2)}}
\def\bmref#1{{\normalfont(\hyperref[#1]{B\ref*{#1}})}\xspace}
\def\cumv{{\normalfont(\hyperref[fig:moves]{M1})}\xspace}
\def\sqmv{{\normalfont(\hyperref[fig:moves]{M2})}\xspace}
\def\Surfof{\Surf}
\def\Dot{\mathfrak{d}} %
\def\Dots{\mathfrak{D}}
\def\Qbr{\Qice_{\br}}
\def\xbr{\x_\br}
\def\Qbrp{\Qice_{\br'}}
\def\Jo{J_{u,\br}}
\def\Jop{J_{u,\br'}}
\def\Jow{J_{u,\bw}}
\def\Poincare{Poincar\'e\xspace}
\def\Jfro{\Jo^{\fro}}
\def\Jmut{\Jo^{\mut}}
\def\Cham{\mathbf{Ch}}
\def\lemmaref#1(#2){Lemma~\hyperref[#1]{\ref*{#1}(#2)}}
\def\Lubrp{\Lambda_{u,\brp}}
\def\Chmin(#1){\crossing{#1}}
\def\ds{\dot{s}}
\def\Xbul{X_{\bullet}}
\def\Ybul{Y_{\bullet}}
\def\cham{chamber\xspace}
\def\Gvw{G^{w,v}}
\def\Gvwtp{G_{>0}^{w,v}}
\def\Lvw{L^{w,v}}
\def\xivw{\xi^{w,v}}
\def\Qtp{\widetilde{Q}'}
\def\Qt{\widetilde{Q}}
\def\ba{\bm{a}}
\def\Cx{\Cycle_{\ba}}
\def\vu{u} %
\def\OmegaOp{\Omega^{\operatorname{op}}}
\def\omop{\omega^{\operatorname{op}}}
\def\upu#1{u_{#1}} %
\begin{document}

\makeatletter
\@namedef{subjclassname@2020}{%
  \textup{2020} Mathematics Subject Classification}
\makeatother

\subjclass[2020]{ 
  Primary:
  13F60. %
  Secondary:
  14M15, %
  05E99. %
}

\keywords{Plabic graph, cluster algebra, open Richardson variety, conjugate surface, local acyclicity, Deodhar hypersurface.}

\begin{abstract}
We introduce $3$-dimensional generalizations of Postnikov's plabic graphs and use them to establish cluster structures for type $A$ braid varieties.
Our results include known cluster structures on open positroid varieties and double Bruhat cells, and establish new cluster structures for type $A$ open Richardson varieties.
\end{abstract}
\maketitle
\setcounter{tocdepth}{1}
\tableofcontents

\section{Introduction}

\emph{Braid varieties} $\BR_{\ubr}$ are smooth, affine, complex algebraic varieties associated to a permutation $u$ and a braid word $\beta$, that is, a word representing an element of the positive braid monoid.  The purpose of this work is to construct a cluster algebra structure \cite{FZ} on the coordinate ring of a braid variety.

\begin{theorem}\label{thm:main_braid}
The coordinate ring $\C[\BR_\ubr]$ of a braid variety is a cluster algebra.
\end{theorem}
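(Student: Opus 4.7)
The plan is to construct, for each pair $(u,\br)$, an explicit initial seed $\seedubr = (\Qubr, \xubr)$ extracted from the associated 3D plabic graph $\Guw$, and then show that the resulting cluster algebra $\Aubr$ equals $\C[\BRu_{e,\cpbf}]$. The construction proceeds as follows. First, from $\Guw$ I read off a quiver $\Qubr$ with vertex set $\Jo = \Jfro \sqcup \Jmut$ by interpreting the chambers (or faces) of the 3D plabic graph as cluster variables, and the arrows as local moves, much as in Postnikov's 2D setting used for positroid varieties. Second, I associate to each vertex $v \in \Jo$ an explicit regular function on $\BR_\ubr$ --- most naturally a ``generalized flag minor'' computed by following the strands of the 3D graph; these are the entries of $\xubr$.

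Next, I would prove that the $\C$-algebra homomorphism $\Aubr \hookrightarrow \C[\BR_\ubr]$ is well defined. The key inputs are: (i) the cluster variables $\xubr$ are algebraically independent and their Laurent monomials parametrize an open torus $\torus(\ubr) \subset \BR_\ubr$ (this follows by writing an explicit birational parametrization via the factorization implicit in $\br$); and (ii) braid-word moves on $\br$, as well as the two local moves \cumv and \sqmv on 3D plabic graphs, induce quiver mutations at the appropriate mutable vertices. Step (ii) is where the 3D plabic graph formalism pays off: one checks locally, on small patches of the graph, that a square move produces exactly the exchange relation predicted by quiver mutation, and a contraction-uncontraction move leaves the seed invariant. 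Once this is established, all the seeds $\seedubr$, as $\br$ varies through braid words for a fixed braid, are mutation-equivalent and determine a common subalgebra of $\C[\BR_\ubr]$.

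The heart of the proof, and the main obstacle, is to upgrade this inclusion $\Aubr \subseteq \C[\BR_\ubr]$ to an equality. My plan here is a standard ``starfish'' / local-acyclicity argument, adapted to the braid variety setting. Concretely: $(a)$ show that $\BR_\ubr$ is smooth and its coordinate ring is a UFD (normality is already known, and the Picard group can be computed from the combinatorial data of $\br$ using the 3D plabic graph), so that it suffices to check codimension-one points; $(b)$ for each mutable vertex $v$, exhibit a neighboring seed obtained by mutation at $v$ and verify that the complement of the union of the two tori has codimension $\geq 2$ in $\BR_\ubr$; $(c)$ identify the codimension-one locus where a frozen variable vanishes with a Deodhar-type hypersurface whose defining equation is exactly the corresponding cluster variable. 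The hardest technical step will be $(b)$: one must show that the Deodhar hypersurfaces corresponding to the cluster variables $\xubr$ set-theoretically cover $\BR_\ubr \setminus \torus(\ubr)$, so that no extra divisor is missed.

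Finally, given the equality on the torus chart and the codimension-one checks, local acyclicity of the quiver $\Qubr$ --- which I would deduce by producing a sequence of mutations that reaches a seed whose quiver has no oriented cycles on its mutable part, using a reduction on the length of $\br$ --- yields $\Aubr = \C[\BR_\ubr]$ by Muller's theorem. Assembling these pieces proves \cref{thm:main_braid}.
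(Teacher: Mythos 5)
Your high-level architecture matches the paper's: build a seed from the 3D plabic graph, show the Deodhar torus is the seed torus, show braid moves induce mutations, and finish with a codimension-two / local-acyclicity argument. But three concrete pieces of your plan do not survive contact with the details.

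First, step~$(c)$ is based on a false premise. On the braid variety $\BR_\ubr$ the frozen cluster variables are \emph{units}: they never vanish. The hypersurfaces along which the paper measures orders of vanishing for frozen indices $d\in\Jfro$ live in the larger affine bundle $\BigBR_\ubr$ (where the ``weakly $w_0$-related'' condition is dropped), not in $\BR_\ubr$ itself. The mutable Deodhar hypersurfaces $V_d$, $d\in\Jmut$, are the ones that genuinely stratify $\BR_\ubr\setminus\torus(\ubr)$ (\cref{prop:deodharHyp}); frozen ones require the ambient enlargement. Your plan needs to be restructured to separate these cases.

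Second, your proposed route to local acyclicity --- mutate to a seed whose mutable quiver is acyclic --- is too strong and is not what happens here. Being mutation-equivalent to an acyclic quiver is a much more restrictive property than local acyclicity, and there is no reason to expect $\Qubr$ to satisfy it. The paper instead shows $\Qubr$ is \emph{sink-recurrent}: after conjugation moves one may assume $\br=ii\br_0$, at which point the vertex coming from the second $i$ is a sink $s$ in the mutable quiver, and both $Q-\{s\}$ and $Q-(N_s\cup\{s\})$ are themselves quivers $\Qubrpmut$, $\Qubrppmut$ of shorter braid words, allowing the recursion to close. Muller's theorem is then applied to this sink-recurrent class, not to an acyclic seed.

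Third, some smaller points: you do not need UFD --- smoothness (hence normality) together with the codimension-two estimate and the BFZ upper-bound theorem suffice. The hard inclusion is really $\C[\BR_\br]\subset\U(\seedubr)$, obtained from the containment of $\BR_\br$ in the union of the initial torus $T$ and \emph{all} once-mutated tori $T_c$ (not just a neighboring pair per vertex), plus the codimension-two complement; the inclusion $\A\subset\C[\BR_\br]$ is the easy direction once each cluster variable and each first mutation is known to be regular. Finally, the codimension-two estimate itself rests on an induction in which the open-closed covering $\{x_2\neq0\}\cong\BR_{i\br'}\times\Cx$, $\{x_2=0\}\cong\BR_{\br'}\times\C$ reduces the length of $\br$; your sketch does not address how the induction would be set up or why the irreducibility of cluster variables and of their one-step mutations (needed for the codimension bound) holds.
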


The braid varieties we consider were studied, rather recently, in~\cite{Mellit_cell,CGGS}, generalizing varieties considered previously in~\cite{Deo,MR,WY}. \Cref{thm:main_braid} resolves conjectures of \cite{Lec,CGGS2}.

Certain special cases of braid varieties, namely open Richardson varieties and open positroid varieties, have played a key role in the combinatorial geometry of flag varieties and Grassmannians.  \emph{Open Richardson varieties} $\Rich_u^w$ are subvarieties of the variety $\SL_n/B_+$ of complete flags in $\C^n$, arising in the study of total positivity and Poisson geometry~\cite{Deo, Lus98, MR, Pos, BGY, KLS}.  %
An explicit cluster structure for open Richardson varieties was conjectured by Leclerc~\cite{Lec}. M\'enard~\cite{Menard} gave an alternative conjectural cluster structure, which was proved by Cao and Keller~\cite{CaoKel} to be an upper cluster structure; M\'enard's and Leclerc's cluster structures are expected to coincide.  Another upper cluster structure was constructed by Ingermanson~\cite{Ing}.  The cluster structure of \Cref{thm:main_braid}, in the case of open Richardson varieties, agrees with that of Ingermanson.  It is related to the cluster structure of Leclerc~\cite{Lec} by the twist automorphism \cite{GL_twist,SSB}.

We also prove that the cluster varieties in \Cref{thm:main_braid} are locally acyclic \cite{Mul}; in the case of open Richardson varieties we use this to establish a variant of a conjecture of Lam and Speyer~\cite{LS}.  In particular, the cohomology of braid varieties satisfies the curious Lefschetz phenomenon (\cref{thm:CL}).

\Cref{thm:main_braid} generalizes the (type A) results of \cite{FZ_double,GY} on double Bruhat cells and \cite{ShWe} on double Bott--Samelson cells, and also the results of Ingermanson~\cite{Ing} and Cao and Keller~\cite{CaoKel}, who found upper cluster structures on open Richardson varieties (see also \cite{Menard}).  Furthermore, \cref{thm:main_braid} generalizes the main result of~\cite{GL_cluster} (see also~\cite{Scott,MSLA,SSBW}), where the same statement was proved for \emph{open positroid varieties}~\cite{KLS}, which are special cases of open Richardson varieties. Positroid varieties are parametrized by \emph{plabic graphs}~\cite{Pos}, whose planar dual quivers describe the cluster algebra structure on the associated open positroid varieties.

Ever since the completion of~\cite{GL_cluster}, it has been our hope that constructing a cluster structure for open Richardson varieties would lead to a meaningful generalization of Postnikov's plabic graphs; indeed, discovering such a generalization turned out to be a crucial step in our proof of \cref{thm:main_braid}.  We associate a \emph{3D plabic graph} to each pair $(u,\br)$ consisting of a permutation $u$ and a (double) braid word $\br$, and use the combinatorics of this graph to construct our cluster structure. The reader is invited to look forward at the examples in \crefrange{fig:bic}{fig:3D-ex}. Code for computing 3D plabic graphs and the associated seeds is available at \cite{GalTut}.

An important geometric ingredient in our approach is the study of the Deodhar geometry of braid varieties, originally used by Deodhar \cite{Deo} in the flag variety setting.  We define an open Deodhar torus $T_\ubr \subset \BR_\ubr$, and our cluster variables are interpreted as characters of $T_\ubr$ that have certain orders of vanishing along the Deodhar hypersurfaces in the complement of the Deodhar torus.  We expect this geometric approach to have applications to other settings where cluster structures are expected to make an appearance.

Our work has a number of applications.  Our cluster structure implies, via \cite{LS}, a curious Lefschetz phenomenon for the cohomology of braid varieties.  Our approach is closely related to the combinatorics of \emph{braid Richardson links} that are associated to a braid variety; in particular, we relate certain quiver point counts to the HOMFLY polynomial of these links.

We learned at the final stages of completing this manuscript that a cluster structure for braid varieties (of arbitrary Lie type) was independently announced in a recent preprint~\cite{CGGLSS}.  It would be interesting to investigate the relation between our 3D plabic graphs and the approach of~\cite{CGGLSS}.  %

\subsection*{Our construction in other Lie types}
The first three authors have continued this line of research to encompass braid varieties of arbitrary Lie type~\cite{GLSBS2}, again using the Deodhar decomposition as the main geometric tool. 
The cluster structure that we construct in type $A$ is much more explicit and combinatorial than the ones that we construct in other Lie types. 
In particular, our type $A$ structure has clear connections to 
the plabic graphs of~\cite{Pos}. In all Lie types, there is a family of ``chamber minors" (generalizing those appearing in~\cite{FZ_double,MR}) %
which are monomials in the cluster variables. In type $A$, the matrix transforming cluster variables to chamber minors is a $\{0,1\}$ matrix; see~\cref{ssec:seeds} and~\cref{prop:combGeoMatch}. 
However, in other Lie types, this matrix can have entries larger than $1$, and the combinatorics of these matrix entries is complicated; see~\cref{rem:orderone} and \cite[Section 7]{GLSBS2}.

On the other hand, there are arguments which are simpler and clearer when presented in all Lie types. 
We have chosen to place those arguments in~\cite{GLSBS2}, and removed longer computations which establish them only in type $A$ from this paper.

\subsection*{Overview}
 In \cref{sec:Rich}, we give a synopsis of our main results in the setting of open Richardson varieties.  In the rest of the paper we work in the setting of braid varieties.  We define 3D plabic graphs and the associated quivers $\Qubr$ in \cref{sec:double_braid_quivers}. Next, we develop the combinatorics of 3D plabic graphs and show that the quivers $\Qubr$ are invariant under braid moves on the word $\br$, naturally extending \emph{square moves} from Postnikov's plabic graphs to 3D plabic graphs; see \cref{sec:invar-under-moves}. We discuss cluster algebras associated to 3D plabic graphs in \cref{sec:cluster+3D} and show that they are \emph{locally acyclic} in the sense of~\cite{Mul}. In \cref{sec:dblBraidVar,sec:seeds}, we study the Deodhar geometry of $\BR_{\ubr}$ and construct a seed in $\CC(\BR_{\ubr})$ for each 3D plabic graph. We then show that the seeds are related by mutation in \cref{sec:movesOnSeed}. Finally, in \cref{sec:proofmain}, we prove \cref{thm:main} by induction on the length of $\br$. We conclude with some applications of our approach in \cref{sec:applications}, and explain how our results and definitions specialize to those in~\cite{Pos,FZ_double,Ing}.

\subsection*{Acknowledgments}
T.L. and D.E.S. thank our students Ray Karpman and Gracie Ingermanson for helping us understand the relationship between Deodhar's positive subexpressions and Postnikov's combinatorics and for the other ideas discussed in \cref{sec:gen}. M.S.B. thanks Daping Weng for illuminating conversations on \cite{ShWe}. P.G. thanks Terrence George for discussions regarding~\cite{GoKe}. We thank Lauren Williams for her comments on the first version of this manuscript. 
We also appreciate many conversations with Allen Knutson about Richardson and Bott--Samelson varieties, and Deodhar tori.
 We thank Roger Casals, Eugene Gorsky, and Anton Mellit for conversations related to this project. Finally, we thank the authors of~\cite{CGGLSS} for sharing their exciting results with us.

\section{Open Richardson varieties}\label{sec:Rich}
In this section, we give a more detailed explanation of \cref{thm:main_braid} in the case of open Richardson varieties.

\subsection{Open Richardson varieties}
Let $G=\SL_n$, and let $B_+$, $B_-$ be the opposite Borel subgroups of upper and lower triangular matrices, respectively.  For two permutations $u,w\in\Sn$ such that $u\leq w$ in the Bruhat order, the \emph{open Richardson variety} $\Rich_u^w$ is defined as
\begin{equation*}%
  \Rich_u^w:=(B_-uB_+\cap B_+wB_+)/B_+.
\end{equation*}

\begin{figure}
  \includegraphics[width=0.8\textwidth]{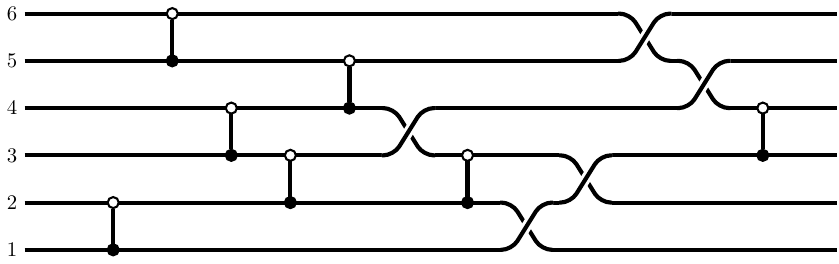}
  \caption{\label{fig:bic}A 3D plabic graph $\Guw$.}
\end{figure}

To each pair $u\leq w$ and to each reduced word $\bw$ for $w$ we associate an \emph{ice quiver} $\Quw$ (\cref{sec:intro:quiver}). Let $\Acal(\Quw)$ be the associated cluster algebra; see \cref{sec:cluster} for background. %

\begin{theorem}\label{thm:main_Rich}
For all $u\leq w$ in $\Sn$, we have an isomorphism
\begin{equation*}%
  \C[\Rich_u^w]\cong \Acal(\Quw).
\end{equation*}
Moreover, the cluster algebra $\Acal(\Quw)$ is locally acyclic and really full rank. %
\end{theorem}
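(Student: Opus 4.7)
The plan is to deduce \cref{thm:main_Rich} from \cref{thm:main_braid} by realizing open Richardson varieties as special instances of braid varieties. Given $u \leq w$ in $\Sn$ and a reduced word $\bw$ for $w$, the Deodhar-type parametrization of $(B_-uB_+ \cap B_+wB_+)/B_+$ from~\cite{Deo,MR,WY,CGGS} identifies $\Rich_u^w$ with the braid variety $\BR_{u,\bw}$, where $\bw$ is viewed as a (single) positive braid word. Under this identification, the 3D plabic graph $\Guw$ of \cref{fig:bic} agrees with the 3D plabic graph $\Gubr$ associated in \cref{sec:double_braid_quivers} to $(u,\bw)$, so that the quiver $\Quw$ coincides with $\Qubr$ for $\beta = \bw$. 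The isomorphism $\C[\Rich_u^w] \cong \Acal(\Quw)$ is then a direct specialization of \cref{thm:main_braid}.

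Local acyclicity of $\Acal(\Quw)$ follows from the corresponding local acyclicity statement for the quivers $\Qubr$ of all 3D plabic graphs, established in \cref{sec:cluster+3D}. For the really full rank condition, I would check at a single conveniently chosen seed---for instance, the one built from a specific 3D plabic graph---that the mutable submatrix of the exchange matrix has full rank over $\Z$, together with the stronger really-full-rank condition; since this property is invariant under mutation, checking it at one seed suffices, and for a judicious choice it can be read off from the combinatorics of the 3D plabic graph, arguing by induction on $\ell(\bw)$ in the spirit of \cref{sec:proofmain}.

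The main obstacle is \cref{thm:main_braid} itself, whose proof occupies most of the paper: constructing an explicit seed in $\C(\BR_\ubr)$ from each 3D plabic graph (\cref{sec:seeds}), showing that braid moves on $\beta$ induce mutation-equivalent seeds (\cref{sec:movesOnSeed}), and running the induction on $\ell(\beta)$ in \cref{sec:proofmain}. A secondary but nontrivial issue is to match the 3D plabic graph quiver $\Quw$ against the quivers of Ingermanson~\cite{Ing} and (via the twist automorphism \cite{GL_twist,SSB}) of Leclerc~\cite{Lec} and M\'enard~\cite{Menard}, so as to confirm the identifications with those cluster structures promised in the introduction.
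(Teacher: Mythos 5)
Your overall plan is essentially the paper's: reduce to the braid/double-braid setting via the identification of $\Rich_u^w$ with $\BR_{u,\bw}$ (cf.\ the discussion after \eqref{eq:braidRich}, \cref{prop:braidRich}, and the isomorphism \eqref{eq:Rich_BR_iota}), match $\Guw$ and $\Quw$ with the constructions in \cref{sec:double_braid_quivers}, and specialize \cref{thm:main}. Local acyclicity is indeed the \sinkrec argument of \cref{sec:cluster+3D}.

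The one place where you are materially vaguer than the paper is the really full rank assertion. Your plan --- check at a single seed, ``read off from the combinatorics'' by induction in the spirit of \cref{sec:proofmain} --- does not isolate the actual mechanism. The paper's proof of \cref{prop:full_rank} is not a matrix computation at a distinguished word; it is a topological argument with the conjugate surface: one identifies $\Lubr=H_1(\Subr,\MP)$ and its dual $\Lubrst$, shows that the dual bridge edges $(\brast_c)_{c\in\Jo}$ form a $\Z$-basis of $\Lubrst$ (\cref{lemma:bridge_basis}) and the relative cycles $(\Cycle_c)_{c\in\Jo}$ a $\Z$-basis of $\Lubr$ (\cref{cor:basis}), and then proves that the \emph{mutable} relative cycles $(\Cycle_c)_{c\in\Jmut}$ extend to a $\Z$-basis of $\Lubrst$. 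That last extension is the key point, it is achieved by the double-bridge reduction \`a la \cref{thm:sinkrec} and is precisely what lets one conclude the rows of $\Bice(\Qubr)$ span $\Z^{|\Jmut|}$ over $\Z$. Invariance under mutation is used to travel between words, but it is not by itself enough; you still need the lattice-theoretic base case. Without identifying this idea, your really-full-rank step is a genuine gap.

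A smaller point: the comparison with the Ingermanson/Leclerc/M\'enard structures (\cref{prop:Ing_iso}) is a supplementary consistency check, not a requirement for \cref{thm:main_Rich}; you should not place it on the critical path of the proof.
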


The cluster algebra terminology in Theorem~\ref{thm:main_Rich} will be introduced in \cref{sec:cluster}.  We now describe the 3D plabic graph $\Guw$, the quiver $\Quw$, and the associated cluster algebra $\Acal(\Quw)$.

\begin {figure}
  \includegraphics[width=0.8\textwidth]{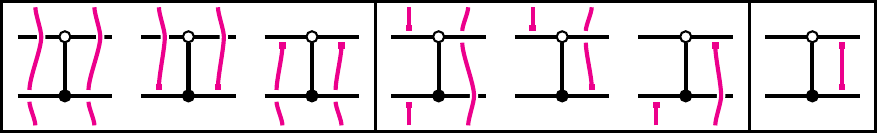}
  \caption{\label{fig:intro:propag}Propagation rules (right to left) for the relative cycles in $\Guw$.}
\bigskip
  \includegraphics[width=0.8\textwidth]{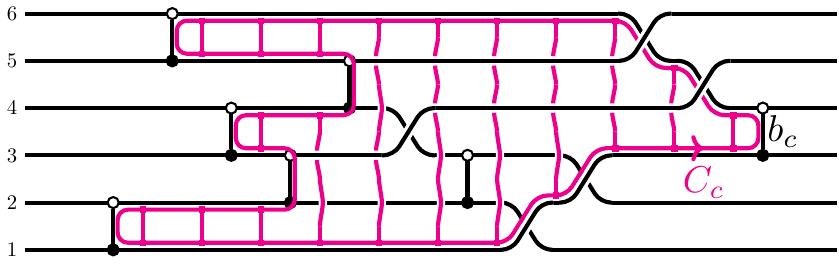}
  \caption{\label{fig:face} Applying propagation rules to find one relative cycle $\Cycle_{c}$ of $\Guw$.}
\end{figure}

\subsection{3D plabic graphs}\label{sec:intro:3D_plabic}
Let $\bw=(i_1,i_2,\dots,i_m)$ be a reduced word for $w$. Consider the unique rightmost subexpression $\bu$ for $u$ inside $\bw$, and let $\Jow\subset[m]:=\{1,2,\dots,m\}$ be the set of indices \emph{not} used in $\bu$. The \emph{3D plabic graph $\Guw$} is obtained from the wiring diagram for $w$ by replacing all crossings in $[m]\setminus \Jow$ by overcrossings and replacing each crossing $c\in\Jow$ by a \emph{black-white bridge edge} $\bridge_c$; see \cref{fig:bic}. We place a \emph{marked point} on each of the $n$ leftmost boundary vertices of $\Guw$, and denote by $\MP$ the set of these marked points.

The number of bridges in $\Guw$ is $|\Jow|=\ell(w)-\ell(u)$, which is the dimension of $\Rich_u^w$. To each index $c\in \Jow$ we will associate an (oriented) \emph{relative cycle} $\Cycle_c$ in $\Guw$, which by definition is either a cycle in $\Guw$ or a union of oriented paths in $\Guw$ with endpoints in $\MP$.

 Each relative cycle $\Cycle_c$ will naturally bound a disk $\Disk_c$.\footnote{More precisely, when $\Cycle_c$ is a cycle, $\partial\Disk_c=\Cycle_c$, and when $\Cycle_c$ is a union of paths with marked endpoints,  $\partial\Disk_c$ is the union of $\Cycle_c$ together with several straight line segments connecting pairs of marked points.}  For instance, in \cref{fig:face}, the vertical sections of $\Disk_c$ are shown in wavy pink lines. We indicate the relative position of $\Disk_c$ in $\R^3$ with respect to the edges of $\Guw$ by over/under-crossings. 
 We will compute $\Disk_c$, and therefore its boundary $\Cycle_c$, starting from the bridge $\bridge_c$ and proceeding to the left
 using the  \emph{propagation rules} in \cref{fig:intro:propag}. We choose the counterclockwise orientation of $\Cycle_c$, so that as one traverses $\Cycle_c$, the disk $\Disk_c$ is to the left. See \cref{sec:rel_cycles} for a  description of relative cycles in the case of double braid varieties.

\subsection{The quiver}\label{sec:intro:quiver}
A \emph{quiver} $Q$ is a directed graph without directed cycles of length $1$ and $2$. An \emph{ice quiver} $\Qice$ is a quiver whose vertex set $\Vice=V(\Qice)$ is partitioned into \emph{frozen} and \emph{mutable} vertices: $\Vice=\Vfro\sqcup \Vmut$. The arrows between pairs of frozen vertices are automatically omitted.

The procedure in \cref{sec:intro:3D_plabic} yields a bicolored graph $\Guw$ decorated with a family $(\Cycle_c)_{c\in\Jow}$ of relative cycles. To this data, we associate an ice quiver $\Quw$. Our construction will rely on the results of~\cite{FoGo_cluster,GoKe}. The vertex set $V(\Quw):=\Jow$ is in bijection with the set of relative cycles. If a relative cycle $\Cycle_c$ is actually a cycle in $\Guw$ then $c$ is a mutable vertex of $\Quw$; otherwise, if $\Cycle_c$ is a union of paths with endpoints in $\MP$, $c$ is a frozen vertex of $\Quw$.

To compute the arrows of $\Quw$, we consider $\Guw$ as a \emph{ribbon graph}, with counterclockwise half-edge orientations around white vertices and clockwise half-edge orientations around black vertices. Let $\Suw$ be the surface with boundary obtained by replacing every edge of $\Guw$ by a thin ribbon and gluing the ribbons together according to the local orientations at the vertices of $\Guw$. See \figref{fig:2D-ex}(d) for an example of $\Suw$. The $n$ marked points of $\Guw$ give rise to $n$ marked points on $\partial\Suw$, the set of which is also denoted by $\MP$. 

We view each relative cycle $\Cycle_c$ as an element of the relative homology $\Luw:=H_1(\Suw,\MP)$. It turns out that each mutable relative cycle can be also viewed as an element of the dual lattice $\Luwst$; see \cref{sec:surfaces}. The (signed) number of arrows between two vertices $c,d\in\Jow$ in $\Quw$, where $d$ is mutable, is defined to be the (signed) intersection number $\<\Cycle_c,\Cycle_d\>$ of the relative cycles $\Cycle_c$ and $\Cycle_d$. These intersection numbers can be computed explicitly using simple pictorial rules; see \cref{alg:int_form}. Alternatively, one can compute the quiver $\Quw$ by summing up half-arrow contributions; see \cref{sec:half-arrow-descr}. One can check that in the case of our running example (\cref{fig:bic}), both descriptions yield the quiver shown in \cref{fig:intro_quiver}. See also \cref{ex:3D}.

\begin{figure}
\igrw[0.4]{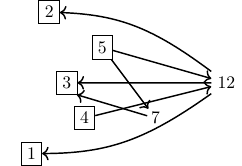}
  \caption{\label{fig:intro_quiver} The quiver $\Quw$ corresponding to the graph $\Guw$ from \cref{fig:bic}. For example, vertex $12$ of $\Quw$ corresponds to the relative cycle $\Cycle_{12}$ is shown in \cref{fig:face}. The frozen vertices are boxed.}
\end{figure}

\begin{figure}
\igrw[1.0]{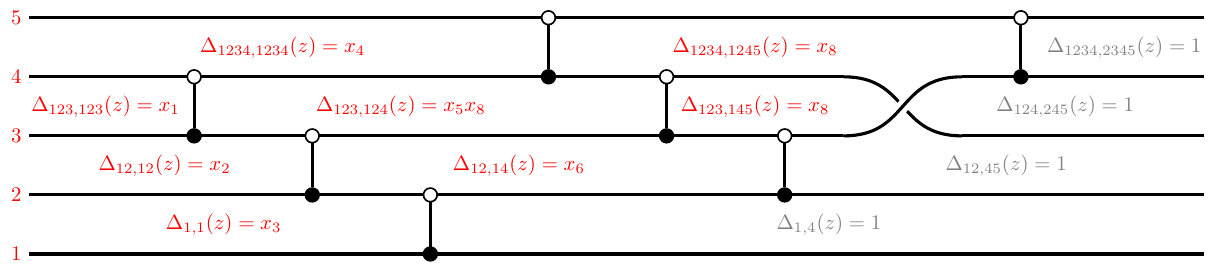}
  \caption{\label{fig:gracie_intro} Factorization of chamber minors into cluster variables.}
\end{figure}

\subsection{The seed}\label{sec:intro:seed}
To each $c\in\Jow$ we associate a \emph{cluster variable} $x_c\in\C[\Rich_u^w]$. Let $\Cham_c$ be the \emph{chamber} (i.e., a connected component of the complement of $\Guw$ in the plane) located immediately to the left of the bridge $\bridge_c$. To this data, one can associate a regular function on $\Rich_u^w$ called a \emph{chamber minor} $\Chmin(c)\in\C[\Rich_u^w]$. Variants of these functions appear in~\cite{MR,Lec,Ing}. 

We now define these chamber minors.
For $v \in \Sn$ and $0 \leq i \leq n$, we write $v[i]:=\{v(1),\dots,v(i)\}$. For a matrix $g \in \GL_n$, we write $\Delta_{A,B}(g)$ for the matrix minor with row set $A$ and column set $B$.
Given an element $gB_+\in\Rich_u^w$, one can find a matrix $z\in B_+$ such that $gB_+=zwB_+$ and $\Delta_{u[i],w[i]}(z)=1$ for all $i=1,2,\dots,n$.  The matrix $z$ is not unique, but is unique up to right multiplication by elements of $U_+ \cap w U_+ w^{-1}$ (where $U_+\subset B_+$ is the unipotent subgroup; see \cref{sec:doublebraid_Notation}), and all the minors of $z$ which we consider are invariant under this multiplication. The wiring diagram of $w$ is obtained from $\Guw$ by replacing every bridge with a crossing, and the wiring diagram of $u$ is obtained by erasing all bridges from $\Guw$. Given any chamber $\Cham$, let $A$ be the set of left endpoints of $u$-strands (i.e., the strands in the wiring diagram of $u$) passing below $\Cham$, and let $B$ be the set of left endpoints of $w$-strands passing below $\Cham$. The chamber minor for $\Cham$ is then given by $\Delta_{A,B}(z)$.

For $d\in \Jow$, we say that $\Cham_c$ is \emph{inside $\Cycle_d$} if it is contained inside the projection of the disk $\Disk_d$ to the plane. 
 Then the cluster variables $(x_c)_{c\in\Jow}$ are uniquely defined by the invertible monomial transformation
\begin{equation}\label{eq:Chmin}
  \Chmin(c)=\prod_{d\in \Jow:\ \text{$\Cham_c$ is inside $\Cycle_d$}} x_d.
\end{equation}
See \cref{fig:gracie_intro} for an example when $w=s_3s_2s_1s_4s_3s_2s_3s_4$ and $u=s_3$, and see \cref{sec:gracie_compare} for further details.

The cluster $\{x_c\}_{c \in \Jow}$ together with the quiver $\Qice_{u,\bw}$ is a seed $\Sigma_{u, \bw}$ in $\C(\Rich_u^{w})$. To prove \cref{thm:main_Rich}, we show that $\C[\Rich_u^{w}]= \A(\Sigma_{u, \bw})$.

\subsection{Deodhar geometry}
In \cite{Deo}, Deodhar constructed a stratification of $\Rich_u^{w}$ for each reduced word $\bw$ of $w$. The strata are of the form $\C^a \times (\C^*)^b$. The dense open stratum in Deodhar's stratification is called the \emph{Deodhar torus}, and denoted $T_{u,\bw}\subset \Rich_u^w$.  The Deodhar torus is the initial cluster torus in our cluster structure.  We show in \cref{prop:deodharIsTorus} that the chamber minors $\{\Chmin(c)\}_{c \in \Jow}$ are a basis of characters for $T_{u,\bw}$.  

We show that the cluster variables $\{x_d\}_{d \in \Jow}$ are certain distinguished characters on $T_{u,\bw}$. 
Namely, the complement $\Rich_u^w \setminus T_{u,\bw}$ is a union of irreducible components called \emph{(mutable) Deodhar hypersurfaces} $V_d$.  We deduce from~\eqref{eq:Chmin} that the mutable cluster variable $x_d$ is the unique character of $T_{u,\bw}$ which vanishes to order one along $V_d$ and has no zeroes along other Deodhar hypersurfaces.  Frozen Deodhar hypersurfaces and frozen cluster variables are constructed using a slightly different geometric approach.

\subsection{Comparison to known cluster structures}\label{sec:gen}
Our construction simultaneously includes several known cluster structures.

When $w$ is a $k$-Grassmannian permutation, the open Richardson variety $\Rich_u^w$ is an open positroid variety in the sense of~\cite{Pos,KLS}. 
In this case, Karpman~\cite{Karpman} showed that the graph $\Guw$ is one of Postnikov's reduced plabic graphs~\cite{Pos} with $n$ extra leaves attached.  
In particular, in this case $\Guw$ is planar and each cycle $C_c$ bounds a single face of $\Guw$, which is not true for general $w\in\Sn$.
In this case, the first two authors~\cite{GL_cluster} showed that the open positroid variety is a cluster variety, with quiver coming from the plabic graph; this gives the quiver $\Quw$.

Many of our ideas appeared in the unpublished Ph.D. dissertation of Ingermanson~\cite{Ing}, which constructs an upper cluster structure on $\Rich_u^w$.  The fourth author was Ingermanson's advisor and is grateful for the ideas he learned from her.  We summarize the constructions in our paper that appear in \cite{Ing}.  Ingermanson writes down the same monomial transformation as \eqref{eq:Chmin}, but defines it via a much more involved recursion.
Ingermanson constructs a bridge diagram (in the case that $\bw$ is a particular reduced word called the ``unipeak word") which is isomorphic to our graph $\Guw$ but is defined as an abstract graph rather than embedded in $\RR^3$.  Ingermanson's quiver is identical to our $\Quw$, but it is described in a very different way, along the lines of \cref{sec:half-arrow-descr}.
\subsection{Braid Richardson varieties}\label{sec:braid}
Open Richardson varieties generalize to \emph{braid Richardson varieties}.  We say that two flags $B_1,B_2\in G/B_+$ are in \emph{relative position $w\in \Sn$} if there exists a matrix $g\in G$ such that $(gB_1,gB_2)=(B_+,wB_+)$. In this case, we write $B_1\Rrel{w} B_2$.  Consider a (not necessarily reduced) word $\br=(i_1,i_2,\dots,i_m)\in[n-1]^m$ and let $u\in \Sn$.  Following the nomenclature of \cite{GLTW}, define the \emph{braid Richardson variety}
\begin{equation}\label{eq:braidRich}
  \BR_{u,\br} = \left\{(B_1,B_2,\dots,B_m)\in (G/B_+)^m\mid B_+\xrightarrow{s_{i_1}} B_1  \xrightarrow{s_{i_2}} \cdots \xrightarrow{s_{i_m}} B_m \xleftarrow{\wo u} B_-\right\}.
\end{equation}
This variety is nonempty whenever $u\leq \br$ in the sense of \cref{dfn:u<=br}.  In the case $u = \wo$, the definition \eqref{eq:braidRich} is the same as the definition of the braid variety $X(\beta)$ considered in \cite{CGGLSS}, and is isomorphic to the braid variety $X(\beta,\wo)$ in \cite{CGGS}.  We shall show that braid Richardson varieties are isomorphic to the braid varieties we study in \cref{sec:dblBraidVar}.

When $\br$ is a reduced word for some $w\in\Sn$ then using a variant of \lemmaref{lem:positionFacts}(2--3), we see that $\BR_{u,\br}$ is isomorphic to the space 
$\left\{B \in G/B_+ \mid B_+\xrightarrow{w} B \xleftarrow{\wo u} B_-\right\}$
 which can be identified with the Richardson variety $\Rich_u^w \subset G/B_+$.  Thus, braid Richardson varieties generalize open Richardson varieties. \Cref{thm:main_Rich} extends to the setting of braid Richardson varieties.

In \cref{sec:double_braid_words}, we describe our construction in the most general setting of double braid varieties, and define a 3D plabic graph $\Gubr$ and a quiver $\Qubr$ for a pair $(u,\br)$ consisting of a permutation $u$ and a \emph{double braid word} $\br$ (\cref{sec:double_braid_words}).  If $\br$ is a double braid word  and $u=\id$, then $\Gubr$ is again planar.
In this case, if we impose that the double word $\beta$ is reduced, we recover the classical cluster structure on type A double Bruhat cells~\cite{FZ_double,BFZ,GY}.  If we allow $\beta$ to be non-reduced, then we recover the type A results of~\cite{ShWe} on double Bott--Samelson varieties.
We note that the fact that our graphs are non-planar indicates that our construction gives a non-trivial $3$-dimensional extension of the combinatorics of~\cite{FZ_double, Pos, ShWe}.

\section{Double braid quivers}\label{sec:double_braid_quivers}

The goal of this section is to define 3D plabic graphs, conjugate surfaces, relative cycles, and the associated quivers in the extended generality of \emph{double braid words}.

\subsection{Double braid words}\label{sec:double_braid_words}
Let $I:=[n-1]$. A \emph{double braid word} is a word $\br=(i_1, i_2, \dots, i_m)\in\pmnm$ in the alphabet
\begin{equation*}%
  \pmn:=\{-1,-2,\dots,-(n-1)\}\sqcup\{1,2,\dots,n-1\}.
\end{equation*}
We denote the set of double braid words by $\DRW$. We will usually abbreviate $\br=(i_1, i_2, \dots, i_m)$ as  $\br=i_1 i_2 \dots i_m$. For $\br\in \DRW$, we write $\ell(\br):=m$. The goal of this section is to associate a quiver $\Qubr$ to a pair $(u,\br)$, where $u\in \Sn$ and $\br\in\DRW$.

The word $\br \in\DRW$ should be considered as a \emph{shuffle} of two positive braid words in the commuting alphabets $I, -I$. We emphasize that the letter $-i$ does \emph{not} correspond to $\sigma_i^{-1}$, the inverse of the braid group generator $\sigma_i$. We call elements of $I$ \emph{red} and elements of $-I$ \emph{blue}. 
For $i\in\pmn$, let
\begin{equation*}%
 s_i^+:=
  \begin{cases}
    s_i, &\text{if }i>0,\\
    \id, &\text{if }i<0,
  \end{cases}\quad   s_i^-:=
  \begin{cases}
    \id, &\text{if }i>0, \\
    s_{-i}, &\text{if }i<0.\\
  \end{cases}
\end{equation*}
We use the convention that the positive indices act on the right while the negative indices act on the left; for a permutation $u\in\Sn$ and $i\in\pmn$, we write this action as $u\mapsto s_i^-us_i^+$. 

Let $\ast$ denote the standard Demazure product on $\Sn$: for $v\in\Sn$ and $i\in I$, let $v\ast s_i$ be equal to $vs_i$ if $v<vs_i$ and to $v$ otherwise. This operation is associative. For a double braid word $\br$, its \emph{Demazure product} is defined by
\begin{equation*}%
  \Demprod(\br):= s_{i_m}^- *s_{i_{m-1}}^-*\cdots* s_{i_1}^- * s_{i_1}^+*s_{i_2}^+ * \cdots* s_{i_m}^+ \quad \in\Sn.
\end{equation*}
\begin{definition}\label{dfn:u<=br}
For $u\in\Sn$ and a double braid word $\br$, we write $u\leq \br$ if $u\leq \Demprod(\br)$ in the Bruhat order on $\Sn$.
\end{definition}

Let $\br=(i_1, i_2, \dots, i_m)\in\pmnm$ and $u\in\Sn$. A \emph{$u$-subexpression} of $\br$ is a sequence
$\bu=(\upu0,\upu1,\dots,\upu m) \in S_n^{m+1}$ such that $\upu0=\id$, $\upu m=u$, and such that for each $c\in[m]$, we have either $\upu c=\upu{c-1}$ or $\upu c=s_{i_c}^-\upu{c-1}s_{i_c}^+$. It is clear that $\br$ contains a $u$-subexpression if and only if $u\leq \br$. %

Suppose that $u\leq\br$. Out of all $u$-subexpressions of $\br$, there exists a unique ``rightmost'' one, called the \emph{$u$-positive distinguished subexpression ($u$-PDS)}. It can be computed explicitly using the following operation which we call \emph{Demazure quotient}: for $u\in\Sn$ and $i\in I$, set
\begin{equation*}%
  s_i \demR u= \begin{cases}
		s_iu,  & \text{ if }s_iu  < u,\\
		u, & \text{otherwise,}
              \end{cases}
              \quad\text{and}\quad
  u \demL s_i= \begin{cases}
		u s_i, & \text{ if }u s_i < u,\\
		u, & \text{otherwise.}
              \end{cases}
\end{equation*}
By convention, we set $\id\demR u=u=u\demL \id$. The $u$-PDS $\bu=(\upu0,\upu1,\dots,\upu m)$ is computed iteratively starting from $\upu m=u$. For $c=m,m-1,\dots,1$, we set
\begin{equation}\label{eq:PDS_recursion}
  \upu{c-1}:=s_{i_c}^- \demR \upu c\demL s_{i_c}^+.
\end{equation}
\noindent Since $u\leq\br$, we have $\upu0=\id$.
We set $\Jo:=\{c\in[m]\mid \upu c=\upu{c-1}\}$. We refer to the indices in $\Jo$ as \emph{solid crossings} and to the indices in $\Hollow$ as \emph{hollow crossings}. 
The indices in $\Hollow$ form a reduced word for $u$, i.e., we have $|\Jo|=\ell(\br)-\ell(u)$.

\begin{remark}
When $\br \in I^m$ is a reduced word for a permutation and $u\leq \br$, the sequence $(u\pd{0}, \dots, u\pd{m})$ is a positive distinguished subexpression in the sense of~\cite[Definition~3.4]{MR}. 
The terminology of ``solid" and ``hollow" crossings is drawn from~\cite{MR}, who draw wiring diagrams in this way.
When the positive and negative subwords of $\br$ are both reduced and $u \leq \br$, $(u\pd{0}, \dots, u\pd{m})$ is a positive double distinguished subexpression in the sense of~\cite{WY}.
\end{remark}

For the rest of this section, we fix a pair $(u,\br)\in\Sn\times \DRW$ satisfying $u\leq \br$, and let $\bu$ be the $u$-PDS of $\br$.

\begin{remark}\label{rmk:u=w0}
All our constructions (including quivers, 3D plabic graphs, cluster algebra structures, and braid varieties) will be invariant under the following operation of appending hollow crossings on the right: if $i\in \pmn$ is such that $u<u':=s_i^-us_i^+$ then we are allowed to replace $(u,\br)$ with $(u',\br i)$. In particular, starting with any pair $(u,\br)$ satisfying $u\leq \br$, we can append hollow crossings to obtain a pair $(\wo,\brp)$ for some double braid word $\brp\in(\pmn)^{m+\ell(\wo)-\ell(u)}$.
\end{remark}

\begin{figure}
\def\wid{0.98\textwidth}
\begin{tabular}{c}
\includegraphics[width=\wid]{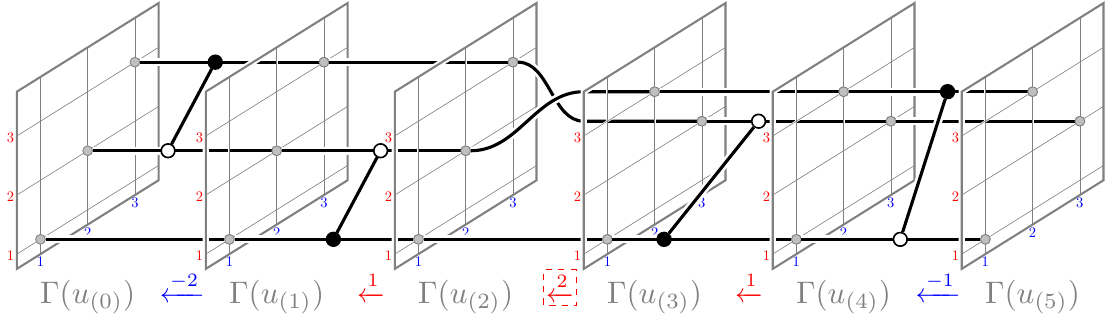}\\[5pt]
(a) 3D plabic graph $\Gubr$\\[5pt]
\includegraphics[width=\wid]{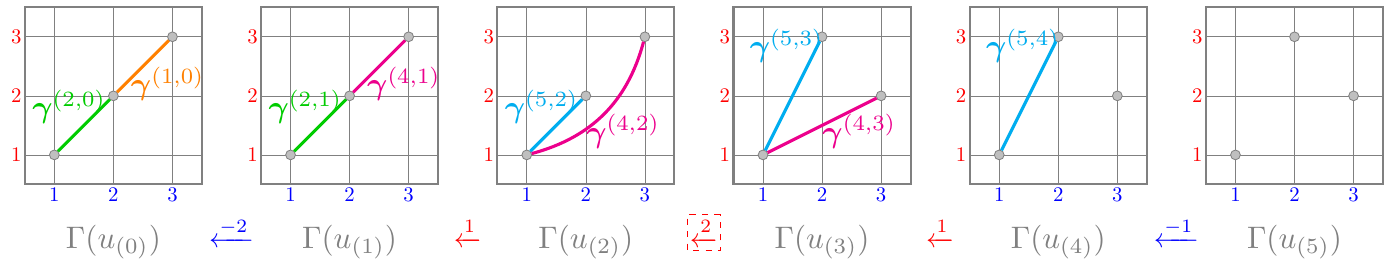}\\[5pt]
(b) monotone curves in $\Gubr$\\[5pt]
\includegraphics[width=\wid]{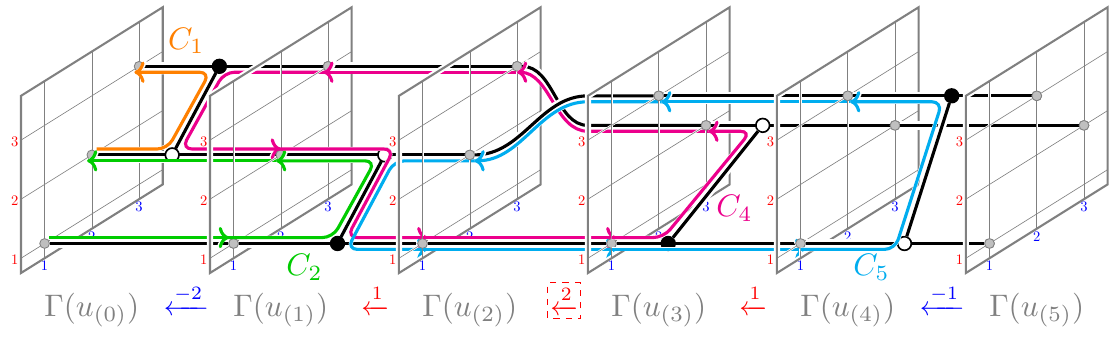}\\[5pt]
(c) relative cycles in $\Gubr$
\end{tabular}
  \caption{\label{fig:3D-ex}A 3D plabic graph $\Gubr$ for $u=s_2$ and $\br=(-2,1,2,1,-1)$; see \cref{ex:3D}.}
\end{figure}

\begin{figure}
\def\widd{0.27}
\def\wid{0.4}
\setlength{\tabcolsep}{1pt}
\hspace{-0.2in}
\begin{tabular}{c}
\begin{tabular}{ccc}
\igrw[\widd]{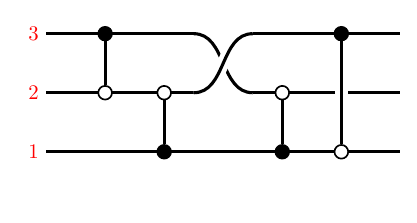}&
\igrw[\widd]{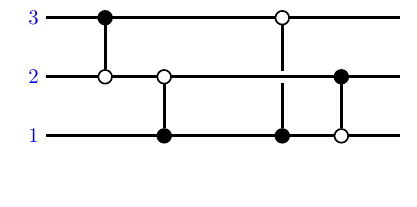}&
\igrw[\widd]{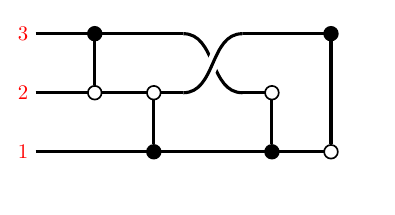}\\[-15pt]
(a) $\pired(\Gubr)$&
(b) $\piblue(\Gubr)$&
(c) $\pired(\Gubar)$
\end{tabular}
\\
\begin{tabular}{cc}
\igrw[\wid]{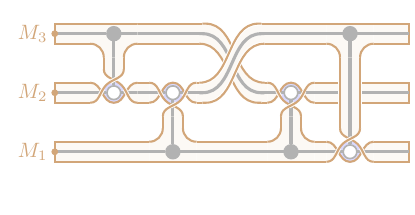}&
\igrw[\wid]{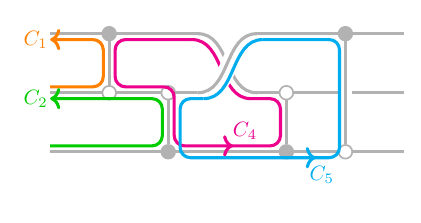}\\[-16pt]
(d) $\pired(\Subr)$&
(e) $\pired(\Cycle_c)$, $c\in\Jo$\\[-0pt]
\end{tabular}
\end{tabular}
  \caption{\label{fig:2D-ex}Projections of objects associated to 3D plabic graphs (cf. \cref{fig:3D-ex,ex:3D}).}
\end{figure}

\subsection{3D plabic graphs}\label{sec:3D_plabic}
We view permutations $z\in\Sn$ as bijections $[n]\to[n]$, with multiplication given by composition. Thus, if $z=xy$ then we have $z(i)=x(y(i))$ for $i\in[n]$. The \emph{permutation diagram} $\PD(z)\subset\Z^2$ of $z\in\Sn$ is the set of dots $(z(i),i)$ for $i\in[n]$. We use Cartesian coordinates for permutation diagrams.\footnote{The reader who likes permutation \emph{matrices} should therefore flip our permutation diagrams upside down, and should also remember that the convention for matrices is to list the vertical coordinate first and the horizontal coordinate second.} Thus, the dot $(i,j)$ is located in column~$i$ and row $j$, with the dot $(1,1)$ located in the bottom-left corner. We let $\preceq$ be the partial order on $\Z^2\subset \R^2$ given by $(x,y)\preceq (x',y')$ whenever $x\leq x'$ and $y\leq y'$.

 We consider $\Z^3$ with coordinates $(i,j,t)$, where the third coordinate $t$ is referred to as the \emph{time}. For each $c=0,1,\dots,m$, place the permutation diagram $\PD(\upu c)$ in the $t=c$ plane.%

We now define the 3D plabic graph $\Gubr$; see \cref{ex:3D} and \figref{fig:3D-ex}(a). We first give an informal description; see below for an explicit description in coordinates in $\Z^3$. 
\begin{definition}\label{dfn:3D_informal}
Start by drawing $n$ strands in $\R^3\supset\Z^3$ whose time coordinate is monotone increasing, so that for $c=0,1,\dots,m$, each of the $n$ dots of $\PD(\upu c)$ belongs to exactly one strand. For each $c\in[m]$, the permutation diagrams $\PD(\upu{c-1})$ and $\PD(\upu c)$ either are identical or differ by a row (if $i_c>0$) or by a column (if $i_c<0$) transposition. The strands of $\Gubr$ connect each dot of $\PD(\upu{c-1})$ to the corresponding dot of $\PD(\upu c)$. In addition, for each solid crossing $c\in\Jo$, we add a \emph{bridge edge} $\bridge_c$ at time $t=c-\frac12$ between the two strands $\strand_1\prec \strand_2$ participating in the solid crossing, where the partial order $\preceq$ is extended from the dots of $\PD(\upu{c-1})=\PD(\upu c)$ to the strands passing through them. If $i_c>0$ (resp., $i_c<0$), the strands $\strand_1,\strand_2$ are located in adjacent rows (resp., columns), and the bridge $\bridge_c$ is black at $\strand_1$ and white at $\strand_2$ (resp., white at $\strand_1$ and black at $\strand_2$). The vertex of $\bridge_c$ on $\strand_1$ (resp., on $\strand_2$) is called the \emph{start} (resp., the \emph{end}) of $\bridge_c$.
\end{definition}

 For $0<c<m$, we do not view the dots in $\PD(\upu c)$ as vertices of $\Gubr$. The dots in $\PD(\upu0)$ are viewed as degree $1$ \emph{marked boundary vertices} of $\Gubr$. The dots in $\PD(\upu m)$ are viewed as unmarked degree $1$  vertices of $\Gubr$. We let $\Gubar$ be obtained from $\Gubr$ by deleting these $n$ vertices and the $n$ edges incident to them; see \figref{fig:2D-ex}(c). 

\begin{figure}
\igrw[1.0]{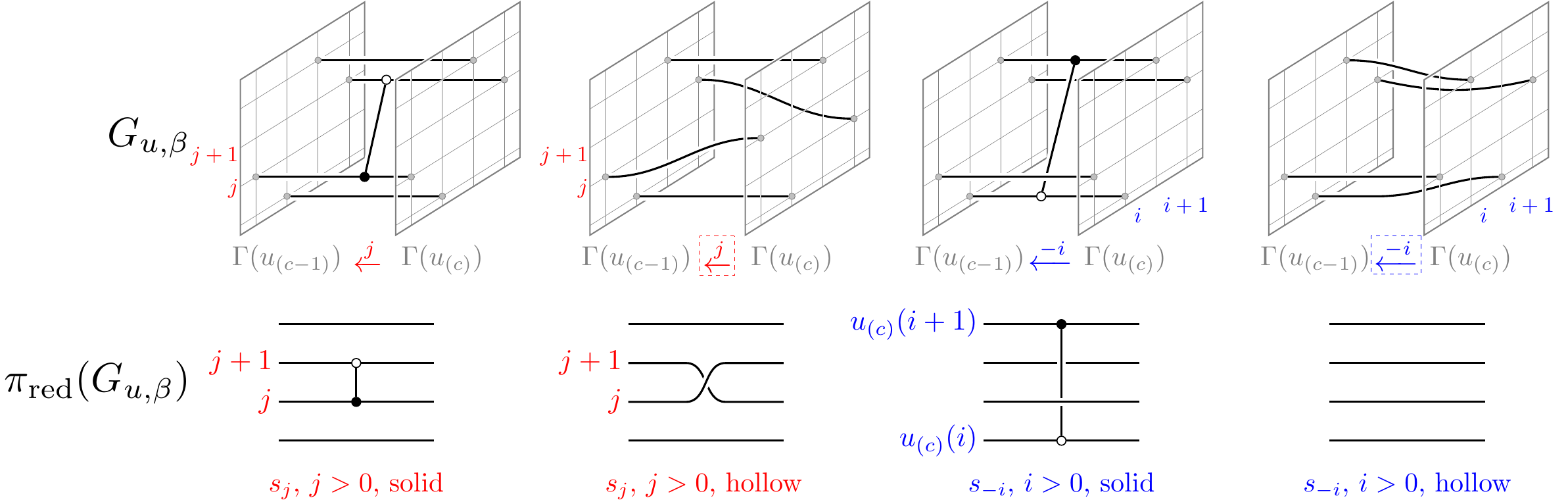}
  \caption{\label{fig:plabic-conven}Applying the red projection to a 3D plabic graph.}
\end{figure}

It is convenient to project 3D plabic graphs to the plane. There are two natural choices for such projections. The \emph{red projection} of $\Gubr$ is its image under the map $\pired:\R^3\to\R^2$, $(i,j,t)\mapsto (t,j)$.
 Similarly, the \emph{blue projection} of $\Gubr$ is obtained by applying the map $\piblue:\R^3\to\R^2$, $(i,j,t)\mapsto (t,i)$. We will mostly work with the red projection $\pired(\Gubr)$; see \cref{fig:2D-ex,fig:plabic-conven}.
 	We sometimes mention the \emph{faces} of $\pired(\Gubr)$ and $\piblue(\Gubr)$; by this we mean connected components of the complement. %

We now give a formal description of $\Gubr$. Suppose first that $c\in\Jo$ is a solid crossing. Then $\upu{c-1}=\upu c$, and we connect each dot $(i,j,c-1)$ in $\PD(\upu{c-1})$ to the corresponding dot $(i,j,c)$ of $\PD(\upu c)$, where $j=\upu{c-1}(i)=\upu c(i)$. Suppose now that $c\in\Hollow$ is a hollow crossing. If $i_c>0$, then $\PD(\upu{c-1})$ and $\PD(\upu c)$ differ by a row transposition, and we connect the dots accordingly. Specifically, letting $j:=i_c$, we connect:
\begin{itemize}
\item $(i,j,c-1)\in\PD(\upu{c-1})$ to $(i,j+1,c)\in\PD(\upu c)$,
\item $(i',j+1,c-1)\in\PD(\upu{c-1})$ to $(i',j,c)\in\PD(\upu c)$, and
\item $(i'',j'',c-1)\in\PD(\upu{c-1})$ to $(i'',j'',c)\in\PD(\upu c)$ for $j''\neq j,j+1$.
\end{itemize}
Similarly, if $i_c<0$, then $\PD(\upu{c-1})$ and $\PD(\upu c)$ differ by a column transposition. Letting $i:=|i_c|$, we connect:
\begin{itemize}
\item $(i,j,c-1)\in\PD(\upu{c-1})$ to $(i+1,j,c)\in\PD(\upu c)$,
\item $(i+1,j',c-1)\in\PD(\upu{c-1})$ to $(i,j',c)\in\PD(\upu c)$, and
\item $(i'',j'',c-1)\in\PD(\upu{c-1})$ to $(i'',j'',c)\in\PD(\upu c)$ for $i''\neq i,i+1$.
\end{itemize}
We add $|\Jo|$-many bridges to $\Gubr$. Consider a solid crossing $c\in\Jo$. Suppose first that $i_c>0$ and let $j:=i_c$. Then $\Gubr$ contains strands connecting the dots $(i,j,c-1)$ to $(i,j,c)$ and $(i',j+1,c-1)$ to $(i',j+1,c)$, for some $i,i'\in[n]$. In order for $c$ to be solid, we must have $i<i'$. We put a black vertex on the segment connecting $(i,j,c-1)$ to $(i,j,c)$, and a white vertex on the segment connecting $(i',j+1,c-1)$ to $(i',j+1,c)$, and connect these two vertices by an edge, which we call a \emph{bridge} and denote $\bridge_c$. Suppose now that $i_c<0$ and let $i:=|i_c|$. Then $\Gubr$ contains strands connecting the dots $(i,j,c-1)$ to $(i,j,c)$ and $(i+1,j',c-1)$ to $(i+1,j',c)$, for some $j,j'\in[n]$, and again we must have $j<j'$. We put a black vertex on the segment connecting $(i,j,c-1)$ to $(i,j,c)$, and a white vertex on the segment connecting $(i+1,j',c-1)$ to $(i+1,j',c)$, and connect these two vertices by a bridge~$\bridge_c$.

\subsection{Conjugate surfaces}\label{sec:surfaces}
Continuing \cref{sec:intro:quiver}, we endow the graph $\Gubr$ with the structure of a \emph{marked ribbon graph} in the language of~\cite[Section~3]{FoGo_moduli}. A \emph{ribbon graph} is a graph together with a choice, for each vertex $v$, of a cyclic orientation on the half-edges emanating from $v$. Taking the red projection of $\Gubr$, we choose the counterclockwise (resp., clockwise) orientation for each white (resp., black) vertex of $\Gubr$. The marked degree $1$ vertices of $\Gubr$ are the $n$ dots in $\PD(\upu0)$.\footnote{In~\cite{FoGo_moduli}, all degree $1$ vertices of a ribbon graph are considered automatically marked, but we do not mark the $n$ dots in $\PD(\upu m)$. Removing the dots in $\PD(\upu m)$ yields the graph $\Gubar$, which is truly a marked ribbon graph in the language of~\cite{FoGo_moduli}.}

\begin{remark}\label{rmk:color_switch}
From now on, we view $\Gubr$ as a ribbon graph, not as a bicolored graph. 
 When projecting a ribbon graph $\Gubr$ to the plane, we choose the color of each vertex to be white (resp., black) if its local half-edge orientation is counterclockwise (resp., clockwise). Thus, for example, we can change the color of a given vertex $q$ by altering the drawing of $G$; see \figref{fig:moves}(left). In this case, we label the resulting vertex by $\bar q$, emphasizing that $q$ and $\bar q$ represent the same vertex of $G$.
\end{remark}

We let $\Subr=\Surfof(\Gubr)$ be the marked surface with boundary associated to $\Gubr$ in a standard way: we replace every edge of $\Gubr$ by a thin rectangle, every vertex of $\Gubr$ by a disk, and glue the rectangles to the boundaries of the disks according to the local orientation around each vertex. 
 Thus, $\partial\Subr$ has several connected components, and we stress that we do \emph{not} glue disks to them. The surface $\Subr$ can be drawn using the red projection of $\Gubr$ as shown in \figref{fig:2D-ex}(d). In particular, $\Subr$ is orientable, with black and white vertices in the red projection of $\Gubr$ corresponding to the different sides of $\Subr$.

We apply a similar construction to $\Gubar$, and it is clear that the resulting surface $\Surfof(\Gubar)$ is homeomorphic to $\Subr$. 

Let $\MP$ be the set of marked points on $\partial\Subr$; thus, $|\MP|=n$. Let 
\begin{equation*}%
  \Lubr:=H_1(\Subr,\MP).
\end{equation*}
 (All relative homology groups we consider are with integer coefficients.) The elements of $\Lubr$, called \emph{relative cycles}, are represented by $\Z$-linear combinations of \emph{arcs}, where an arc is either an oriented closed curve embedded into the interior of $\Subr$ or an oriented curve embedded into $\Subr$ with both endpoints marked. Let 
\begin{equation*}%
  \Lubrst:=H_1(\Subr\setminus\MP,\partial\Subr\setminus\MP).
\end{equation*}
 We have an \emph{intersection form} on $(\Subr,\MP)$ which gives rise to a perfect pairing
\begin{equation}\label{eq:pairing}
  \<\cdot,\cdot\>: \Lubr\otimes \Lubrst\to\Z;
\end{equation}
see e.g.~\cite[Proposition~3.48]{CaWe} or~\cite[Section~6.1]{Mellit_cell}. For $\Cycle\in\Lubr$, $\Cycle'\in\Lubrst$, the \emph{intersection number} $\<\Cycle,\Cycle'\>$ is the integer obtained by counting signed intersection points between two generic relative cycles representing $\Cycle$ and $\Cycle'$.
Denote
\begin{equation*}%
  \Lmut := H_1(\Subr).
\end{equation*}
It can be considered as a sublattice of both $\Lubr$ and $\Lubrst$. 
Mutable variables will correspond (\cref{sec:rel_cycles}) to oriented cycles in $\Gubr$, and hence to elements of $\Lmut$; frozen cluster variables will correspond to relative cycles in $\Gubr$, i.e., to elements of $\Lubr$.

For future reference, we note:
\begin{lemma} \label{lem:summand}
$\Lmut$ splits off as a direct summand of both $\Lubr$ and $\Lubrst$.
\end{lemma}

\begin{proof}
We start with the case of $\Lubr$. Recall that $\Lmut = H_1(\Subr)$ and $\Lubr=H_1(\Subr,\MP)$, so we have an exact sequence
\[ 0 = H_1(\MP) \to H_1(\Subr) \to H_1(\Subr,\MP) \to H_0(\MP) .\]
Let $A$ be the image of  $H_1(\Subr,\MP) \to H_0(\MP)$. Since $A$ is a submodule of the free $\Z$-module $H_0(\MP)$, we have $A \cong \Z^k$ for some $k$. 
So we have a short exact sequence
\[ 0 \to H_1(\Subr) \to H_1(\Subr,\MP) \to \Z^k \to 0 . \]
Since $\Z^k$ is free, we have $\Lubr = H_1(\Subr,\MP) = H_1(\Subr) \oplus \Z^k = \Lmut \oplus \Z^k$.

The case of $\Lubrst$ is similar: we have an exact sequence
\begin{equation*}%
  H_1(\partial\Subr\setminus\MP) \to
  H_1(\Subr\setminus\MP) \to
  H_1(\Subr\setminus\MP,\partial\Subr\setminus\MP) \to
H_0(\partial\Subr\setminus\MP).
\end{equation*}
Here we also have $H_1(\partial\Subr\setminus\MP) = 0$ because $\partial\Subr\setminus\MP$ is homeomorphic to a disjoint union of line segments; cf. \figref{fig:2D-ex}(d). The rest of the proof proceeds as above.
\end{proof}

\subsection{Relative cycles}\label{sec:rel_cycles}
Fix a solid crossing $c\in\Jo$. Our goal is to associate to it a relative cycle $\Cycle_c\in\Lubr$. As in \cref{sec:intro:3D_plabic}, we will obtain $\Cycle_c$ as the boundary of a certain $2$-dimensional disk $\Disk_c$ inside $\R^3$.

\begin{figure}
\igrw[1.0]{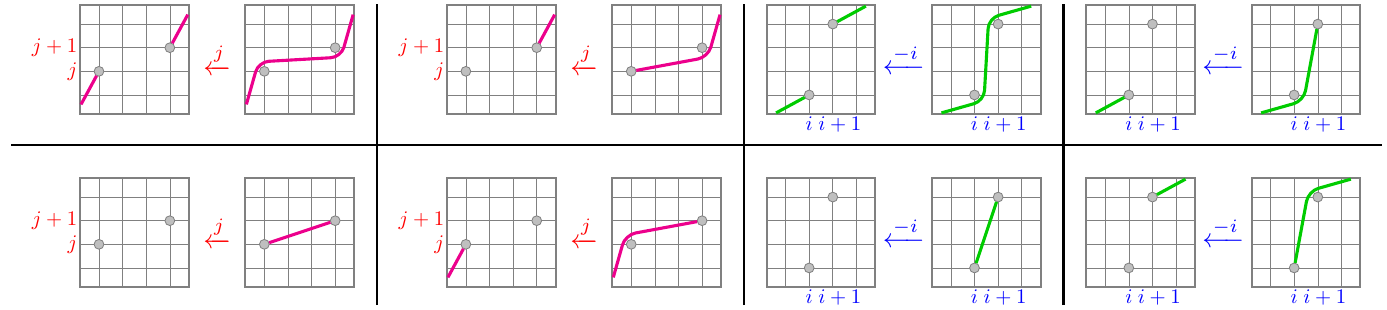}
  \caption{\label{fig:propag} Propagation rules for monotone curves; see \cref{sec:rel_cycles}. In any case not depicted here, the monotone curve is unchanged by the action of a solid crossing.}
\end{figure}

A \emph{monotone curve} inside a permutation diagram $\PD(z)\subset\R^2$ is a curve $\gamma:[0,1]\to\R^2$ whose endpoints are dots in $\PD(z)$, with no other dots of $\PD(z)$ on $\gamma$, and such that both coordinates of $\gamma$ are strictly monotone increasing. Recall that we write $(x,y)\preceq (x',y')$ if $x\leq x'$ and $y\leq y'$. Write $(x,y)\prec (x',y')$ if $(x,y)\preceq (x',y')$ and $(x,y)\neq(x',y')$. Thus, $\gamma(0)\prec \gamma(1)$. A \emph{monotone multicurve} is a collection $\bgamma=(\gamma_1,\gamma_2,\dots,\gamma_k)$ of monotone curves inside $\PD(z)$ such that \begin{equation*}%
  \gamma_1(0)\prec\gamma_1(1)\prec \gamma_2(0)\prec \gamma_2(1)\prec\cdots\prec\gamma_k(0)\prec\gamma_k(1).
\end{equation*}

The intersection of the disk $\Disk_c$ with each plane $t=r$, $0\leq r\leq c-1$, will be a monotone multicurve inside $\PD(\upu r)$ denoted $\bgamma\pu{c,r}$. For $r=c-1$, $\bgamma\pu{c,c-1}$ consists of a single monotone curve connecting the two dots on the strands which are connected by $\bridge_c$. We then compute $\bgamma\pu{c,r}$ iteratively for $r=c-2,\dots,1,0$, using the following \emph{propagation rules}. When passing from $\bgamma\pu{c,r}$ to $\bgamma\pu{c,r-1}$ for a solid crossing $r$, each monotone curve $\gamma$ in $\bgamma\pu{c,r}$ either is preserved or \emph{gets cut}. The cutting moves are shown in \cref{fig:propag}, and all other curves not shown in \cref{fig:propag} are preserved. When the crossing $r$ is hollow, 
each monotone curve $\gamma$ changes ``smoothly'' so that a dot $\Dot$ of $\PD(\upu r)$ is above (resp., below) $\gamma$ if and only if the dot $\Dot'$ of $\PD(\upu{r-1})$ that is on the same strand as $\Dot$ is above (resp., below) the image of $\gamma$ in $\PD(\upu{r-1})$; see e.g. \figref{fig:3D-ex}(b).

To give a coordinate description in the case of a solid crossing, let $r\in \Jo$ be such that $i_r>0$ and let $j:=i_r$. Let $\Dot:=(i,j)$ and $\Dot':=(i',j+1)$ be the dots in $\PD(\upu r)=\PD(\upu{r-1})$, for some $i,i'\in[n]$. Then the curve $\gamma$ \emph{gets cut} if and only if it passes weakly above $\Dot$ and weakly below $\Dot'$. Assume now that $i_r<0$ and let $i:=|i_r|$. Let $\Dot:=(i,j)$ and $\Dot':=(i+1,j')$ be the dots in $\PD(\upu r)=\PD(\upu{r-1})$, for some $j,j'\in[n]$. Then $\gamma$ \emph{gets cut} if and only if it passes weakly to the right of $\Dot$ and weakly to the left of $\Dot'$. In both cases, the cutting move consists of removing the part of $\gamma$ passing between $\Dot$ and $\Dot'$. (In particular, if neither $\Dot$ nor $\Dot'$ was an endpoint of $\gamma$ then we split $\gamma$ into two monotone curves $\gamma',\gamma''$ satisfying $\gamma'(1)=\Dot$ and $\gamma''(0)=\Dot'$. On the other hand, if both $\Dot$ and $\Dot'$ were endpoints of $\gamma$ then the whole of $\gamma$ disappears.)

\begin{remark}
These cutting rules are motivated by the combinatorics of almost positive subexpressions, described in \cref{sec:APS}.
\end{remark}

\begin{definition}\label{dfn:mut_fro}
If $\bgamma\pu{c,0}$ is empty, 
 then we declare $c$ to be \emph{mutable}, otherwise, we declare $c$ to be \emph{frozen}. We let $\Jfro$ and $\Jmut$ denote the sets of frozen and mutable indices, respectively. 
\end{definition}
\noindent Thus, we have a decomposition $\Jo=\Jfro\sqcup\Jmut$. 

If $c$ is mutable, we obtain a disk $\Disk_c$ inside $\R^3$ whose boundary $\partial\Disk_c$ is a cycle $\Cycle_c$ in $\Gubr$ that does not pass through any marked points. If $c$ is frozen, we treat $\bgamma\pu{c,0}$ as part of the boundary of the disk $\Disk_c$, and denote the rest of $\partial\Disk_c$ by $\Cycle_c:=\partial\Disk_c\setminus\bgamma\pu{c,0}$. In both cases, $\partial\Disk_c$ passes through the bridge $\bridge_c$, and we orient $\partial\Disk_c$ so that it is directed from the start to the end of $\bridge_c$; cf. \cref{dfn:3D_informal}. This induces an orientation on each arc in $\Cycle_c$, and therefore we obtain a relative cycle $\Cycle_c\in \Lubr$. See Figures~\hyperref[fig:3D-ex]{\ref*{fig:3D-ex}(c)} and~\hyperref[fig:2D-ex]{\ref*{fig:2D-ex}(e)}.
Thus, $C_c$ gives a class in $\Lubr$, which is in $\Lmut$ if and only if $c$ is mutable.

\begin{figure}
\begin{tabular}{c|c|c}
\igrw[0.2]{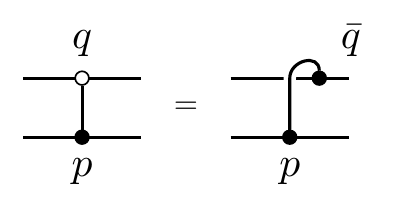} & 
\begin{tikzpicture}[baseline=(Z.base)]
\coordinate(Z) at (0,0);
\node(A) at (0,0.8){\igrw[0.5]{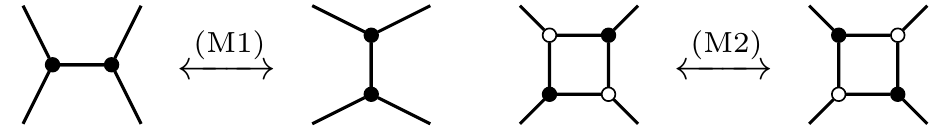}};
\end{tikzpicture} 
& 
\begin{tikzpicture}[baseline=(Z.base)]
\coordinate(Z) at (0,0);
\node(A) at (0,0.85){\igrw[0.2]{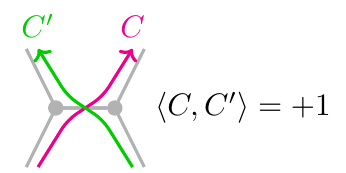}};
\end{tikzpicture} 
\end{tabular}
  \caption{\label{fig:moves} Left: switching the color of a vertex. Middle: contraction-uncontraction move~(M1) and square move~(M2). Right: sign convention for the intersection form of $\Subr$.}
\end{figure}

\subsection{The quiver}%
 Our goal is to associate an ice quiver $\Qubr$ to the pair $(u,\br)$.

The vertex set of $\Qubr$ is $\Vice:=\Jo$, with frozen vertices  $\Vfro:=\Jfro$ and mutable vertices $\Vmut:=\Jmut$ (cf. \cref{dfn:mut_fro}). Let $c\in\Vice$ and $d\in\Vmut$, and consider the corresponding relative cycles $\Cycle_c,\Cycle_d$. By the inclusion $\Lmut \subset \Lubrst$,
we may view $\Cycle_d$ as an element of $\Lubrst$. The \emph{cluster exchange matrix} $\Bice(\Qubr)=(\bice_{c,d})_{c\in\Vice,d\in\Vmut}$ of $\Qubr$ is given by 
\begin{equation}\label{eq:bice_dfn}
  \bice_{c,d}=\#\{\text{arrows $c\to d$ in $\Qice$}\} - \#\{\text{arrows $d\to c$ in $\Qice$}\}:=\<\Cycle_c,\Cycle_d\>.
\end{equation}
In other words, the (signed) number of arrows from $c$ to $d$ in $\Qubr$ is given by the intersection number $\<\Cycle_c,\Cycle_d\>$. We give an explicit algorithm for computing intersection numbers.

\begin{algorithm}\label{alg:int_form}
The intersection form $\<\Cycle,\Cycle'\>$ of the surface $\Subr=\Surfof(\Gubr)$ may be computed as follows. See \cref{fig:3D-quiver} and the top row of \figref{fig:half-arr}(right) for examples.
\setlength{\leftmargini}{0.5cm}
\setlength{\leftmarginii}{0.5cm}
\begin{itemize}
\item Viewing $\Cycle,\Cycle'$ as subgraphs of $\Gubr$, decompose $\Cycle \cap \Cycle'$ into a union of disjoint paths. 
\item For each of these paths $P$, we will have a contribution $\<\Cycle,\Cycle'\>|_P\in\{ -1, 0, 1 \}$; the intersection number $\<\Cycle,\Cycle'\>$ will be the sum of $\<\Cycle,\Cycle'\>|_P$ over the components $P$ of  $\Cycle \cap \Cycle'$.
\item Let $P = (p_0, p_1, \ldots, p_r)$ be one of the paths. The contribution $\<\Cycle,\Cycle'\>|_P$ will depend only on which neighbors of $p_0$ and $p_r$ are visited by $\Cycle$ and $\Cycle'$, and in which order.
\item  Draw $P$ in the plane with all vertices black (cf. \cref{rmk:color_switch}) and perturb $\Cycle,\Cycle'$ slightly so that they have either zero or one intersection point.
\item If $\Cycle,\Cycle'$ have zero intersection points, we have $\<\Cycle,\Cycle'\>|_P = 0$.
\item If $\Cycle,\Cycle'$ have one intersection point $p$, we have $\<\Cycle,\Cycle'\>|_P = +1$ if the tangent vectors of $(\Cycle,\Cycle')$ at $p$ form a positively oriented basis of the plane and $\<\Cycle,\Cycle'\>|_P = -1$ otherwise; see \figref{fig:moves}(right).
\end{itemize}
\end{algorithm}

\begin{remark}\label{rmk:orient}
As we pointed out in \cref{sec:surfaces}, the surface $\Subr$ is orientable. It follows that we have $\<\Cycle_d,\Cycle_d\>=0$ for all mutable $d\in\Jo$.
\end{remark}

\begin{figure}
\igrw[1.0]{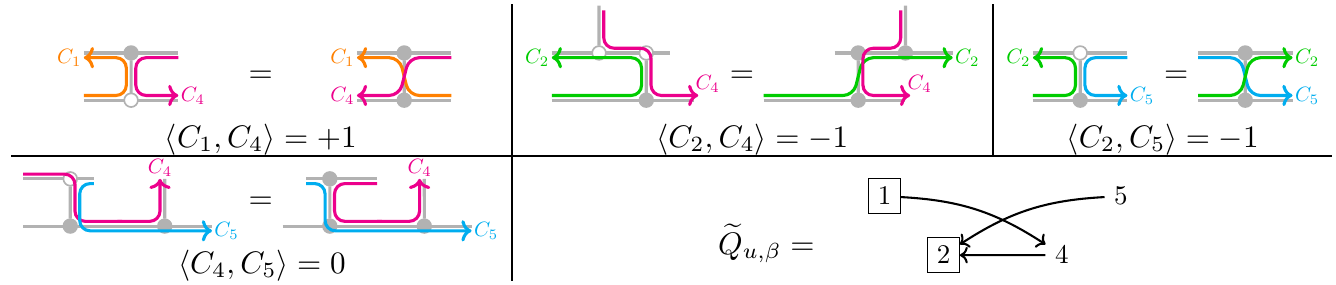}
  \caption{\label{fig:3D-quiver}Computing the quiver $\Qubr$ corresponding to \cref{fig:3D-ex,fig:2D-ex,ex:3D}; see \cref{alg:int_form}.}
\end{figure}

\begin{example}\label{ex:3D}
Let $u:=s_2\in S_3$ and $\br:=(-2,1,2,1,-1)$. Thus, $\Jo=\{1,2,4,5\}$. The graph $\Gubr$ is given in \figref{fig:3D-ex}(a), and its red and blue projections are shown in \figref{fig:2D-ex}(a,b), respectively. The monotone (multi)curves are computed in \figref{fig:3D-ex}(b) using the propagation rules from \cref{fig:propag}. The relative cycles $(\Cycle_c)_{c\in\Jo}$ are shown in Figures~\hyperref[fig:3D-ex]{\ref*{fig:3D-ex}(c)} and~\hyperref[fig:2D-ex]{\ref*{fig:2D-ex}(e)}, 
 and the surface $\Subr$ is shown in \figref{fig:2D-ex}(d). In \cref{fig:3D-quiver}, we use this data to compute the intersection numbers $\<\Cycle_c,\Cycle_d\>$ and the quiver $\Qubr$ via \cref{alg:int_form}. The frozen vertices of $\Qubr$ are boxed in \cref{fig:3D-quiver}.
\end{example}

\begin{remark}\label{rmk:embedding}
The simplified propagation rules shown in \cref{fig:intro:propag} are obtained from the rules in \cref{fig:propag} by applying the red projection. It is therefore important to distinguish between over/under-crossings when applying the rules in \cref{fig:intro:propag} to the red projection of a 3D plabic graph in \cref{fig:face}. However, the ribbon graph $\Gubr$ and the associated surface $\Subr$ are most naturally considered as abstract objects without a fixed choice of an embedding in $\R^3$. Thus, in the rest of our figures, e.g., when drawing the boundary of $\Subr$ in \figref{fig:2D-ex}(d) or the graphs in 
Figures~\ref{fig:easy-moves}--\ref{fig:hard-moves-2}, 
 the over/under-crossings are irrelevant, and are chosen arbitrarily.
\end{remark}

\begin{definition}\label{defn:quiverMutation}
	Given an ice quiver $\Qice$ and a mutable vertex $d\in\Vmut$, one can \emph{mutate} $\Qice$ in direction $d$ to obtain  another quiver $\Qice'=\mu_d(\Qice)$, the \emph{mutation} of $\Qice$ at $d$. Mutation preserves the sets of mutable and frozen vertices, and changes the arrows as follows:
	\begin{itemize}
		\item for each directed path $c\to d\to e$ in $\Qice$ of length $2$, add an arrow $c\to e$ to $\Qice'$;
		\item reverse all arrows in $\Qice$ incident to $d$;
		\item remove all directed $2$-cycles in the resulting directed graph, one at a time.
	\end{itemize}
\noindent This operation may create arrows between pairs of frozen vertices; such pairs are omitted.
\end{definition}

Recall that the quiver $\Qubr$ is obtained from the collection $(\Cycle_c)_{c\in\Jo}$ of relative cycles in $\Lubr$ via~\eqref{eq:bice_dfn}. Following~\cite[Section~4.1.2]{GoKe} and~\cite[Section~1.2]{FoGo_cluster}, we explain how for each $d\in\Jmut$, the quiver $\mu_d(\Qubr)$ is obtained in the same way via~\eqref{eq:bice_dfn} from another collection $(\mu_d(\Cycle_c))_{c\in\Jo}$ of relative cycles in $\Lubr$. Namely, we set 
\begin{equation}\label{eq:cycle_mut}
  \mu_d(\Cycle_c):=
  \begin{cases}
    \Cycle_c+\max \left(\<\Cycle_c,\Cycle_d\>,0\right) \Cycle_d, &\text{if $c\neq d$,}\\
    -\Cycle_d, &\text{ if $c=d$.}
  \end{cases}
\end{equation}
\noindent Since $d$ is mutable, we see that for each mutable $c$, the relative cycle $\mu_d(\Cycle_c)$ is still naturally an element of both $\Lubr$ and $\Lubrst$. 

\begin{lemma}[{\cite[Lemma~1.7]{FoGo_cluster}}]\label{lemma:FoGo}
The exchange matrix of $\mu_d(\Qubr)$ is given via~\eqref{eq:bice_dfn} by the intersection numbers of the relative cycles $(\mu_d(\Cycle_c))_{c\in\Jo}$.
\end{lemma}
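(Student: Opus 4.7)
The plan is to reduce the statement to a direct algebraic verification using bilinearity of the intersection pairing in~\eqref{eq:pairing}, together with two geometric facts already in place: (i) self-intersections $\<\Cycle_d,\Cycle_d\>$ vanish by \cref{rmk:orient} (orientability of $\Subr$), and (ii) the intersection form is antisymmetric on $\Lubr \cap \Lubrst$, so that for two mutable indices $c,d$ we have $\<\Cycle_c,\Cycle_d\> = -\<\Cycle_d,\Cycle_c\>$. Note that, as remarked just before the lemma, when $d$ is mutable the cycles $\mu_d(\Cycle_c)$ still lie in $\Lubr$ for every $c$ and in $\Lubrst$ for every mutable $c$, so the intersection numbers $\<\mu_d(\Cycle_c),\mu_d(\Cycle_e)\>$ on the right-hand side of~\eqref{eq:bice_dfn} are well-defined.

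Write $b_{c,e} := \<\Cycle_c,\Cycle_e\>$ and $b'_{c,e} := \<\mu_d(\Cycle_c),\mu_d(\Cycle_e)\>$, where $e \in \Jmut$. I would split into cases. If $c = d$ or $e = d$, then the definition~\eqref{eq:cycle_mut} shows $\mu_d(\Cycle_d) = -\Cycle_d$ while the other cycle is an $\Lubr$-linear combination of $\Cycle_{c'}$ and $\Cycle_d$; using $\<\Cycle_d,\Cycle_d\>=0$, bilinearity gives $b'_{d,e} = -b_{d,e}$ and similarly $b'_{c,d} = -b_{c,d}$, which agrees with the Fomin--Zelevinsky mutation of the exchange matrix along row/column $d$.

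For the generic case $c,e \neq d$, expand by bilinearity:
\begin{align*}
b'_{c,e} &= \<\Cycle_c + \max(b_{c,d},0)\,\Cycle_d,\ \Cycle_e + \max(b_{e,d},0)\,\Cycle_d\> \\
&= b_{c,e} + \max(b_{c,d},0)\,b_{d,e} + \max(b_{e,d},0)\,b_{c,d} + \max(b_{c,d},0)\max(b_{e,d},0)\<\Cycle_d,\Cycle_d\>.
\end{align*}
The last term vanishes by \cref{rmk:orient}. Using antisymmetry $b_{e,d} = -b_{d,e}$, the remaining correction becomes $\max(b_{c,d},0)\,b_{d,e} + \max(-b_{d,e},0)\,b_{c,d}$. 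A four-way sign check on $(b_{c,d}, b_{d,e})$ shows this equals $\tfrac12(|b_{c,d}|\,b_{d,e} + b_{c,d}\,|b_{d,e}|)$, which is exactly the Fomin--Zelevinsky mutation correction in \cref{defn:quiverMutation}. This matches the combinatorial rule (arrows created via length-$2$ paths through $d$, together with removal of $2$-cycles), so $b'_{c,e}$ is the exchange matrix entry of $\mu_d(\Qubr)$.

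I do not expect any serious obstacle: the entire argument is a formal manipulation, and the only nontrivial ingredients (self-intersection vanishing and antisymmetry) are already isolated in \cref{rmk:orient} as consequences of $\Subr$ being an orientable surface. The only mildly delicate point is keeping careful track of which cycle is being viewed in $\Lubr$ versus $\Lubrst$ when writing $b_{c,d}$ and $b_{d,e}$, but since both $c$ and $d$ (resp.\ $d$ and $e$) are mutable in the generic case, the ambiguity is resolved by the antisymmetry identity above. Thus the algebraic lattice transformation~\eqref{eq:cycle_mut} on the Fock--Goncharov side correctly implements quiver mutation on the ice quiver $\Qubr$.
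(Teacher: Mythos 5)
The paper does not give its own proof of this lemma; it cites Fock--Goncharov \cite[Lemma~1.7]{FoGo_cluster} and relies on it as a black box. So there is no "paper's proof" to compare against. Your proposal supplies a complete direct verification, and it is correct: the case split into $c=d$, $e=d$, and $c,e\neq d$ covers all matrix entries; the bilinear expansion in the generic case is exactly right; the cross term $\max(b_{c,d},0)\max(b_{e,d},0)\<\Cycle_d,\Cycle_d\>$ dies by \cref{rmk:orient}; and the four-way sign check reducing $\max(-b_{d,e},0)\,b_{c,d} + \max(b_{c,d},0)\,b_{d,e}$ to $\tfrac12(|b_{c,d}|\,b_{d,e} + b_{c,d}\,|b_{d,e}|)$ matches the Fomin--Zelevinsky mutation formula (equivalently, the arrow bookkeeping of \cref{defn:quiverMutation}). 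This is essentially the same algebraic computation carried out in Fock--Goncharov.

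One small imprecision worth fixing: you invoke ``antisymmetry on $\Lubr \cap \Lubrst$,'' but the lattices $\Lubr$ and $\Lubrst$ are not literally subsets of a common object, so an intersection does not make sense. What you actually use, and what is true, is that a closed oriented loop in the interior of $\Subr$ lifts canonically to both $\Lubr$ and $\Lubrst$ (\cref{rmk:La_vs_Last}), and for two such loops the pairing~\eqref{eq:pairing} restricts to the usual intersection form on an oriented surface, which is skew-symmetric. Remark~\ref{rmk:orient} only records $\<\Cycle_d,\Cycle_d\>=0$; the full skew-symmetry you need in the generic case follows from the same orientability argument, and it would be cleaner to state that explicitly rather than phrase it as antisymmetry ``on $\Lubr\cap\Lubrst$.'' Also be careful that $b_{e,d}=-b_{d,e}$ requires both $e$ and $d$ mutable, which indeed holds in your setup since $e\in\Jmut$ and $d$ is the mutated vertex; when $c$ is frozen the term $b_{c,d}=\<\Cycle_c,\Cycle_d\>$ is still defined (pairing $\Lubr$ against $\Lubrst$), so the expansion remains valid.
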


\begin{remark}\label{rmk:Z_basis_will_show}
We will show (\cref{lemma:bridge_basis}) that $(\Cycle_c)_{c\in\Jo}$ is a $\Z$-basis of $\Lubr$. It follows that $(\mu_d(\Cycle_c))_{c\in\Jo}$ is a $\Z$-basis of $\Lubr$ as well.
It will usually \textbf{not} be true that $(\Cycle_c)_{c\in\Jmut}$ is a $\Z$-basis of $\Lmut$. However, we will show in \cref{lemma:mutable_completes_to_basis} that $(\Cycle_c)_{c\in\Jmut}$  can be completed to a $\Z$-basis of $\Lmut$. 
\end{remark}

\subsection{Distinguished subexpressions}\label{sec:APS}
The goal of this section is to explain how the propagation rules for monotone curves from \cref{sec:rel_cycles} reflect the combinatorics of \emph{almost positive subexpressions}. These results will be used in \cref{sec:seeds} for comparison to geometry.

Let $\br\in\DRW$ be a double braid word and $u\leq\br$. A $u$-subexpression $\bu$ of $\br$ is called \emph{distinguished} if $\upu{c}\leq s_{i_c}^-\upu{c-1}s_{i_c}^+$ for each $c\in[m]$. This notion originated in the study of the geometry of open Richardson varieties~\cite{Deo,MR,WY}.

\begin{definition}\label{def:almostPos}
	Let $d \in \Jo$. Let $\vu\pd{m}\apd{d}:=u$, and for $c=m,m-1,\dots,1$, define
	\[\vu\pd{c-1}\apd{d}:=\begin{cases}
		s_{i_d}^- \ast \vu\pd{d}\apd{d} \ast s_{i_d}^+,& \text{if $c=d$,}\\
		s_{i_{c}}^- \demR \vu\pd{c}\apd{d}\demL s_{i_{c}}^+,& \text{otherwise}.
	\end{cases} \]
	We call the sequence $(\vu\apd{d}\pd{0}, \dots, \vu\apd{d}\pd{m})$ the \emph{$(u,d)$-almost positive sequence} (\emph{$(u,d)$-APS}).
\end{definition}
In other words, the $u$-PDS is obtained by starting with $u$ and taking successive Demazure quotients according to the letters of $\br$. The $(u,d)$-APS is obtained by taking Demazure quotients up to crossing $d$; then making a ``mistake'' at crossing $d$ and taking Demazure product rather than Demazure quotient; then continuing to take Demazure quotients.

\begin{figure}
\igrw[0.5]{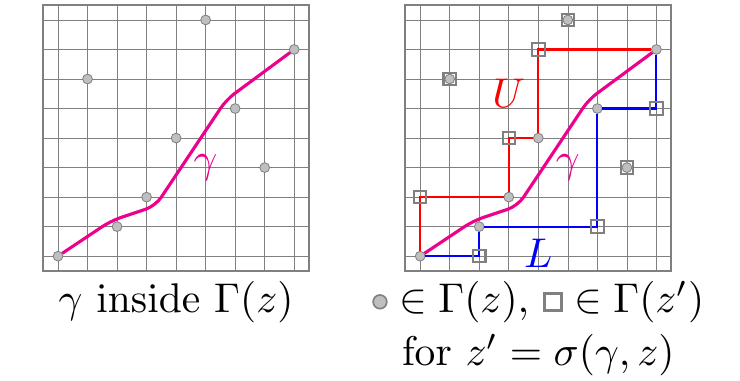}
  \caption{\label{fig:gswap} Recovering the almost positive subexpression from the corresponding monotone curve.}
\end{figure}

Consider a monotone curve $\gamma$ inside a permutation diagram $\PD(z)$ for $z\in\Sn$. 
 Take the smallest skew shape $\la/\mu$ containing $\gamma$ whose inner corners are at the dots of $\PD(z)$, and let $L$ and $U$ be its lower and upper boundaries, respectively; see \cref{fig:gswap}. Let $\Dots_\gamma$ be the set of dots of $\PD(z)$ contained in $U\cup L$. 
 Let $\Dots'_\gamma$ be the set of outer corners of $\la/\mu$, i.e., the set of lattice points where $L$ turns left or  $U$ turns right. We define $z':=\gswap(\gamma,z)\in\Sn$ to be the permutation such that $\PD(z')$ is obtained from $\PD(z)$ by replacing the dots in $\Dots_\gamma$ with the dots in $\Dots'_\gamma$. Given a monotone multicurve $\bgamma=(\gamma_1,\gamma_2,\dots,\gamma_k)$, we let $\gswap(\bgamma,z):=\gswap(\gamma_1,\gswap(\gamma_2,\dots,\gswap(\gamma_k,z)\dots)).$ The following result explains the relation between monotone curves and almost positive subexpressions.

\begin{figure}
\igrw[1.0]{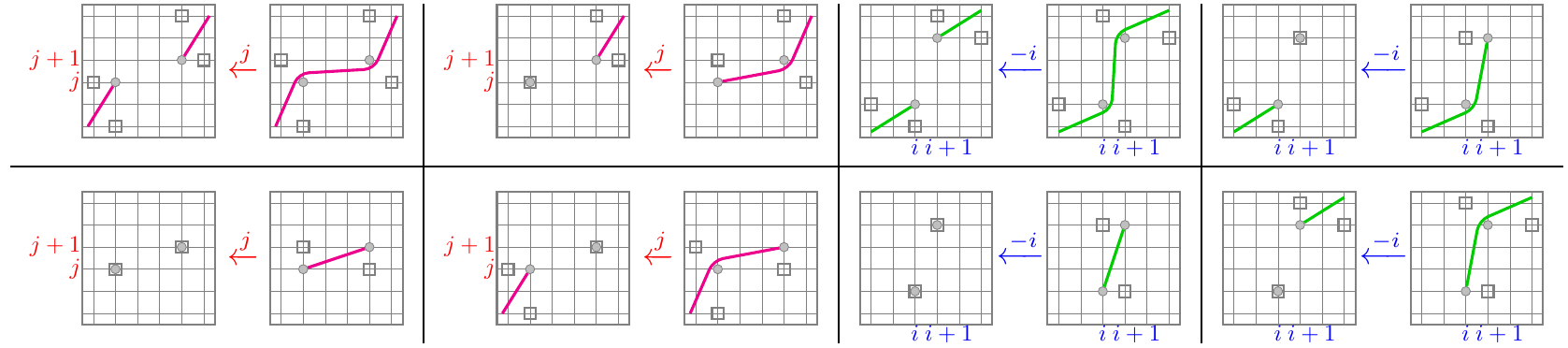}
  \caption{\label{fig:propag-x} Propagation rules (\cref{fig:propag}) shown together with the corresponding almost positive subexpressions.}
\end{figure}

\begin{proposition}\label{prop:gswap}
Let $d\in\Jo$. Then for all $c\leq d-1$, we have
\begin{equation}\label{eq:gswap}
  \vu\apd{d}\pd{c}=\gswap(\bgamma^{d,c},\upu c).
\end{equation}
\end{proposition}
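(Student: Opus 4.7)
The plan is to argue by downward induction on $c$, from $c=d-1$ down to $c=0$.

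\emph{Base case ($c=d-1$)}: By \cref{def:almostPos}, $v\apd{d}\pd{d-1}=s_{i_d}^-\ast u\ast s_{i_d}^+$. Since $d\in\Jo$, applying~\eqref{eq:PDS_recursion} at index $d$ gives $\up{d-1}=s_{i_d}^-\demR u\demL s_{i_d}^+=u$, which forces the Demazure product $s_{i_d}^-\ast u\ast s_{i_d}^+$ to be the ordinary (length-increasing) product $s_{i_d}^- u\, s_{i_d}^+$. On the other hand, $\bgamma^{d,d-1}$ consists of a single monotone curve joining the two dots of $\PD(\up{d-1})$ lying on the two strands connected by the bridge $\bridge_d$. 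A direct check shows that $\gswap$ applied to this curve realizes exactly the transposition corresponding to $s_{i_d}^-$ (if $i_d<0$) or $s_{i_d}^+$ (if $i_d>0$), matching $v\apd{d}\pd{d-1}$.

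\emph{Inductive step}: Assume the equality at $c$; we deduce it at $c-1$, splitting into two cases.

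Case (a): $c\in\Hollow$. Then $\up{c-1}$ differs from $\up c$ by a row or column transposition (depending on the sign of $i_c$), and $\bgamma^{d,c-1}$ is obtained from $\bgamma^{d,c}$ by following the strand-wise bijection that moves the dots. One checks that $\gswap$ is equivariant under this transposition, so that $\gswap(\bgamma^{d,c-1},\up{c-1})$ is obtained from $\gswap(\bgamma^{d,c},\up c)$ by the same operation that takes $\up c$ to $\up{c-1}$. Combining this with the algebraic identity $v\apd{d}\pd{c-1}=s_{i_c}^-\demR v\apd{d}\pd c\demL s_{i_c}^+$ (which in the hollow case reduces to an honest transposition, by the PDS construction of $\up{c-1}$) and the inductive hypothesis yields the claim.

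Case (b): $c\in\Jo\setminus\{d\}$. Then $\up{c-1}=\up c$, and each monotone curve in $\bgamma^{d,c}$ is either preserved or cut according to the propagation rules of \cref{fig:propag}. These rules partition into finitely many local configurations depending on the position of the curve relative to the two dots at the crossing, and, as \cref{fig:propag-x} displays, each configuration precisely implements the Demazure-quotient operation on $\gswap$ by $s_{i_c}^-$ on the left and $s_{i_c}^+$ on the right. Applying this component-by-component to the curves making up $\bgamma^{d,c}$ and invoking the inductive hypothesis recovers $v\apd{d}\pd{c-1}=s_{i_c}^-\demR v\apd{d}\pd c\demL s_{i_c}^+$.

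The main obstacle is the case analysis in Case~(b). The propagation rules in \cref{fig:propag} were designed specifically so that this correspondence holds, but verifying it rigorously requires enumerating every configuration of a monotone curve near the two dots involved in a solid crossing, computing both the geometric effect of $\gswap$ and the algebraic effect of the Demazure quotient, and checking that they agree. \cref{fig:propag-x} assembles this case-by-case comparison into a single picture, which is essentially what the proof renders into prose.
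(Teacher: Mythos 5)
Your proposal takes essentially the same route as the paper's proof: downward induction on $c$, with the base case at $c=d-1$ coming from the single transposition created at the solid crossing $d$, and the inductive step reading off from the propagation rules and \cref{fig:propag-x} that cutting corresponds exactly to the Demazure quotient. You have spelled out the hollow/solid case split that the paper compresses into one sentence. One small notational slip to fix in the base case: since the APS agrees with the PDS for $c\geq d$, you have $v\apd{d}\pd{d}=\up{d}$, so the correct identities are $v\apd{d}\pd{d-1}=s_{i_d}^-\ast \up{d}\ast s_{i_d}^+$ and $\up{d-1}=s_{i_d}^-\demR \up{d}\demL s_{i_d}^+=\up{d}$; your use of $u$ in place of $\up{d}$ is only valid when $d=m$, though the argument that follows is unaffected by the replacement.
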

\begin{proof}
For $c=d-1$, the permutations $\vd c$ and $\upu c$ differ by a single transposition which corresponds to creating the monotone curve $\bgamma\pu{d,d-1}$. For $c=d-2,\dots,0$, we check~\eqref{eq:gswap} by induction, since the propagation rules of \cref{fig:propag} turn out to exactly reflect the application of Demazure quotient to pairs $(\upu c,\vd c)$; see \cref{fig:propag-x}.
\end{proof}

\begin{corollary}\label{cor:mutable-from-aps}
A solid index $d\in\Jo$ is mutable if and only if $\vd 0=\id$.
\end{corollary}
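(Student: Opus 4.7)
The plan is to specialize \cref{prop:gswap} to $c=0$ and invoke \cref{dfn:mut_fro}. Since $u\leq\br$, the $u$-PDS satisfies $\up 0 = \id$, so the proposition yields $\vd 0 = \gswap(\bgamma^{d,0},\id)$. By \cref{dfn:mut_fro}, $d$ is mutable if and only if $\bgamma^{d,0}$ is empty, so the corollary reduces to proving the equivalence
\[
\gswap(\bgamma^{d,0},\id) = \id \iff \bgamma^{d,0} = \emptyset.
\]

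The forward implication is immediate since $\gswap$ acts trivially on an empty multicurve. For the converse, I would write $\bgamma^{d,0}=(\gamma_1,\ldots,\gamma_k)$ with $k\geq 1$. Each $\gamma_i$ is a monotone curve in $\PD(\id)$, so its endpoints are diagonal dots $(a_i,a_i)$ and $(b_i,b_i)$ with $a_i<b_i$; moreover, the $\preceq$-monotonicity of the multicurve forces $b_i<a_{i+1}$. Hence the curves $\gamma_i$ act on pairwise disjoint intervals of diagonal dots, so the dot replacements performed by successive $\gswap$ operations are supported on disjoint subsets of $\PD(\id)$ and cannot cancel one another.

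The crux, and the main obstacle, is verifying that $\gswap(\gamma,\id)\neq\id$ for a single nonempty monotone curve $\gamma$. I would argue this as follows: the minimal skew shape $\la/\mu$ enclosing $\gamma$ with inner corners at diagonal dots must be two-dimensional, since a one-dimensional chain of diagonal dots cannot contain a curve whose coordinates both strictly increase. Consequently $\la/\mu$ possesses at least one outer corner strictly off the diagonal, and the operation $\gswap(\gamma,\id)$ replaces the diagonal dots in $\Dots_\gamma$ by dots at these outer corners, thereby producing a diagram with an off-diagonal dot and hence a permutation distinct from $\id$. Combined with the disjointness of the individual curves' actions established in the previous paragraph, this off-diagonal dot survives in the composition, giving $\vd 0 \neq \id$ and completing the proof.
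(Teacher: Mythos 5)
Your argument is correct and follows the paper's intended route: specialize \cref{prop:gswap} to $c = 0$, use $u\pd{0} = \id$, and invoke \cref{dfn:mut_fro}. The paper leaves the corollary unproved as an immediate consequence of \cref{prop:gswap}, and the verification you supply — that $\gswap$ applied to any nonempty monotone multicurve inside $\PD(\id)$ produces a diagram with an off-diagonal dot, since each minimal skew shape has positive area and hence off-diagonal outer corners, and the individual replacements are supported on disjoint $\preceq$-intervals — is exactly the step being left to the reader.
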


\begin{figure}
\begin{tabular}{cc}
\begin{tikzpicture}[baseline=(Z.base)]
\coordinate(Z) at (0,0);
\node(A) at (0,2.3){\igrw[0.2]{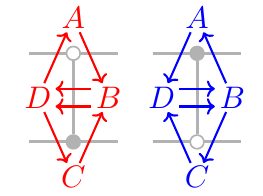}};
\end{tikzpicture}
  &
\igrw[0.75]{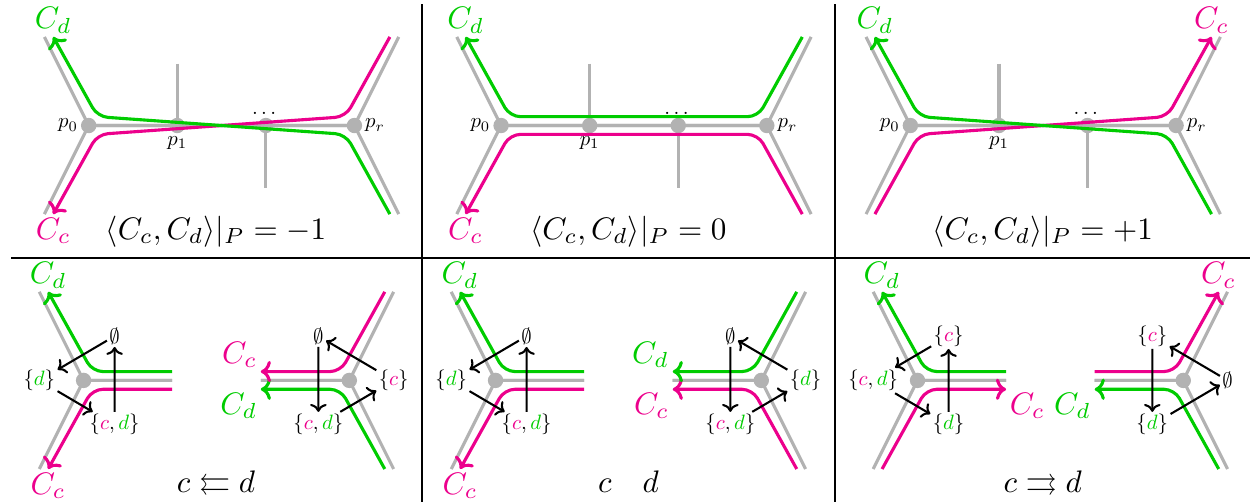}
\end{tabular}
  \caption{\label{fig:half-arr} The half-arrow description of $\Qubr$; see \cref{sec:half-arrow-descr}.}
\end{figure}

\subsection{Half-arrow description of \texorpdfstring{$\Qubr$}{the quiver}}\label{sec:half-arrow-descr}
We give an alternative description of $\Qubr$ that will be useful in \cref{prop:seedFromLabeledGraphs}. For each red crossing $c\in\Jo$, $i_c>0$, consider the four faces $A,B,C,D$ around the red projection of the bridge $\bridge_c$ as shown in \figref{fig:half-arr}(left). To each face $X\in\{A,B,C,D\}$ we associate a collection $\Chminsetr(X)$ of indices $d\in\Jo$ such that $X$ is inside the red projection of the disk $\Disk_d$; cf.~\eqref{eq:Chmin}. Then for each pair $(X,Y)=(A,B),(B,D),(D,A),(C,B),(B,D),(D,C)$ (note that $(B,D)$ is listed twice), we draw a half-arrow from each element of $\Chminsetr(X)$ to each element of $\Chminsetr(Y)$. Similarly, for each blue crossing $c\in\Jo$, $i_c<0$, we consider the four faces $A,B,C,D$ in the blue projection, and then for each $(X,Y)=(A,B),(B,D),(D,A),(C,B),(B,D),(D,C)$, we draw a half-arrow from each element of $\Chminsetb(Y)$ to each element of $\Chminsetb(X)$. Here, $\Chminsetb(X)$ is the set of indices $d\in\Jo$ such that $X$ is inside the blue projection of $\Disk_d$. We obtain a collection of half-arrows between the elements of $\Jo$.

\begin{proposition}\label{prop:half_arr}
For $c,d\in\Jo$, the difference between the number of half-arrows $c\to d$ and the number of half-arrows $d\to c$ equals $2\<\Cycle_c,\Cycle_d\>$.
\end{proposition}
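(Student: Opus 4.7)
The plan is to show that both sides of the claimed equation decompose as sums of local contributions indexed by the bridges of $\Gubr$, and then to verify the equality locally at each bridge. The half-arrow side is already expressed as such a sum: at each bridge $\bridge_b\in\Jo$, the six listed face pairs produce half-arrows between the elements of $\Chminsetr(X)$ and $\Chminsetr(Y)$ (or $\Chminsetb$ for blue bridges), so the total signed half-arrow count from $c$ to $d$ decomposes as
\begin{equation*}
\#\{c\to d\}-\#\{d\to c\}=\sum_{b\in\Jo}(\text{local signed half-arrow count at }\bridge_b).
\end{equation*}

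For the right-hand side, I would apply \cref{alg:int_form}. The intersection number $\<\Cycle_c,\Cycle_d\>$ is the sum of local contributions $\<\Cycle_c,\Cycle_d\>|_{P_i}$ over connected components $P_i$ of $\Cycle_c\cap\Cycle_d$ in $\Gubr$, and each $\<\Cycle_c,\Cycle_d\>|_{P_i}$ is determined by the local pictures at the two endpoints of $P_i$. Since all vertices of $\Gubr$ away from bridges have degree $2$, the endpoints of each $P_i$ must be bridge vertices. Regrouping by assigning each path endpoint to its incident bridge, one obtains
\begin{equation*}
2\<\Cycle_c,\Cycle_d\>=\sum_{b\in\Jo}(\text{local intersection contribution at }\bridge_b),
\end{equation*}
with the factor of $2$ appearing because each $P_i$ contributes at two bridges, one at each of its two endpoints (a single-vertex path being counted twice at the same bridge).

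The core of the proof is then a local verification at each bridge. I would identify the six face pairs $(A,B),(B,D),(D,A),(C,B),(B,D),(D,C)$ with six ``wedges'' (corner sectors of $\Subr$) at the black and white endpoints of $\bridge_b$. At each wedge, the signed half-arrow count from $c$ to $d$ should equal the local intersection sign at the corresponding corner of $\Subr$; both are determined by which of $A,B,C,D$ lie inside the projected disks $\pired(\Disk_c)$ and $\pired(\Disk_d)$, since if $\Cycle_c$ separates face $X$ from face $Y$ across an edge adjacent to this wedge then $\Cycle_c$ must traverse that edge, and similarly for $\Cycle_d$. The configurations in which the two cycles cross transversally at the wedge can then be enumerated directly. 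The main obstacle will be this case analysis: red and blue bridges require separate treatment due to the reversed direction convention in the half-arrow rule, and the orientations of $\Cycle_c,\Cycle_d$ (determined by their defining bridges and the ribbon structure of $\Subr$) together with the sign convention of \figref{fig:moves}(right) must be tracked carefully. I expect the six listed pairs to have been chosen precisely so that the wedge-by-wedge match holds.
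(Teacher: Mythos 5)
Your overall plan --- decompose $\<\Cycle_c,\Cycle_d\>$ via \cref{alg:int_form} into contributions from maximal paths $P\subset\Cycle_c\cap\Cycle_d$, attribute each contribution to the two endpoints of $P$ (producing the factor of $2$), and match these locally against half-arrow counts --- is exactly the structure of the paper's proof. But the step that makes the local verification precise, and which your ``wedge-by-wedge'' sketch is reaching for without pinning down, is missing: the paper first replaces the bridge-level half-arrow rule (four faces $A,B,C,D$ and six ordered pairs around $\bridge_c$, with direction reversed for blue bridges) by a \emph{per-vertex} rule. Around each trivalent vertex $p$ of $\Gubr$, drawn so that $p$ is black (cf.\ \cref{rmk:color_switch}), one labels the three corners of $\Subr$ at $p$ counterclockwise as $A,B,C$ and draws half-arrows for the cyclic pairs $(A,B),(B,C),(C,A)$ between the cycles through $p$ having the respective corner on their left. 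This decouples the two ends of each bridge, absorbs the red/blue asymmetry into the ``draw $p$ black'' convention, and turns the comparison into a per-vertex statement that lines up exactly with \cref{alg:int_form}: for a path $P=(p_0,\dots,p_r)$, the per-vertex half-arrow contribution is $\pm1$ at each of $p_0,p_r$, zero at the interior vertices, and the two endpoint contributions add to $2\<\Cycle_c,\Cycle_d\>|_P$.

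Without that reformulation, your phrase ``the local intersection sign at the corresponding corner'' is not well defined as written: half-arrows are drawn between face \emph{pairs}, not at individual wedges, and the contribution in \cref{alg:int_form} is attached to a path, not a corner, so it is unclear how the wedge-by-wedge comparison would actually be carried out; moreover working per-bridge forces you to handle both bridge endpoints simultaneously, inflating the case analysis you already flag as the main obstacle. One smaller imprecision: the endpoints of $P$ need not be bridge vertices; they can be degree-$1$ marked vertices when $\Cycle_c$ and $\Cycle_d$ are both frozen arcs terminating at a common marked point. This is harmless since such endpoints contribute zero to both sides, but ``the endpoints of each $P_i$ must be bridge vertices'' is slightly too strong as stated.
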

\noindent In other words, by~\eqref{eq:bice_dfn}, the quiver $\Qubr$ is obtained by just summing up the signed half-arrow contributions and dividing the result by $2$.

\begin{proof}
The above half-arrow description can be replaced by the following local description. Consider a vertex $p$ of $\Gubr$, and draw the neighborhood of $\Subr$ around $p$ so that $p$ is black. Label the faces around $p$ by $A,B,C$ in counterclockwise order. For $X\in\{A,B,C\}$, let $\Chminsetp(X)$ denote the set of indices $c\in\Jo$ such that $\Cycle_c$ passes through $p$ with $X$ to the left of $\Cycle_c$. Then for each $(X,Y)=(A,B),(B,C),(C,A)$, draw a half-arrow from each element of $\Chminsetp(X)$ to each element of $\Chminsetp(Y)$.

Consider two relative cycles $\Cycle_c,\Cycle_d$, for $c,d\in\Jo$. Recall from \cref{alg:int_form} that the intersection number $\<\Cycle_c,\Cycle_d\>$ may be computed as a sum of local contributions $\<\Cycle_c,\Cycle_d\>|_P$ from maximal by inclusion paths in $\Cycle_c\cap\Cycle_d$. Such contributions are shown in the top row of \figref{fig:half-arr}(right). 
 On the other hand, as shown in the bottom row of \figref{fig:half-arr}(right), the (signed) contribution to the number of half-arrows $c\to d$ is $\pm1$ for $p_0$ and $p_r$ (and zero for each of $p_1,\dots,p_{r-1}$) so that the combined half-arrow contribution from $p_0$ and $p_r$ is exactly $2\<\Cycle_c,\Cycle_d\>|_P$. Summing over all such paths $P$, the result follows.
\end{proof}

\section{Invariance under moves}\label{sec:invar-under-moves}
In this section, we show that the mutation class of the quiver $\Qubr$ defined above is invariant under applying \emph{double braid moves} to $\br$.

\subsection{Moves for 3D plabic graphs}\label{sec:moves-for-3D}

\begin{figure}
\igrw[1.0]{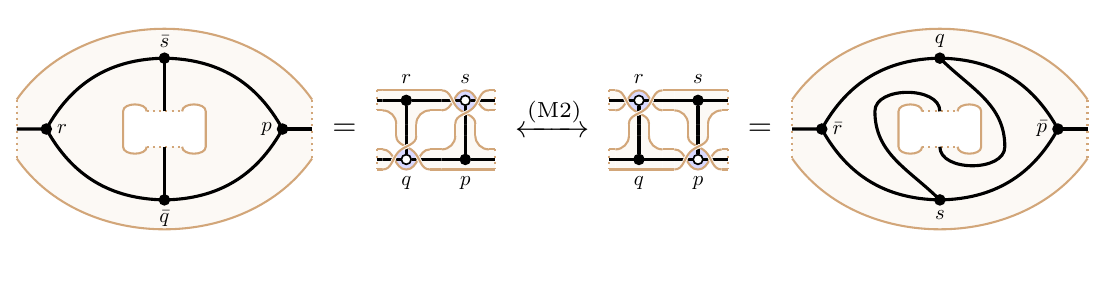}\vspace{-0.3in}
  \caption{\label{fig:sq-surf}Applying a square move~\protect\sqmv to $\Gubr$ preserves the surface $\Subr$ but changes the embedding of $\Gubr$ in $\Subr$. See also~\cite[Figure~20]{GoKe}.}
\end{figure}

Just as in Postnikov's theory~\cite{Pos}, our two main moves for 3D plabic graphs are the \emph{contraction-uncontraction move \cumv} and the \emph{square move \sqmv}, shown in \figref{fig:moves}(middle). The move \cumv can be performed on any edge $e=\{p,q\}$ of a 3D plabic graph $G$. Specifically, we draw $G$ in the plane so that $p$ and $q$ are of the same color, and then we apply the usual contraction-uncontraction move, producing two other vertices of the same color as $p$ and $q$. The move \sqmv can be performed on any $4$-cycle $(p,q,r,s)$ in $G$, as shown in \cref{fig:sq-surf}. The moves \cumv--\sqmv preserve the conjugate surface $\Surf=\Surfof(G)$ but change the embedding of $G$ inside of $\Surf$. In particular, it will be important later that as one applies the various \emph{double braid moves} to the double braid word $\br$, the surface $\Subr$ stays unchanged throughout the process.
\begin{remark}
  Surprisingly, the square move~\sqmv can be obtained by performing two contraction-uncontraction moves~\cumv on $G$; see \cref{fig:sqmv-vs-cumv}. We still distinguish~\sqmv as a separate transformation, for the following reason. We have associated three kinds of objects to a pair $(u,\br)$: a 3D plabic graph $\Gubr$, a marked surface $\Subr$, and a collection $(\Cycle_c)_{c\in\Jo}$ of relative cycles in $\Subr$. While $\Gubr$ determines $\Subr$, neither $\Gubr$ nor $\Subr$ determines $(\Cycle_c)_{c\in\Jo}$. Thus, if one applies moves~\cumv--\sqmv to $\Gubr$, one has to specify additional rules for how the tuple $(\Cycle_c)_{c\in\Jo}$ changes. These rules are given in \cref{thm:invariance} below: the tuple $(\Cycle_c)_{c\in\Jo}$ changes according to~\eqref{eq:cycle_mut} when we apply \emph{mutation braid moves} corresponding to~\sqmv, and is preserved when we apply \emph{non-mutation braid moves} corresponding to other sequences of moves~\cumv.
\end{remark}

\begin{figure}
\igrw[0.8]{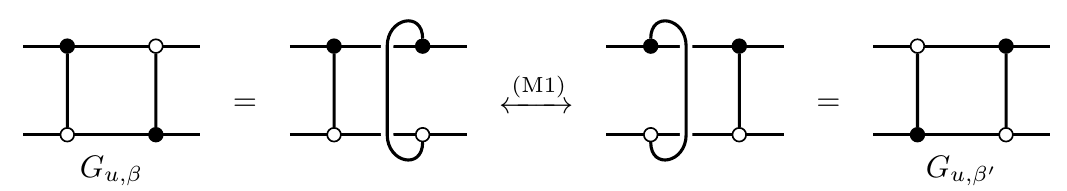}
  \caption{\label{fig:sqmv-vs-cumv}Representing the square move~\protect\sqmv as a composition of two contraction-uncontraction moves~\protect\cumv.}
\end{figure}

\subsection{Moves for double braid words}

Two double braid words are \emph{(double braid) equivalent} if they are related by a sequence of the following double braid moves: %
\begin{enumerate}[\normalfont(B1)]
	\item \label{bm1} $i j \leftrightarrow j i$ \quad if $i, j\in\pmn$ have different signs;
	\item \label{bm2} $i j \leftrightarrow j i$ \quad if $i, j\in\pmn$ have the same sign and $|i-j|>1$;
	\item \label{bm3} $i j i \leftrightarrow j i j$ \quad if $i, j\in\pmn$ have the same sign and $|i-j|=1$.
\end{enumerate}

\begin{definition}\label{dfn:mutation_etc_moves}
We say that a double braid move is \emph{fully solid} if all of the indices involved are solid. Suppose that $i_{d-1}=-j$ and $i_{d}=i$ for some $i,j\in I$ and $2\leq d\leq m$. Then we say that the move~\bmref{bm1} swapping these two indices is \emph{special} if $\upu{d-1} s_i=s_j\upu{d-1}$, and \emph{solid-special} if it is special and fully solid. Motivated by the following theorem, we call ~\bmref{bm1} (solid-special) and~\bmref{bm3} (fully solid) \emph{mutation moves}. All other braid moves are \emph{non-mutation moves}.
\end{definition}

\begin{theorem}\label{thm:invariance} 
  The mutation type of $\Qubr$ is invariant under double braid moves~\bmref{bm1}--\bmref{bm3} on $\br$.  More precisely:
  \begin{enumerate}[\normalfont(1)]
\item\label{item:inv1} Under mutation moves, the quiver $\Qubr$ changes by a mutation. The relative cycles $(\Cycle_c)_{c\in\Jo}$ change according to~\eqref{eq:cycle_mut}. The graph $\Gubr$ changes by a square move \sqmv.
\item\label{item:inv2} Under non-mutation moves, the quiver $\Qubr$ and the relative cycles $(\Cycle_c)_{c\in\Jo}$ are unchanged, up to relabeling. The graph $\Gubr$ changes by a sequence of contraction-uncontraction moves \cumv.
  \end{enumerate}
In both cases, the surface $\Subr$ is unchanged.
\end{theorem}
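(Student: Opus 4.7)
The plan is a case analysis on the three types of braid moves \bmref{bm1}--\bmref{bm3}, verifying locally the effect on the 3D plabic graph $\Gubr$, on the conjugate surface $\Subr$, and on the collection of relative cycles $(\Cycle_c)_{c\in\Jo}$. Since each move involves only two or three consecutive letters of $\br$ and the constructions of $\Gubr$ and $(\Cycle_c)$ are local in the time direction, it suffices to analyze a small window and show that the outcome matches the claim; the rest of the data outside the window is untouched.

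First I would observe that the Demazure product $\Demprod(\br)$ is invariant under all braid moves, so the permutations $\up c$ for $c$ outside the window are unaffected. In particular, the endpoints of the window pair $(\up{c_{\min}-1}, \up{c_{\max}})$ agree before and after the move, so all propagation data for indices outside the window is preserved. Next, the surface $\Subr$ is built from the marked ribbon graph structure of $\Gubr$, and both \cumv and \sqmv preserve the conjugate surface (cf.~\cref{fig:sq-surf}); thus $\Subr$ will be unchanged in every case, which reduces the task to comparing embeddings of $\Gubr$ in $\Subr$ and comparing relative cycles.

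For the non-mutation cases of part \eqref{item:inv2}, I would enumerate sub-cases by recording which letters in the window are solid and which are hollow, and redrawing $\Gubr$ in each sub-case (for \bmref{bm1} with different signs, for \bmref{bm2} where the strands occupy non-adjacent rows/columns, and for \bmref{bm3} where at least one crossing is hollow). In every sub-case the local change in $\Gubr$ can be realized by a finite sequence of contraction-uncontraction moves \cumv, so the ribbon graph is unchanged; the propagation rules of \cref{fig:propag} then give an identification of relative cycles (modulo the obvious relabeling caused by the swap of positions in $\br$), and $\Qubr$ is preserved by~\eqref{eq:bice_dfn}.

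The substantive cases are the mutation moves: solid-special \bmref{bm1} and fully solid \bmref{bm3}. Here I would draw the local picture of $\Gubr$ before and after the move and check: (i) a $4$-cycle is present at the relevant location in $\Gubr$, so a square move \sqmv is defined and produces exactly the local graph after the braid move; (ii) running the propagation rules of \cref{fig:propag} through the window shows that every relative cycle $\Cycle_c$ transforms exactly according to~\eqref{eq:cycle_mut}, with the distinguished mutable index $d$ being the one whose bridge sits at the square that is moved. Once (ii) is verified, \cref{lemma:FoGo} immediately upgrades the cycle transformation to a quiver mutation at $d$, completing part \eqref{item:inv1}.

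The main obstacle I anticipate is (ii) in the fully solid \bmref{bm3} case: one must track how the monotone multicurves $\bgamma^{c,\cdot}$ for every other solid index $c$ propagate through a window of three adjacent crossings on three strands, and verify that the net change after the braid relation is exactly an addition of $\max(\<\Cycle_c,\Cycle_d\>,0)\cdot\Cycle_d$. It is helpful here to invoke \cref{prop:gswap} so that the propagation can be checked at the level of the almost positive subexpressions, which behave transparently under the Coxeter braid relation. For solid-special \bmref{bm1}, the subtlety is in justifying the name ``special'': I would verify that the condition $\up{d-1}s_i = s_j\up{d-1}$ is precisely what guarantees that the blue and red bridges in the window share both endpoint strands and thus bound a $4$-cycle of $\Gubr$; without this condition the two bridges are disjoint and the move is of type~\cumv instead.
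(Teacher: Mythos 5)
Your outline follows the paper's proof at essentially every stage: a local case analysis by move type, invariance of $\Subr$ under the local graph moves \cumv and \sqmv, a case-by-case comparison of how cycles pass through the window, and \cref{lemma:FoGo} to upgrade the transformation of relative cycles into a quiver mutation; your observation that the solid-special condition for \bmref{bm1} is exactly what forces the two bridges to share both strands (and hence bound a $4$-cycle) is also what the paper verifies. Two points are worth pinning down before the plan closes. For the non-mutation moves, the claim that ``the propagation rules of \cref{fig:propag} then give an identification of relative cycles'' is precisely the nontrivial step, and it is not automatic from running the local rules on each side of the move: the paper isolates the needed observation in \cref{lemma:tree}, which says that if the window subgraph $\Gubr|_{[l,r]}$ contains \emph{no cycles} and the modified window is obtained from it by moves \cumv, then the monotone multicurves at the window boundary are determined by the APS data via \cref{prop:gswap}, the APS data at $l$ and $r$ is invariant under the move, and the acyclicity of the window forces the restriction of $\Cycle_c$ to the window to be determined by that boundary data. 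The acyclicity hypothesis is essential (the mutation moves are exactly the ones where a $4$-cycle appears in the window), so in each of your sub-cases you must both exhibit the \cumv sequence and verify the window is a tree before you can conclude anything.

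For the fully-solid \bmref{bm3} case you suggest that invoking \cref{prop:gswap} makes the propagation ``transparent.'' It does show that nothing changes outside the window (the APS sequences agree past the window because the Demazure quotients satisfy the braid relation), and that is useful. But it cannot by itself produce the exact coefficient $\max(\<\Cycle_c,\Cycle_d\>,0)$ in~\eqref{eq:cycle_mut}: you still have to determine, for each possible local behavior of a monotone curve around the $\prec$-chain of dots in $\PD(\up{d})$, whether the corresponding relative cycle picks up a copy of $\Cycle_d$. The paper does this by enumerating all such local configurations (roughly thirty in the \bmref{bm3} case), recording the signature of each cycle on the square $psrq$, and comparing to the picture in \cref{fig:sq-surf}; some finite enumeration of this sort seems unavoidable, so be prepared for the case analysis to be the bulk of the work rather than a footnote.
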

We prove \cref{thm:invariance} in the next two subsections. Throughout, we let $\br$ and $\brp$ be two braids related by one of the moves in \cref{thm:invariance}. We denote the 3D plabic graphs by $\Gubr$ and $\Gubrp$, the surfaces by $\Subr$ and $\Subrp$, and the relative cycles by $(\Cycle_c)_{c\in\Jo}$ and $(\Cycle'_c)_{c\in\Jop}$, respectively.

\begin{figure}
\begin{tabular}{c|c}
\begin{tikzpicture}[baseline=(Z.base)]
\coordinate(Z) at (0,0);
\node(A) at (0,0.85){\igrw[0.4]{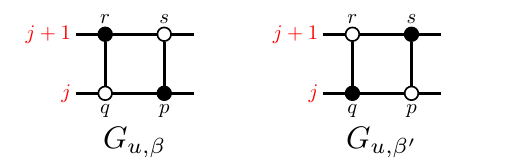}};
\end{tikzpicture}
  &
\begin{tikzpicture}[baseline=(Z.base)]
\coordinate(Z) at (0,0);
\node(A) at (0,0.85){\igrw[0.55]{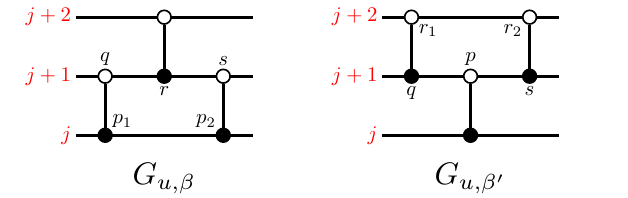}};
\end{tikzpicture}
\end{tabular}
  \caption{\label{fig:mut-graphs}Applying braid moves from part~\eqref{item:inv1} of \cref{thm:invariance} results in applying a square move to $\Gubr$.}
\end{figure}

\subsection{Mutation moves}\label{sec:inv_mut}
We prove part~\eqref{item:inv1} of \cref{thm:invariance}. Assume that we are applying a mutation move involving either the indices $i_{d-1},i_d$ or $i_{d-2},i_{d-1},i_d$ for some $d\in\Jo$. By \cref{lemma:FoGo}, it suffices to show that the relative cycles change according to~\eqref{eq:cycle_mut}, i.e., that $\Cycle'_c=\mu_d(\Cycle_c)$ for all $c\in\Jo$.

\subsubsection{Applying \protect\bmref{bm1} (solid-special)}\label{sec:inv_mut_bm1}
We would like to swap two solid crossings $i_{d-1}=-i$ and $i_{d}=j$ (with $i,j\in I$) such that $\upu{d-1}s_i=s_j\upu{d-1}$.  Since $d-1,d$ are both solid, we have $\upu{d-2}=\upu{d-1}=\upu{d}$. In addition, using $\upu{d}s_i=s_j\upu{d}$, we find that $\upu{d}(i)=j$ and $\upu{d}(i+1)=j+1$. The graph $\Gubr$ therefore contains a $4$-cycle spanned by the pair $(\bridge_{d-1},\bridge_d)$ of bridges of opposite color; see \figref{fig:mut-graphs}(left).  It follows that the graph $\Gubrp$ is obtained from $\Gubr$ by applying a square move \sqmv. In particular, we have $\Subr=\Subrp$; see \cref{fig:sq-surf}. 

We will show that $\Cycle'_c=\mu_d(\Cycle_c)$ by classifying all possible cases of how a relative cycle $\Cycle_c$ can look around the square $psrq$. This amounts to classifying the behavior of the monotone multicurves $\bgamma\pu{c,d},\bgamma\pu{c,d-1},\bgamma\pu{c,d-2}$ around the dots $\Dot:=(i,j)$ and $\Dot':=(i+1,j+1)$ of $\PD(\upu d)$. Note that, with the exception of the relative cycles $\Cycle_d$ and $\Cycle_{d-1}$, the monotone multicurve $\bgamma\pu{c,d}$ determines $\bgamma\pu{c,d-1}$ and $\bgamma\pu{c,d-2}$. Moreover, it suffices to consider the behavior of each monotone curve of $\bgamma\pu{c,d}$ separately; cf. \cref{rmk:two_curves} below.

\begin{figure}
\igrw[0.8]{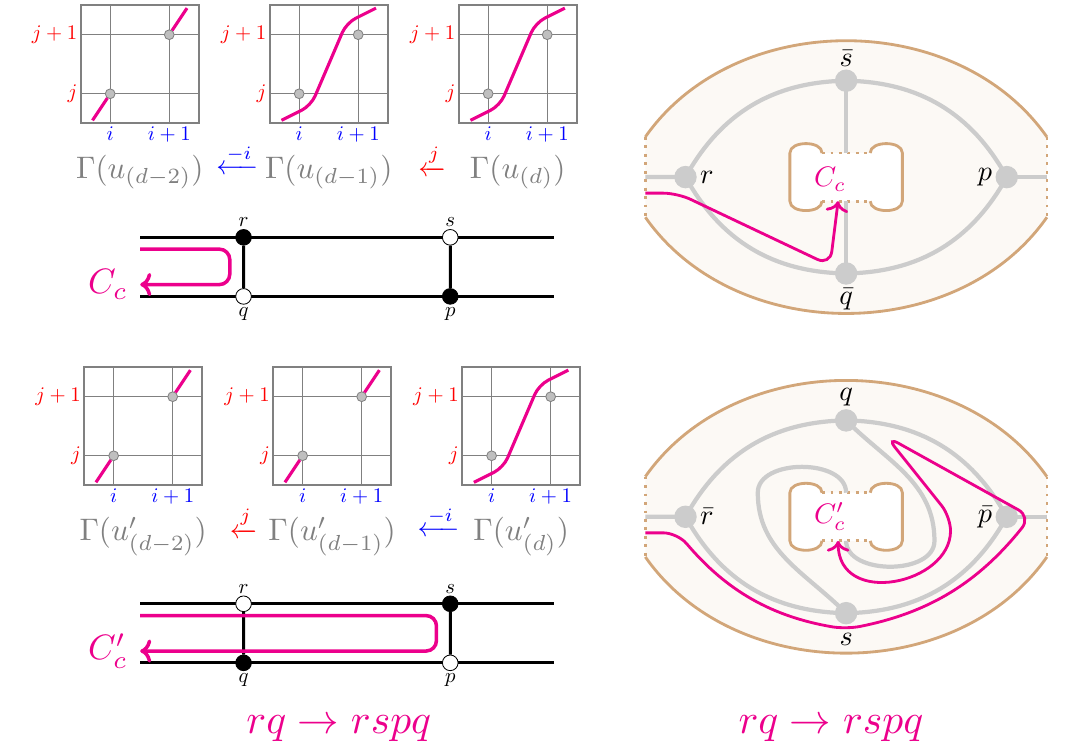}
  \caption{\label{fig:mut-ij-ex}An example of computing the signature $rq\to rspq$ of a relative cycle $\Cycle_c$ under the move~\protect\bmref{bm1} (solid-special).}
\end{figure}

For a relative cycle $\Cycle_c$ in $\Gubr$, its \emph{signature} is the ordered list of vertices of the square $psrq$ that $\Cycle_c$ passes through. For example, let us consider a monotone curve $\gamma$ inside $\PD(\upu d)$ passing below $\Dot$ and above $\Dot'$, as in \cref{fig:mut-ij-ex}. Using the rules in \cref{fig:propag}, we propagate $\gamma$ to $\PD(\upu{d-1})$ and $\PD(\upu{d-2})$, and find that the relevant part of $\Cycle_c$ passes first through $r$ and then through $q$ in $\Gubr$. Therefore, the signature of this relative cycle is $rq$. Repeating the same procedure for the graph $\Gubrp$, we find that the signature of $\Cycle_c$ in $\Gubrp$ is $rspq$; see \figref{fig:mut-ij-ex}(left). As shown in \figref{fig:mut-ij-ex}(right), the cycle with signature $rq$ in $\Gubr$ is homotopic to the cycle with signature $rspq$ in $\Gubrp$ when we view them as cycles in the ambient surface $\Subr=\Subrp$. 

\begin{figure}
\igrw[1.0]{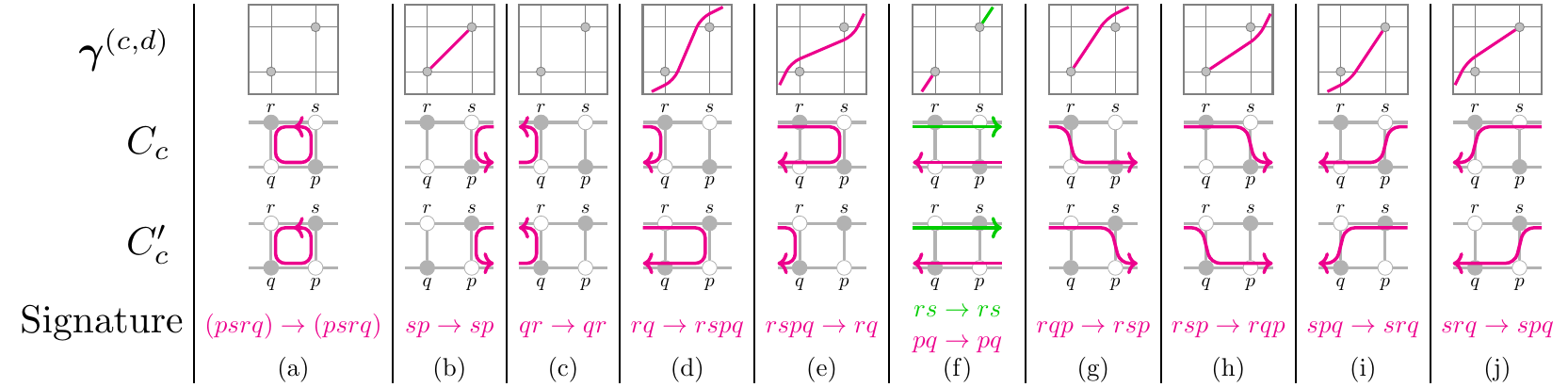}
  \caption{\label{fig:mut-ij-curves} The possible relative cycles and their signatures for the proof in \cref{sec:inv_mut_bm1}.}
\end{figure}

In \cref{fig:mut-ij-curves}, we list all possible relative cycles that pass through at least one vertex of the square $psrq$, given together with their monotone curves $\gamma$ inside $\PD(\upu d)$ and the signatures in $\Gubr$ and $\Gubrp$. For example, $\Cycle_d$ is shown in \figref{fig:mut-ij-curves}(a), and the curve $\gamma$ from \cref{fig:mut-ij-ex} is shown in \figref{fig:mut-ij-curves}(d). 

We will want to understand how $C_c$ and $C'_c$ relate, not as drawn in the planar projections in \cref{fig:mut-ij-curves}, but as drawn on the surface $\Subr=\Subrp$. 
Comparing the signatures to \cref{fig:sq-surf}, we see that for all $c\in\Jo$, the following are equivalent:
\begin{itemize}
\item $\Cycle_c\neq \Cycle'_c$ (as elements of $\Lubr=\Lubrp$);
\item either $c=d$ or $\<\Cycle_c,\Cycle_d\>>0$ (in which case $\<\Cycle_c,\Cycle_d\>=1$);
\item $\Cycle_c$ is shown in \figref{fig:mut-ij-curves}(a,b,c).
\end{itemize}
Note that $\Cycle_d$ is represented in the notation of \cref{fig:sq-surf} as the cycle passing through the vertices of the square $psrq$. Thus, in order to have $\<\Cycle_c,\Cycle_d\>>0$, when $\Cycle_c$ is drawn in \figref{fig:sq-surf}(far left), it needs to start inside $psrq$ and end outside of $psrq$. This happens precisely when $\Cycle_c$ is one of the relative cycles shown in \figref{fig:mut-ij-curves}(b,c). We see that indeed in all cases, we have $\Cycle'_c=\mu_d(\Cycle_c)$.

\begin{remark}\label{rmk:two_curves}
In general, $\Cycle_c$ is represented by a monotone \emph{multi}curve inside $\PD(\upu{d})$, and thus, for example, the two monotone curves shown in \figref{fig:mut-ij-curves}(f) could be parts of a single monotone multicurve. However, this does not affect our analysis because the intersection number $\<\Cycle_c,\Cycle_d\>$ is additive over all intersection points in $\Cycle_c\cap \Cycle_d$.
\end{remark}

\begin{figure}
\igrw[1.0]{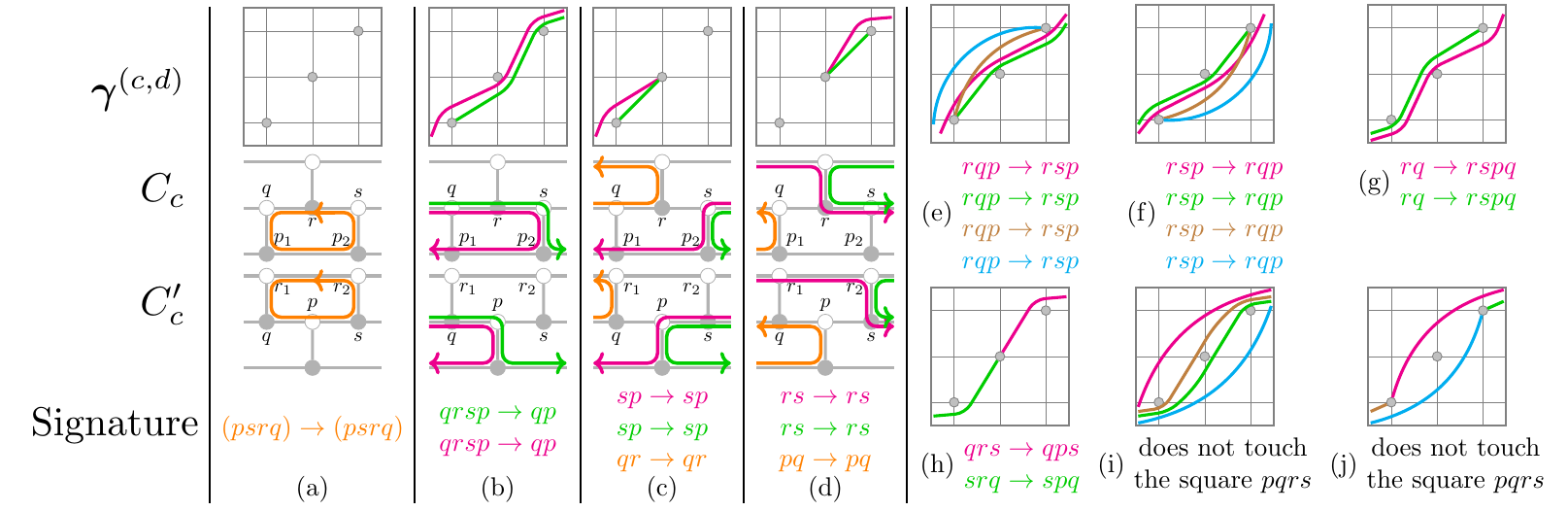}
  \caption{\label{fig:mut-iji-curves}The possible signatures for the proof in \cref{sec:inv_mut_bm3}.}
\end{figure}

\subsubsection{Applying \protect\bmref{bm3} (fully solid)}\label{sec:inv_mut_bm3}
We proceed using a similar strategy. Suppose that $i_{d-2}=j$, $i_{d-1}=j+1$, and $i_d=j$ for some $j\in I$, with all three indices solid. The 3D plabic graphs $\Gubr$ and $\Gubrp$ are shown in \figref{fig:mut-graphs}(right). In particular, after applying a contraction-uncontraction move \cumv to the vertices $p_1,p_2$ of $\Gubr$ and to the vertices $r_1,r_2$ of $\Gubrp$, we obtain two graphs which differ by a square move \sqmv, where the square has vertices $psrq$. Abusing notation, we denote these two graphs again by $\Gubr$ and $\Gubrp$. 

Let us now classify the relative cycles. There are a total of $26$ options for how a monotone curve can look like inside $\PD(\upu d)$. In addition, there are $3$ more relative cycles $\Cycle_d$, $\Cycle_{d-1}$, $\Cycle_{d-2}$ which do not correspond to any monotone curves in $\PD(\upu d)$, shown in orange in \figref{fig:mut-iji-curves}(a,c,d). Out of these $29$ options, $8$ relative cycles shown in \figref{fig:mut-iji-curves}(i,j) do not pass through any of the vertices of the square $psrq$ (after applying the above  contraction-uncontraction moves). The remaining $21$ relative cycles, together with their signatures in $\Gubr$ and $\Gubrp$, are shown in \figref{fig:mut-iji-curves}(a--h). 

Comparing the signatures to \cref{fig:sq-surf}, we see that for all $c\in\Jo$, the following are equivalent:
\begin{itemize}
\item $\Cycle_c\neq \Cycle'_c$;
\item either $c=d$ or $\<\Cycle_c,\Cycle_d\>>0$ (in which case $\<\Cycle_c,\Cycle_d\>=1$);
\item $\Cycle_c$ is shown in \figref{fig:mut-iji-curves}(a,b,c).
\end{itemize}
We again get that $\Cycle'_c=\mu_d(\Cycle_c)$ in each case. This completes the proof of part \eqref{item:inv1} of \cref{thm:invariance}.

\begin{figure}
\igrw[0.5]{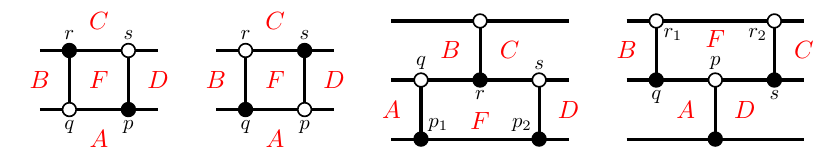}
  \caption{\label{fig:sq-face-lab}Labeling the faces when applying the square move.}
\end{figure}

\subsubsection{Chamber minors}
Suppose we are applying one of the mutation moves from part \eqref{item:inv1} of \cref{thm:invariance} at $\Cycle_d$ for some $d\in\Jmut$. Then (after possibly applying moves \cumv from \cref{sec:inv_mut_bm3}), the graph $\Gubr$ contains a square $psrq$. Let $G$ be the portion of the red projection of $\Gubr$ located in the small neighborhood of $psrq$; thus, $G$ is planar. Let $F$ be the face of $G$ inside this square, and let $A,B,C,D$ be the four faces adjacent to the square in clockwise order; see \cref{fig:sq-face-lab}. (We only consider $A,B,C,D$ in a small neighborhood of the square.) 
For $c\in\Jo$ and a face $E\in \{F,A,B,C,D\}$, we will write $\ord_c(E)=1$ if $E$ is inside $\Cycle_c$ and $\ord_c(E)=0$ otherwise. As in \cref{sec:intro:seed}, we say that $E$ is \emph{inside} $\Cycle_c$ if $E$ is contained inside the red projection of the disk $\Disk_c$, or in other words, if $E$ is to the left of the curve representing the red projection of $\Cycle_c$. We define $\ord'_c(E)$ similarly using the graph $\Gubrp$ and the cycles $\Cycle'_c$ in it. It follows from part \eqref{item:inv1} of \cref{thm:invariance} that the difference $\Cycle_c-\Cycle'_c$ in $\Lubr$ is always a multiple of $\Cycle_d$. Thus, we have $\ord'_{c}(E)=\ord_c(E)$ for $E\neq F$. (Alternatively, this can be seen directly from \cref{fig:mut-ij-curves,fig:mut-iji-curves}.) The following result will be used in the proof of \cref{prop:checkingMutMoves}.
\begin{lemma}
  For all $c\in\Jo\setminus\{d\}$,
  we have
\begin{equation}\label{eq:ord_min_lemma}
  \ord_c(F)+\ord'_c(F)=\min(\ord_c(A)+\ord_c(C),\ord_c(B)+\ord_c(D)).
\end{equation}
\end{lemma}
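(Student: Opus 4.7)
The plan is to verify \eqref{eq:ord_min_lemma} by reducing both sides to data intrinsic to the cycle $\Cycle_c$, and then checking the resulting integer identity against the classifications tabulated in \cref{fig:mut-ij-curves} and \cref{fig:mut-iji-curves}. First, by part~\eqref{item:inv1} of \cref{thm:invariance}, for $c\in\Jo\setminus\{d\}$ we have $\Cycle'_c-\Cycle_c=\lambda_c\,\Cycle_d$ in $\Lubr=\Lubrp$, where $\lambda_c:=\max(\<\Cycle_c,\Cycle_d\>,0)$. The case analyses in \cref{sec:inv_mut_bm1} and \cref{sec:inv_mut_bm3} show that $\<\Cycle_c,\Cycle_d\>\in\{-1,0,1\}$, so $\lambda_c\in\{0,1\}$. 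Since $F$ is the unique face enclosed by the $4$-cycle $\Cycle_d$ bounding the disk $\Disk_d$, adding $\Cycle_d$ toggles whether $F$ lies inside the disk bounded by the ambient cycle. Hence $\ord_c(F)+\ord'_c(F)=2\ord_c(F)$ when $\lambda_c=0$, and $\ord_c(F)+\ord'_c(F)=1$ when $\lambda_c=1$.

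The next step is to show that the right-hand side of~\eqref{eq:ord_min_lemma} respects the same dichotomy. When $\<\Cycle_c,\Cycle_d\>=1$, an arc of $\Cycle_c$ crosses $\Cycle_d$ transversally with positive sign; tracking its entry and exit sides around the square $psrq$ shows that exactly one of $\{A,C\}$ and exactly one of $\{B,D\}$ is inside $\Cycle_c$, so both $\ord_c(A)+\ord_c(C)$ and $\ord_c(B)+\ord_c(D)$ equal $1$ and the minimum is $1$. When $\<\Cycle_c,\Cycle_d\>\le 0$, either $\Cycle_c$ misses the neighborhood of the square (so all five faces lie on the same side of $\Cycle_c$ and $\min = 2\ord_c(F)$), or its arcs traverse the square in a ``balanced'' configuration (entering and exiting on the same side, or contributing canceling intersections), forcing at least one of the opposite pairs $\{A,C\}$, $\{B,D\}$ to lie entirely on the same side of $\Cycle_c$ as $F$, so again $\min = 2\ord_c(F)$.

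Both geometric claims in the preceding paragraph can be verified directly from the tables in \cref{fig:mut-ij-curves} (for mutation move~\bmref{bm1}) and \cref{fig:mut-iji-curves} (for mutation move~\bmref{bm3}): for each listed signature of $\Cycle_c$, one reads off the five values $\ord_c(F), \ord_c(A), \ord_c(B), \ord_c(C), \ord_c(D)$, together with $\ord'_c(F)$, and checks a one-line arithmetic identity. Only three non-trivial relative-cycle configurations arise for~\bmref{bm1} (rows (a), (b), (c) of \cref{fig:mut-ij-curves}) and on the order of a dozen for~\bmref{bm3} (rows (a)--(h) of \cref{fig:mut-iji-curves}).

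The main obstacle is the combinatorial bookkeeping in the second step: turning the ``balanced'' configuration statement into a precise claim requires careful tracking of which external half-edges at the vertices $p,s,r,q$ are used by $\Cycle_c$, together with the orientation convention (inside-equals-left). A cleaner coordinate-free proof would deduce~\eqref{eq:ord_min_lemma} from the intersection-form identities of $\Cycle_c$ against the small loops encircling the four corner faces, paralleling the half-arrow formalism of \cref{sec:half-arrow-descr}; however, making that approach rigorous appears to require essentially the same case analysis in disguise, so for the purposes of the proof I expect the direct tabulation to be shortest.
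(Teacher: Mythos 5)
Your first paragraph contains a genuine gap that undermines the reduction. You claim that when $\lambda_c=0$ (i.e., $\Cycle'_c=\Cycle_c$ as relative homology classes), the values $\ord_c(F)$ and $\ord'_c(F)$ must agree, giving $\ord_c(F)+\ord'_c(F)=2\ord_c(F)$. This is false. The subtlety, which the paper emphasizes with its worked example, is that $\ord_c(F)$ depends on the actual curve (the boundary of the disk $\Disk_c$ built from the propagation rules), not only on the homology class $[\Cycle_c]\in\Lubr$. When the square move is applied, the signature of $\Cycle_c$ around the square $psrq$ can change even while the homology class is preserved; for instance the signature $rq$ in $\Gubr$ becomes $rspq$ in $\Gubrp$, which is a homotopic curve in $\Subr$, but one that passes around the opposite side of the face $F$. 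Concretely, for the relative cycle in Figure~\ref{fig:mut-ij-curves}(d) the paper computes the identity as $1+0=\min(0+1,1+1)$: here $\langle\Cycle_c,\Cycle_d\rangle\le 0$, so $\lambda_c=0$, yet $\ord_c(F)=1$ and $\ord'_c(F)=0$. Your formula would predict the left-hand side equals $2$, which is wrong. The same issue means your count of "only three non-trivial configurations" for~\bmref{bm1} undercounts the cases that actually need inspection.

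Because the dichotomy by $\lambda_c$ is the only structural content you add beyond a direct case check, and it fails, the proposal effectively collapses back to the paper's method (verify \eqref{eq:ord_min_lemma} by inspecting the tabulated configurations in Figures~\ref{fig:mut-ij-curves} and~\ref{fig:mut-iji-curves}) but without a valid organizing principle. If you want to keep a winding-number-style argument, you would need to replace the homology-class reasoning with a statement about the actual curve representations: how the signature $\{pq,rs\}$ versus $\{ps,qr\}$ in the two graphs determines $\ord_c(F)$ versus $\ord'_c(F)$, and how this correlates with the pattern of $\ord_c$ on $A,B,C,D$. That reformulation still reduces to the same inspection of the figures, which is the paper's proof.
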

\begin{proof}
Follows by inspection from \cref{fig:mut-ij-curves,fig:mut-iji-curves}. For example, if $\Cycle_c$ is given in \figref{fig:mut-ij-curves}(d) then~\eqref{eq:ord_min_lemma} becomes $1+0=\min(0+1,1+1)$. Similarly, if both monotone curves in \figref{fig:mut-ij-curves}(f) are parts of the same monotone multicurve corresponding to $\Cycle_c$ then~\eqref{eq:ord_min_lemma} becomes $0+0=\min(0+0,1+1)$.
\end{proof}

\begin{figure}
\igrw[0.8]{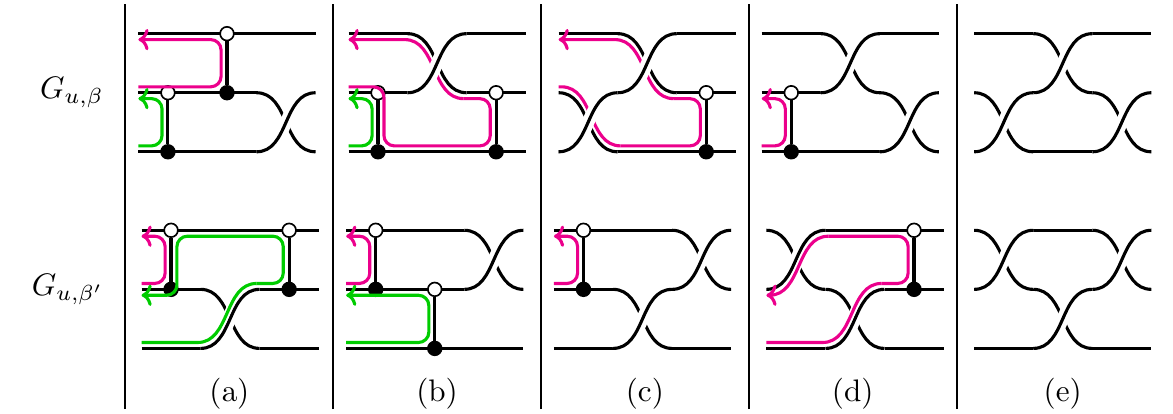}
  \caption{\label{fig:non-mut-iji} Applying move~\protect\bmref{bm3} (not fully solid).}
\end{figure}

\subsection{Non-mutation moves}\label{sec:nonMutQuiver}
We prove part~\eqref{item:inv2} of \cref{thm:invariance}. The following result will be convenient to show equalities of the form $\Cycle'_{c'}=\Cycle_c$ without classifying all possible monotone curves as we did in \cref{sec:inv_mut}.

\begin{lemma}\label{lemma:tree}
  Suppose that the double braids $\br,\br'$ are related by one of the moves~\bmref{bm1}--\bmref{bm3} involving the indices in some interval $(l,r]\subset[m]$ of length $2$ or $3$. Suppose in addition that the portion $\Gubr|_{[l,r]}$ of $\Gubr$ between $\PD(\upu l)$ and $\PD(\upu r)$ has no cycles, and that $\Gubrp|_{[l,r]}$ is obtained from $\Gubr|_{[l,r]}$ via a sequence of contraction-uncontraction moves \cumv. Then, for all $c\in\Jo\setminus(l,r]$, we have $\Cycle_c=\Cycle'_c$.
\end{lemma}
\begin{proof}
By \cref{prop:gswap}, the monotone multicurves of $\Cycle_c$ inside $\PD(\upu l)$ and $\PD(\upu r)$ are determined by the combinatorics of \emph{almost positive subexpressions}, specifically, by the pairs $(\upu l, \vu\pd{c}\apd{l})$ and $(\upu r, \vu\pd{c}\apd{r})$. One can check (cf. \cref{fig:non-mut-iji}) that these pairs are invariant under applying the moves~\bmref{bm1}--\bmref{bm3} on the interval $(l,r]$. Thus, we see that the monotone multicurves of $\Cycle_c$ inside $\PD(\upu l)$ and $\PD(\upu r)$ are preserved under the double braid move. Thus, the locations where $\Cycle_c$ enters and exits $\Gubr|_{[l,r]}$ remain unchanged as we apply the move. Since $\Gubr|_{[l,r]}$ has no cycles, the restriction of $\Cycle_c$ to $\Gubr|_{[l,r]}$ is determined by these locations. It is then clear that as we apply the moves \cumv to $\Gubr|_{[l,r]}$, the relative cycle $\Cycle_c$ is preserved.
\end{proof}

In order to prove part~\eqref{item:inv2} of \cref{thm:invariance}, it suffices to find a \emph{relabeling bijection} $\Jo\xrasim\Jop$, $c\mapsto c'$, such that $\Cycle'_{c'}=\Cycle_c$ for all $c\in\Jo$.

\begin{figure}
\igrw[1.0]{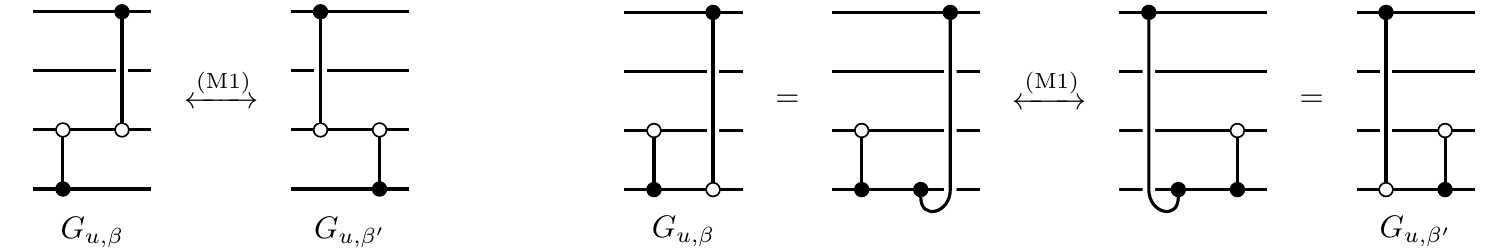}
  \caption{\label{fig:easy-moves}Representing the move~\protect\bmref{bm1} (not special) by the moves from \cref{fig:moves}.}
\end{figure}

\begin{figure}
\igrw[0.8]{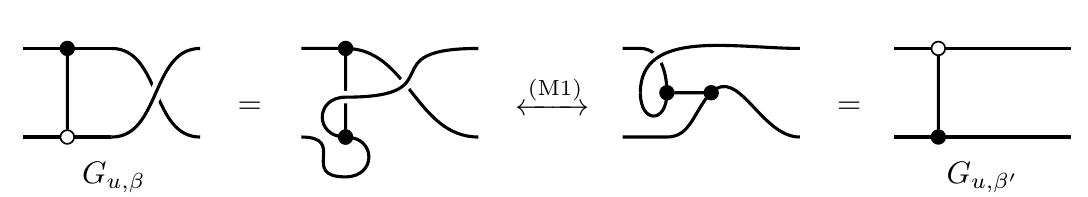}
  \caption{\label{fig:hard-moves-1}Representing the move~\protect\bmref{bm1} (special, not fully solid) by the moves from \cref{fig:moves}. See the warning in the text regarding the rightmost figure.}
\end{figure}

\begin{figure}
\igrw[1.0]{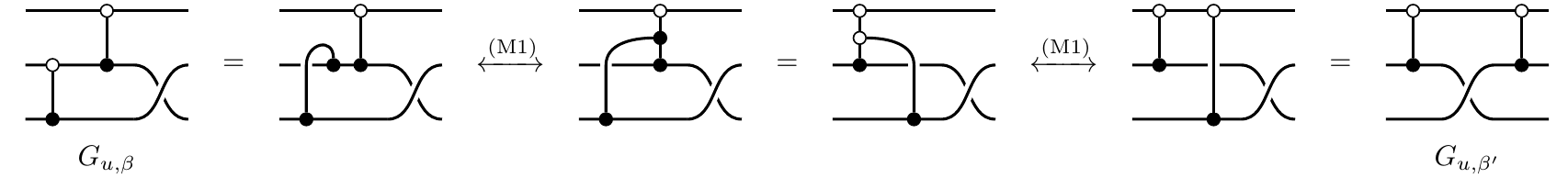}
  \caption{\label{fig:hard-moves-2}Representing the move~\protect\bmref{bm3} (not fully solid) by the moves from \cref{fig:moves}.}
\end{figure}

\subsubsection{Applying \protect\bmref{bm1} (special, not fully solid)}\label{sec:non-mut-bm1-spec}
Just as in \cref{sec:inv_mut_bm1}, we assume that we have two crossings $i_{d-1}=-i$ and $i_{d}=j$ (with $i,j\in I$) such that $\upu{d-1}s_i=s_j\upu{d-1}$, but now at least one of the two crossings is hollow. In this case, we must have that $d$ is hollow and $d-1$ is solid. Indeed, recall from~\eqref{eq:PDS_recursion} that $\upu{d-1}=\upu d \demL s_{j}$ and $\upu{d-2}=s_i\demR \upu{d-1}$. If $\upu{d-1}=\upu d s_j<\upu d$ then $d$ is hollow, and then $s_i\upu{d-1}=\upu{d-1}s_j=\upu d>\upu{d-1}$, so $d-1$ must be solid. Otherwise, we have $\upu{d-1}=\upu d<\upu d s_j$, and then $s_i\upu{d-1}=\upu{d-1}s_j>\upu{d-1}$, so both $d$ and $d-1$ are solid, a contradiction.

After we swap $-i$ and $j$ in $\br$, the crossing $i'_d=-i$ becomes hollow while $i'_{d-1}=j$ becomes solid. Thus, we have $\Jo=\Jop$. We take the relabeling bijection to be the identity map. The graphs $\Gubr$ and $\Gubrp$ are related by a single move \cumv shown in \cref{fig:hard-moves-1}, and their restrictions to $[d-2,d]$ have no cycles. By \cref{lemma:tree}, we get $\Cycle'_{c}=\Cycle_c$ inside $\Lubr=\Lubrp$ for all $c\in\Jo$.

We note that \cref{fig:hard-moves-1} can be confusing because there is a hollow crossing in $\Gubrp$, occurring to the right of the bridge, which is invisible in the red projection.
If we had drawn the blue projection instead, then the hollow crossing in $\Gubrp$ would be visible, but the one in $\Gubr$ would be invisible.

\subsubsection{Applying \protect\bmref{bm1} (not special)}
We take the relabeling bijection $c\mapsto c'$ to be the transposition of $d-1$ and $d$. If one or both of the crossings $d-1,d$ is hollow, we have $\Gubr=\Gubrp$, and we check using \cref{lemma:tree} that $\Cycle_c=\Cycle'_{c'}$ for all $c\in\Jo$. Assume now that both crossings $d-1$ and $d$ are solid.

If the bridges $\bridge_{d-1},\bridge_d$ share two strands in common then the move~\bmref{bm1} is special, a contradiction. If the bridges $\bridge_{d-1},\bridge_d$ share zero strands in common then $\Gubr=\Gubrp$, and we are done by \cref{lemma:tree}. From now on, we assume that the bridges $\bridge_{d-1},\bridge_d$ share exactly one strand in common.

There are two cases: either the start of one bridge is on the same strand as the end of the other bridge, or the start (resp., the end) of one bridge is on the same strand as the start (resp., the end) of the other bridge. In each case, $\Gubr$ and $\Gubrp$ are related by a sequence of moves \cumv shown in \cref{fig:easy-moves}. We are done by \cref{lemma:tree}.

\subsubsection{Applying \protect\bmref{bm2}}
We take the relabeling bijection $c\mapsto c'$ to be the transposition of $d-1$ and $d$. We have $\Gubr=\Gubrp$. We are done by \cref{lemma:tree}.

\subsubsection{Applying \protect\bmref{bm3} (not fully solid)}
Suppose that $i_{d-2}=j$, $i_{d-1}=j+1$, and $i_d=j$ for some $j\in I$. In $\PD(\upu{d-3})$, the dots $(i,j)$, $(i',j+1)$, $(i'',j+2)$ are located in $\prec$-increasing order, i.e., we have $i<i'<i''$. The restriction of $\PD(\upu{d})$ to the rows $j,j+1,j+2$ and columns $i,i',i''$, however, could be any permutation in $S_3$. If this permutation is the identity, then the crossings $d-2,d-1,d$ are all solid, a contradiction.

 For each of the remaining five permutations in $S_3$, the corresponding graphs $\Gubr$ and $\Gubrp$ are shown in \cref{fig:non-mut-iji}. Observe that their restrictions to $[d-3,d]$ have no cycles. In cases~(c), (d) and~(e), we have $\Gubr=\Gubrp$. Cases~(a) and~(b) are related by vertical reflection. The sequence of moves \cumv relating $\Gubr$ to $\Gubrp$ in case~(a) is shown in \cref{fig:hard-moves-2}, and case~(b) is the vertical reflection of this. In each of the five cases, there is a unique relabeling bijection $\Jo\cap \{d-2,d-1,d\}\xrasim \Jop\cap \{d-2,d-1,d\}$ preserving the monotone multicurve inside $\PD(\upu{d-3})$; it is indicated by colored curves in \cref{fig:non-mut-iji}. Extending the relabeling bijection by the identity map outside the interval $\{d-2,d-1,d\}$, we are done by \cref{lemma:tree}. This finishes the proof of \cref{thm:invariance}.

\subsection{Color-switching moves}
We introduce two more moves, which allow us to switch the color of the first or the last letter of $\br$. For $i\in I=[n-1]$, let $i^\ast:=n-i$, so that $s_i\wo=\wo s_{i^\ast}$. %
\begin{enumerate}[\normalfont(B1)]
  \setcounter{enumi}{3}
\item \label{bm4} (assuming $u=\wo$) $\br_0i\leftrightarrow \br_0(-i^\ast)$ for $i\in \pmn$ and $\br_0\in(\pmn)^{m-1}$.
\item \label{bm5} $i\br_0\leftrightarrow (-i)\br_0$ for $i\in \pmn$ and $\br_0\in(\pmn)^{m-1}$;
\end{enumerate}
\noindent Note that the assumption $u=\wo$ in \bmref{bm4} is not restrictive in view of \cref{rmk:u=w0}.

For an ice quiver $\Qice$, its \emph{mutable part} is the induced subquiver of $\Qice$ with vertex set $\Vmut$, where $\Vmut$ is the set of mutable vertices of $\Qice$.
\begin{proposition} \ \label{prop:bm5_bm4}
\begin{enumerate}
\item  Under the move~\bmref{bm4}, $\Qubr$, $\Gubr$, $\Subr$, and $(\Cycle_c)_{c\in\Jo}$ are unchanged.
\item  Under the move~\bmref{bm5}, the mutable part $\Qubrmut$ of $\Qubr$ is unchanged.
\end{enumerate}
\end{proposition}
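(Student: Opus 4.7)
For part (1), I would first observe that with $u = \wo$, the PDS is identical for $\br = \br_0 i$ and $\brp = \br_0(-i^*)$. Indeed $\up{m} = \wo$, and at position $m$ we compute $\up{m-1} = \wo \demL s_i = \wo s_i$ for $\br$ (since $\wo s_i < \wo$), while $\up{m-1} = s_{i^*} \demR \wo = s_{i^*} \wo = \wo s_i$ for $\brp$ (using $s_{i^*} \wo = \wo s_i$). Hence the PDS agrees on all positions $0, 1, \ldots, m-1$; position $m$ is hollow in both words; and $\Jo = \Jop \subseteq [m-1]$. All bridges lie at positions $\leq m-1$, where $\Gubr$ and $\Gubrp$ literally coincide. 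The only visible difference between $\Gubr$ and $\Gubrp$ is the strand pairing between times $m-1$ and $m$ (row-swap vs.\ column-swap), but the dots in $\PD(\up{m})$ are unmarked degree-$1$ leaves carrying no ribbon structure. Thus $\Gubr = \Gubrp$ as marked ribbon graphs, $\Subr = \Subrp$, and since each $\Cycle_c$ is computed by backward propagation from $\bridge_c$ through positions $\leq m-1$, the tuple $(\Cycle_c)_{c\in\Jo}$ is also unchanged; hence $\Qubr = \Qubrp$ by~\eqref{eq:bice_dfn}.

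For part (2), I first check that $\Demprod(\br) = \Demprod(\brp)$, since the first letter contributes $s_{i_1}^- \ast s_{i_1}^+ = s_{|i_1|}$ regardless of sign; so $u \leq \br \iff u \leq \brp$. The backwards PDS agrees on positions $1, \ldots, m$, and at position $1$ the condition $\up{0} = \id$ reduces in both words to $\up{1} \in \{\id, s_i\}$ (whether one applies $\demL s_i$ on the right for $\br$ or $s_i \demR$ on the left for $\brp$). Hence $\Jo = \Jop$. Applying the same reasoning to the APS framework of \cref{sec:APS} shows that the mutability condition $\vd{0} = \id$ is equivalent in both words to $\vd{1} \in \{\id, s_i\}$, so $\Jmut = \Jop^{\mut}$.

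The main difficulty is to show that for mutable $c, d$, the intersection number $\<\Cycle_c, \Cycle_d\>$ is the same in $\br$ and $\brp$. I would split into two cases. If position $1$ is hollow ($\up{1} = s_i$), smooth propagation at a hollow crossing preserves emptiness of monotone multicurves, so $\bgamma^{c,0} = \emptyset$ forces $\bgamma^{c,1} = \emptyset$ for any mutable $c$. Thus $\Disk_c$ lies entirely in the slab $t \geq 1$, where $\Gubr$ and $\Gubrp$ literally coincide, and the mutable cycles together with all their intersections are identical. If position $1$ is solid ($\up{1} = \id$), then $c = 1$ is frozen (not in $\Jmut$), and for any mutable $c \geq 2$ the multicurves $(\bgamma^{c,k})_{k \geq 1}$ agree in both words. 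Since the only dots of $\PD(\up{1}) = \PD(\id)$ that can be endpoints of a monotone curve cut-and-eliminated at position $1$ are $(1,1)$ and $(2,2)$, $\Cycle_c$ either avoids $\bridge_1$ entirely or enters and exits it via strand half-edges on the time-$\geq 1$ side. Under the move, the vertex colors of $\bridge_1$ swap and both cyclic orientations reverse; a direct local application of \cref{alg:int_form} (or equivalently the half-arrow rule of \cref{sec:half-arrow-descr}, noting that positive and negative crossings produce half-arrows in opposite directions) shows that the contributions to $\<\Cycle_c, \Cycle_d\>$ at the two vertices of $\bridge_1$ flip signs simultaneously and cancel. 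Hence all mutable-to-mutable intersection numbers are preserved, and the mutable part of $\Qubr$ is invariant under \bmref{bm5}.
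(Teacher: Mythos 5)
Part (1) of your proposal is correct and is essentially the paper's argument, worked out in somewhat more detail. The key facts — that the $u$-PDS agrees for $\br_0 i$ and $\br_0(-i^*)$ when $u=w_0$ (using $s_{i^*}w_0=w_0s_i$), that crossing $m$ is hollow in both words, and that no bridge or relative cycle enters the slab $t\in[m-1,m]$ — give the conclusion. The paper phrases this through the subgraph $\Gubar$, but the content is the same.

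For part (2), your reductions are sound: $\Demprod(\br)=\Demprod(\brp)$, the PDS and APS agree for $c\geq 1$, hence $\Jo=\Jop$ and $\Jmut=\Jop^{\mut}$, and the case split into hollow versus solid position $1$ is the right one. (Small typo: the dots of $\PD(\id)$ involved are $(i,i)$ and $(i+1,i+1)$, not $(1,1)$ and $(2,2)$.) The hollow subcase is handled exactly as in the paper. The gap is in the final step of the solid subcase. The claim that ``the contributions to $\<\Cycle_c,\Cycle_d\>$ at the two vertices of $\bridge_1$ flip signs simultaneously and cancel'' is not justified and is also not the right framing. In \cref{alg:int_form} the contribution of a shared path $P$ is governed solely by the neighborhoods of the two endpoints of $P$; for two mutable cycles both through $\bridge_1$, each uses only the bridge half-edge and the strand half-edge on the $t>\tfrac12$ side at both $p$ and $q$ (the other strand half-edge leads to a marked point, which mutable cycles avoid), so $p$ and $q$ are interior vertices of $P$ and contribute $0$ — they do not ``flip'' or ``cancel.'' More importantly, before one can invoke \cref{alg:int_form} at all one must know that the underlying surface has not changed; the color swap at $p,q$ changes the ribbon data, so this needs a separate argument. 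What the paper does is exactly this: it shows $\Subrp$ is obtained from $\Subr$ by a contraction--uncontraction move \cumv together with a relabeling of marked points $i\leftrightarrow i+1$ (cf.\ \cref{fig:hard-moves-1}); since \cumv preserves the surface and the relabeling does not touch anything to the right of $\PD(\up0)$, the intersection form on mutable cycles is unchanged. Alternatively, one could make your route rigorous by first establishing the surface invariance and then noting the endpoints of each relevant shared path lie strictly to the right of $\bridge_1$. As written, that last step of your part~(2) does not go through.
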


\begin{proof}
  First, let us consider the move~\bmref{bm4}. Since $u=\wo$, the index $m$ must be hollow. There are no relative cycles passing through the corresponding strands of $\Gubr$, since they only pass through the graph $\Gubar$ from \cref{sec:3D_plabic}. This verifies the first part of the proposition.

  Now, consider the move~\bmref{bm5}. Without loss of generality, assume that $i:=i_1>0$. Since we only care about the mutable part of $\Qubr$, we may disregard all frozen indices, i.e., indices $c\in\Jo$ such that the monotone multicurve $\bgamma\pu{c,0}$ is nonempty. The index $1$ is either hollow or frozen. For any mutable index $d\in\Jo$, if $\bgamma\pu{d,1}$ is empty then the intersection number $\<\Cycle_c,\Cycle_d\>$ clearly stays unchanged under~\bmref{bm5} for all $c\in\Jo$. Let $c\in\Jo$ be a mutable index such that the monotone multicurve $\bgamma\pu{c,1}$ is nonempty (but $\bgamma\pu{c,0}$ is empty). Then the index $1$ must be solid (and therefore frozen). Using a contraction-uncontraction move similar to the one in \cref{fig:hard-moves-1}, we see that the marked surface $\Subrp$ (where $\brp$ is obtained by applying~\bmref{bm5} to $\br$) is obtained from $\Subr$ by swapping the labels of the marked points $i$ and $i+1$.
 Since this swap does not affect the part of $\Subr$ to the right of $\PD(\upu0)$, we see that for any two mutable relative cycles $\Cycle_c,\Cycle_d$ that pass through the bridge $\bridge_1$, their intersection number is unchanged under~\bmref{bm5}. This verifies the second part of the proposition.
\end{proof}

\section{Cluster algebras associated to 3D plabic graphs}\label{sec:cluster+3D}
The goal of this section is to show that the cluster algebra defined by $\Qubr$ is \emph{locally acyclic}~\cite{Mul} and \emph{really full rank}~\cite{LS}.

\subsection{Background on cluster algebras}\label{sec:cluster}
We briefly recall the definition of a cluster algebra and related concepts.

Recall from \cref{sec:intro:quiver} the definition of an ice quiver $\Qice$, with vertex set $\Vice=\Vmut \sqcup \Vfro$. We say that $\Qice$ is \emph{isolated} if its mutable part has no arrows. For a set $S\subset \Vmut$, let $\Qice[S]$ denote the ice quiver obtained from $\Qice$ by further declaring all vertices in $S$ to be frozen. We write $\Qice-S$ for the ice quiver obtained from $\Qice$ by removing the vertices in $S$.

The associated \emph{(extended) exchange matrix} $\BQice=(b_{v, w})_{v \in \Vice, w \in \Vmut}$ is defined by 
\begin{equation*}
	b_{v,w}=\#\{\text{arrows $v\to w$ in $\Qice$}\} - \#\{\text{arrows $w\to v$ in $\Qice$}\}
\end{equation*}
\begin{definition}\label{dfn:really_fr}
We say that $\Qice$ is \emph{really full rank} if the rows of its exchange matrix span $\Z^{|\Vmut|}$ over~$\Z$.
\end{definition}

Let $\Fcal \cong \CC(t_1, \dots, t_r)$ be isomorphic to the field of rational functions in $r$ algebraically independent variables. A \emph{seed} in $\Fcal$ is a pair $\Sigma =(\x, \Qice)$ where $\Qice$ is an ice quiver with $r$ vertices $\Vice=\Vmut \sqcup \Vfro$ and $\x=\{x_v\}_{v \in \Vice}$ is a transcendence basis of $\Fcal$. The tuple $\x$ is the \emph{cluster}, its elements are \emph{cluster variables} and $x_v$ is \emph{mutable} if $v \in \Vmut$ and \emph{frozen} otherwise. 

Given a seed $(\x, \Qice)$ in $\Fcal$ and $d \in \Vmut$, one can \emph{mutate} in direction $d$ to obtain a new seed $\mu_d(\x, \Qice)=(\x', \mu_d(\Qice))$. The quiver of the mutated seed is the mutation of $\Qice$ in direction $d$ (see \cref{defn:quiverMutation}). The cluster $\x'=\{x'_v\}_{v \in \Vice}$ of the new seed satisfies $x'_v = x_v$ for $v \neq d$ and 
$$
x'_d = \frac{\prod_{v \to d} x_v + \prod_{d \to v} x_v}{x_d} \in \Fcal.
$$
By repeatedly mutating, we generate (possibly infinitely) many seeds and cluster variables. 
We denote by $\A(\x, \Qice)$ the \emph{cluster algebra} associated to the seed $(\x, \Qice)$. This is the $\C$-subalgebra of $\Fcal$ generated by all cluster variables and the inverses of frozen variables. We call $\A(\x, \Qice)$ \emph{isolated} (resp., \emph{really full rank}) if $\Qice$ is.

We denote by $\U(\x, \Qice)$ the \emph{upper cluster algebra} associated to the seed $(\x, \Qice)$; see~\cite{BFZ}. It is given by
\[\U(\x, \Qice)= \bigcap_{(\x', \Qice')} \CC[(x_v')^{\pm1}: v \in \Vice] \subset \Fcal,\]
where the intersection is taken over all seeds $(\x', \Qice')$ which can be obtained from $(\x, \Qice)$ by a sequence of mutations.

Note that we have isomorphisms $\A(\x, \Qice)\cong \A(\mathbf{y}, \Qice)$ and $\U(\x, \Qice)\cong \U(\mathbf{y}, \Qice)$ for any two clusters $\x, \mathbf{y}$.
 Thus, we may occasionally write $\A(\Qice)$ and $\U(\Qice)$ if the particular choice of initial cluster does not matter.

In general, we have $\A(\x, \Qice) \subseteq \U(\x, \Qice)$, and the containment may be strict. However, we have equality if $\A(\x, \Qice)$ is \emph{locally acyclic}, a property introduced by Muller \cite{Mul}. We need a few definitions before defining local acyclicity; we follow the presentation of \cite{MullerLA2}. 

Let $\A(\x, \Qice)$ be a cluster algebra and let $S \subset \Vmut$. Then the cluster algebra $\A(\x, \Qice[S])$ obtained by freezing $S$ is a \emph{cluster localization} of $\A(\x, \Qice)$ if
\begin{equation}\label{eq:clusterloc} \A(\x, \Qice[S])= \A(\x, \Qice)[x_v^{-1}: v \in S].\end{equation}
(In general, the left-hand side is contained in the right.)  Lemma 4.3 of \cite{Mul} states that if $\A(\x, \Qice[S])=\U(x, \Qice[S])$, then \eqref{eq:clusterloc} automatically holds.

A collection of cluster localizations $\{\A_i\}$ of a cluster algebra $\A$ is a \emph{cover} if for every prime ideal $P$ of $\A$, there is some $\A_i$ such that $\A_i P \subsetneq \A_i$.
A cluster algebra is \emph{locally acyclic} if it has a cover by isolated cluster algebras. We say that $\Qice$ is \emph{locally acyclic} if $\A(\Qice)$ is.
The motivation for this definition is that, in this case, the collection of varieties $\text{Spec} \A_i$ forms an open cover of $\text{Spec} \A$.

We will use the following facts about locally acyclic cluster algebras.

\begin{proposition}[{\cite[Proposition 3.10]{Mul}}]\label{prop:locAcycMutableEnough}
	Let $\Qice$ be an ice quiver and let $Q$ denote its mutable part, as usual. Then $\A(\Qice)$ is locally acyclic if and only if $\A(Q)$ is locally acyclic.
\end{proposition}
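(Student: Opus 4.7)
The plan is to use the fact that local acyclicity is an intrinsic property of the mutable part of the quiver, detected by the existence of an ``isolated cover'' by cluster localizations. Since this is cited as~\cite[Proposition~3.10]{Mul}, my proof proposal is to describe the ingredients Muller assembles.

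First, I would check that the operation of passing to the mutable part commutes with both mutation and with freezing additional mutable vertices. For any $d \in \Vmut$, the mutable part of $\mu_d(\Qice)$ equals $\mu_d(Q)$: the only arrows that $\mu_d$ creates lying inside the mutable part are those $c \to e$ arising from paths $c \to d \to e$ with both $c,e \in \Vmut$, which are exactly the arrows produced by mutating $Q$ at $d$. Similarly, for $S \subset \Vmut$, the mutable parts of $\Qice[S]$ and $Q[S]$ coincide as quivers on $\Vmut \setminus S$. Consequently, mutation/freezing sequences applied to $\Qice$ are in natural bijection with those applied to $Q$, and the property of being \emph{isolated} (no arrows in the mutable part) is manifestly detected by this common mutable part.

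Next, I would establish that a family of cluster localizations $\{\A(\Qice[S_i])\}$ covers $\A(\Qice)$ if and only if the corresponding family $\{\A(Q[S_i])\}$ covers $\A(Q)$. The key point is that $\A(\Qice)$ is obtained from $\A(Q)$ essentially by adjoining the Laurent polynomials $\C[x_f^{\pm 1}: f \in \Vfro]$ — more precisely, by a principal coefficient-type extension in which the frozen variables act as invertible parameters. Under such an extension, prime ideals of $\A(\Qice)$ pull back from prime ideals of $\A(Q)$ in a way that preserves the defining condition $\A_i P \subsetneq \A_i$ of a cover, since inverting or leaving alone frozen variables neither creates nor destroys the relevant containments. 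Combining this with the first step yields the two-way implication: an isolated cover of $\A(\Qice)$ descends to one for $\A(Q)$, and conversely lifts from $\A(Q)$ to $\A(\Qice)$.

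The main obstacle in executing this carefully is the bookkeeping for the prime-ideal correspondence under coefficient extension: one has to confirm that freezing commutes with localization in the precise way required by Muller's definition of cluster localization, and that the ``prime avoidance'' condition for a cover transports along the inclusion $\A(Q) \hookrightarrow \A(\Qice)$ (after suitably identifying ambient fields). This is exactly the content Muller formalizes in his proof, so once that machinery is invoked, the equivalence follows.
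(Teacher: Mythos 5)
The paper does not actually prove this statement: it cites it directly as \cite[Proposition~3.10]{Mul}, so there is no ``paper's proof'' to compare against. Evaluating your proposal on its own merits, the first step is fine: passing to the mutable part does commute with both mutation at a mutable vertex and with freezing a subset of $\Vmut$, and this is elementary. The problem is in your second step.

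The claim that ``$\A(\Qice)$ is obtained from $\A(Q)$ essentially by adjoining the Laurent polynomials $\C[x_f^{\pm 1}: f\in\Vfro]$'' is false, and this is not a hedging issue one can smooth over. The exchange relations of $\A(\Qice)$ involve the frozen variables, so its cluster variables are genuinely different rational functions than those of $\A(Q)$; in particular there is no natural ring inclusion $\A(Q)\hookrightarrow\A(\Qice)$, and $\A(\Qice)$ is not a localization of $\A(Q)\otimes_\C\C[x_f^{\pm1}]$. A minimal example: let $\Qice$ have one mutable vertex $v$ and one frozen vertex $f$ with a single arrow $f\to v$. Then $\A(Q)=\C[x_v,x_v']/(x_vx_v'-2)\cong\C[x_v^{\pm1}]$, so $\A(Q)[x_f^{\pm1}]$ is the coordinate ring of a $2$-torus, while $\A(\Qice)=\C[x_f^{\pm1},x_v,x_v']/(x_vx_v'-x_f-1)$ is the coordinate ring of a family over $\Cx$ whose fiber over $x_f=-1$ is singular; the two are not isomorphic. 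Consequently, the prime-ideal transport argument you envision does not have a map to transport along, and the equivalence of the two covering conditions does not reduce to ``inverting parameters.'' What actually makes Muller's proposition true is that local acyclicity admits a characterization that depends only on the mutable quiver --- Muller verifies the covering condition via covering pairs, where the nonvanishing assertion (as in \cite[Lemma~5.3]{Mul}, invoked in the proof of \cref{prop:sinkrecImpliesLocAcyc} in this paper) is proved from the combinatorics of the mutable part and applies uniformly across choices of frozen vertices --- rather than any direct ring-theoretic comparison of $\A(\Qice)$ with $\A(Q)$.
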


\begin{theorem}[{\cite[Theorem 4.1]{Mul}}]
	If $\A(\x, \Qice)$ is locally acyclic, then $\A(\x, \Qice)= \U(\x, \Qice)$.
\end{theorem}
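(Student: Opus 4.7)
The plan is to follow a two-step reduction: first handle the isolated case directly, then use the cover structure to bootstrap to the general case.

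In the isolated case, $\Qice$ has no arrows between mutable vertices, so each exchange relation $x_d x_d' = M_d^+ + M_d^-$ involves only monomials in frozen variables. Mutations at distinct mutable vertices therefore commute, each mutable variable has only two possible states (original or mutated), and the entire mutation class contains exactly $2^{|\Vmut|}$ seeds. One then computes $\U(\x,\Qice)$ directly from the defining intersection over this finite collection of seeds: the Laurent phenomenon gives $\A \subseteq \U$, and conversely any element of $\U$, being a Laurent polynomial in each of the finitely many clusters, can be shown using the explicit binomial exchange relations to lie in $\A$.

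For the general case, let $\{\A_i = \A(\x, \Qice[S_i])\}$ be a cover of $\A$ by isolated cluster algebras. By the isolated case each $\A_i$ coincides with $\U_i := \U(\x, \Qice[S_i])$. Freezing additional mutable vertices restricts the family of seeds appearing in the upper cluster algebra intersection, so $\U \subseteq \U_i$ for every $i$. Since each $\A_i$ is moreover a cluster localization, $\A_i = \A[x_v^{-1} : v \in S_i]$, and we obtain
\begin{equation*}
\U \;\subseteq\; \bigcap_i \A[x_v^{-1} : v \in S_i].
\end{equation*}

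The main obstacle will be verifying that this intersection equals $\A$ itself. Here the cover hypothesis is exactly what is needed: the condition that every prime $P \subset \A$ satisfies $\A_i P \subsetneq \A_i$ for some $i$ is precisely the statement that the maps $\Spec \A_i \to \Spec \A$ jointly cover $\Spec \A$. Standard sheaf-theoretic gluing for the structure sheaf on an affine scheme then identifies $\A$ with the equalizer of the natural maps $\prod_i \A_i \rightrightarrows \prod_{i,j} \A_i \otimes_\A \A_j$, and in particular $\A = \bigcap_i \A_i$ inside $\Fcal$. Combined with the reverse containment $\A \subseteq \U$ from the Laurent phenomenon, this yields $\A = \U$.
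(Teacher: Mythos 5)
The paper does not reprove this statement; it simply cites it as \cite[Theorem~4.1]{Mul}. Your proposal is a reconstruction of Muller's own argument, and the outline is correct. A few remarks on the details.

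Your base case (isolated quivers) is the step that is most hand-waved. The claim that each mutable variable in an isolated quiver has exactly two states and that mutations commute is correct: since there are no arrows between mutable vertices, the exchange monomials $M_d^\pm$ involve only frozen variables, mutating at $e\neq d$ cannot alter any arrow incident to $d$, and mutating twice at $d$ returns $x_d$. The assertion that every element of $\U$ then lies in $\A$ is true but genuinely requires an argument: one must show that an element lying in all $2^{|\Vmut|}$ Laurent rings has, for each mutable $d$, the property that the coefficient of $x_d^{-k}$ (as a Laurent polynomial in the initial cluster) is divisible by $(M_d^+ + M_d^-)^k$, and then iterate over $d$ using independence of the exchange relations. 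This is precisely the rank-one computation of Berenstein--Fomin--Zelevinsky applied coordinate by coordinate. You state the conclusion but do not carry out this verification; in a complete proof it should appear.

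The reduction to the cover is correct. The containment $\U(\x,\Qice)\subseteq \U(\x,\Qice[S_i])$ holds because the set of clusters reachable from $(\x,\Qice[S_i])$ is a subset of those reachable from $(\x,\Qice)$ (exchange relations at vertices outside $S_i$ are unaffected by freezing $S_i$), so the upper cluster algebra for the freezing intersects over fewer rings. The identity $\A = \bigcap_i \A_i$ inside $\Fcal$ is also correct, and your sheaf-theoretic phrasing is a clean way to see it. Since each $\A_i = \A[x_v^{-1}:v\in S_i]$ is a localization of the domain $\A$, the maps $\Spec\A_i\to\Spec\A$ are open immersions, the cover condition $\A_i P\subsetneq\A_i$ for some $i$ says these open sets cover $\Spec\A$, and the sheaf axiom for $\mathcal{O}_{\Spec\A}$ gives the equalizer. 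Muller's own write-up phrases this same fact via a prime-ideal argument (showing that the conductor $\{a\in\A: af\in\A\}$ of any $f\in\bigcap_i\A_i$ cannot be proper), which is the algebraic translation of your geometric gluing; both are valid. Combined with $\A\subseteq\U$ from the Laurent phenomenon, your chain $\U\subseteq\bigcap_i\U_i=\bigcap_i\A_i=\A\subseteq\U$ closes correctly.
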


One frequently studied class of locally acyclic quivers are \emph{Louise} quivers \cite{LS}. We focus instead on \emph{\sinkrec} quivers, defined below, which may not be Louise but are locally acyclic. In the next subsection, we show that our quivers of interest, $\Qice_{u,\br}$, are \sinkrec (see \cref{thm:sinkrec}).

Let $Q$ be a quiver with no frozen vertices. A vertex $s$ of $Q$ is a \emph{sink} if it has no outgoing arrows. We let $\NtQ$ be the set of vertices of $Q$ that have an arrow pointing to $s$. The following notion is analogous to the class of \emph{leaf-recurrent quivers} from~\cite[Section~5.4]{GL_plabic}; see also~\cite[Remark~5.14]{GL_plabic}.
\begin{definition}\label{def:sinkrec}
	The class of \emph{\sinkrec quivers} is defined recursively as follows.
	\begin{itemize}
		\item Any isolated quiver $Q$ is \sinkrec.
		\item Any quiver that is mutation equivalent to a \sinkrec quiver is \sinkrec.
		\item Suppose that a quiver $Q$ has a sink vertex $s$ such that the quivers $Q-\{s\}$ and $Q-(\NtQ\cup\{s\})$ are \sinkrec. Then $Q$ is \sinkrec.
	\end{itemize}
\end{definition}
The above definition refers to mutable quivers (without frozen vertices). We say that an ice quiver $\Qice$ is \emph{\sinkrec} if its mutable part $Q$ is \sinkrec. 

\begin{proposition}\label{prop:sinkrecImpliesLocAcyc}
	If $\Qice$ is a \sinkrec quiver, then $\A(\x, \Qice)$ is locally acyclic.
\end{proposition}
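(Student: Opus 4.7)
The plan is to induct on the recursive construction of sink-recurrent quivers in \cref{def:sinkrec}. By \cref{prop:locAcycMutableEnough}, it suffices to treat the case where $\Qice$ has no frozen vertices, so write $Q := \Qice$. The base case (isolated $Q$) is immediate, since $\A(Q)$ is already an isolated cluster algebra and covers itself. The mutation-equivalent case is also immediate: $\A(Q) \cong \A(Q')$ as cluster algebras, and local acyclicity transfers. The substance is the third case: suppose $Q$ has a sink $s$ such that $Q - \{s\}$ and $Q - (\NtQ \cup \{s\})$ are sink-recurrent, hence locally acyclic by induction, and write $N := \NtQ$.

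I will exhibit a two-element cover of $\A(Q)$ by locally acyclic cluster localizations. For the first piece, mutate at $s$ and freeze: because $s$ is a sink in $Q$, mutation at $s$ introduces no new arrows and only reverses those incident to $s$, so $\mu_s(Q) - \{s\} = Q - \{s\}$, which is locally acyclic. Therefore $\A(\mu_s(Q)[s])$ is locally acyclic by induction on its mutable part, hence equals its upper cluster algebra; Muller's cluster-localization criterion \cite[Lemma 4.3]{Mul} then identifies it with $\A_1 := \A(Q)[(x_s')^{-1}]$, where $x_s' := \mu_s(x_s)$. For the second piece, freeze $N$: in $Q[N]$, the vertex $s$ is mutable but isolated (every arrow at $s$ pointed from a vertex of $N$), so the mutable part of $Q[N]$ decomposes as $\{s\} \sqcup \bigl(Q - (N \cup \{s\})\bigr)$. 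Adjoining an isolated vertex to a sink-recurrent quiver keeps it sink-recurrent (applying \cref{def:sinkrec} at $s$), so this mutable part is locally acyclic by induction. The same reasoning identifies $\A(Q[N])$ with $\A_2 := \A(Q)[x_v^{-1} : v \in N]$.

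To verify that $\{\A_1, \A_2\}$ covers $\A(Q)$, apply the exchange relation at the sink $s$: because $s$ has no outgoing arrows, the empty product convention gives
\[ x_s\, x_s' \;=\; \prod_{v \in N} x_v \;+\; 1. \]
If a prime $P \subset \A(Q)$ contains $x_s'$, then $\prod_{v \in N} x_v \equiv -1 \pmod{P}$, forcing every $x_v$ with $v \in N$ to be a unit modulo $P$; in particular $x_v \notin P$ for all $v \in N$. Hence for every prime $P$, either $\A_1 P \subsetneq \A_1$ or $\A_2 P \subsetneq \A_2$, which is the cover condition. Composing with the isolated covers of $\A_1$ and $\A_2$ supplied by induction yields an isolated cover of $\A(Q)$, proving local acyclicity.

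The main obstacle is orchestrating the second localization: the clause of \cref{def:sinkrec} requiring $Q - (\NtQ \cup \{s\})$ (rather than merely $Q - \{s\}$) to be sink-recurrent is exactly what makes $\A_2 = \A(Q[N])$ have a locally acyclic mutable part. Once both pieces are in hand, the exchange relation at the sink is rigged perfectly to supply the cover condition.
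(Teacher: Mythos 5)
Your proof is correct and follows the same inductive decomposition as the paper, covering $\A(Q)$ by localizing at the sink $s$ and at its in-neighbor set $N = \NtQ$. The two small deviations are cosmetic: you mutate at $s$ before freezing, which is unnecessary since freezing $s$ directly already yields the mutable part $Q - \{s\}$ (this is what the paper does), and you verify the cover condition via the explicit sink exchange relation $x_s x_s' = \prod_{v\to s} x_v + 1$ rather than by citing Muller's covering-pair lemma \cite[Lemma~5.3]{Mul}.
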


\begin{proof}
If $\Qice$ is isolated, we are done. Otherwise, since local acyclicity is a property of the cluster algebra rather than the quiver, we may assume that $Q$ has a sink $s$ so that $Q-\{s\}$ and $Q-(\NtQ\cup\{s\})$ are \sinkrec (by mutating if necessary).  Consider the freezings $\A_1=\A(\x, Q[s])$ and $\A_2=\A(\x, Q[\NtQ])$. The mutable parts of $Q[s]$ and $Q[\NtQ]$ are $Q-\{s\}$ and $Q-\NtQ$, respectively, which are both \sinkrec, the former by definition, and the latter because $Q-\NtQ$ differs from the \sinkrec quiver $Q-(\NtQ \cup \{t\})$ by an isolated vertex. So $\A_1$ and $\A_2$ are locally acyclic by induction, and thus are cluster localizations.

Let $t$ be any neighbor of $s$.  Then the pair $(s,t)$ forms a \emph{covering pair} in the sense of \cite{Mul}, and by \cite[Lemma 5.3]{Mul} it follows that $x_s$ and $x_t$ cannot simultaneously vanish on $\A(\Qice)$.  It follows that $\A_1$ and $\A_2$ cover $\A$.  	The union of the covers of $\A_1$ and $\A_2$ by isolated cluster algebras is a cover of $\A(\x, Q)$ by isolated cluster algebras. %
\end{proof}

\subsection{Local acyclicity}
Our goal is to show the following result, which by \cref{prop:sinkrecImpliesLocAcyc} implies that the cluster algebra $\A(\x, \Qice)$ is locally acyclic.

\begin{theorem}\label{thm:sinkrec}
For any $u\leq \br$, the ice quiver $\Qubr$ is \sinkrec.
\end{theorem}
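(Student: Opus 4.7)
\emph{Plan.} I would prove this by induction on the number of solid crossings $|\Jo| = \ell(\br) - \ell(u)$. The base case $|\Jo| = 0$ produces a quiver with no vertices at all, which is trivially isolated and hence sinkrec by definition.

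For the inductive step, the flexibility afforded by \cref{thm:invariance} together with the mutation-invariance clause of \cref{def:sinkrec} lets me replace $(u,\br)$ by any double-braid equivalent pair without changing whether $\Qubr$ is sinkrec. Moreover, \cref{prop:bm5_bm4} allows me to change the color of the leftmost letter without altering the mutable part $\Qubrmut$, and by \cref{rmk:u=w0} I may assume $u = \wo$. The strategy is then to (a) identify a sink $s$ in the mutable part of some quiver in the mutation class of $\Qubr$, and (b) verify that both $Q - \{s\}$ and $Q - (\NtQ \cup \{s\})$ are sinkrec by induction.

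My candidate for a sink is the leftmost mutable index $s := \min(\Jmut)$. Since $s$ is mutable, the monotone multicurve $\bgamma\pu{s,0}$ is empty, so $\Cycle_s$ is a closed loop confined to a small region near the left end of $\Gubr$. After applying non-mutation braid moves (using \bmref{bm1}--\bmref{bm3} in their non-mutation incarnations of \cref{sec:nonMutQuiver}) to push the bridge $\bridge_s$ as close to the left boundary as possible, $\Cycle_s$ becomes a short loop around $\bridge_s$. I would then read off from \cref{alg:int_form}, or equivalently from the half-arrow description of \cref{prop:half_arr}, that every signed intersection $\langle \Cycle_s, \Cycle_d\rangle$ for $d \in \Jmut \setminus \{s\}$ is $\leq 0$: any half-arrow contribution attached at the endpoints of $\bridge_s$ points \emph{into} $s$ because $\Cycle_s$ bounds a disk with $\bridge_s$ on its boundary and no other bridge interior to that disk. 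This would establish that $s$ is a sink of the mutable part of (the possibly mutated) $\Qubr$.

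Once the sink is in hand, I would interpret $Q - \{s\}$ and $Q - (\NtQ \cup \{s\})$ as smaller instances of the same construction. Deleting $s$ corresponds to removing the bridge $\bridge_s$ from $\Gubr$, which (after recomputing the PDS via~\eqref{eq:PDS_recursion}) yields a pair $(u',\br')$ with $|\Jo| - 1$ solid crossings, bringing us into the inductive hypothesis. Deleting $\NtQ$ in addition amounts to freezing those mutable bridges $d$ whose cycle $\Cycle_d$ satisfies $\langle \Cycle_d, \Cycle_s\rangle > 0$; geometrically these are the bridges sharing an endpoint with $\bridge_s$ on the appropriate side, and freezing them should, after further braid moves, reduce to a pair with strictly fewer mutable solid crossings, where induction again applies. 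The \textbf{main obstacle} will be this last reduction: giving a clean geometric description of $\NtQ$ in terms of the 3D plabic graph, and matching the freezing operation to a precise modification of $(u,\br)$ (likely by deleting or freezing a short block of crossings adjacent to $\bridge_s$) so that the mutable part of the resulting ice quiver coincides with the quiver of that smaller instance. This is where the careful combinatorics of the propagation rules in \cref{fig:propag}, together with the analysis already used in \cref{sec:inv_mut_bm1,sec:inv_mut_bm3}, will be indispensable.
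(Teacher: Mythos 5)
Your induction and the idea of locating a sink near the left boundary match the paper's strategy, but the mechanism for getting there has a gap. You propose to take $s = \min(\Jmut)$ and ``push $\bridge_s$ as close to the left boundary as possible'' using only non-mutation moves, so that $\Cycle_s$ becomes a short loop. This is not clearly achievable: the braid moves needed to transport a given bridge across the word will in general be mutation moves (fully-solid \bmref{bm3} or solid-special \bmref{bm1}), and non-mutation moves alone need not suffice. The paper sidesteps this entirely. After reducing to $u = w_0$ with all letters positive, it observes that $\br$ must be non-reduced, hence contains a repeated letter $ii$; applies commutation/braid moves to write $\br = \br_1 ii\br_2$; then uses the \emph{conjugation move}~\eqref{eq:rotn} --- a composition of \bmref{bm5}, \bmref{bm1}, \bmref{bm4} --- to cyclically rotate to $ii\br_2\br_1^\ast$, and one more \bmref{bm5} to reach $(-i)i\br_2\br_1^\ast$. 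Only then is vertex $d=2$ shown to be a sink (via the signature analysis on the square $psrq$). The existence of the repeated letter and the conjugation trick are the crucial ingredients your plan is missing; picking $\min(\Jmut)$ and moving it is a different, unjustified normalization.

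The second gap is the one you flag yourself. Your claim that ``deleting $s$ corresponds to removing the bridge $\bridge_s$ from $\Gubr$'' is not how the reduction works: removing the sink's own bridge would change every cycle passing through it, not just delete the vertex $s$. In the paper, after normalizing to $\br = (-i)i\br_2\br_1^\ast$, one passes to $\brp = i\br_2\br_1^\ast$ by deleting the \emph{other} bridge $\bridge_1$ (the frozen letter $-i$); this makes the former sink cycle $\Cycle_2$ lose its closed-loop status, so $\Qubrpmut = \Qubrmut - \{d\}$. Passing further to $\brpp = \br_2\br_1^\ast$ (deleting both letters) is exactly what realizes $Q - (\NtQ\cup\{d\})$, since the mutable cycles that previously ran through $\bridge_2$ now hit marked endpoints and become frozen. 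You correctly identify that matching $\NtQ$ to a combinatorial modification of $(u,\br)$ is the ``main obstacle,'' but the resolution is not to freeze adjacent bridges---it is to delete the two leading letters and use the frozen/mutable dichotomy of relative cycles. Also note the paper's necessary case split on whether $i_2$ is solid or hollow (in the hollow case the double bridge collapses and $\Qubrmut = \Qubrpmut$ directly), which your proposal omits.
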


\begin{proof}
We proceed by induction on the number $\ell(\br)-\ell(u)$ of vertices of $\Qubr$. For the base case $\ell(\br)=\ell(u)$, we see that $\br$ is a reduced word for $u$ and all crossings are hollow, and thus $\Qubr$ is an empty quiver. Let us now assume that $\ell(\br)>\ell(u)$.

By \cref{rmk:u=w0}, it suffices to consider the case $u=\wo$. Applying the moves~\bmref{bm1} and~\bmref{bm4}, we may assume that all indices in $\br$ are positive. Then, assuming $\br=i\br_0$, we can 
 transform $\br$ into
\begin{equation}\label{eq:rotn}
  \br=i\br_0\xrightarrow{\text{\bmref{bm5}}} (-i)\br_0 \xrightarrow{\text{\bmref{bm1}}} \cdots \xrightarrow{\text{\bmref{bm1}}} \br_0(-i) \xrightarrow{\text{\bmref{bm4}}} \br_0 i^\ast.
\end{equation}
We call the operation~\eqref{eq:rotn} \emph{the conjugation move}.

Since $u=\wo$ and $\ell(\br)>\ell(u)$, the braid word $\br$ must be non-reduced. Then, after applying the moves~\bmref{bm2}--\bmref{bm3}, we may transform $\br$ into a word of the form $\br_1ii\br_2$, for two positive braid words $\br_1,\br_2$. Applying conjugation moves~\eqref{eq:rotn}, we may further transform it into the word $ii\br_2\br_1^\ast$, where $\br_1^\ast$ is obtained from $\br_1$ by applying the map $i\mapsto i^\ast$ to each index. Applying one more move~\bmref{bm5}, we obtain the word $(-i)i\br_2\br_1^\ast$ which we still denote by $\br$. 

By the same argument as in \cref{sec:non-mut-bm1-spec}, we get that the crossing $i_1=-i$ is solid. Let $\brp:=i\br_2\br_1^\ast$ be obtained by omitting $-i$ from $\br$. Since $i_1$ is solid, we have $u\leq\brp$, and by the induction hypothesis, the quiver $\Qubrpmut$ is \sinkrec.

Suppose first that the crossing $i_2=i$ is hollow. It is easy to check from the propagation rules in \cref{fig:propag} that no mutable relative cycle passes through the bridge $\bridge_1$. In particular, we see that $\Qubrmut=\Qubrpmut$, which we know is \sinkrec by induction.

Suppose now that both crossings $i_1=-i$ and $i_2=i$ are solid. The graph $\Gubr$ has a square $psrq$ formed by the two corresponding bridges as in \figref{fig:sq-surf}(left). The index $d:=2$ is mutable and the relative cycle $\Cycle_d$ passes through the vertices of the square in the counterclockwise direction. Any other mutable relative cycle $\Cycle_c$ satisfying $\<\Cycle_c,\Cycle_d\>\neq0$ must have signature $sp$ (i.e., pass through the bridge $\bridge_2$ in the direction opposite to $\Cycle_d$). In particular, we see that $\<\Cycle_c,\Cycle_d\>=1$ whenever $c$ is mutable and $\<\Cycle_c,\Cycle_d\>\neq0$. It follows that $d$ is a sink in $\Qubrmut$.

Let $\brpp:=\br_2\br_1^\ast$ be obtained by omitting both $-i$ and $i$ from $\br$. We still have $u\leq \brpp$, and by the induction hypothesis, the quiver $\Qubrppmut$ is \sinkrec. We have $\Qubrpmut=\Qubrmut-\{d\}$. The quiver $\Qubrppmut$ is obtained from $\Qubrmut$ by deleting $d$ together with all vertices that have an arrow pointing to $d$, since the corresponding cycles become frozen in $\Gubrpp$. The result follows.
\end{proof}

\begin{remark}
Similar reasoning has been recently used in~\cite[Proposition~7.9]{GL_plabic} to study \emph{plabic fences}, which are recovered as special cases of our construction when $u=\id$. %
\end{remark}

From \cref{thm:sinkrec} and \cref{prop:sinkrecImpliesLocAcyc}, we have the following immediate corollary.

\begin{corollary}\label{cor:locAcyclic}
	For any $u \leq \br$, the quiver $\Qubr$ is locally acyclic.
\end{corollary}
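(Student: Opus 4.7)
The plan is simply to chain together the two results just established in this subsection; the corollary is indeed immediate once \cref{thm:sinkrec} and \cref{prop:sinkrecImpliesLocAcyc} are in hand.

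More precisely, first I would invoke \cref{thm:sinkrec} to conclude that, for the given pair $(u,\br)$ with $u \leq \br$, the ice quiver $\Qubr$ is \sinkrec in the sense of \cref{def:sinkrec}. By the convention that an ice quiver is \sinkrec when its mutable part is, this gives us a \sinkrec mutable quiver associated to $(u,\br)$.

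Next I would feed this into \cref{prop:sinkrecImpliesLocAcyc}, which asserts that if $\Qice$ is \sinkrec then the cluster algebra $\A(\x,\Qice)$ is locally acyclic. Applied to $\Qice = \Qubr$ with any choice of initial cluster $\x$, this produces the desired local acyclicity of $\A(\Qubr)$. Since the convention in \cref{sec:cluster} is that the ice quiver is called locally acyclic precisely when its associated cluster algebra is, we obtain that $\Qubr$ is locally acyclic, completing the proof.

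There is essentially no obstacle here: both ingredients have already been proved in the preceding subsection, and the corollary is stated as an ``immediate'' consequence. The only thing to be careful about is book-keeping the difference between an ice quiver being locally acyclic versus its mutable part being locally acyclic, but this is handled uniformly by \cref{prop:locAcycMutableEnough} and by the fact that \cref{prop:sinkrecImpliesLocAcyc} is stated directly for ice quivers (through their mutable parts). Therefore no additional lemmas, inductions, or case analyses are needed.
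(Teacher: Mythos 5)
Your proposal is correct and matches the paper's argument exactly: the corollary is obtained by chaining \cref{thm:sinkrec} with \cref{prop:sinkrecImpliesLocAcyc}, with the ice-quiver/mutable-part bookkeeping handled by the conventions already set up in \cref{sec:cluster}. No gaps.
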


\subsection{Really full rank}\label{sec:full_rank}
Our goal is to show the following result; cf. \cref{dfn:really_fr}.
\begin{theorem}\label{prop:full_rank}
The quiver $\Qubr$ is really full rank.
\end{theorem}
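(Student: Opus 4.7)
The plan is to prove \cref{prop:full_rank} in two main stages. First I would establish the Remark following \cref{lemma:FoGo}, namely that $(\Cycle_c)_{c \in \Jo}$ is a $\Z$-basis of $\Lubr$. Second, I would combine this with a topological analysis via long exact sequences to deduce that the rows of the exchange matrix $B$ span $\Z^{|\Jmut|}$.

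For the basis property, the rank computation is straightforward: $\Subr$ deformation retracts onto the ribbon graph $\Gubar$, so $\Lubr \cong H_1(\Gubar, \MP)$; a direct count gives $n + 2|\Jo|$ vertices and $3|\Jo|$ edges in $\Gubar$ (the $|\Jo|$ bridges plus $2|\Jo|$ strand segments), and since each strand has a marked endpoint, each connected component of $\Gubar$ meets $\MP$, so the long exact sequence for the pair $(\Gubar, \MP)$ yields $\operatorname{rank}_\Z \Lubr = 3|\Jo| - (n + 2|\Jo|) + n = |\Jo|$. Given the matching rank, it suffices to show $\Z$-spanning, which I would prove by induction on $|\Jo|$. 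The key input is \cref{thm:invariance}: non-mutation braid moves preserve $(\Cycle_c)_c$ up to relabeling and mutation braid moves transform them by the unimodular operation \eqref{eq:cycle_mut}, so the basis property is invariant under braid-move equivalence. This allows us to reduce $\br$ to the standard form $\br = (-i)i\br_0$ with $i_1 = -i$ solid (as in the proof of \cref{thm:sinkrec}), where the inductive step removes one or two bridges and verifies that the newly introduced cycles are primitive in the enlarged surface.

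For the really full rank conclusion, by the perfect pairing \eqref{eq:pairing} and the basis result, $\Lubrst \cong \Z^{|\Jo|}$ via $\alpha \mapsto (\langle \Cycle_c, \alpha\rangle)_c$. Under this identification, the $d$-th column of $B$ is precisely the image of $\Cycle_d \in \Lubrst$, so really full rank is equivalent to the subgroup $\Lambda := \langle \Cycle_d : d \in \Jmut \rangle \subset \Lubrst$ being a direct summand. I would then apply the long exact sequence
\begin{equation*}
H_1(\Subr \setminus \MP) \xrightarrow{\iota} \Lubrst \xrightarrow{\partial} H_0(\partial\Subr \setminus \MP),
\end{equation*}
observing that $H_0(\partial\Subr \setminus \MP)$ is free abelian, so $L_0 := \ker\partial = \operatorname{image}(\iota)$ is automatically a direct summand of $\Lubrst$. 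Since each mutable $\Cycle_d$ is a closed interior cycle, $\Lambda \subseteq L_0$, and the problem reduces to the equality $\Lambda = L_0$.

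The main obstacle, and the heart of the proof, is establishing $\Lambda = L_0$. A rank count (showing $\operatorname{rank}(\operatorname{image}(\partial)) = |\Jfro|$, which should follow from the correspondence between frozen cycles $\Cycle_c$ and pairs of marked points on $\partial\Subr$) would give $\operatorname{rank}(L_0) = |\Jmut| = \operatorname{rank}(\Lambda)$; combined with $\Lambda \subseteq L_0$ and the direct-summand property of $L_0$, this leaves the saturation of $\Lambda$ in $L_0$ as the final step. I would establish saturation either by explicitly constructing dual elements $\beta_d \in \Lubr$ satisfying $\langle \beta_d, \Cycle_{d'}\rangle = \delta_{d,d'}$ for $d,d' \in \Jmut$ (using the local description of cycles around each bridge from the propagation rules in \cref{fig:propag}), or by running a parallel induction under the same braid-move reduction used in the basis step.
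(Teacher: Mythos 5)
Your two-stage plan — first show $(\Cycle_c)_{c\in\Jo}$ is a $\Z$-basis of $\Lubr$, then prove really full rank — follows the paper's structure, but your route through each stage differs significantly, and the second stage has genuine gaps.

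The key tool you do not use, and which the paper leans on heavily, is the bridge basis of $\Lubrst$ (the paper's \cref{lemma:bridge_basis}): the dual bridge edges $(\brast_c)_{c\in\Jo}$ form a $\Z$-basis of $\Lubrst$, proven by a short right-to-left elimination on the non-bridge edges. Once you have this, the basis property of $(\Cycle_c)_{c\in\Jo}$ in $\Lubr$ falls out immediately from the unitriangularity of the pairing matrix $\<\Cycle_c,\brast_d\>$, with no need for the Euler-characteristic rank count nor an induction; and the really-full-rank statement becomes the cleaner assertion that $(\Cycle_c)_{c\in\Jmut}$ extends to a $\Z$-basis of $\Lubrst$, which the paper establishes by the same braid-move induction you gesture at, reducing to $\br=ii\br_0$ with a \emph{same-color} double bridge. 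In that configuration the new cycle satisfies $\Cycle_2=\pm\brast_1\pm\brast_2$ and the inductive step is a one-line unimodular basis exchange. You instead reduce to the mixed-color form $(-i)i\br_0$ used in \cref{thm:sinkrec}; there the new cycle runs around a square involving strand edges as well, so the basis-exchange statement is no longer immediate, which is exactly why the paper switches to the same-color word for this proof (see the footnote there).

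Your long-exact-sequence framework is a reasonable observation — $L_0=\ker\partial$ is automatically a direct summand because $H_0(\partial\Subr\setminus\MP)$ is free, and $\Lambda\subseteq L_0$ since mutable cycles are interior — but it defers rather than resolves the two hard points. The rank identity $\operatorname{rank}(\operatorname{im}\partial)=|\Jfro|$ is asserted with a one-line heuristic (``correspondence between frozen cycles and pairs of marked points''); unwinding it leads to the combinatorial claim that $n-c+z=|\Jfro|$, where $c$ is the number of components of $\Subr$ and $z$ the number of marked-point-free boundary circles, which is true but by no means automatic and is not proved. And the saturation of $\Lambda$ in $L_0$, which you leave as an ``either/or,'' is the entire content of the theorem; the second option you offer (``parallel induction under braid-move reduction'') would, when carried out, amount to showing that $(\Cycle_c)_{c\in\Jmut}$ extends to a $\Z$-basis of $\Lubrst$, at which point $\Lambda$ is a direct summand outright and the whole LES scaffolding is unnecessary. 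So the proposal is plausible in outline but overbuilds the machinery while leaving the two load-bearing steps unestablished; the missing ingredient that would close both gaps cleanly is precisely the bridge-basis lemma.
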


We recall what we have described before: We defined $\Lubr = H_1(\Subr, \MP)$, $\Lubrst=H_1(\Subr\setminus\MP,\partial\Subr\setminus\MP)$ and $\Lmut = H_1(\Subr)$. The lattice $\Lmut$ is a sublattice of both $\Lubr$ and $\Lubrst$, and non-canonically splits off as a direct summand of either (\cref{lem:summand}).
We have a perfect pairing $\langle \cdot, \cdot \rangle: \Lubr \times \Lubrst \to \mathbb{Z}$.
For $c \in \Jo$, we have constructed relative cycles $\Cycle_c$ in $\Lubr$; if $c \in \Jmut$ then $\Cycle_c \in \Lmut$.
The matrix $\Bice$ has entries $\langle \Cycle_c, \Cycle_d \rangle$ for $c \in \Jo$ and $d \in \Jmut$. 
We want to show that the rows of this matrix (indexed by $\Jo$) span $\Z^{\Jmut}$ over~$\Z$. 

\begin{lemma} \label{lemma:bridge_basis}
The relative cycles $(\Cycle_c)_{c \in \Jo}$ are a $\Z$-basis of $\Lubr$.
\end{lemma}

\begin{proof}
Since $\Subr$ deformation retracts onto $\Gubr$, we see that $\Lubr\cong \Z^{|\Jo|}$ is a free abelian group whose rank is the number of bridges in $\Gubr$. 
Thus, it is enough to show that the cycles  $(\Cycle_c)_{c \in \Jo}$ span $\Lubr$ over $\Z$.

Let $\gamma = \sum a_e e$ be any relative cycle in $\Lubr$. Removing all bridges from $\Gubr$ leaves $n$ disjoint paths so, if $\gamma$ is nonzero, there must be a bridge $c$ with $a_c \neq 0$; let $c$ be the rightmost such bridge. Replacing $\gamma$ by $\gamma - a_c \Cycle_c$, we replace $\gamma$ by a cycle supported on bridges to the left of $c$. Continuing in this manner, we eventually obtain a cycle supported on no bridges, which must be $0$.
\end{proof}

\begin{proof}[Proof of \cref{prop:full_rank}]
If the rows of the matrix $\Bice$ spanned a proper sublattice of $\Z^{\Jmut}$, then there would be a prime $p$, and integers $\ba := (a_d)_{d \in \Jmut}$, not all $0 \bmod p$, such that the element $\Cx:=\sum_{d\in \Jmut} a_d C_d \in \Lmut\subset\Lubrst$ satisfies 
$\langle C_c, \Cx \rangle \equiv 0 \bmod p$ for all $c \in \Jo$. 
So, using \cref{lemma:bridge_basis} and the fact that the pairing $\<\cdot,\cdot\>$ is perfect, we get $\Cx \in p \Lubrst$. But $\Lmut$ is a direct summand of $\Lubrst$ by \cref{lem:summand}, and thus $\Cx \in p \Lmut$. Using the \cref{lemma:bridge_basis} again to note that the $C_d$, $d\in\Jmut$, are linearly independent modulo $p$, this shows that all the $a_d$ are $0 \bmod p$, contrary to assumption.
\end{proof}

We also take the opportunity to verify a claim made earlier in \cref{rmk:Z_basis_will_show}.
\begin{lemma} \label{lemma:mutable_completes_to_basis}
The cycles $(\Cycle_c)_{c \in \Jmut}$ can be extended to a $\Z$-basis of $\Lmut$.
\end{lemma}

\begin{proof}
By \cref{lemma:bridge_basis}, the cycles $(\Cycle_c)_{c \in \Jmut}$ are linearly independent, so it remains to check that $\Lmut/\Z \cdot \{ \Cycle_c : c \in \Jmut \}$ is torsion free. 
This embeds in the larger abelian group $\Lubr/\Z \cdot \{ \Cycle_c : c \in \Jmut \}$ which, using \cref{lemma:bridge_basis}, we can identify with $\Z^{\Jo}/\Z^{\Jmut} = \Z^{\Jfro}$.
\end{proof}

\section{Double braid varieties}\label{sec:dblBraidVar}

\subsection{Notation}\label{sec:doublebraid_Notation}
Let $G=\SL_n$. Recall that $I:=[n-1]$ indexes the simple transpositions of the Weyl group $W=S_n$, and that for $i \in I$, we denote $i^*:= n-i$.  For $i \in I$, we set $(-i)^* := -i^*$.  Let $B_+$ and $B_-$ denote the subgroup of upper triangular and lower triangular matrices, respectively, and let $U_+$ and $U_-$ denote the respective unipotent subgroups. We use $H$ to denote the torus of diagonal matrices.

For $i\in I$, let $\phi_i:\SL_2 \to G$ denote the homomorphism where $\phi_i(g)$ is the matrix which has $g$ as the $2 \times 2$ submatrix on rows and columns $i, i+1$ and otherwise agrees with the identity matrix.

We use $\phi_i$ to lift $S_n$ to $G$. Let 
\[ \ds_i:= \phi_i \begin{pmatrix}
	0 & -1\\
	1 & 0
\end{pmatrix}.
\]
For $w\in S_n$, we define $\dot{w}:= \ds_{i_1} \dots \ds_{i_n}$ where $s_{i_1} \dots s_{i_n}$ is a reduced expression for $w$. The map $w \mapsto \dot{w}$ is not a homomorphism, but if $\ell(vw)=\ell(v)+ \ell(w)$, then $(vw)^{\bigcdot}=\dot{v}\dot{w}$. In particular, $\dot{w}$ does not depend on the choice of reduced expression. Explicitly, 
\[(\dot{w})_{i,j}= \begin{cases}
	0 &\text{ if } i \neq w(j),\\
	(-1)^{|\{a<j: w(a)>w(j)\}|} &\text{ if } i = w(j).
\end{cases}\]
We omit the dot when the choice of the signs in the permutation matrix of $w$ does not matter: e.g., we write $B_+wB_+$ and $\H w$ in place of $B_+\dw B_+$ and $\H\dw$.

We will also need the %
generators
\begin{equation}\label{eq:gen_dfn}
x_i(t):=\phi_i \begin{pmatrix}
	1& t\\
	0& 1
\end{pmatrix}, \quad 
y_i(t):=\phi_i \begin{pmatrix}
	1& 0\\
	t& 1
\end{pmatrix}, \quad \alphacheck_i(t):=\phi_i \begin{pmatrix}
	t& 0\\
	0& 1/t
      \end{pmatrix},
\end{equation}
 and the \emph{braid matrices}
\[z_i(t):=\phi_i \begin{pmatrix}
	t & -1\\
	1 & 0
\end{pmatrix}
=
x_i(t) \ds_i = \ds_i y_i(-t);\]
\[\z_i(t):=\phi_i \begin{pmatrix}
	t & 1\\
	-1 & 0
\end{pmatrix}
=
x_i(-t) \ds^{-1}_i = \ds^{-1}_i y_i(t).\]

\subsection{Weighted flags}
A \emph{weighted flag} is an element $F = gU_+ \in G/U_+$.  Associated to a weighted flag $F$ is the flag $\pi(F)=gB_+$, the image of $F$ in $G/B_+$.  

\begin{definition}
Let $w \in S_n$. Two weighted flags $(F,F')$ are called \emph{(weakly) $w$-related} if there exist $g \in G$ and $h\in H$ such that $(gF,gF') = (U_+, h\dot{w}U_+) \in G/U_+ \times G/U_+$. Equivalently, $F=g_1 U_+$ and $F'=g_2 U_+$ are weakly $w$-related if and only if $g_1^{-1}g_2 \in B_+ \dot{w} B_+$. We write this as $F \Rwrel{w} F'$.
	
	Two weighted flags $(F,F')$ are called \emph{strictly $w$-related} if there exists $g \in G$ such that $(gF,gF') = (U_+, \dot{w}U_+) \in G/U_+ \times G/U_+$.  Equivalently, $F=g_1 U_+$ and $F'=g_2 U_+$ are strictly $w$-related if and only if $g_1^{-1}g_2 \in U_+ \dot{w} U_+$. We write this as $F \Rrel{w} F'$. 
\end{definition}
Let $F,F'$ be two weighted flags and let $B,B'$ be their images in $G/B_+$.  Then 
\begin{equation}\label{eq:relrel}
\mbox{$F \Rrel{w} F'$ implies $F \Rwrel{w} F'$, which in turn implies $B \Rrel{w} B'$ (see \cref{sec:braid}).}
\end{equation}

We collect some elementary facts about relative position.
\begin{lemma}\label{lem:positionFacts} %
 Let $F, F', F''$ be weighted flags. Suppose $v, w \in S_n$ and $\ell(vw)=\ell(v) + \ell(w)$.
	\begin{enumerate}
		\item $F \Rrel{\id} F'$ if and only if $F=F'$.
		\item If $F \Rrel{v} F' \Rrel{w} F''$, then $F \Rrel{vw}F''$.
		\item If $F \Rrel{vw}F''$, then there exists a unique $F'$ such that $F \Rrel{v} F' \Rrel{w} F''$. If $F \Rwrel{vw} F''$, there exist unique $F_1', F_2' \in G/U_+$ such that $F \Rrel{v} F_1' \Rwrel{w} F''$ and $F \Rwrel{v} F_2' \Rrel{w} F''$.
	\end{enumerate}
\end{lemma}

\begin{proof}
	Parts (1), (2) and the first sentence of (3) can be found in \cite[Appendix]{ShWe}. We show the existence of $F_1'$ in (3); the argument for $F_2'$ is similar. The images $\pi(F)$ and $\pi(F'')$ in $G/B_+$ uniquely determine a flag $L \in G/B_+$ satisfying $\pi(F) \Rrel{v} L \Rrel{w} \pi(F'')$, and $L$ has a unique lift $F_1'$ to $G/U_+$ such that $F \Rrel{v} F_1'$.
\end{proof}

\begin{lemma}\label{lem:paramSingleStep}%
	Suppose $F \Rrel{s_i} F'$ and say $F=gU_+$. Then there exists a unique $t \in \CC$ such that $F'=gz_i(t) U_+$. Similarly, if $F'=g'U_+$, there exists a unique $t' \in \CC$ such that $F=g' \z_i(t')U_+$.
      \end{lemma}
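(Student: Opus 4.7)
The plan is to reduce the lemma to the standard Bruhat-style parametrization of the cell $U_+\dot s_i U_+$ by carefully bookkeeping an $\SL_2$ computation, and then to rephrase the two parametrizations in terms of $z_i(t)$ and $\z_i(t')$ respectively.

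First I would unpack definitions. Writing $F = gU_+$ and $F' = g'U_+$, the condition $F \Rrel{s_i} F'$ is, by definition, equivalent to $g^{-1}g' \in U_+\dot s_i U_+$. The condition $F' = g z_i(t) U_+$ becomes $g^{-1} g' \in z_i(t) U_+$, while the condition $F = g'\z_i(t')U_+$ becomes $g^{-1}g' \in U_+\z_i(t')^{-1}$. Using $z_i(t) = x_i(t)\dot s_i$ and computing inverses in $\SL_2$, one finds $\z_i(t')^{-1} = \dot s_i x_i(t')$. So the two claims reduce to the following two unique factorizations of an element $h \in U_+\dot s_i U_+$:
\begin{equation*}
h = z_i(t)\,u \qquad\text{and}\qquad h = u'\,\dot s_i x_i(t'),
\end{equation*}
with $u,u' \in U_+$ and $t,t'\in\C$ uniquely determined.

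For existence, I would use the (standard) fact that $\dot s_i$ normalizes the subgroup $U_+^{(i)} \subset U_+$ generated by the positive root subgroups other than the one indexed by $\alpha_i$, while permuting the $\alpha_i$-root subgroup $U_+^{\alpha_i} = \{x_i(t) : t \in \C\}$ with the corresponding negative root subgroup $\{y_i(t)\}$. Using the factorizations $U_+ = U_+^{\alpha_i}\cdot U_+^{(i)} = U_+^{(i)}\cdot U_+^{\alpha_i}$, one computes
\begin{equation*}
U_+\dot s_i U_+ \;=\; U_+^{\alpha_i}\,U_+^{(i)}\,\dot s_i\,U_+ \;=\; U_+^{\alpha_i}\,\dot s_i\,U_+^{(i)}\,U_+ \;=\; x_i(\C)\,\dot s_i\,U_+ \;=\; z_i(\C)\,U_+,
\end{equation*}
which proves existence for the first factorization. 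An entirely parallel computation, bringing the $\alpha_i$-part of the \emph{right} copy of $U_+$ across $\dot s_i$, gives $U_+\dot s_i U_+ = U_+\,\dot s_i\, x_i(\C)$, proving existence for the second factorization.

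For uniqueness of $t$ in either factorization, I would use a direct $\SL_2$ calculation. If $z_i(t_1)u_1 = z_i(t_2)u_2$ with $u_1,u_2 \in U_+$, then $u_2 u_1^{-1} = z_i(t_2)^{-1}z_i(t_1) = \dot s_i^{-1}\,x_i(t_1-t_2)\,\dot s_i$, and the $\SL_2$ identity
\begin{equation*}
\begin{pmatrix} 0 & 1 \\ -1 & 0 \end{pmatrix}\!\begin{pmatrix} 1 & s \\ 0 & 1 \end{pmatrix}\!\begin{pmatrix} 0 & -1 \\ 1 & 0 \end{pmatrix} = \begin{pmatrix} 1 & 0 \\ -s & 1 \end{pmatrix}
\end{equation*}
shows that $\dot s_i^{-1} x_i(s)\dot s_i = y_i(-s)$, which lies in $U_+$ only for $s=0$. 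Hence $t_1 = t_2$, forcing $u_1 = u_2$. The same computation (applied on the other side) handles uniqueness in the $\z_i(t')$ factorization. The only subtle point—really the only place that could go wrong—is making sure the signs and the identifications $z_i(t) = x_i(t)\dot s_i$, $\z_i(t')^{-1} = \dot s_i x_i(t')$ are consistent with the conventions fixed in~\eqref{eq:gen_dfn}; everything else is a mechanical consequence of the rank-one computation.
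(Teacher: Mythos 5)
The paper states \cref{lem:paramSingleStep} without proof, treating it as an elementary fact about relative position of weighted flags (the preceding lemma is referenced to \cite[Appendix]{ShWe}). Your argument is correct: the reduction to the two factorizations $h = z_i(t)u$ and $h = u'\,\dot s_i\,x_i(t')$ of $h\in U_+\dot s_i U_+$ is the right translation of the definitions, the decomposition $U_+ = U_+^{\alpha_i}\cdot U_+^{(i)}$ with $\dot s_i$ normalizing $U_+^{(i)}$ gives existence cleanly, and the $\SL_2$ conjugation identity $\dot s_i^{-1}x_i(s)\dot s_i = y_i(-s)$ gives uniqueness. The sign bookkeeping for $\z_i(t')^{-1} = \dot s_i x_i(t')$ checks out against the conventions in \eqref{eq:gen_dfn}. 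This is exactly the kind of rank-one argument the authors presumably had in mind when omitting the proof.
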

      \begin{remark}\label{rmk:t_depends}
The parameters $t,t'$ in \cref{lem:paramSingleStep} depend on the choices of the representative matrices $g,g'$.
      \end{remark}

\subsection{Double braid variety}

For a pair $(u,\br) \in S_n \times \DRW$ with $u \leq \br$, let $\bigBR_{u,\br}$ denote the space of tuples of weighted flags satisfying %
\begin{equation}\label{eq:braidDiagram}
	\begin{tikzcd}
		X_0& \arrow[l,"\textcolor{red}{s_{i_1}^+}"'] X_1& \arrow[l,"\textcolor{red}{s_{i_2}^+}"'] \cdots& \arrow[l,"\textcolor{red}{s_{i_m}^+}"'] 
		X_m \\
		Y_0 \arrow[r,"\textcolor{blue}{s_{i_1^\ast}^-}"'] \arrow[u, Rightarrow, "\wo \cdot \id"']& Y_1 \arrow[r,"\textcolor{blue}{s_{i_2^\ast}^-}"'] & \cdots \arrow[r,"\textcolor{blue}{s_{i_m^\ast}^-}"'] & Y_m \arrow[u, rightarrow, "\wo \cdot u"']
	\end{tikzcd}
\end{equation}
Also define $\BigBR_{u, \br}$ by omitting the condition that $(X_0,Y_0)$ are weakly $\wo$-related.  Then $\BigBR_{u, \br}$ is a $\C^l$-bundle over $G/U_+$ (where $l=m+\ell(\wo u)$), and $\bigBR_{u, \br}$ is an open subset of $\BigBR_{u, \br}$. We denote points in $\bigBR_{\ubr}$ and $\BigBR_{\ubr}$ by $(\Xbul, \Ybul)$. %

\begin{remark}\label{rmk:braidVarIntuition}
  Intuitively, a point in $\BigBR_{u, \br}$ is a walk in $G/U_+ \times G/U_+$ from a pair of weighted flags $X_m \Lrel{\wo u} Y_m$ to a pair of weighted flags $X_0 \Lwrel{\wo} Y_0$.
  The ``direction" of step $c$ is dictated by the crossing $i_c$ of $\beta$.
  A red crossing $i$ in $\beta$ means the step changes the $i$-dimensional subspace of the first flag in the pair. A blue crossing $j$ means the step changes the $(n-|j|)$-dimensional subspace of the second flag. Given an arbitrary pair $(X_m,Y_m)$ satisfying $X_m \Lrel{\wo u} Y_m$,
  we can parametrize $(X_{c-1},Y_{c-1})$ iteratively for $c=m,m-1,\dots,1$ using \cref{lem:paramSingleStep}: assuming $(X_c,Y_c)=(g_cU_+,g'_cU_+)$, we set
  \begin{equation}\label{eq:propagate_left}
    (X_{c-1},Y_{c-1}):=
    \begin{cases}
      (g_cz_{i_c}(t_c)U_+,g'_cU_+), &\text{if $i_c>0$,}\\
      (g_cU_+,g'_c\z_{|i_c|^*}(t_c)U_+), &\text{if $i_c<0$,}\\
    \end{cases}
  \end{equation}
 for arbitrary parameters $\bt:=(t_1,t_2,\dots,t_m)\in\C^m$. For $(\Xbul, \Ybul)$ to be a point in $\bigBR_{u, \br}$, we require further that $X_0 \Lwrel{\wo} Y_0$, which is an extra open condition on the parameters $(\bt,X_m,Y_m)$.
\end{remark}

The group $G$ acts on $\bigBR_{u, \br}$ and $\BigBR_{u,\br}$ by acting simultaneously on all the weighted flags by left multiplication.
\begin{lemma}
	The action of $G$ on $\bigBR_{u, \br}$ is free.
\end{lemma}
\begin{proof}
	$G$ acts freely on pairs $(X_0,Y_0)$ which are weakly $\wo$-related. Indeed, it suffices to consider the case $(X_0, Y_0)=(U_+, h \dot{w}_0 U_+)$. The stabilizer of the pair $(U_+, h \dot{w}_0 U_+)$ is $U_+ \cap h\dot{w}_0 U_+ \dot{w}_0^{-1}h^{-1}$, which is contained in $U_+ \cap B_-=\{1\}$.
\end{proof}

\begin{definition}
	The \emph{double braid variety} $\BR_{u,\br} = G\backslash \bigBR_{u, \br}$ is the quotient of $\bigBR_{u, \br}$ by $G$.
\end{definition}

Our notation is consistent with \cref{sec:Rich} due to the following.

\begin{proposition}\label{prop:braidRich}
Let $\beta\in I^m$ be a positive braid word. Then the double braid variety $\BR_{\ubr}$ is isomorphic to the braid Richardson variety defined in \eqref{eq:braidRich}.
\end{proposition}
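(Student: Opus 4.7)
My plan is to construct an explicit algebraic bijection between the double braid variety $\BR_{u,\br}$ and the braid Richardson variety from \eqref{eq:braidRich}. The first observation is that, for positive $\br$, each letter $s_{i_c^*}^-$ in the bottom row of \eqref{eq:braidDiagram} equals the identity, so $Y_0 = Y_1 = \cdots = Y_m$ collapse to a single weighted flag $Y$. Thus $\bigBR_{u,\br}$ is the space of tuples $(X_0,\ldots,X_m,Y)$ satisfying $X_c \Rrel{s_{i_c}} X_{c-1}$ for each $c$, together with $Y \Rrel{w_0 u} X_m$ and the open condition $Y \Rwrel{w_0} X_0$.

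I would define a forward map $\Phi$ from the double braid variety to the braid Richardson variety by using the free $G$-action to normalize each orbit to its unique representative with $X_0 = U_+$ and $\pi(Y) = B_-$. Such a representative exists because $Y \Rwrel{w_0} X_0$ projects to $\pi(Y) \Rrel{w_0} \pi(X_0)$, and $G$ acts transitively on pairs of opposite flags; the residual $H$-action is free on the $H$-torsor $\pi^{-1}(B_+)$ and uniquely pins $X_0$ to $U_+$. The final stabilizer is $U_+ \cap U_- = \{1\}$, so the normalization is canonical. Setting $B_c := \pi(X_c)$ then gives a tuple $(B_1, \ldots, B_m)$: the conditions $X_c \Rrel{s_{i_c}} X_{c-1}$ project to $B_{c-1} \Rrel{s_{i_c}} B_c$ (the strict $s_i$-relation being symmetric on bare flags, even though not on weighted flags), and $Y \Rrel{w_0 u} X_m$ projects to $B_- \Rrel{w_0 u} B_m$, as required in \eqref{eq:braidRich}.

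For the inverse $\Psi$, I would build the data by iterative lifting. Given $(B_1,\ldots,B_m)$ in the braid Richardson variety, set $X_0 := U_+$ and apply the second formulation of \cref{lem:paramSingleStep} inductively: at step $c$ it produces the unique weighted flag $X_c = X_{c-1}\z_{i_c}(t_c)U_+$ that lifts $B_c$ and satisfies $X_c \Rrel{s_{i_c}} X_{c-1}$. To lift $B_-$ to a weighted flag $Y$ with $Y \Rrel{w_0 u} X_m$, take an arbitrary lift $Y_0$ of $B_-$, use the Bruhat decomposition to write $Y_0^{-1} X_m = u_1 h_0 \dot{w_0 u}\, u_2$ with $u_1, u_2 \in U_+$ and $h_0 \in H$, and then set $Y := Y_0 h_0$. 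Passing to the $G$-orbit of $(U_+, X_1, \ldots, X_m, Y)$ produces the inverse morphism.

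The main technical point, which I expect to be the hardest to verify, is the uniqueness of the $Y$-lift: along the $H$-torsor of weighted lifts of $B_-$, exactly one element is strictly $w_0 u$-related to $X_m$. This reduces to the observation that $\dot{w}^{-1} h \dot{w} \in U_+$ for $h \in H$ and $w \in S_n$ forces $h = 1$, since $H$ is normalized by $N_G(H)$ and $H \cap U_+ = \{1\}$. Once $\Phi$ and $\Psi$ are shown to be mutually inverse algebraic morphisms, the isomorphism between the two varieties follows.
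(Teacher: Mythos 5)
Your proof is correct, but it makes a dual choice of normalization to the paper's and lifts in the opposite direction, so it is worth spelling out the comparison. The paper gauge-fixes $(X_0, Y_0) = (hU_+, \dot{w}_0 U_+)$, pinning the weighted flag $Y_0$ completely and leaving $X_0$ free over the $H$-torsor $\pi^{-1}(B_+)$; for the inverse it sets $Y_0 = \cdots = Y_m = \dot{w}_0 U_+$ once and for all, lifts $B_m$ to $X_m$ (a "forward" lift from the already-chosen $Y_m$), and then walks right-to-left lifting $B_{m-1}, \ldots, B_1$. You instead pin $X_0 = U_+$ completely and leave $Y$ free over $\pi^{-1}(B_-)$, build the lift $\Psi$ left-to-right from $X_0$, and only at the end lift $B_-$ to the unique $Y$ strictly $w_0 u$-related to $X_m$. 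The two normalizations are equally valid (in each case the residual stabilizer of the orbit reduces to $H \cap U_+ = \{1\}$), and your left-to-right lifting uses the "backward" form of \cref{lem:paramSingleStep} at each step, which works because $t \mapsto \pi(g'\z_i(t)U_+)$ is a bijection onto the $s_i$-neighbors of $\pi(g'U_+)$. The price of your choice is the final Bruhat-decomposition argument for the lift of $B_-$: you produce $h_0$ from $Y_0^{-1}X_m = u_1 h_0 \dot{w_0 u}\, u_2$, and uniqueness rests on the observation that $\dot{w}^{-1}h\dot{w} \in U_+$ with $h \in H$ forces $h = 1$. This is correct (and you correctly identify it as the technical crux), but it is a step the paper avoids entirely by making $X_m$, rather than $Y$, the last flag to be pinned down. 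Both routes are sound; the paper's ordering is marginally cleaner because its lift of $X_m$ from $Y_m$ is the same one-step observation used for all the other lifts, whereas yours needs the separate $H$-torsor argument.
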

\begin{proof}
Write $\pi$ for the natural map $G/U_+ \to G/B_+$.
Let $(X_\bullet,Y_\bullet) \in \BR_{\ubr}$.  The $G$-action can be gauge-fixed by assuming $(X_0, Y_0)=(h U_+, \dot{w}_0 U_+)$, where $h \in H$.    With this gauge-fix, we obtain a map $\pi^{m}:\BR_{\ubr}\to (G/B_+)^m$ sending $(X_\bullet,Y_\bullet)\mapsto (\pi(X_1),\pi(X_2),\dots,\pi(X_m))$.  We now show that this map is an isomorphism onto the braid Richardson variety of \eqref{eq:braidRich}.  Since $\pi(X_0)=B_+$ and $\pi(Y_0) = B_-$, \eqref{eq:relrel} shows that $\pi^m$ maps $\BR_{\ubr}$ into the space \eqref{eq:braidRich}.  

For the inverse of $\pi^m$: Suppose we have two flags $B,B'\in G/B_+$ satisfying $B \xrightarrow{s_i} B'$ and a lift $F \in G/U_+$ of $B$ is given. Then there is a unique $F' \in G/U_+$ that lifts $B'$ and satisfies $F \xrightarrow{s_i}F'$. Now, given a point $B_{\bullet}$ in the space \eqref{eq:braidRich}, we set $Y_0 = \cdots = Y_m= \dot{\wo} U_+$ and set $X_m$ to be the unique lift of $B_m$ such that $Y_m \Rrel{\wo u} X_m$. Lifting $B_{m-1}, B_{m-2}, \dots, B_1$ from right to left (using the observation at the beginning of the paragraph), we obtain a unique point in $\BR_{\ubr}$ which $\pi^m$ maps to $B_{\bullet}$.
\end{proof}

\begin{remark}
When both blue and red subwords of $\br$ are reduced, the same argument as in the proof of \cref{prop:braidRich} shows that our double braid varieties are isomorphic to the varieties considered by Webster and Yakimov \cite{WY}.  When $u$ is the identity, $\BR_{\id,\br}$ is isomorphic to the double Bott--Samelson cells of Shen and Weng \cite{ShWe}.
\end{remark}

\begin{remark}
The isomorphism of \cref{prop:braidRich} is not compatible with total positivity, in the sense that it does not send the totally positive part $\Rtp_u^w$ of $\Rich_u^w$ (introduced by Lusztig~\cite{Lus2}) to the subset of the braid variety $\BR_{u,\br}$ where all cluster variables take positive value. (Here $\br$ is a reduced word for $w$.) In order to fix that, one must compose this isomorphism with an automorphism of $\Rich_u^w$ discussed in \cref{sec:comparison}.
\end{remark}

\subsection{Geometry of double braid varieties}

In this section, we discuss some straightforward geometric results on double braid varieties. As preparation, we first show that it is enough to consider the case when $u =\wo$.

\begin{notation}\label{notn:w0}
	For any object defined in terms of a pair $(u, \br)$, we suppress the permutation $u$ from the notation when $u=\wo$. For example, we write $\BR_{\br}:=\BR_{\wo ,\br}$.
\end{notation}

\begin{lemma}\label{lem:w0enough}
	Given $u \leq \br$, let $\gamma^+\in I^{\ell(\wo u)}$ be a reduced word for $\wo  u$ using red letters and $\gamma^-\in (-I)^{\ell(u\wo )}$ be a reduced word for $u \wo$ in blue letters. Then
	\begin{equation*}%
		\bigBR_{u, \br} \cong \bigBR_{\br\gamma^+} \cong \bigBR_{\br\gamma^-} \quad\text{and}\quad \BR_{u, \br} \cong \BR_{\br\gamma^+} \cong \BR_{\br\gamma^-}.
	\end{equation*}
\end{lemma}

\begin{proof}
	The isomorphisms
	\[ \bigBR_{\br\gamma^+} \to \bigBR_{u, \br} \qquad \bigBR_{\br\gamma^-} \to \bigBR_{u, \br} \]
	are given by truncating $(\Xbul,\Ybul)$ after the pair $(X_{\ell(\br)}, Y_{\ell(\br)})$. The choice of $\gamma^+$ and $\gamma^-$ together with \lemmaref{lem:positionFacts}(2) ensures that the maps are well defined. Injectivity and surjectivity follow from \lemmaref{lem:positionFacts}(3). The isomorphisms are $G$-equivariant and so descend to the quotient.
\end{proof}

\begin{proposition}
	For $u\leq \br$, the double braid variety $\BR_{\ubr}$ is a smooth, affine, irreducible complex algebraic variety of dimension equal to $d(u,\br) := \ell(\br) -\ell(u)$.
\end{proposition}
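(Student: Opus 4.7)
The plan is to establish each claim by first analyzing the ambient space $\bigBR_{u,\br}$ and then descending via the geometric quotient $\BR_{u,\br} = G\bs\bigBR_{u,\br}$.

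For smoothness, irreducibility, and the dimension count, I would rely on the iterative propagation described in \cref{rmk:braidVarIntuition}. Projecting $\BigBR_{u,\br}\to G/U_+$ by $(\Xbul,\Ybul)\mapsto X_m$ realizes $\BigBR_{u,\br}$ as a locally trivial $\C^l$-bundle with $l=m+\ell(w_0u)$: given $X_m$, the fiber parametrizes $(Y_m;t_1,\dots,t_m)$ with $X_m\Rrel{w_0u}Y_m$, that is, a $\C^{\ell(w_0u)}$-worth of choices for $Y_m$ together with $\C^m$ worth of parameters $t_c$. Hence $\BigBR_{u,\br}$ is smooth and irreducible of dimension $\dim(G/U_+)+m+\ell(w_0u)$. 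The open subset $\bigBR_{u,\br}$ is nonempty --- a concrete point is produced from the $u$-PDS of $\br$, which by construction realizes the required relative positions --- so it inherits the same properties.

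The $G$-action on $\bigBR_{u,\br}$ is free, and admits a section given by the gauge-fix $(X_0,Y_0)=(U_+,h\dot{w}_0U_+)$ for a unique $h\in H$; this makes $\bigBR_{u,\br}\to\BR_{u,\br}$ a principal $G$-bundle. Consequently $\BR_{u,\br}$ is a smooth, irreducible variety of dimension
\begin{equation*}
  \dim\bigBR_{u,\br}-\dim G \;=\; m+\ell(w_0u)+\dim(G/U_+)-\dim G \;=\; m-\ell(u) \;=\; d(u,\br),
\end{equation*}
using $\dim(G/U_+)=\dim G-\ell(w_0)$ and $\ell(w_0u)=\ell(w_0)-\ell(u)$.

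The main obstacle will be affineness. My strategy is to reduce to cases already in the literature: \cref{lem:w0enough} lets us assume $u=w_0$, and then the braid moves~\bmref{bm1} and~\bmref{bm4} (each of which leaves $\BR_{u,\br}$ isomorphic, as they correspond to relabeling weighted flags and to trivial hollow crossings respectively) further let us assume that $\br$ is a positive braid word. In that situation, \cref{prop:braidRich} identifies $\BR_\br$ with the braid Richardson variety of \eqref{eq:braidRich}, whose affineness is established in \cite{CGGS,Mellit_cell}. A direct alternative would be to use the gauge-fix $(X_0,Y_0)=(U_+,h\dot{w}_0U_+)$ to realize $\BR_{u,\br}$ as the locally closed subvariety of $H\times\C^m$ cut out by the condition that the right-hand pair $(X_m,Y_m)$ produced by \eqref{eq:propagate_left} lies in the stratum $U_+\dot{w_0u}U_+\subset G$; affineness then follows from the explicit product decomposition of this stratum together with a Deodhar-type factorization of the parameters $(h,t_1,\dots,t_m)$.
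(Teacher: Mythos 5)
Your treatment of smoothness, irreducibility, and the dimension count matches the paper's in spirit: you realize $\BigBR_{u,\br}$ as an iterated affine bundle (yours is over $G/U_+$, the paper's is over a point after gauge-fixing $X_0=U_+$), deduce these properties there, and descend along the free $G$-action. That part is fine.

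The genuine divergence is affineness, which the paper treats self-containedly. The paper reduces to $u=w_0$, gauge-fixes only $X_0=U_+$ (leaving a residual $U_+$-action), observes that the resulting space $V$ is a principal open in an affine space $\cong\C^m$ (the condition $Y_0\in B_+\dot w_0 U_+$ being nonvanishing of minors), and then quotients by the \emph{unipotent} $U_+$ acting freely with an explicit section coming from $B_+\dot w_0U_+ = U_+\cdot(H\dot w_0 U_+)$; the quotient of smooth affine along a free unipotent action with a section is again smooth affine. Your primary route (a) instead outsources the key step to \cite{CGGS,Mellit_cell}, which is legitimate but not self-contained, and it also leans on the \bmref{bm1}, \bmref{bm4} isomorphisms of \cref{prop:isoVar}, which in the paper are proved \emph{after} this proposition (those particular isomorphisms are elementary and do not depend on affineness, so this is an ordering issue rather than a circularity, but it should be flagged). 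Your alternative route (b) is in fact very close to the paper's argument — the gauge-fix $(X_0,Y_0)=(U_+,h\dot w_0U_+)$ is exactly the section of the paper's $U_+$-bundle — but as written it has a gap: being ``locally closed'' in $H\times\C^m$ does not imply affine. You would need to either first reduce to $u=w_0$ (in which case $U_+\dot{w_0u}U_+=U_+$ is closed in $G$, so the locus is closed in $H\times\C^m$ and hence affine), or note that the Bruhat cell $U_+\dot w U_+$ is the intersection of its closure with the principal open $\{\prod_i\Delta_{w[i],[i]}\neq 0\}$, so its preimage is closed in a principal open of $H\times\C^m$ and therefore affine. The appeal to a ``Deodhar-type factorization'' is not needed and obscures this point.
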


\begin{proof}
Using \cref{lem:w0enough}, we may assume $u=\wo$. Consider the space of tuples of weighted flags satisfying
		\begin{equation}\label{eq:Ugauge}
		\begin{tikzcd}
			U_+& \arrow[l,"\textcolor{red}{s_{i_1}^+}"'] X_1& \arrow[l,"\textcolor{red}{s_{i_2}^+}"'] \cdots& \arrow[l,"\textcolor{red}{s_{i_m}^+}"'] 
			X_m \\
			Y_0 \arrow[r,"\textcolor{blue}{s_{i_1^\ast}^-}"'] %
			& Y_1 \arrow[r,"\textcolor{blue}{s_{i_2^\ast}^-}"'] & \cdots \arrow[r,"\textcolor{blue}{s_{i_m^\ast}^-}"'] & Y_m \arrow[u, rightarrow, "\id"']
		\end{tikzcd} 
	\end{equation}
This space is an iterated $\C^1$-bundle and thus affine. Imposing the condition that $U_+$ and $Y_0$ are weakly $\wo$-related (that is, $Y_0\in B_+ \dot{w}_0 B_+=B_+ \dot{w}_0 U_+$) cuts out a nonempty smooth affine open subset $V$ of the iterated $\C^1$-bundle. The braid variety $\BR_{\br}$ is the quotient of $V$ by the diagonal action of $U_+=\text{Stab}_G(U_+)$. The group $U_+$ acts freely on $B_+ \dot{w}_0 U_+$ and thus acts freely on $V$.  It follows that the quotient $\BR_{\br}$ is also smooth and affine; it is also clearly irreducible.

For the dimension, note that 
\[\dim( \bigBR_{\br})= \dim (G/U_+) + \ell(\br)= \dim(G)- \ell(\wo ) + \ell(\br)\]
so $\dim(\BR_{\br})=\dim( \bigBR_{\br})-\dim(G) = \ell(\br)-\ell(\wo )$.
\end{proof}

Braid moves induce natural isomorphisms between braid varieties, which we give below. In \cref{sec:movesOnSeed}, we will analyze the pullbacks of these isomorphisms and their effect on the cluster algebra associated to a braid variety.

\begin{proposition}\label{prop:isoVar} 
 Fix $u \leq \br$. Suppose $\br'$ is related to $\br$ by one of the moves  \bmref{bm1}--\bmref{bm5}. There is a natural isomorphism 
\begin{equation*}%
  \mis: \BR_{u, \br} \xrasim \BR_{u, \br'},\quad 	(\Xbul,\Ybul) \mapsto (\Xbul',\Ybul')
\end{equation*}
 given as follows.
	\begin{enumerate}
	\item[\bmref{bm1}--\bmref{bm3}:] If the braid move involves indices $l,l+1,\dots,r$, then $(\Xbul',\Ybul') \in \BR_{u, \br}$ is the unique tuple such that $(X_c', Y'_c)=(X_c, Y_c)$ for $c \notin \{l, l+1, \dots, r-1\}$. The remaining weighted flags $(X_l', Y_l'),\dots,(X_{r-1}', Y_{r-1}')$ are uniquely determined using \cref{lem:positionFacts}.
	\item[\bmref{bm4}:] Assume $u=\wo$. Suppose the last letter of $\br$ is in $I$. Then $(\Xbul',\Ybul')$ is given by $X'_{m-1}=X'_m=Y'_m:=X_{m-1}$, $Y'_{m-1}:=Y_{m-1}$, and $(X'_c,Y'_c):=(X_c,Y_c)$ for all $0\leq c<m-1$. 
	\item[\bmref{bm5}:] Suppose the first letter of $\br$ is in $I$. For $c\in[m]$, we set $(X'_c,Y'_c):=(X_c,Y_c)$. We also set $X'_0:=X_1$ and $Y_1':=Y_0=Y_1$. Recalling that $X_0\Rwrel{\wo} Y_0$, we let $Y_0'$ be the unique weighted flag satisfying $X_0\Rwrel{\wo s_{i^\ast}} Y_0' \Rrel{s_{i^\ast}} Y_0$ (cf. \lemmaref{lem:positionFacts}(3)). It follows that $X'_0\Lwrel{\wo}Y'_0$ and $Y'_0\Rrel{s_{i^\ast}} Y'_1$, so $(\Xbul',\Ybul')\in\BR_{\br'}$.\qed
	\end{enumerate}
\end{proposition}

From \cref{lem:w0enough} and \cref{prop:isoVar}, we have an immediate corollary.

\begin{corollary}
Any double braid variety $\BR_{\ubr}$ with $u\leq\br$ is isomorphic to a braid variety $\BR_{\br'}$ where $\br'$ has only red letters.
\end{corollary}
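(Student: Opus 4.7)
The plan is to combine \cref{lem:w0enough} with \cref{prop:isoVar}, using only the moves \bmref{bm1} and \bmref{bm5}. First I would use \cref{lem:w0enough} to reduce to the case $u = w_0$: picking any reduced word $\gamma^+ \in I^{\ell(w_0 u)}$ for $w_0 u$ made of red letters, we get $\BR_{\ubr} \cong \BR_{\br\gamma^+}$, which by \cref{notn:w0} means $\BR_{w_0, \br\gamma^+}$. Since $w_0 \leq \br'$ holds automatically for any double braid word $\br'$, the hypothesis $u \leq \br$ is trivially satisfied after this reduction, and it suffices to prove the statement assuming $u = w_0$.

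Next, assuming $u = w_0$, I would induct on the number of blue letters in $\br = i_1 i_2 \cdots i_m$. If $\br$ has no blue letter, there is nothing to prove. Otherwise, let $k$ be the position of the leftmost blue letter, so that $i_1, \dots, i_{k-1}$ are all red and $i_k < 0$. Since red and blue letters have opposite signs, \bmref{bm1} applies to swap $i_k$ with each of its red neighbors on the left; iterating $k-1$ times and invoking \cref{prop:isoVar} yields an isomorphism $\BR_{\br} \cong \BR_{\br'}$, where $\br' = i_k\, i_1 \cdots i_{k-1}\, i_{k+1} \cdots i_m$ starts with the blue letter $i_k$. A single application of \bmref{bm5} then flips the first letter from $i_k$ to $-i_k$, which is red since $i_k < 0$, producing a word $\br''$ with one fewer blue letter. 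Since the permutation is still $w_0$ and $w_0 \leq \br''$ is automatic, the induction hypothesis applies to $\br''$, and the resulting purely red word gives the desired isomorphism.

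I do not expect a serious obstacle: the argument is essentially bookkeeping once \cref{lem:w0enough} and \cref{prop:isoVar} are in hand. The only subtlety is that \bmref{bm1} requires the two swapped letters to have opposite signs; this is precisely why one must always move the \emph{leftmost} blue letter, guaranteeing that every letter it passes is red.
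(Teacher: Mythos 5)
Your proof is correct and follows the same route the paper clearly intends (the paper leaves the corollary unproved as ``immediate'' from \cref{lem:w0enough} and \cref{prop:isoVar}, and a nearly identical argument using \bmref{bm1} and \bmref{bm4} appears inside the proof of \cref{thm:sinkrec}). Your variant --- move the leftmost blue letter to the front with \bmref{bm1}, then flip it red with \bmref{bm5} --- is symmetric to the paper's preference for pushing the rightmost blue letter to the end and flipping with \bmref{bm4}; either works once $u$ has been reduced to $w_0$.

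One small assertion needs fixing: you claim ``$w_0 \leq \br'$ holds automatically for any double braid word $\br'$,'' but that is false --- $w_0 \leq \Demprod(\br')$ requires $\br'$ to be long and rich enough (e.g.\ it fails for the empty word). What is true, and what your argument actually needs, is that moves \bmref{bm1} and \bmref{bm5} preserve the Demazure product, so $w_0 \leq \br$ is preserved along the induction; equivalently, \cref{prop:isoVar} produces an isomorphism of nonempty varieties, forcing $w_0 \leq \br''$. With that justification substituted, the proof is complete.
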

\noindent The purpose of defining double braid varieties is to obtain more seeds (or more Deodhar tori), one for each double braid word.

\subsection{Grid and \cham minors}
We next discuss regular functions on $\BR_{\ubr}$, which we eventually show are characters of cluster tori. Given $(\Xbul,\Ybul)$, let
\begin{equation}\label{eq:Z_c_dfn}
  Z_c := Y_c^{-1} X_c \in U_+ \backslash G / U_+.
\end{equation}
It is clear that $Z_{c}$ is well defined for a point in $\BR_{\ubr}$, since it is unchanged by the action of $G$. We frequently abuse notation and use $Z_c$ to denote both the double coset and a representative of this double coset. Our goal is to use the matrix $Z_c$ to introduce certain regular functions on $\BR_\ubr,\bigBR_\ubr,\BigBR_\ubr$ which we refer to as \emph{grid minors}. 

We first discuss the general problem of finding regular functions on $U_+ \backslash B_+ v B_+ / U_+$ and  $U_+ \backslash \overline{B_+ v B_+} / U_+$. 
Define a partial order $\preceq_v$ on $[n]$ by $i \preceq_v j$ if $i \leq j$ and $v(i) \geq v(j)$. Thus, if $v$ is the identity, then all distinct elements of $[n]$ are incomparable for $\preceq_v$ and, if $v = \wo$, then $\preceq_v$ is the usual order.
\begin{lemma}
Let $J$ be a lower order ideal for $\preceq_v$. Then the minor $\Delta_{v(J), J}$, considered as a function on $\overline{B_+ v B_+}$, is invariant for the left and right action of $U_+$.
\end{lemma}

\begin{proof}
We put a partial order on the collection of $k$-element subsets of $[n]$ by $\{ x_1 < x_2 < \cdots < x_k \} \preceq \{ y_1 < y_2 < \cdots < y_k \}$ if $x_i \leq y_i$ for each $i$.

Put $I = v(J)$. 
It is enough to prove left and right invariance on $B_+ v B_+$. Every element of $B_+ v B_+$ can be written as $u_1 v t u_2$ for $u_1$, $u_2 \in U_+$ and a diagonal matrix $t$. We will show that $\Delta_{I, J}(u_1 v t u_2) = \Delta_{I, J}(v t)$. 

By the Cauchy-Binet formula, we have
\[ \Delta_{I, J}(u_1 v t u_2)  = \sum_{I', J'} \Delta_{I, I'}(u_1) \Delta_{I', J'}(v t) \Delta_{J', J}(u_2) . \]
Since $u_1 \in U_+$, the first factor is nonzero only if $I \preceq I'$, and similarly the last factor is nonzero if and only if $J' \preceq J$. 
Meanwhile, the middle factor is nonzero if and only if $v(J') = I'$. 
Since $J$ is an order ideal for $\preceq_v$, the inequalities $I \preceq v(J')$ and $J' \preceq J$ can only be satisfied with $J' = J$ and, thus, $I'= I$.
So the only nonzero term is $\Delta_{I,I}(u_1) \Delta_{I,J}(vt) \Delta_{J,J}(u_2) = \Delta_{I,J}(vt)$, as promised.
\end{proof}

\begin{corollary} \label{cor:WellDefinedMinors}
For any $1 \leq h \leq n-1$, the minors $\Delta_{v[h], [h]}$ and $\Delta_{[n+1-h, n], v^{-1} [n+1-h,n]}$ are $U_+ \times U_+$ invariant on $\overline{B_+ v B_+}$.
\end{corollary}

\begin{remark}
In fact, the functions $\Delta_{v(J), J}$ generate the ring of $U_+ \times U_+$ invariant functions on $\overline{B_+ v B_+}$; see \cite{MO418142}.
\end{remark}

Now, recall from \cref{sec:double_braid_words} the definition of the $u$-PDS $\bu=(u\pd{0}, \dots, u\pd{m})$.

\begin{definition}\label{dfn:grid_minors}
	For $c \in \Jo$ a crossing and $h \in I$, we define the \emph{red grid minor} as
	\begin{equation}\label{eq:DeltaGrid}
          \grid_{c, h}(\Xbul,\Ybul)= \Delta_{\wo u\pd{c}[h], [h]}(Z_c),
	\end{equation}
        and the \emph{blue grid minor} as
\begin{equation}\label{eq:DeltaGrid_blue}
  \grid_{c,-h}(\Xbul,\Ybul)=\Delta_{[n+1-h, n], u\pd{c}^{-1}\wo [n+1-h, n]}(Z_c), %
\end{equation}
where $Z_c=Y_c^{-1}X_c$. 
\end{definition}
\noindent Note that the color of a grid minor $\grid_{c,h}$ is determined by the sign of $h$. 
\begin{lemma}\label{lemma:grid_minors_are_functions}
  For each $c \in[0,m]:=\{0,1,\dots,m\}$ and $h \in I$, the grid minors $\grid_{c,\pm h}$ give rise to $G$-invariant regular functions on $\BigBR_\ubr$ and $\bigBR_\ubr$, and to regular functions on $\BR_\ubr$. These regular functions are compatible with the quotient map $\bigBR_\ubr\to\BR_\ubr$ and the inclusion map $\bigBR_\ubr\hookrightarrow \BigBR_\ubr$.
\end{lemma}

\begin{proof}
By the definition of $\bigBR_\ubr$, there is some $g$ and some $u' \succeq u\pd{c}$ such that $(X_c, Y_c) \in g \wo u' B_+ \times g  B_+$ and thus
$Z_c \in B_+ \wo u' B_+    \subset \overline{B_+ \wo u\pd{c} B_+}$. 
From \cref{cor:WellDefinedMinors}, the minors $\Delta_{\wo u\pd{c}[h],\ [h]}$ and $\Delta_{[n+1-h, n], u\pd{c}^{-1}\wo [n+1-h, n]}$ are $U_+ \times U_+$ invariant functions on $\overline{B_+ \wo u\pd{c} B_+}$, and hence descend to well defined functions on the quotient $\BR_\ubr$.
\end{proof}

\begin{lemma}\label{lem:manyMinorsStable}
	Consider a crossing $c\in[m]$. Suppose $h \neq i_c$ is of the same sign as $i_c$. Then 
	\[
		\grid_{c-1, h} = \grid_{c, h}.
	\]
\end{lemma}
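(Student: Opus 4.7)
The plan is to reduce the claim to an explicit matrix computation. Since each grid minor descends to a well-defined function on $U_+\backslash G/U_+$ by \cref{lemma:grid_minors_are_functions}, I may replace each $Z_c$ by any matrix representative and argue entirely in~$G$.

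The key preliminary step is to relate $Z_c$ and $Z_{c-1}$ as matrices. Applying \cref{lem:paramSingleStep} to the two single-step relations in~\eqref{eq:braidDiagram} involving column~$c$: if $i_c=i>0$, then $Y_{c-1}=Y_c$ and one can choose representatives so that $Z_c=Z_{c-1}\,z_i(t)$ for some $t\in\C$; if $i_c=-j<0$, then $X_{c-1}=X_c$ and, using $s_{i_c^*}^-=s_{n-j}$, one gets $Z_c=z_{n-j}(t)^{-1}\,Z_{c-1}$.

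It then remains to check that the grid minor itself is unchanged. For the index sets in~\eqref{eq:DeltaGrid}--\eqref{eq:DeltaGrid_blue}: unpacking the PDS recursion~\eqref{eq:PDS_recursion}, either $u\pd{c-1}=u\pd c$ (solid case), or $u\pd{c-1}$ differs from $u\pd c$ by right multiplication by $s_i$ (when $i_c=i>0$) or left multiplication by $s_j$ (when $i_c=-j<0$). Since $s_i$ swaps only the values $i$ and $i+1$, the set $[h]$ is fixed whenever $h\neq i$, yielding $w_0u\pd{c-1}[h]=w_0u\pd c[h]$ in the red case and $u\pd{c-1}^{-1}[|h|]=u\pd c^{-1}[|h|]$ in the blue case. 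For the actual value of the minor: right multiplication by $z_i(t)$ affects only columns $i,i+1$, and its action on any column set $[h]$ with $h\neq i$ either leaves those columns untouched (when $h<i$) or restricts to a transformation with determinant $\det\smat{t & -1\\1 & 0}=1$ (when $h>i$). The blue case is symmetric: left multiplication by $z_{n-j}(t)^{-1}$ acts on rows $n-j,n-j+1$, and the row suffix $w_0[|h|]=[n+1-|h|,n]$ plays the role of $[h]$, with the same dichotomy giving invariance whenever $|h|\neq j$.

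The main subtlety, and the step most prone to sign or indexing errors, is aligning the left/right asymmetry of the Demazure operators, the involution $i\mapsto i^*$ that interchanges the two colorings of $\br$, and the role of $w_0$ in converting the prefix $[h]$ of~\eqref{eq:DeltaGrid} into the suffix $w_0[h]$ of~\eqref{eq:DeltaGrid_blue}. Once these conventions are set up correctly, the rest of the argument is routine linear algebra.
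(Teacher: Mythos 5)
Your argument takes the same approach as the paper's: both hinge on the fact that $Z_{c-1}$ and $Z_c$ differ by right (red case) or left (blue case) multiplication by a braid-type matrix supported on two adjacent columns/rows, from which the equality of grid minors follows for $h \neq i_c$ by the determinant-one observation you spell out. Two small slips, neither of which affects the conclusion. First, the direction of the relation is reversed: from~\eqref{eq:propagate_left} one passes \emph{from} $Z_c$ \emph{to} $Z_{c-1}$, so the correct identity is $Z_{c-1}=Z_c\,z_i(t)$ (red), not $Z_c=Z_{c-1}\,z_i(t)$. Second, in the blue case the left multiplier is $\ds_{n-j}\,x_{n-j}(t)=\z_{n-j}(t)^{-1}$, which is not the same matrix as $z_{n-j}(t)$. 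Both the correct matrix and the one you wrote are unimodular and supported on rows $n-j,n-j+1$, so your computation is robust to these errors. Your explicit check via the PDS recursion~\eqref{eq:PDS_recursion} that the row/column index sets $u\pd{c-1}[h]$ and $u\pd c[h]$ (and their blue analogues) coincide when $h\neq i_c$ is a useful detail that the paper's one-line proof leaves implicit.
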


\begin{proof}
	Let $i:=i_c$. The result follows from the fact that $Z_{c-1}= Z_c x_{i}(t) \ds_{i}$ or $Z_{c-1}=\ds_{i^*} x_{i^*}(t) Z_c$ (as usual using $Z_c$ also to denote a representative for the double coset) for some $t \in \CC$, depending on whether $c$ is red or blue.
\end{proof}

We will be particularly interested in the following subclass of grid minors.
\begin{definition}\label{dfn:cham}
  Define the \emph{\cham minor} of $c$ as
\begin{equation}\label{eq:chamber}
  \crossing{c}:=\Delta_{c-1, i_{c}}.
\end{equation}
\end{definition}
\noindent Note that $\crossing{c}$ is a minor of $Z_{c-1}$. %
 In words, \cref{lem:manyMinorsStable} shows that a crossing of color $x\in\{\text{red},\text{blue}\}$ can change exactly one grid minor of color $x$ (but may change many grid minors of the opposite color). The ``changed" grid minor of color $x$ to the left of the bridge $\bridge_c$ is exactly the \cham minor $\crossing c$.

\begin{figure}
\def\wid{0.98\textwidth}
\begin{tabular}{c}
  \begin{tabular}{ccc}
    \igrw[0.4]{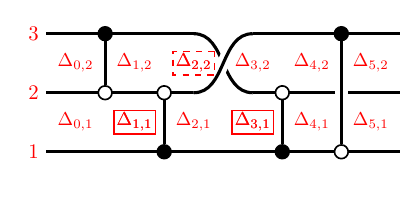} & \qquad\qquad&\igrw[0.4]{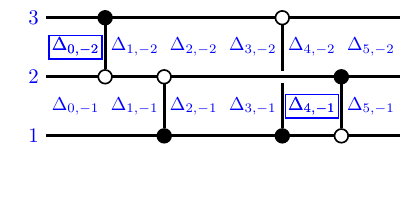} \\[-30pt]
    (a) $\pired(\Gubr)$, red grid minors (labels) & &(b) $\piblue(\Gubr)$, blue grid minors (labels)
  \end{tabular}\\[50pt]
  \igrw[1.0]{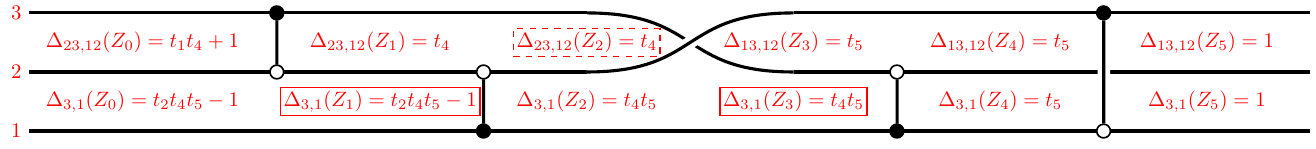}\\
 (c) $\pired(\Gubr)$, red grid minors (values)\\[10pt]
  \igrw[1.0]{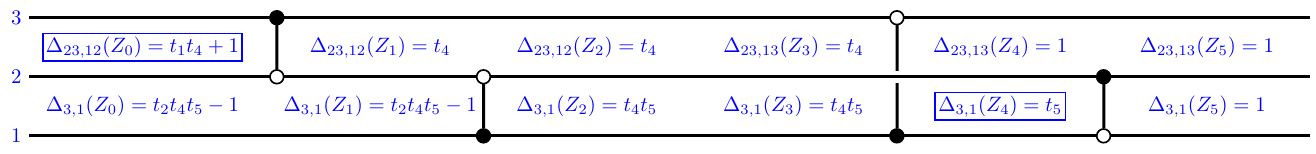}\\
  (d) $\piblue(\Gubr)$, blue grid minors (values)\\[10pt]
  \igrw[1.0]{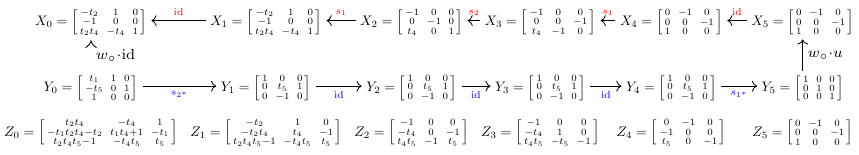}\\
  (e) the matrices $X_c,Y_c,Z_c$ for $c\in[0,m]$
\end{tabular}
  \caption{\label{fig:grid}Grid minors for $u=s_2$ and $\br=(-2,1,2,1,-1)$; see \cref{ex:3D}, \cref{fig:2D-ex}, and \cref{ex:grid}. The \cham minors $\crossing{c}$, $c\in[m]$, are boxed.}
\end{figure}

\begin{remark}\label{rmk:grid_3D}
We explain the relationship between grid minors and 3D plabic graphs; compare \cref{fig:grid} to \cref{fig:2D-ex}. Recall from \cref{sec:3D_plabic} that for a 3D plabic graph $\Gubr$ with coordinates $(i,j,t)$, we consider its red projection $\pired(\Gubr)$ with coordinates $(t,j)$ and its blue projection $\piblue(\Gubr)$ with coordinates $(t,i)$. For $c\in[0,m]$ and $h\in I$, let us place the red grid minor $\grid_{c,h}$ at the point $(t=c,j=h+0.5)$, lying in some face of $\pired(\Gubr)$. Similarly, we place the blue grid minor $\grid_{c,-h}$ at the point $(t=c,i=h+0.5)$, lying in some face of $\piblue(\Gubr)$. This labeling is shown in \figref{fig:grid}(a,b). Each \cham minor $\crossing{c}$ appears immediately to the left of the bridge $\bridge_c$ in the projection of the corresponding color; these minors are boxed in \cref{fig:grid}.
\end{remark}

\begin{remark}
\Cref{lem:manyMinorsStable} can be seen as a special case of the observation that any two red (resp., blue) grid minors that belong to the same face of $\pired(\Gubr)$ (resp., $\piblue(\Gubr)$) are equal.
\end{remark}

\begin{example}\label{ex:grid}
We continue \cref{ex:3D}; thus, $u=s_2$, $\br=(-2,1,2,1,-1)$, and $\Jo=\{1,2,4,5\}$. The matrices $X_c,Y_c,Z_c$ are computed in \figref{fig:grid}(e) from right to left using~\eqref{eq:propagate_left}. The grid minors given by~\eqref{eq:DeltaGrid}--\eqref{eq:DeltaGrid_blue} are computed in \figref{fig:grid}(c,d). The solid (resp., hollow) \cham minors are boxed in solid (resp., dashed) lines.
\end{example}

See \cref{sec:comparison} for a comparison of the \cham minors defined here with minors that have previously appeared in work on double Bruhat cells and open Richardson varieties.

\section{Deodhar geometry and seeds}\label{sec:seeds}
\def\Jos{J_\br}
\def\Josp{J_{\brp}}
\def\Jmuts{J^{\mut}_\br}
\def\Jfros{J^{\fro}_\br}
Let $u\leq \br$. We will use the $u$-positive distinguished subexpression  (cf. \cref{sec:double_braid_words}) to define a torus in $\BR_{\ubr}$, which will ultimately be a cluster torus. We will then use $(u,d)$-almost positive sequences (cf. \cref{sec:APS}) to define hypersurfaces in $\BR_{\ubr}$, and in turn to define cluster variables for $\BR_{\ubr}$.

\subsection{The Deodhar torus}
We continue to denote the $u$-PDS by $\bu=(u\pd{0}, \dots, u\pd{m})$.
\begin{definition}
	The \emph{Deodhar torus} $\torus(\ubr) \subset \BR_{\ubr}$ is the subset of $\BR_{\ubr}$ given by the conditions
	\begin{equation}\label{eq:DTcond}
	X_c \Lwrel{\wo  u\pd{c}} Y_c \qquad \mbox{	for $c=0,1,\ldots,m$.}
	\end{equation}
\end{definition}

\begin{remark}\label{rmk:torusIntuition}
	Points in $\torus(\ubr)$ lift to walks of the sort described in \cref{rmk:braidVarIntuition} where at each step, one greedily increases the relative position of $X_c$ and $Y_c$.
\end{remark}

Recall from the proof of \cref{lemma:grid_minors_are_functions} that all grid minors are nonvanishing on the Deodhar torus $\torus(\ubr)$.

As the next proposition shows, the Deodhar torus is in fact a torus, and the solid \cham minors form a basis of characters.

\begin{proposition}[{\cite[Corollary 2.8, Proposition 2.12]{GLSBS2}}]\ \label{prop:deodharIsTorus} 
\begin{enumerate}
\item The Deodhar torus $\torus(\ubr)$ is an open subset of $\BR_{\ubr}$, isomorphic to an algebraic torus of dimension $d(\ubr)$.
\item  The character lattice of $\torus(\ubr)$ consists of Laurent monomials in the solid \cham minors $\Delta_c$, $c \in \Jo$.  The ring of regular functions on $\torus(\ubr)$ is the ring of Laurent polynomials in the \cham minors.
\item The grid minors are characters of $\torus(\ubr)$.
\end{enumerate}

\end{proposition}

While the braid variety $\BR_{\ubr}$ is unchanged by braid moves, the Deodhar torus $T_{\ubr}$ may change. In particular, we will show later that non-mutation moves preserve the Deodhar torus, while mutation moves change it.

\subsection{Deodhar hypersurfaces}

For $d\in \Jo$, recall the notion of the $(u,d)$-almost positive sequence $(\vu\pd{0}\apd{d}, \dots, \vu\pd{m}\apd{d})$ from \cref{def:almostPos} and recall from \cref{cor:mutable-from-aps} that $d$ is mutable if and only if $\vu\pd{0}\apd{d} = \id$. In this section, we use the $(u,d)$-APS for solid~$d$ to define Deodhar hypersurfaces.

\begin{definition}
Let $d \in \Jo$.  Define the \emph{Deodhar hypersurface} $V_d \subset \BigBR_{\ubr}$ to be the closure of the locus satisfying
\begin{equation}\label{eq:deodharHyp}
  X_{c} \Lwrel{\wo  \vu\pd{c}\apd{d}} Y_c \quad\text{for all $c \in [0,m]$.}
\end{equation}
\end{definition}

Note that an index $d \in \Jo$ is mutable (resp., frozen) if and only if $V_d \subset \bigBR_{\ubr}$ (resp., $V_d \cap \bigBR_{\ubr}= \emptyset$). If $d$ is mutable, then the $G$-action on $V_d$ is free and $V_d/G$ is a subvariety of $\BR_{\ubr}$. 

\begin{remark}
	Points in the locus satisfying \eqref{eq:deodharHyp} lift to walks of the sort described in \cref{rmk:braidVarIntuition} where at every step besides step $d$, one greedily increases the relative position of $X_c$ and $Y_c$. At step $d$, one makes a ``mistake'' and decreases the relative position of $X_d$ and $Y_d$. 
\end{remark}

The Deodhar hypersurfaces form the complement of the Deodhar torus, as the next proposition shows. We denote by $\ttorus(\ubr)$ the pre-image of $\torus(\ubr)$ under the quotient map $\bigBR_{\ubr} \to \BR_{\ubr}$.

\begin{proposition}[{\cite[Proposition 2.19]{GLSBS2}}]\label{prop:deodharHyp}
The closed subset $\BigBR_{\ubr} \setminus \ttorus(\ubr)$ is the union of the Deodhar hypersurfaces $V_d$ for $d \in \Jo$. Each Deodhar hypersurface $V_d$ is irreducible and has codimension~$1$ in $\BigBR_{\ubr}$.
\end{proposition}

For a grid minor $\grid_{c, h}$ and $d \in \Jo$, we denote by $\ord_{V_d} \grid_{c,j}$ the order of vanishing of $\grid_{c, h}$ on the Deodhar hypersurface $V_d \subset \BigBR_{\ubr}$. By \cref{lemma:grid_minors_are_functions}, $\grid_{c, h}$ is regular on $\BigBR_{\ubr}$, so $\ord_{V_d} \grid_{c,j} \geq 0$. The next proposition determines $\ord_{V_d} \grid_{c,j}$ exactly, using the $u$-PDS and $(u,d)$-APS.

\begin{proposition}\label{prop:combGeoMatch} Fix $d \in \Jo$. For $j \in I$,
  \begin{equation*}%
    \ord_{V_d} \grid_{c,j} = \begin{cases}
		1 & \text{ if }u\pd{c}[j] \neq \vu\pd{c}\apd{d}[j],\\
		0 & \text{ otherwise,}
              \end{cases}
              \quad\text{and}\quad
              \ord_{V_d} \grid_{c,-j} = \begin{cases}
		1 & \text{ if }u^{-1}\pd{c}[j] \neq (\vu\pd{c}\apd{d})^{-1}[j],\\
		0 & \text{ otherwise.}
	\end{cases}
  \end{equation*}
\end{proposition}
\begin{remark}\label{rem:orderone}
In the case of braid varieties of arbitrary type treated in~\cite{GLSBS2}, the analog of \cref{prop:combGeoMatch} becomes more complicated. It is still the case that  $\ord_{V_d} \grid_{c,j}=0$ if and only if $u\pd{c}[j]=\vu\pd{c}\apd{d}[j]$; see~\cite[Proposition~2.21]{GLSBS2}. However, if $u\pd{c}[j]\neq \vu\pd{c}\apd{d}[j]$ then we have $\ord_{V_d} \grid_{c,j}\geq1$, and determining the exact value of $\ord_{V_d} \grid_{c,j}$ becomes a highly nontrivial problem; see~\cite[Section~7]{GLSBS2} for a combinatorial algorithm.
\end{remark}
\begin{proof}
  We compute the order of vanishing inside the space $S\subset\BigBR_\ubr$ of tuples $(\Xbul,\Ybul)$ of the form~\eqref{eq:Ugauge} (i.e., the space where $X_0$ has been gauge-fixed to $U_+$).  By \cref{lem:paramSingleStep}, this space $S$ is an affine space with coordinates $\t = (t_1,t_2,\ldots)$, as each $X_i$ or $Y_i$ is of the form  
$\z_{i_{j_1}}(t_{j_1}) \cdots \z_{i_{j_r}}(t_{j_r})U_+$ for some $r$. (These coordinates differ from those given by~\eqref{eq:propagate_left}.)  The $Z_c$ are then also products of inverses of braid matrices in some subset of the parameters $\t$.

Let $Z$ be any matrix that is a product of braid matrices or their inverses with parameters from $\t$, each parameter used at most once.  Then it is easy to see that every minor of $Z$ is linear or constant in each variable $t_i$.  The Deodhar hypersurface $V_d$ is cut out by a minor of $Z_{d-1}$, and this minor has degree one in some parameter $t = t_{i(d)}$.  Any grid minor $\grid_{c,j}$ is at most degree one in $t$, and thus vanishes to order at most one on $V_d$.

Suppose $j \in I$.  Then $u\pd{c}[j] = \vu\pd{c}\apd{d}[j]$ if and only if for a generic point in $V_d$, the $j$-th subspace in the weighted flag $X_c$ has the correct relative position (given by $u\pd{c}[j]$) with respect to $Y_c$.  This holds if and only if $\grid_{c,j}$ does not vanish on $V_d$. As we showed above, $\ord_{V_d}\grid_{c,j}\leq 1$, and thus 
 we obtain the stated formula for $j \in I$.  The argument for $j \in -I$ is identical.
\end{proof}

\begin{proposition}\label{prop:ordDeterminesCharacter}
The map 
\begin{equation}\label{eq:ord}
f \mapsto (\ord_{V_c} f)_{c \in \Jo} 
\end{equation}
is an isomorphism from the character lattice of $\torus(\ubr)$ to $\Z^{d(\ubr)}$. 
\end{proposition}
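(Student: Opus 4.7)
The plan is to identify both domain and codomain with $\Z^{\Jo}$ using compatible bases and show that the resulting linear map is given by a unimodular matrix. By Proposition \ref{prop:deodharIsTorus}(2), the character lattice of $\torus(\ubr)$ is freely generated over $\Z$ by the solid \cham minors $(\crossing{c})_{c\in\Jo}$, so the map \eqref{eq:ord} is $\Z$-linear between free modules of rank $d(\ubr)=|\Jo|$, encoded by the matrix $M=(M_{c,d})_{c,d\in\Jo}$ with $M_{c,d}:=\ord_{V_d}\crossing{c}$. It suffices to show that, when $\Jo$ is ordered increasingly, $M$ is upper-triangular with $1$s on the diagonal. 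Since $\crossing{c}=\grid_{c-1,i_c}$, Proposition \ref{prop:combGeoMatch} reduces each entry to comparing $u\pd{c-1}$ and $v\pd{c-1}\apd{d}$ at position~$i_c$.

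For the off-diagonal vanishing ($c>d$), I would read off directly from Definition \ref{def:almostPos} that the recursion defining $v\pd{\cdot}\apd{d}$ agrees with that of the $u$-PDS on the interval $[d,m]$: both use Demazure quotient and both start from $u$ at position $m$. Consequently, $v\pd{c'}\apd{d}=u\pd{c'}$ for every $c'\geq d$. Taking $c'=c-1\geq d$ makes every grid minor $\grid_{c-1,j}$ satisfy the ``equal'' case of Proposition \ref{prop:combGeoMatch}, so $\ord_{V_d}\grid_{c-1,j}=0$ for all $j$, hence $M_{c,d}=0$.

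For the diagonal ($c=d$), solidity of $d\in\Jo$ means $u\pd{d-1}=s_{i_d}^-\demR u\pd{d}\demL s_{i_d}^+=u\pd{d}$, so the ``mistake'' step of the APS strictly increases length: $v\pd{d-1}\apd{d}=s_{i_d}^-\ast u\pd{d}\ast s_{i_d}^+$ is obtained from $u\pd{d-1}$ by the row transposition (if $i_d>0$) or column transposition (if $i_d<0$) swapping $i_d$ and $i_d+1$. A direct check shows that this transposition changes the set $u\pd{d-1}[i_d]$ (resp.\ $u\pd{d-1}^{-1}[-i_d]$), so Proposition \ref{prop:combGeoMatch} yields $M_{d,d}=\ord_{V_d}\grid_{d-1,i_d}=1$. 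Combining the two cases, $\det M=1$, and \eqref{eq:ord} is an isomorphism.

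The main technical obstacle is the diagonal calculation: one must unpack the Demazure product $s_{i_d}^-\ast u\pd{d}\ast s_{i_d}^+$ in each of the two color cases and verify that the change in $u\pd{d-1}$ indeed disturbs the particular Schubert-like subset tracked by the grid minor $\grid_{d-1,i_d}$. The upper-triangularity is essentially formal from Definition \ref{def:almostPos}.
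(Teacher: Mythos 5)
Your proof is correct and follows the same approach as the paper: both identify the character lattice with $\Z^{\Jo}$ via the \cham minors (Proposition~\ref{prop:deodharIsTorus}) and then show that the matrix of orders of vanishing $(\ord_{V_d}\crossing{c})$ is unitriangular, using Proposition~\ref{prop:combGeoMatch}. The paper cites Proposition~\ref{prop:combGeoMatch} directly for both the diagonal entry and the off-diagonal vanishing, whereas you carefully unpack why these hold from Definition~\ref{def:almostPos} — your details are the justification implicit in the paper's appeal to that proposition.
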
 

\begin{proof}
	Let $C \subset \Z^{d(\ubr)}$ be the image of $\{\crossing{c}\}_{c \in \Jo}$. By \cref{prop:combGeoMatch}, the \cham minor $\crossing{c}$ vanishes to order $1$ on $V_c$, and does not vanish on $V_d$ for $d<c$. Thus, there is an upper unitriangular matrix taking the standard basis of $\Z^{d(\ubr)}$ to $C$. In particular, the images of the \cham minors form a basis of $\Z^{d(\ubr)}$. 
\end{proof}

\subsection{Seeds}\label{ssec:seeds}
The isomorphism \eqref{eq:ord} allows us to define new distinguished characters on $\torus(\ubr)$, which will be the cluster variables.

\begin{definition}\label{def:cluster-var}
For $c \in \Jo$, we define the \emph{cluster variable} $x_c$ to be the unique character of $\torus(\ubr)$ that vanishes to order one on $V_c$ and has neither a pole nor a zero on $V_{c'}$ for $c'\in\Jo\setminus\{c\}$. We denote the cluster by $\xubr=\{x_c\}_{c \in \Jo}$.
\end{definition}

\begin{example}\label{ex:grid2}
  Continuing \cref{ex:grid}, we see that in the notation of \cref{fig:grid}, the cluster variables are given by
  \begin{equation*}%
    x_1=t_1t_4+1,\quad x_2=t_2t_4t_5-1,\quad x_4=t_4,\quad x_5=t_5.
  \end{equation*}
  In particular, $\Delta_4=t_4t_5=x_4x_5$ factors as a product of two cluster variables.
\end{example}

It is immediate from the definition that each cluster variable is a regular function on $\BR_{\ubr}$ and in particular each frozen variable is a unit in $\C[\BR_\ubr]$. It is also immediate that the cluster variables in $\xubr$ are algebraically independent and irreducible.

Recall the definition of the ice quiver $\Qubr$ from \eqref{eq:bice_dfn}. The vertices of $\Qubr$ are labeled by $c \in \Jo$, as are the elements of $\xubr$. So we define the seed
\begin{equation}\label{eq:seed}
\seedubr:=(\xubr, \Qubr).
\end{equation}

We may now state our main result.
\begin{theorem}\label{thm:main}
  For all $u\leq\br$, we have
\begin{equation*}%
  \C[\BR_\ubr]= \Acal(\seedubr)
\end{equation*}
as subrings of $\C(\BR_{\ubr})$. 
 Moreover, the cluster algebra $\Acal(\seedubr)$ is locally acyclic and really full rank.
\end{theorem}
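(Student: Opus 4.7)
The plan is to prove the three assertions in order of increasing difficulty. Local acyclicity of $\A(\seedubr)$ is already established in \cref{cor:locAcyclic}; Muller's theorem then gives the upper-cluster equality $\A(\seedubr) = \U(\seedubr)$, which will be used below. The really-full-rank property is \cref{prop:full_rank}. The substantive content of the theorem is therefore the equality $\C[\BR_\ubr] = \A(\seedubr)$ of subrings of $\C(\BR_\ubr)$.

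For $\A(\seedubr) \subseteq \C[\BR_\ubr]$, the initial cluster variables are regular by construction: each $x_c$ is defined as a character of $T_\ubr$ whose orders along the mutable Deodhar hypersurfaces $V_{c'}$ are nonnegative, and by \cref{prop:deodharHyp} these exhaust $\BR_\ubr \setminus T_\ubr$, so smoothness of $\BR_\ubr$ yields regular extension. For a cluster variable produced by mutating $\seedubr$ at a mutable vertex $d$, I plan to show in \cref{sec:movesOnSeed} that this mutation corresponds to a mutation braid move on $\br$, so that the mutated seed coincides with the seed $\seed{u,\brp}$ of the new word $\brp$. Combined with the isomorphism $\BR_\ubr \cong \BR_{u,\brp}$ of \cref{prop:isoVar}, the mutated cluster variables are then initial cluster variables for a different braid word, and hence regular on $\BR_\ubr$. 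Iterating through the mutation class covers every cluster variable.

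For the reverse inclusion $\C[\BR_\ubr] \subseteq \A(\seedubr)$, I will induct on $\ell(\br)$. The base case $\ell(\br) = \ell(u)$ is immediate since $\BR_\ubr$ is a point. For the inductive step, using \cref{rmk:u=w0}, \cref{prop:bm5_bm4}, and braid moves, I reduce to the situation from the proof of \cref{thm:sinkrec}: $u = w_0$ and $\br = (-i)i\br_0$, with $d = 2$ mutable and $w_0 \leq \brp := i\br_0$. By \cref{thm:sinkrec}, freezing $x_d$ in $\Qubr$ produces an ice quiver whose mutable part agrees with that of $\Qubrp$; by the inductive hypothesis, the corresponding cluster algebra is $\C[\BR_{u,\brp}]$. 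Geometrically, I will construct a map realizing the localization $\C[\BR_\ubr][x_d^{-1}]$ as the cluster algebra of the frozen seed $\seed{u,\brp}$ (up to an explicit $\Cx$ factor corresponding to the additional bridge $\bridge_1$). A codimension-one argument along the Deodhar hypersurface $V_d$ then upgrades this to the equality on all of $\BR_\ubr$: using \cref{prop:ordDeterminesCharacter}, the character lattice of $T_\ubr$ is indexed by the $x_c$, so a regular function on $\BR_\ubr$ restricts on $T_\ubr$ to a Laurent polynomial; bounding the pole order along $V_d$ via integrality then forces membership in $\A(\seedubr)$.

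The main obstacle is this inductive step, in particular constructing and controlling the geometric map from a localization of $\BR_\ubr$ to the lower-dimensional braid variety $\BR_{u,\brp}$ in a manner compatible with both the quiver reduction $\Qubr[\{d\}] \to \Qubrp$ and the change of Deodhar stratification. A subsidiary but delicate challenge in Step~1 is verifying that seed mutation truly matches a mutation braid move: on the combinatorial side this is the relative-cycle formula \eqref{eq:cycle_mut}, but the geometric counterpart requires tracking how the distinguished characters $x_c$ of $T_\ubr$ and of the torus $T_{u,\brp}$ transform via their orders of vanishing along Deodhar hypersurfaces, matching them against the explicit isomorphism of \cref{prop:isoVar}.
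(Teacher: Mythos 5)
The proposal's Step 1 has a genuine gap. You claim that mutating $\seedubr$ at any mutable vertex $d$ ``corresponds to a mutation braid move on $\br$,'' so that every cluster variable is an initial cluster variable for some double braid word. This is false: braid moves realize only a very small subset of mutations. In the paper's setup, after reducing to the form $\br = ii\br'$, only the mutation at the single vertex $d=2$ is realized by a braid move (composing \bmref{bm5} with a special solid \bmref{bm1}); regularity of the other one-step mutated variables $\tilde{x}_c$ is established by a separate inductive argument (the mutation formula is the \emph{same} as in $\seed{i\br'}$ and $\seed{\br'}$, and one invokes the inductive hypothesis on the shorter words). The mutation class of $\Qubr$ is typically much larger than the set of seeds reachable by double braid moves, so iterating braid moves does not cover it. Moreover, this entire inclusion $\A\subseteq\C[\BR_\ubr]$ is never proved directly in the paper; the clean route is to show $\C[\BR_\ubr]$ equals the Berenstein--Fomin--Zelevinsky \emph{upper bound} $\C[\x^{\pm1}]\cap\bigcap_{c\in\Jmuts}\C[\tilde{\x}_c^{\pm1}]$ and then invoke BFZ (upper bound $=$ upper cluster algebra) together with Muller's theorem and \cref{cor:locAcyclic} ($\U=\A$). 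Your direct attempt is both unnecessary and broken.

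Your reverse-inclusion outline is too coarse to assess as a proof, and it misses several load-bearing ingredients. First, localizing by a single $x_d^{-1}$ gives access only to the open piece $\UU\cong\BR_{i\br'}\times\Cx$; the paper's induction also crucially uses the closed stratum $V=V_2\cong\BR_{\br'}\times\C$, and both are needed in the regularity and torus-covering arguments. Second, ``bounding the pole order along $V_d$'' controls membership in $\C[\x^{\pm1}]\cap\C[\tilde{\x}_d^{\pm1}]$ only; the upper bound requires control along \emph{all} the mutated tori $T_c$, $c\in\Jmuts$. The paper achieves this through \cref{lem:posexponent}, the irreducibility statements for cluster variables and their mutations (\cref{lem:irred,lem:mutirred}), and the codimension-two covering \cref{lem:codim2} for $T\cup\bigcup_c T_c$, finishing with normality of the smooth variety $\BR_{\br}$. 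None of these appear in your sketch, and without them the argument does not close. Finally, a minor point: the paper reduces to $\br=ii\br'$ in positive letters (not $(-i)i\br_0$), handling the case of a hollow second crossing separately in \cref{prop:second-i-hollow}.
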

\noindent The proof of the isomorphism will occupy Sections~\ref{sec:movesOnSeed}--\ref{sec:proofmain}. The local acyclicity and the really full rank property of $\Acal(\seedubr)$ are \cref{cor:locAcyclic,prop:full_rank}.

We say that a cluster variable $x_c$ \emph{appears} in a grid minor $\grid_{d, h}$ if $\ord_{V_c} \grid_{d, h}=1$. One characterization for when this happens is given by \cref{prop:combGeoMatch}. We now relate it to the combinatorics of 3D plabic graphs; cf. \cref{sec:intro:seed}. Recall from \cref{sec:rel_cycles} that for each $d\in\Jo$ we have a relative cycle $\Cycle_d$ in $\Gubr$ which bounds a disk $\Disk_d$. Recall also from \cref{rmk:grid_3D} that we decorate the faces of the projections of $\Gubr$ with grid minors.

\begin{lemma}\label{lem:ordViaSoap}
Let $d \in \Jo$, $c \in [0, m]$, and $h \in I$ (resp., $h\in -I$). Then $x_d$ appears in $\grid_{c, h}$ if and only if $\grid_{c,h}$ lies in the red (resp., blue) projection of the disk $D_d$.
\end{lemma}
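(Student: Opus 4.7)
The plan is to combine \cref{prop:combGeoMatch} with \cref{prop:gswap} in order to reduce the lemma to a combinatorial statement about $\gswap$ and monotone multicurves. I handle the red case $h\in I$ in detail; the blue case is symmetric.

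By \cref{prop:combGeoMatch}, $x_d$ appears in $\grid_{c,h}$ iff $\up{c}[h]\neq v\apd{d}\pd{c}[h]$. For $c\ge d$ both sides are trivially false: the APS recursion gives $v\apd{d}\pd{c}=\up{c}$, and the disk $\Disk_d$ is built to the left of the bridge $\bridge_d$ at time $t=d-\tfrac12$, so its red projection does not meet any slice at $t\ge d$. Assume henceforth $c\le d-1$, so that \cref{prop:gswap} gives $v\apd{d}\pd{c}=\gswap(\bgamma\pu{d,c},\up{c})$. On the geometric side, the slice of $\pired(\Disk_d)$ at $t=c$ equals $\pired(\bgamma\pu{d,c})$, which is the union of $j$-intervals $[j_1(\gamma),j_2(\gamma)]$ over the monotone curves $\gamma\in\bgamma\pu{d,c}$ with endpoints having $j$-coordinates $j_1(\gamma)<j_2(\gamma)$. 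Hence the label $\grid_{c,h}$, sitting at $(t=c,j=h+\tfrac12)$, lies in $\pired(\Disk_d)$ iff $j_1(\gamma)\le h<j_2(\gamma)$ for some such $\gamma$.

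The core of the proof is the following single-curve statement: for a monotone curve $\gamma$ in $\PD(z)$ from $(i_1,j_1)$ to $(i_2,j_2)$ arising in the multicurve $\bgamma\pu{d,c}$, one has $z[h]\neq \gswap(\gamma,z)[h]$ iff $j_1\le h<j_2$. By definition, $\gswap$ replaces $\Dots_\gamma$ by $\Dots'_\gamma$; since the result is again a permutation diagram, the two sets share the same column set $C=\{c_1<\cdots<c_k\}$ and row set $R=\{r_1<\cdots<r_k\}$. The staircase structure of the boundaries $L,U$ of the skew shape forces $z|_C\colon C\to R$ to be order-preserving and $\gswap(\gamma,z)|_C$ to be order-reversing; with $p=|C\cap[1,h]|$, this gives $z[h]\cap R=\{r_1,\dots,r_p\}$ and $\gswap(\gamma,z)[h]\cap R=\{r_{k-p+1},\dots,r_k\}$, which coincide iff $p\in\{0,k\}$, i.e.\ iff $h<c_1$ or $h\ge c_k$. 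The proof is then completed by the structural identity $C=\{j_1,j_1+1,\dots,j_2\}$ (so that $c_1=j_1$ and $c_k=j_2$): this holds in the base case $c=d-1$, where the bridge directly forces $\Dots_\gamma=\{(j,\up{d-1}(j)),(j+1,\up{d-1}(j+1))\}$ with $j=i_d=j_1$, and is propagated through each of the rules in \cref{fig:propag} by case analysis.

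Finally, the multicurve case reduces to the single-curve case. The defining inequalities of a monotone multicurve force the open intervals $(j_1(\gamma_a),j_2(\gamma_a))$ to be pairwise disjoint, and by the structural identity so are the column sets $C_a$. The individual operations $\gswap(\gamma_a,\cdot)$ therefore act on disjoint columns, so their effects on $\up{c}[h]$ combine independently; thus $\up{c}[h]\neq\gswap(\bgamma\pu{d,c},\up{c})[h]$ iff $h+\tfrac12$ lies in $(j_1(\gamma_a),j_2(\gamma_a))$ for some $a$, finishing the red case. The blue case ($h\in -I$) uses the version of \cref{prop:combGeoMatch} comparing the inverse permutations $\up{c}^{-1}[|h|]$ and $(v\apd{d}\pd{c})^{-1}[|h|]$, together with the analogous analysis applied to $\piblue(\Disk_d)$ and the column-swap semantics of blue crossings. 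I expect the main technical obstacle to be the inductive verification of the structural identity $C=\{j_1,\dots,j_2\}$: this requires tracking how the inner-corner set $\Dots_\gamma$ transforms under each of the propagation rules of \cref{fig:propag}, separately for hollow and solid crossings.
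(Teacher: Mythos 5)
Your proposal follows the same high-level route as the paper: reduce via \cref{prop:combGeoMatch} to the combinatorial comparison of $u\pd c[h]$ with $v\pd c\apd d[h]$, invoke \cref{prop:gswap} to express $v\pd c\apd d$ as $\gswap(\bgamma\pu{d,c},\up c)$, and then argue that the sets $z[h]$ and $\gswap(\gamma,z)[h]$ differ exactly when the monotone curve separates the two coordinate half-lines determined by $h$. The paper's own proof is a one-paragraph sketch that says ``from the details of this construction, one can conclude''; your proposal attempts to spell out that implication, which is a reasonable thing to do.

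There are, however, two concrete issues with the way you spell it out. First, the structural identity $C=\{j_1,j_1+1,\dots,j_2\}$ is a type mismatch: $C$ is the \emph{column} set of $\Dots_\gamma$, whereas $j_1,j_2$ are the \emph{row} coordinates of the endpoints of $\gamma$. What is free from the definitions is that the two endpoints of $\gamma$ are the $\prec$-extreme elements of $\Dots_\gamma$, so $c_1=i_1$, $c_k=i_2$, $r_1=j_1$, $r_k=j_2$; the contiguity $C=\{i_1,\dots,i_2\}$ is not needed, and the identification $c_1=j_1$, $c_k=j_2$ that you actually use is simply false in general. Since your ``$p=|C\cap[1,h]|$'' analysis lands on the column interval $[c_1,c_k)=[i_1,i_2)$, while the red projection (which keeps $(t,j)$ and drops the column) makes $\grid_{c,h}$ lie in $\pired(D_d)$ precisely for $h$ in the row interval $[j_1,j_2)$, you are left with a genuine mismatch between what your computation produces and what the lemma requires. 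To close this you must either carry the analysis on the row set $R$ together with $z^{-1}$, or else carefully track which of rows versus columns the bridge/grid-minor conventions in \cref{sec:3D_plabic} and \cref{rmk:grid_3D} actually select (the answer is not self-evident from the notation).

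Second, the claims that $z|_C$ is order-preserving and $\gswap(\gamma,z)|_C$ is order-reversing, and the preservation of your structural statement under the rules of \cref{fig:propag}, are stated but not proved; you explicitly flag the latter as ``the main technical obstacle.'' The order-preservation of $z|_C$ does hold (any two dots of $\Dots_\gamma$ are $\prec$-comparable because the lower and upper staircases of a skew shape are nested), but the order-reversal for the outer corners and the inductive verification through the cutting moves are exactly the details the paper itself waved at, so as written your proposal, like the paper's, remains a sketch at the crucial step rather than a complete proof.
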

\begin{proof}
	This follows from \cref{prop:combGeoMatch,prop:gswap}. \Cref{prop:combGeoMatch} computes $\ord_{V_d} \grid_{c,h}$ by comparing certain subsets: $u\pd{c}[h]$ and $\vu\pd{c}\apd{d}[h]$ for the red minor, and  $(u\pd{c})^{-1}[|h|]$ and $(\vu\pd{c}\apd{d})^{-1}[|h|]$ for the blue minor. 
	
	\Cref{prop:gswap} shows that the permutation $\vu\pd{c}\apd{d}$ can be recovered from the monotone multicurve $\bgamma^{d,c}$. From the details of this construction, one can conclude that $u\pd{c}[h] \neq \vu\pd{c}\apd{d}[h]$ if and only if one of the curves in $\bgamma^{d,c}$ intersects the horizontal line $j=h+0.5$ inside $\PD(\upu c)$. This is equivalent to the point $(c, h+0.5)$ lying in the red projection of $D_d$. Similarly, $(u\pd{c})^{-1}[h]\neq(\vu\pd{c}\apd{d})^{-1}[h]$ if and only if one of the curves in $\bgamma^{d,c}$ intersects the vertical line $i=h+0.5$ in $\PD(\upu c)$ if and only if $(c, h+0.5)$ lies in the blue projection of $D_d$. Comparing with \cref{prop:combGeoMatch} gives the result.
      \end{proof}
\begin{example}
Recall from \cref{ex:grid2} that in \cref{fig:grid}, we have $\Delta_4=x_4x_5$. This is consistent with the fact that the face of $\pired(\Gubr)$ containing $\Delta_4=\Delta_{3,1}$ lies inside the cycles $\Cycle_4,\Cycle_5$ shown in \figref{fig:2D-ex}(e).
\end{example}

We now give an alternate construction of $\Qubr$ related to the half-arrow description in \cref{sec:half-arrow-descr}, which will be quite useful in analyzing the effects of braid moves on $\seedubr$ in \cref{sec:movesOnSeed}.

First, consider a collection $P_{\ubr}^+$ (resp., $P_{\ubr}^-$) of half-arrows between the red (resp., blue) grid minors $\{\grid_{c,h}\}_{c \in [0, m], h \in I}$ (resp., $\{\grid_{c,h}\}_{c \in [0, m], h \in -I}$). It is obtained by placing the configuration of half-arrows shown in \cref{fig:half-arrows-and-minors} around each bridge $b_c$ with $i_c>0$ in $\pired(\Gubr)$ (resp., each blue bridge $b_c$ with $i_c<0$ in $\piblue(\Gubr)$), where $i:=|i_c|$.
\begin{figure}[h]
	\includegraphics*[width=0.45\textwidth]{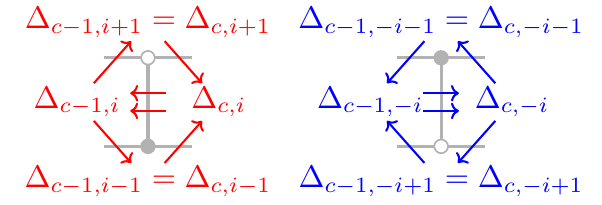}
	\caption{Half-arrow configuration to obtain $P_{\ubr}^+$ (left) and $P_{\ubr}^-$ (right) for \cref{prop:seedFromLabeledGraphs}.}
	\label{fig:half-arrows-and-minors}
\end{figure}

\begin{proposition}\label{prop:seedFromLabeledGraphs}
Define a quiver with vertex set $\xubr$ as follows. Write each grid minor as 
\[\grid_{c, h}= \prod_{d \in \Jo} x_d^{q^d_{c,h}},\]
where $q^d_{c,h}:= \ord_{V_d} \grid_{c, h}\geq0$.
For every half-arrow $\grid_{c, h} \to \grid_{c', h'}$ in $P_{\ubr}^+$ and $P_{\ubr}^-$, draw $q^d_{c, h}q^{d'}_{c', h'}$ arrows from $x_d$ to $x_d'$. (In other words, for every $x_d$ appearing in $\grid_{c, h}$ and $x_d'$ appearing in $\grid_{c', h'}$, draw a half-arrow $x_d \to x_d'$.) Then delete loops, 2-cycles and arrows between frozen vertices. 
The resulting collection of half-arrows agrees with the quiver $\Qubr.$ That is, the signed number of half-arrows between any two vertices is twice the number of arrows between the corresponding vertices of $\Qubr$.
\end{proposition}
\begin{proof}
	This follows directly from \cref{lem:ordViaSoap} and \cref{prop:half_arr}.
      \end{proof}

\section{Moves preserve the cluster algebra}\label{sec:movesOnSeed}
To each $u\leq \br$, we have associated a seed $\seedubr$ and thus also a cluster algebra $\Aubr:=\A(\seedubr) \subset \CC(\BR_{\ubr})$. Our ultimate goal is to show that $\CC[\BR_{\ubr}]$ is isomorphic to $\Aubr$. As an intermediate step, we show the following.

\begin{theorem}\label{thm:inducedIso} Suppose $\br$ and $\br'$ are related by one of the moves \bmref{bm1}--\bmref{bm5}. Let
 $\mis: \BR_{\ubr}\xrightarrow{\sim} \BR_{\ubrp}$ be the corresponding isomorphism from \cref{prop:isoVar}. Then the isomorphism $\mis^*: \CC(\BR_{\ubrp})\xrightarrow{\sim} \CC(\BR_{\ubr})$ restricts to an isomorphism $\Aubrp \xrightarrow{\sim} \Aubr$.
\end{theorem}

\begin{remark}\label{rmk:onlyCheckCluster}
	If $\br$ and $\br'$ are related by moves \bmref{bm1}--\bmref{bm4}, then the quivers $\Qubr$ and $\Qubrp$ are related either by relabeling the vertices or by mutation (cf. \cref{thm:invariance}, \cref{prop:bm5_bm4}). So in these cases, we have an isomorphism $\Aubr \cong \Aubrp$, and to prove \cref{thm:inducedIso}, we just need to check that the isomorphism is induced by the appropriate isomorphism of braid varieties.
\end{remark}

Throughout the following three subsections, we assume that $u=\wo$ (cf. \cref{notn:w0,lem:w0enough}), and that $\br$ and $\br'$ are related by a single move \bmref{bm1}--\bmref{bm5}.  We let $\mis:\BR_{\br} \to \BR_{\brp}$ denote the corresponding isomorphism, defined in \cref{prop:isoVar}. Any pullback mentioned is a pullback by $\mis$. We set $\Josp$ to be the set of solid crossings for $\brp$. For $x_c \in \x_{\br'}$, we denote $p_c := \mis^*(x_c)$ and $\pbr:=\{p_c\}_{c \in \Josp}$. The isomorphism $\mis$ also induces a map on the double cosets $Z_c$; we write $Z'_c$ to denote the image of $Z_c$ under this map. Pullbacks of grid minors are also denoted with primes.

\subsection{Non-mutation moves \texorpdfstring{\bmref{bm1}--\bmref{bm4}}{(B1)--(B4)} do not change the seed}

In this section, we prove \cref{thm:inducedIso} for non-mutation moves other than \bmref{bm5}. We show that in this case, $\seedbr$ and $(\pbr, \Qbrp)$ differ only by reindexing. As noted in \cref{rmk:onlyCheckCluster}, it suffices to check this statement for cluster variables.

\begin{proposition}\label{prop:nonMutPreserveVar}
 Suppose $\br$ and $\br'$ are related by a non-mutation move \bmref{bm1}--\bmref{bm4} and let $\alpha$ be the corresponding relabeling bijection $\Jos \to \Josp$ from \cref{sec:nonMutQuiver}. Then $x_c = p_{\alpha(c)}$.
\end{proposition}

\begin{proof} %
For \bmref{bm4}, $\alpha$ is the identity. It is clear that $\grid_{c, h}= \grid_{c, h}'$ and $x_d$ appears in $\grid_{c, h}$ if and only if $p_d$ appears in $\grid_{c, h}'$. The claim follows. Indeed, suppose $d \in \Jmuts$ and for all solid $c>d$, we already know $x_c=p_{\alpha(c)}$. Then $\grid_{d-1, i_d}= x_d M$ and $\grid_{d-1, i_d}' = p_{\alpha(d)} M'$, where $M$ (resp., $M'$) is a product of cluster variables $x_c$ (resp., $p_{\alpha(c)}$) with $c>d$. By above, the left-hand sides of these two equations are equal and $M=M'$. So e.g. by restricting to the Deodhar torus, we may conclude $x_d = p_{\alpha(d)}$.

Now, for non-mutation moves \bmref{bm1}--\bmref{bm3}. Suppose the non-mutation move involves the indices in some interval $(l, r] \subset [m]$ of length $2$ or $3$. Let $G^L$ (resp., $G^R$) denote the portion of $\Gbr$ between $\Gamma(u\pd{0})$ and $\Gamma(u\pd{l})$ (resp., between $\Gamma(u\pd{r})$ and $\Gamma(u\pd{m})$). Note that $G^L$ (resp., $G^R$) is also equal to the portion of $\Gbrp$ between $\Gamma(u'\pd{0})$ and $\Gamma(u'\pd{l})$ (resp., between $\Gamma(u'\pd{r})$ and $\Gamma(u'\pd{m})$). By \cref{sec:nonMutQuiver}, for all $c\in\Jos$, the cycles $C_c$ and $C'_{\alpha(c)}$ behave identically on $G^L$ and $G^R$. This implies a point is contained in the red (resp., blue) projection of $D_c$ if and only if it is contained in the red (resp., blue) projection of $D'_{\alpha(c)}$. Also, for $c \notin (l, r):=[l+1,r-1]$, we have $\grid_{c, h}=\grid_{c, h}'$ for all $h \in \pm I$. Using \cref{lem:ordViaSoap}, we obtain the following.
\begin{lemma}\label{lem:nonMutHelper}
Suppose $c \notin (l, r)$ and $d\in\Jos$. Then $x_d$ appears in $\grid_{c, h}$ if and only if $p_{\alpha(d)}$ appears in $\grid_{c, h}'=\grid_{c, h}$.
\end{lemma}
 
 Note that $\alpha$ is the identity on $(r, m]$. The same argument as in the \bmref{bm4} case shows $x_c = p_{\alpha(c)}$ for solid crossings $c>r$. For solid $c \in (l, r]$, to show $x_c=p_{\alpha(c)}$, it suffices to find a grid minor $\grid_{d, h}$ with $d\leq l$ which is a product of $x_c$ and other $x_{c'}$ which are already known to be equal to $p_{\alpha(c')}$. In fact, a grid minor $\grid_{l, h}$ will always work; this can be seen from \cref{fig:non-mut-iji,fig:easy-moves,fig:hard-moves-1} and the rules governing cycles. Now, $\alpha$ is also the identity on $[1, l]$, so the same argument as for \bmref{bm4} shows $x_c = p_{\alpha_c}$ for solid $c\leq l$.
\end{proof}

\subsection{Mutation moves mutate the seed}\label{sec:exc}
In this section, we prove \cref{thm:inducedIso} for mutation moves. We show that in this case, $\seedbr$ and $(\pbr, \Qbrp)$ are related by mutation. This statement has already been checked for the quivers, so we just check the cluster variables.

\begin{lemma}
Suppose $\br$ and $\br'$ are related by the mutation move whose rightmost crossing is $c+1$.
Then for $d \in \Jos\setminus \{c+1\}$, we have $x_d=p_d$.
\end{lemma}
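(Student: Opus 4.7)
The plan is to compare the divisors of $x_d$ and $p_d = m^*(x_d')$ on $\BR_\br$ and then pin down the remaining unit. Both are irreducible regular functions on $\BR_\br$ (the second because $x_d'$ is one on $\BR_{\brp}$ and $m$ is an isomorphism), so their divisors are irreducible codimension-one subvarieties. Once these divisors are shown to coincide, one has $p_d = \lambda x_d$ for some $\lambda\in\C[\BR_\br]^\times$. By \cref{prop:deodharIsTorus}, any such unit is a constant times a Laurent monomial in \cham minors, and the equality of vanishing loci of $p_d$ and $x_d$ on mutable Deodhar hypersurfaces forces this monomial to involve only frozen variables; the monomial is then detected by orders of vanishing on the frozen Deodhar hypersurfaces of $\BigBR_\br$, and the remaining scalar by a direct evaluation.

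The core geometric claim is therefore
\begin{equation*}
m^{-1}(V'_{d'}) = V_{d'} \qquad \text{for every $d' \in \Jos\setminus\{c+1\}$,}
\end{equation*}
together with the analogous identification for frozen Deodhar hypersurfaces. The defining conditions \eqref{eq:deodharHyp} of $V_{d'}$ and $V'_{d'}$ are controlled by the $(u,d')$-APS $(v\pd{c}\apd{d'})_{c=0}^{m}$ via \cref{def:almostPos}; for $d'\neq c+1$ this APS is built by Demazure quotients at every crossing other than the mutation. By the casework of \cref{sec:inv_mut_bm1,sec:inv_mut_bm3} together with \cref{prop:gswap}, the monotone multicurves $\bgamma\pu{d',c}$ in $\Gubr$ and $\Gubrp$ have identical boundary data on either side of the mutation interval. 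Combined with the fact that the explicit isomorphism $m$ from \cref{prop:isoVar} does not touch intermediate flags outside that interval, this yields $m^{-1}(V'_{d'}) = V_{d'}$.

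The main obstacle will be the complementary claim $m(V_{c+1}) \neq V'_{d'}$ for every mutable $d' \neq c+1$, which guarantees $\ord_{V_{c+1}} p_d = 0$. Geometrically, it expresses the fact that at the bridge $\bridge_{c+1}$ the mutation move genuinely interchanges the two transverse Deodhar directions. The combinatorial shadow of this exchange is the identity
\begin{equation*}
\ord_c(F) + \ord'_c(F) = \min\bigl(\ord_c(A)+\ord_c(C),\,\ord_c(B)+\ord_c(D)\bigr)
\end{equation*}
from \cref{sec:inv_mut}, itself a reflection of the cubic braid identity $z_i(t_1)z_{i+1}(t_2)z_i(t_3)=z_{i+1}(t_3)z_i(t_1 t_3 - t_2)z_{i+1}(t_1)$ used in the proof of \cref{prop:isoVar}. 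Granting this (and the frozen analogue of the previous step), $p_d/x_d$ is a unit with every divisorial order zero, hence a constant; evaluating both sides at any point of $T_{u,\br}\cap m^{-1}(T_{u,\brp})$, where grid minors outside the mutation region are preserved by $m^*$, pins this constant down to $1$.
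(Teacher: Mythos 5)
Your approach—replacing the paper's grid-minor triangularity with a divisor-theoretic argument—is genuinely different from the paper's proof, which simply notes that grid minors at time slices outside the mutation interval are preserved by $m$ and that the unitriangular relation between $\cham$ minors and cluster variables is unchanged there, then applies the same triangularity argument as in \cref{prop:nonMutPreserveVar}. Your plan (match divisors of $p_d$ and $x_d$, conclude $p_d/x_d$ is a unit of $\C[\BR_\br]$, identify the unit as a Laurent monomial in frozen variables via \cref{prop:deodharIsTorus}, then kill it via orders on frozen hypersurfaces and an evaluation) is coherent in outline. However, I see two substantive issues.

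First and most importantly, the core claim $m^{-1}(V'_{d'}) = V_{d'}$ for $d'\in\Jos\setminus\{c+1\}$ is asserted but not proved, and the sketch you give does not cover the hard case. You argue from ``identical boundary data on either side of the mutation interval'' together with $m$ not touching the flags outside $(l,r)$. That handles $d'$ outside the interval, where the $(u,d')$-APS matches at every $c'\le l$ and $c'\ge r$ because $m$ preserves those flags and the Demazure products through the interval agree; but it does not address $d'\in(l,r)$ (e.g.\ $d'=c$ or $d'=c-1$ for a fully solid \bmref{bm3} move), where the APS makes its mistake \emph{inside} the interval that $m$ rewrites. There the two families of intermediate flags at time $c'\in(l,r)$ are different (e.g.\ for \bmref{bm1} special solid, $(X_c,Y_c)\ne(X'_c,Y'_c)=(X_{c+1},Y_{c-1})$), and one must check that the conditions \eqref{eq:deodharHyp} at interior times are either (a) generically implied by the conditions at $l$, $r$ and the single-step relations, or (b) directly matched by an explicit calculation using $\up{d-1}s_i=s_j\up{d-1}$ or the cubic braid identity. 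Neither is done, and this is the nontrivial content of the lemma. Relatedly, you describe $m(V_{c+1})\ne V'_{d'}$ as ``the main obstacle'', but once $m^{-1}(V'_d)=V_d$ is established, this follows automatically (if $m(V_{c+1})=V'_d$ then $V_{c+1}=m^{-1}(V'_d)=V_d$, contradicting $d\neq c+1$), so flagging it as a separate difficulty suggests some confusion about where the real work lies.

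Second, the finishing steps are thinner than they should be: the passage from ``unit of $\C[\BR_\br]$'' to ``Laurent monomial in frozen cluster variables times a constant'' is fine, but you then need to compute $\ord_{V_c}\lambda$ for frozen $c$ by extending $m$ to $\BigBR$ and checking $m^{-1}(V'_c)=V_c$ for frozen $c$ as well—a separate assertion. And ``direct evaluation'' to pin the constant to $1$ amounts to comparing $x_d$ and $p_d$ via grid minors outside the interval, which is exactly the paper's triangularity argument—so at that point you have circled back to the route you set out to avoid. None of this is fatal, but as written the proposal leaves the central geometric identification and the final normalization at the level of a sketch.
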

\begin{proof}
This argument is similar to the proof of \cref{prop:nonMutPreserveVar}. Say the mutation move occurs on indices in the interval $(l, r] \in [m]$, with $r:=c+1$. Then for $h \in [0, l] \cup [r, m]$, the cluster variable $x_{c+1}$ does not appear in grid minors $\grid_{d, h}$ and $\grid_{d, h}= \grid_{d, h}'$. Moreover, a cluster variable $x_{c'} \neq x_{c+1}$ appears in $\grid_{d, h}$ for $h \in [0, l] \cup [r, m]$ if and only if $p_{c'}$ appears in $\grid_{d, h}'$. The desired equality now follows from a triangularity argument.
\end{proof}

Now, we verify that the single new cluster variable in $\pbr$ satisfies the exchange relation given by $\seedbr$. We will need some identities for grid minors, which we obtain from standard determinantal identities.

The following relations hold on $G$.
\begin{proposition}[Desnanot--Jacobi identity]
Suppose $p, q \in S_n$ and $a\in I$ with $\ell(ps_a) = \ell(p)+1$ and $\ell(qs_a) = \ell(q) +1$. Then
\begin{equation}\label{eq:FZ1}
\Delta_{p[a],q[a]} \Delta_{ps_a[a],qs_a[a]}= \Delta_{ps_a[a],q[a]} \Delta_{p[a],qs_a[a]}+  \Delta_{p[a-1],q[a-1]}\Delta_{p[a+1],q[a+1]}.
\end{equation}
\end{proposition}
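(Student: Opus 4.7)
The identity \eqref{eq:FZ1} is just a repackaging of the classical Desnanot--Jacobi (Lewis Carroll / Dodgson condensation) identity applied to a single $(a{+}1) \times (a{+}1)$ submatrix of $g$. The plan is to identify all six minors appearing in the statement as the six canonical minors of that submatrix, and then quote the classical identity.

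First I would set up notation. The hypotheses $\ell(ps_a) = \ell(p)+1$ and $\ell(qs_a) = \ell(q)+1$ give $p(a) < p(a+1)$ and $q(a) < q(a+1)$. Writing $A := p[a-1]$ and $B := q[a-1]$, the row and column sets appearing in \eqref{eq:FZ1} decompose uniformly as
\begin{align*}
p[a]   &= A \cup \{p(a)\}, &  q[a]   &= B \cup \{q(a)\}, \\
ps_a[a]&= A \cup \{p(a+1)\}, & qs_a[a]&= B \cup \{q(a+1)\}, \\
p[a+1] &= A \cup \{p(a),p(a+1)\}, & q[a+1]&= B \cup \{q(a),q(a+1)\}.
\end{align*}
Let $M$ be the $(a{+}1)\times(a{+}1)$ submatrix of $g$ with row set $A \cup \{p(a),p(a+1)\}$ and column set $B \cup \{q(a),q(a+1)\}$, and for row indices $i$ and column indices $k$ in these sets, let $M^{i}_{k}$ (resp.\ $M^{i_1,i_2}_{k_1,k_2}$) denote the minor of $M$ with the specified rows and columns deleted.

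With this setup, a direct inspection matches the six minors in \eqref{eq:FZ1} with minors of $M$:
\begin{align*}
\Delta_{p[a],q[a]} &= M^{p(a+1)}_{q(a+1)}, & \Delta_{ps_a[a],qs_a[a]} &= M^{p(a)}_{q(a)}, \\
\Delta_{ps_a[a],q[a]} &= M^{p(a)}_{q(a+1)}, & \Delta_{p[a],qs_a[a]} &= M^{p(a+1)}_{q(a)}, \\
\Delta_{p[a-1],q[a-1]} &= M^{p(a),p(a+1)}_{q(a),q(a+1)}, & \Delta_{p[a+1],q[a+1]} &= \det M.
\end{align*}
The final step is to apply the Desnanot--Jacobi identity to $M$ with distinguished rows $p(a) < p(a+1)$ and distinguished columns $q(a) < q(a+1)$:
\[
\det(M) \cdot M^{p(a),p(a+1)}_{q(a),q(a+1)}
= M^{p(a)}_{q(a)} \cdot M^{p(a+1)}_{q(a+1)}
- M^{p(a)}_{q(a+1)} \cdot M^{p(a+1)}_{q(a)}.
\]
Substituting the identifications above and rearranging yields \eqref{eq:FZ1}.

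There is no real obstacle here; the only subtlety is the sign in the classical identity, which comes out with the desired sign because the two deleted row indices and the two deleted column indices are each written in increasing order. The identity holds on all of $G$ (and a fortiori on the Bruhat cells we care about) simply because it is a polynomial identity in the matrix entries.
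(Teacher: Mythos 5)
Your proof is correct and spells out exactly what the paper leaves implicit: the proposition title "[Desnanot--Jacobi identity]" is the paper's entire argument, and you have correctly identified the $(a{+}1)\times(a{+}1)$ submatrix on rows $p[a+1]$ and columns $q[a+1]$, matched the six minors in \eqref{eq:FZ1} to the six minors appearing in the classical identity, and verified the signs via the orderings $p(a)<p(a+1)$ and $q(a)<q(a+1)$ guaranteed by the length hypotheses. This is the same approach the paper intends, just written out.
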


\begin{proposition}[{\cite[Theorem 1.16(1)]{FZ_double}}]
Let $p, q\in S_n$ and $a,b\in I$. Suppose $(s_as_b)^3 = 1$ and $\ell(qs_as_bs_a) = \ell(q)+3$.  Then
\begin{equation}\label{eq:FZ2}
\Delta_{p[a], qs_a[a]} \Delta_{p[b], qs_b [b]} = \Delta_{p[a], q [a]} \Delta_{p[b], qs_as_b [b]} +\Delta_{p[a], qs_bs_a [a]} \Delta_{p[b], q[b]}.
\end{equation}
\end{proposition}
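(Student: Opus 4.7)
The plan is to prove \eqref{eq:FZ2} by reducing it to a polynomial identity in classical minors, then verifying it via three applications of the Desnanot--Jacobi identity \eqref{eq:FZ1}.

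By the symmetry $a \leftrightarrow b$, assume $b = a+1$. The hypothesis $\ell(qs_as_bs_a) = \ell(q) + 3$ unpacks (using $s_as_bs_a = s_bs_as_b$, the long element of $\langle s_a,s_b\rangle$) as $q(a) < q(a+1) < q(a+2)$. Set $I := p[a-1]$, $(i_1,i_2) := (p(a), p(a+1))$, $J := q[a-1]$, and $(j_1, j_2, j_3) := (q(a), q(a+1), q(a+2))$. A direct calculation of the subsets $qs_a[a], qs_bs_a[a], q[b], qs_b[b], qs_as_b[b]$ shows that the six minors appearing in \eqref{eq:FZ2} are, respectively,
\[
\Delta_{I\cup\{i_1\},\, J\cup\{j_2\}},\ \Delta_{I\cup\{i_1,i_2\},\, J\cup\{j_1,j_3\}},\ \Delta_{I\cup\{i_1\},\, J\cup\{j_1\}},\ \Delta_{I\cup\{i_1,i_2\},\, J\cup\{j_2,j_3\}},\ \Delta_{I\cup\{i_1\},\, J\cup\{j_3\}},\ \Delta_{I\cup\{i_1,i_2\},\, J\cup\{j_1,j_2\}}.
\]

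Next, for each pair $(k,l) \in \{(1,2), (1,3), (2,3)\}$, I would apply \eqref{eq:FZ1} to the ``large'' minor on rows $I\cup\{i_1,i_2\}$ and columns $J\cup\{j_k,j_l\}$ with pivot minor $\Delta_{I,J}$. This expresses each large minor as
\[
\Delta_{I\cup\{i_1,i_2\},\, J\cup\{j_k,j_l\}} \;=\; \Delta_{I,J}^{-1}\bigl( A_k B_l - A_l B_k\bigr),
\]
where $A_k := \Delta_{I\cup\{i_1\}, J\cup\{j_k\}}$ and $B_k := \Delta_{I\cup\{i_2\}, J\cup\{j_k\}}$. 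Substituting these into \eqref{eq:FZ2} and clearing the factor $\Delta_{I,J}$ collapses the claim to the elementary algebraic identity
\[
A_2(A_1 B_3 - A_3 B_1) \;=\; A_1(A_2 B_3 - A_3 B_2) + A_3(A_1 B_2 - A_2 B_1),
\]
which holds (both sides equal $A_1A_2B_3 - A_2A_3B_1$). Since $\Delta_{I,J}$ is not identically zero, the polynomial identity \eqref{eq:FZ2} holds on a dense open subset and hence everywhere.

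The main obstacle will be sign bookkeeping in the three Desnanot--Jacobi reductions: \eqref{eq:FZ1} is stated for indices forming an increasing configuration, so one must check that $i_1 < i_2$ and $j_k < j_l$ in each application. The column ordering $j_1 < j_2 < j_3$ follows from the hypothesis on $q$; the row ordering can be arranged by noting that swapping $p(a) \leftrightarrow p(a+1)$ changes $p$ but preserves both sides of \eqref{eq:FZ2} (as the relevant row sets $p[a]$ and $p[a+1]$ are interchanged consistently across all six minors, up to a global sign). Thus one may assume $p(a) < p(a+1)$ at the outset.
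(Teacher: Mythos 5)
The paper cites this identity directly from \cite[Theorem 1.16(1)]{FZ_double} without reproducing a proof, so there is no in-paper argument to compare yours against; your task was therefore to supply one independently, and the one you give is essentially correct. The reduction of the six minors to sets built from $I=p[a-1]$, $J=q[a-1]$, $(i_1,i_2)=(p(a),p(a+1))$, $(j_1,j_2,j_3)=(q(a),q(a+1),q(a+2))$ is accurate: with $b=a+1$, the hypothesis $\ell(qs_as_bs_a)=\ell(q)+3$ unpacks to $q(a)<q(a+1)<q(a+2)$, and your identifications of $qs_a[a]$, $qs_bs_a[a]$, $q[b]$, $qs_b[b]$, $qs_as_b[b]$ check out. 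The three Desnanot--Jacobi reductions share the common pivot $\Delta_{I,J}$, and the terminal algebraic identity collapses exactly as you compute.

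The one imprecision is the closing WLOG on the row ordering. Replacing $p$ by $ps_a$ does \emph{not} ``interchange the row sets $p[a]$ and $p[a+1]$''---those have different cardinalities---and $ps_a[a]=\{p(1),\dots,p(a-1),p(a+1)\}\ne p[a]$, so \eqref{eq:FZ2} for $p$ and for $ps_a$ are genuinely different statements rather than the same statement up to a global sign. As phrased, your reduction does not establish the case $p(a)>p(a+1)$. The fix is immediate, though: if $p(a)>p(a+1)$, instead relabel $(i_1,i_2):=(p(a+1),p(a))$ so that $i_1<i_2$ still holds; then $p[a]=I\cup\{i_2\}$, the six minors become $B_2$, $\Delta_{I,J}^{-1}(A_1B_3-A_3B_1)$, $B_1$, $\Delta_{I,J}^{-1}(A_2B_3-A_3B_2)$, $B_3$, $\Delta_{I,J}^{-1}(A_1B_2-A_2B_1)$, and the reduced claim becomes
\[
B_2(A_1B_3-A_3B_1)=B_1(A_2B_3-A_3B_2)+B_3(A_1B_2-A_2B_1),
\]
which also holds (both sides equal $A_1B_2B_3-A_3B_1B_2$). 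With that local correction your proof is complete.
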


\begin{lemma} \label{lem:sSwitch} Let $A, B \subset [n]$ with $|A|=|B|$, $a\in I$, and suppose $s_a (A) \geq A$ in the lexicographic order. Then
	\[\Delta_{s_a(A), B}(\ds_a M)=\Delta_{A, B}(M) \quad \text{and} \quad \Delta_{B, s_a(A)}(M\ds_a)=-\Delta_{B, A}(M).\]
\end{lemma}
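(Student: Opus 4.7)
The proof is a short direct matrix computation. First I would unpack the definition $\ds_a = \phi_a\smat{0 & -1 \\ 1 & 0}$: left multiplication sends $M$ to the matrix whose row $a$ equals $-M_{a+1,*}$, whose row $a+1$ equals $M_{a,*}$, and whose other rows agree with $M$. Dually, right multiplication by $\ds_a$ sends $M$ to the matrix whose column $a$ equals $M_{*, a+1}$, whose column $a+1$ equals $-M_{*, a}$, and whose other columns agree with $M$.

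Next I would case-split on $|\{a, a+1\} \cap A|$. The hypothesis $s_a(A) \geq A$ in lex order rules out the case $a+1 \in A$ and $a \notin A$ (where $s_a(A) < A$ strictly). In the principal case $a \in A$, $a+1 \notin A$, write $A = \{a_1 < \cdots < a_k\}$ with $a = a_p$; since $a+1 \notin A$ we have $a_{p+1} \geq a+2$, so $s_a(A)$ is the sorted tuple obtained from $A$ by replacing $a$ with $a+1$ in slot $p$. For the first identity, the $p$-th row of the submatrix of $\ds_a M$ on row set $s_a(A)$ is row $a+1$ of $\ds_a M$, namely $M_{a,*}$ with no sign change; all other rows coincide with the corresponding rows of $M$ indexed by $A\setminus\{a\}$, so the minor equals $\Delta_{A,B}(M)$. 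For the second identity, the $p$-th column of the submatrix of $M\ds_a$ on column set $s_a(A)$ is column $a+1$ of $M\ds_a$, namely $-M_{*,a}$; multilinearity extracts the factor $-1$, and the remaining columns match those of $\Delta_{B,A}(M)$, giving $-\Delta_{B,A}(M)$.

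The boundary cases where $s_a(A) = A$ (i.e.\ either $\{a, a+1\} \cap A = \emptyset$ or $\{a,a+1\} \subseteq A$) are handled by the same multilinearity-plus-row/column-swap argument: when $A$ avoids both indices, the relevant rows/columns of $\ds_a M$ and $M\ds_a$ agree with those of $M$ and the identity is immediate; when $A$ contains both indices, one swaps the two rows (resp.\ columns) indexed by $a$ and $a+1$ inside the submatrix and applies multilinearity to collect the sign from the $-1$ in $\ds_a$, yielding the stated result.

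The only obstacle is careful sign bookkeeping, combining (a) the single $-1$ in the definition of $\ds_a$, (b) the alternating property of the determinant under row and column permutations, and (c) the convention that rows and columns of a minor are labeled by sorted index sets, so replacing $a$ by $a+1$ in $A$ keeps the same positional slot precisely because $a+1 \notin A$. No deeper ideas are required; the lemma is a direct corollary of the definitions, and it will be used in the subsequent mutation-move verification to compare grid minors before and after applying $\ds_a$ on the left or the right.
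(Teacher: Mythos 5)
Your computation for the principal case $a\in A$, $a+1\notin A$ is correct, and that is the only case the paper actually uses when invoking this lemma. The paper supplies no proof of this statement, so the direct row/column-by-row/column verification you give is the natural one. However, your claim that the boundary cases $s_a(A)=A$ ``yield the stated result'' is false for the right-multiplication identity, and this is worth flagging because the sign you assert simply does not appear there. When $\{a,a+1\}\cap A=\emptyset$, the columns of $M\ds_a$ indexed by $A$ coincide exactly with those of $M$, giving $\Delta_{B,A}(M\ds_a)=\Delta_{B,A}(M)$ with \emph{no} sign change, contradicting the claimed $-\Delta_{B,A}(M)$. When $\{a,a+1\}\subseteq A$, the column swap contributes a factor $-1$ which cancels the $-1$ from the negated column, again giving $+\Delta_{B,A}(M)$, not $-\Delta_{B,A}(M)$. (The first identity, by contrast, does survive both boundary cases, precisely because for rows the swap sign and the negation sign cancel.) So the lemma is only correct under the strict hypothesis $s_a(A) > A$, i.e.\ $a\in A$ and $a+1\notin A$; the paper's ``$\geq$'' should be read as ``$>$''. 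You should restrict the proof to that case (which suffices for every place the lemma is cited) rather than asserting, without carrying out the computation, that the degenerate cases produce the same sign.
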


Now, we use the determinantal identities above to obtain relations on grid minors before and after a mutation move. 

\begin{proposition}\ 
	\begin{enumerate}
		\item Suppose $\br, \br'$ are related by a \bmref{bm1} special solid move $ j i \to i j$ on crossings $c, c+1$. Then the grid minors on $\BR_{\br}$ satisfy
		\begin{equation}\label{eq:M1identity}
			\grid_{c, j} \grid_{c,j}' = \grid_{c+1, j} \grid_{c-1, j}  + \grid_{c, j-1} \grid_{c, j+1}.
		\end{equation}
	 \item Suppose $\br, \br'$ are related by a \bmref{bm3} fully solid move $iji \to jij $ on crossings $c-1, c, c+1$. Then the grid minors on $\BR_{\br}$ satisfy
	 \begin{equation}\label{eq:M2identity}
	 	\grid_{c, i} \grid_{c, j}' = \grid_{c+1, i} \grid_{c-2, j} + \grid_{c+1, j} \grid_{c-2, i}.
	 \end{equation}
	\end{enumerate}
\end{proposition}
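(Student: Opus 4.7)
The plan is to derive each identity from a classical determinantal identity in $\SL_n$: \eqref{eq:FZ1} (Desnanot--Jacobi) for \eqref{eq:M1identity} and \eqref{eq:FZ2} for \eqref{eq:M2identity}. In both cases, the strategy is to express all six grid minors as minors of a single common reference matrix, and then identify the resulting relation with the template determinantal identity. The key preliminary observation is that under a mutation move, the PDS is unchanged (both $\br$ and $\br'$ produce the same sequence $\up\bullet$ on the relevant indices, since the affected crossings are solid in both words), so the row-index permutations $w_0\up c$ appearing in the grid minors coincide across the move. Moreover, the matrices $Z_\bullet = Y_\bullet^{-1}X_\bullet$ agree outside the local window: $Z_{c-1}=Z_{c-1}'$ and $Z_{c+1}=Z_{c+1}'$ for (1), and $Z_{c-2}=Z_{c-2}'$, $Z_{c+1}=Z_{c+1}'$ for (2). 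Within the window, each intermediate $Z_\bullet$ is obtained from a boundary matrix by iterated multiplication with braid factors $z_i(t)$ or $\z_{|i|^*}(t)$, according to~\eqref{eq:propagate_left}.

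For (1), I would take $M := Z_{c-1}=Z_{c-1}'$ as the reference, observing that $Z_c = \z_{j^*}(t_c)\,M$ and $Z_c' = M\,z_i(t_c')^{-1}$, while $Z_{c+1}$ is obtained from $M$ by either a further right- or left-multiplication. Using~\eqref{eq:zRMult} (and its row-multiplication analog for $\z_{j^*}$, proved via \cref{lem:sSwitch}), each of the six grid minors rewrites as a signed minor of $M$ whose row set is derived from $w_0\up{c-1}$ and whose column set is one of $[j]$, $[j-1]$, $[j+1]$, or $s_j[j]=[j-1]\cup\{j+1\}$. The specialness condition $\up{c-1}s_i = s_j\up{c-1}$ is precisely what is needed to match the row and column sets of these minors against the six terms of~\eqref{eq:FZ1} applied to $M$ with $p = w_0\up{c-1}$, $q = \id$, and $a = j$. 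For (2), I would take $M := Z_{c+1}=Z_{c+1}'$, so that $Z_{c-2}$ and all intermediate matrices $Z_c, Z_{c-1}$ (in $\br$) and $Z_c', Z_{c-1}'$ (in $\br'$) are products of $M$ with $z_i$-$z_j$ sequences; the parameters of $\br$ and $\br'$ are then linked by the braid identity $z_i(a)z_j(b)z_i(c) = z_j(c')z_i(b')z_j(a')$. Iterating~\eqref{eq:zRMult} unfolds each of the six grid minors in~\eqref{eq:M2identity} into a minor of $M$, and the identity becomes~\eqref{eq:FZ2} applied to $M$ with $p = w_0\up{c+1}$, $\{a,b\} = \{i,j\}$, and $q$ determined by the relevant initial segment; the hypothesis $(s_as_b)^3 = 1$ of~\eqref{eq:FZ2} is guaranteed by $|i-j|=1$, and the length condition $\ell(qs_as_bs_a)=\ell(q)+3$ follows from all three crossings $c-1,c,c+1$ being solid.

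The main obstacle is the careful bookkeeping of row and column sets and of $\pm$-signs as~\eqref{eq:zRMult} and \cref{lem:sSwitch} are applied repeatedly. For each of the six grid minors one must determine which case of~\eqref{eq:zRMult} occurs (depending on whether $s_i$, resp. $s_{j^*}$, fixes, increases, or decreases the column, resp.\ row, set), and whether $s_{j^*}A$ is greater or less than $A$ for the relevant row set $A = w_0\up c[j]$; a single sign error would spoil the correspondence with \eqref{eq:FZ1} or \eqref{eq:FZ2}. For (2), there is the additional subtlety that the parameters in $\br$ and $\br'$ differ by the braid substitution, so to express both sides of \eqref{eq:M2identity} as minors of a common $M$ one must invoke this substitution; however, since the target identity \eqref{eq:FZ2} is a polynomial identity in the entries of $M$ alone, the parameter reconciliation is automatic once all six minors have been identified.
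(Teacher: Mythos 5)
Your high-level plan — express all six grid minors as minors of a single reference matrix via \eqref{eq:zRMult} and \cref{lem:sSwitch}, then match against \eqref{eq:FZ1} or \eqref{eq:FZ2} — is exactly the approach taken in the paper, including the observations that the PDS is unchanged across the move and that $Z_{c-1}=Z_{c-1}'$, $Z_{c+1}=Z_{c+1}'$. However, your proposed choice of $p$, $q$, $a$ for \eqref{eq:FZ1} is not the one the paper uses, and it does not give \eqref{eq:M1identity}.

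Concretely: you propose $p=w_0\up{c-1}$, $q=\id$, $a=j$ applied to $M=Z_{c-1}$. The first factor of the left-hand side of \eqref{eq:FZ1} is then $\Delta_{p[a],q[a]}(M)=\Delta_{w_0\up{c-1}[j],[j]}(Z_{c-1})=\grid_{c-1,j}$. But $\grid_{c-1,j}$ appears on the \emph{right}-hand side of \eqref{eq:M1identity}, paired with $\grid_{c+1,j}$; it should not be one of the two ``diagonal'' factors $\Delta_{p[a],q[a]}\Delta_{ps_a[a],qs_a[a]}$. With your choice one can check (e.g.\ on the Deodhar torus, where $Z_c\in H(w_0\up c)^{\bigcdot}$ makes many minors vanish) that $\Delta_{ps_a[a],qs_a[a]}(Z_{c-1})$ is not $\grid_{c,j}$ or $\grid_{c,j}'$, and in fact the resulting Desnanot--Jacobi instance degenerates to a triviality on the torus rather than to \eqref{eq:M1identity}. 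The paper instead takes (in the case $j<0$, $i>0$, $k:=-j$, $v:=\up{c+1}$) $p=w_0 s_k$, $q=v^{-1}$, $a=k$. With that choice one gets $\Delta_{ps_a[a],q[a]}=\grid_{c-1,j}$ and $\Delta_{p[a],qs_a[a]}=\grid_{c+1,j}$ on the right, and (using $Z_{c-1}=\ds_{k^*}x_{k^*}(t')Z_c=Z_c'\,x_i(t)\ds_i$, \cref{lem:sSwitch}, and $U_+$-invariance) $\Delta_{p[a],q[a]}\Delta_{ps_a[a],qs_a[a]}=(-\grid_{c,j})(-\grid_{c,j}')=\grid_{c,j}\grid_{c,j}'$ on the left, giving \eqref{eq:M1identity} exactly. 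The specialness condition $vs_i=s_kv>v$ is what makes the column conversions $v^{-1}s_k[k]=s_iv^{-1}[k]$ cooperate; it does \emph{not} make $w_0\up{c-1}[k]$ and $[k]$ into the correct index sets.

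Two smaller issues: you write $Z_c=\z_{j^*}(t_c)M$, which treats the letter $j$ as a blue index ($j<0$), but then use $[j]$, $s_j[j]$, and $a=j$ as if $j\in I$ were positive; you need to fix a sign convention, work with $k=|j|$, and distinguish whether the minors involved are red or blue, since that determines whether $p$ or $q$ should involve $\up{c}$. For part (2), the paper applies \eqref{eq:FZ2} to $Z_{c-2}$ (with $p=w_0v$, $q=\id$, $a=i$, $b=i+1$ when $i>0$, $j=i+1$), not to $Z_{c+1}$, and in particular $q=\id$ is correct there precisely because the reference matrix is on the \emph{other} side of the local window; your proposal leaves $q$ unspecified and uses $Z_{c+1}$, which would require further column conversions beyond what you describe.
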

\begin{proof} To avoid clutter, we set $v:=u\pd{c+1}$.
	
For (1): We will assume $j<0$ and $i>0$, as the other case is similar. Let $k=-j$. Note that we can write $Z_{c-1}, Z_c$ and $Z_c'$ in terms of $Z_{c+1}$:
\[Z_{c-1}=(\z_{k^*}(t'))^{-1}Z_{c+1}z_i(t)=\ds_{k^*}x_{k^*}(t') Z_{c+1} x_{i}(t) \ds_i\]
while $Z_c=Z_{c+1} x_{i}(t) \ds_i$ and $Z_c' = \ds_{k^*}x_{k^*}(t') Z_{c+1}$.

Now, apply \eqref{eq:FZ1} to $Z_{c-1}$ with $p=\wo  s_k$, $q=v^{-1} $, and $a=k$. Using $v s_i = s_k v >v$, \cref{lem:sSwitch}, \cref{lem:manyMinorsStable}, and the $U_+ \times U_+$-invariance of the grid minors, we obtain Equation \eqref{eq:M1identity}. For example, 
\begin{align*}
	\Delta_{p[a],qs_a[a]}&=\Delta_{s_{k^*}\wo [k],s_iv^{-1}[k]}(Z_{c-1})
	=-\Delta_{\wo [k],s_iv^{-1}[k]}(Z_{c})
	=\Delta_{\wo [k],v^{-1}[k]}(Z_{c+1})=\grid_{c+1, j}.
\end{align*}

For (2): Using the same strategy as above, we apply \eqref{eq:FZ2} to $Z_{c-2}$ (or its transpose) to obtain \eqref{eq:M2identity}. If $i>0$ and $j=i+1$, for example, we set $p=\wo  v$, $q=\id$, $a=i$ and $b=i+1$.
\end{proof}

We need one additional identity on grid minors to show that $p_{c+1}$ satisfies the correct exchange relation.

\begin{lemma}\label{lem:minorShortRelations}
	Choose a crossing $c$.  Let $a \in [n]$ and let $b=-u\pd{c}(a)$. Then the relation 
	\[ \grid_{c, a} \grid_{c, b+1} = \grid_{c, b} \grid_{c, a-1}\]
	holds on $\BR_{\br}$, where we set $\grid_{c,0}=\grid_{c,\pm n}:=1$.
\end{lemma}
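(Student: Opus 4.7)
The identity concerns regular functions on $\BR_\br$ (by \cref{lemma:grid_minors_are_functions}), and $\BR_\br$ is irreducible, so it suffices to verify it on the dense open Deodhar torus $T_{u,\br}$.

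Set $u := \up{c}$ and $w := \wo u$. On $T_{u,\br}$, the element $Z_c$ lies in the double-coset image of the Bruhat cell $B_+ w B_+$, and hence admits a canonical representative of the form $\dot w h$ with $h = \diag(h_1, \ldots, h_n) \in H$. The constraint $Z_c \in \SL_n$, combined with $\det \dot w = 1$, forces $\prod_i h_i = 1$. From the identity $\Delta_{A,B}(\dot w h) = \bigl(\prod_{j \in B} h_j\bigr)\,\Delta_{A,B}(\dot w)$, I obtain explicit monomial formulas
\[\grid_{c, a} \;=\; \prod_{j=1}^a h_j, \qquad \grid_{c, -a} \;=\; \prod_{j \in u^{-1}[a]} h_j \qquad (a \in I).\]
Both sign factors $\Delta_{w[a], [a]}(\dot w)$ and $\Delta_{\wo[a], u^{-1}[a]}(\dot w)$ equal $+1$: the first is the standard Bruhat normalization of $\dot w$, and the second follows by the same inversion-counting using $(\dot w)_{w(j),j} = (-1)^{|\{j'<j\,:\,w(j') > w(j)\}|}$, combined with the observation that $w$ maps $u^{-1}[a]$ into $\wo[a] = \{n-a+1, \ldots, n\}$ and its complement into $\{1, \ldots, n-a\}$, so no pair $(j', j)$ with $j' < j$, $j' \notin u^{-1}[a]$, $j \in u^{-1}[a]$ can be an inversion of $w$; the remaining internal inversions contribute a factor $(-1)^{2\operatorname{inv}(w|_{u^{-1}[a]})} = 1$.

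With these formulas and $b = -u(a)$, the elementary set decomposition $u^{-1}[u(a)] = u^{-1}[u(a)-1] \sqcup \{a\}$ yields
\[\grid_{c, a}\,\grid_{c, b+1} \;=\; \prod_{j=1}^a h_j \;\cdot\; \prod_{j \in u^{-1}[u(a)-1]} h_j \;=\; \grid_{c, b}\,\grid_{c, a-1}\]
on $T_{u,\br}$, and therefore on all of $\BR_\br$ by density. The boundary conventions $\grid_{c, 0} = \grid_{c, \pm n} := 1$ are compatible with the monomial formulas, since $\prod_{j=1}^n h_j = \prod_{j \in u^{-1}[n]} h_j = \prod_i h_i = 1$ and $\prod_{j=1}^0 h_j$ is the empty product. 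The one point requiring care is the sign verification $\Delta_{\wo[a], u^{-1}[a]}(\dot w) = +1$; everything else is direct monomial bookkeeping followed by the density argument.
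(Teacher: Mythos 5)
Your proof is correct and takes essentially the same route as the paper's: restrict to the dense Deodhar torus, represent $Z_c$ there as a torus element times the lifted permutation $(w_0 u\pd{c})^{\bigcdot}$, and identify both ratios $\grid_{c,a}/\grid_{c,a-1}$ and $\grid_{c,b}/\grid_{c,b+1}$ with the contribution of a single nonzero entry. The genuine difference is the sign bookkeeping: the paper keeps the nonzero entries of $Z_c$ as the unknowns and asserts both ratios equal $(-1)^q g_{w_0(|b|),a}$ for a common exponent $q$, whereas you factor $Z_c=\dot w h$ and verify the flag minors $\Delta_{w[a],[a]}(\dot w)$ and $\Delta_{w_0[a],(u\pd{c})^{-1}[a]}(\dot w)$ are each exactly $+1$, so no signs arise and both sides become pure monomials in the $h_j$'s (with the boundary conventions $\grid_{c,0}=\grid_{c,\pm n}=1$ automatically compatible via $\det=1$). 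Your version is more explicit and self-contained; one small remark is that the red-side sign $\Delta_{w[a],[a]}(\dot w)=1$ is not a separate ``standard normalization'' fact distinct from the blue computation, but is the very same inversion double-count, specialized to the initial segment $[a]$ (where the ``no external inversions'' observation is vacuous).
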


\begin{proof}
  It suffices to show that this relation holds on the Deodhar torus $\torus(\br)$, which is dense. %
  A point in $ \torus(\br)$ is represented by $(X_\bullet, Y_\bullet) \in \bigBR_{\br}$ with $X_c \Lwrel{\wo  u\pd{c}} Y_c$. Using the $G$-action, we may choose the representative $(X_\bullet, Y_\bullet)$ so that the matrix $Z_c= Y_c^{-1} X_c$ is of the form
	\[g:=Z_c=h (\wo  u\pd{c})^{\bigcdot}\]
	for some $h \in H$. Only the entries $g_{\wo  u\pd{c}(i), i}$ are nonzero. So for $i>0$, $\grid_{c, i}$ is (up to sign) the product of the nonzero entries of $g$ in columns $1, \dots, i$. For $i<0$, $\grid_{c, i}$ is (up to sign) the product of the nonzero entries of $g$ in rows $\wo (|i|), \wo (|i|)+1 , \dots, n$. Using this, we find
	\[\frac{ \grid_{c, a}}{\grid_{c, a-1}} =  \frac{ \grid_{c, b}}{\grid_{c, b+1}} = (-1)^q g_{\wo  (|b|), a}. \] 
	where $q$ is the number of nonzero entries of $g$ southwest of $g_{\wo  (|b|), a}$.
\end{proof}

\begin{proposition}\label{prop:checkingMutMoves}
	Let $\br$ and $\br'$ be related by a mutation move whose rightmost crossing is $c+1$. Then $(\pbr, \Qbrp)=\mu_{c+1}(\seedbr)$.
\end{proposition}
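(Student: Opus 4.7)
The first thing to note is that by the preceding lemma in this subsection we already have $p_d = x_d$ for every $d \in \Jos \setminus \{c+1\}$, and that $\Qbrp = \mu_{c+1}(\Qbr)$ by Theorem~\ref{thm:invariance}\eqref{item:inv1}. Consequently the entire proposition reduces to the single exchange relation
\[
x_{c+1}\cdot p_{c+1} \;=\; M_+ + M_-,
\qquad M_\pm \,:=\, \prod_{d} x_d^{[\pm b_{d,c+1}]_+},
\]
where the exponents $b_{d,c+1}$ are read off from $\Qbr$.

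My plan is to derive this relation directly from one of the two grid-minor identities just proved. For a \bmref{bm1} special solid move I will use \eqref{eq:M1identity}, and for a \bmref{bm3} fully solid move I will use \eqref{eq:M2identity}. Each of these is an equality of the shape
\[
\grid_F \cdot \grid'_F \;=\; \grid_A\,\grid_C \;+\; \grid_B\,\grid_D,
\]
where $F, A, B, C, D$ label the five faces around the mutation square in $\Gbr$ as in \cref{fig:sq-face-lab}, and $\grid_F$, $\grid'_F$ denote the grid minors in the central face computed from $\br$ and $\br'$ respectively (the latter pulled back to $\BR_{\br}$ via $m^*$). The four outer minors $\grid_A,\grid_B,\grid_C,\grid_D$ are unchanged by the move.

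The next step is to replace each of the five grid minors by its cluster-variable factorization $\grid_{c',h} = \prod_{d} x_d^{\ord_{V_d}\grid_{c',h}}$ supplied by \cref{lem:ordViaSoap}. A geometric inspection of the cycles $C_{c+1}$ and $C'_{c+1}$ around the square shows that $\grid_F$ contributes exactly one factor of $x_{c+1}$ while $\grid'_F$ contributes exactly one factor of $p_{c+1}$ (and no other instance of the mutating variable appears in the identity), whereas the four side minors involve only variables $\{x_d : d \neq c+1\}$. Combining this with the combinatorial identity \eqref{eq:ord_min_lemma}
\[
\ord_{V_{c'}}\grid_F + \ord_{V_{c'}}\grid'_F \;=\; \min\bigl(\ord_{V_{c'}}\grid_A + \ord_{V_{c'}}\grid_C,\; \ord_{V_{c'}}\grid_B + \ord_{V_{c'}}\grid_D\bigr),
\]
which holds for every $c' \in \Jos \setminus \{c+1\}$, I can cancel the greatest common monomial in $\{x_d : d \neq c+1\}$ from both sides of \eqref{eq:M1identity} or \eqref{eq:M2identity} to obtain an equation $x_{c+1} \cdot p_{c+1} = \widetilde M_+ + \widetilde M_-$ in which $\widetilde M_\pm$ are monomials in $\{x_d : d \neq c+1\}$.

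The hard part of the argument is the final bookkeeping step: verifying that $\widetilde M_\pm$ coincide with the quiver-prescribed monomials $M_\pm$. I expect this to follow from the half-arrow description of $\Qbr$ (Proposition~\ref{prop:seedFromLabeledGraphs}): the arrows of $\Qbr$ incident to $c+1$ are generated by the half-arrows placed around the bridge $b_{c+1}$ in the appropriate projection of $\Gbr$, and the sources and targets of these half-arrows lie precisely in the faces $A, B, C, D$. Matching the half-arrow configuration against the two products $\grid_A\grid_C$ and $\grid_B\grid_D$ should then pin down each $M_\pm$. For \bmref{bm3} moves the square does not appear directly in $\Gbr$, and I will first need to perform the contraction-uncontraction move of \cref{sec:inv_mut_bm3} to expose it, after which the argument proceeds identically. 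The remaining items (the color case analysis in \bmref{bm1}, and the overall sign governed by \eqref{eq:cycle_mut}) should be a routine check.
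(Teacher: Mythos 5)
Your proposal identifies the right ingredients and follows essentially the paper's strategy: reduce to the exchange relation for $x_{c+1}$, derive it from the Desnanot--Jacobi-type identity \eqref{eq:M1identity} or \eqref{eq:M2identity}, factor grid minors into cluster variables via \cref{lem:ordViaSoap}, cancel the common monomial via \eqref{eq:ord_min_lemma}, and compare with the half-arrow description of \cref{prop:seedFromLabeledGraphs}. However, you have labeled the final matching step ``routine,'' and that is exactly where the substance of the proof lives, in two ways you haven't addressed. First, the half-arrow description of \cref{prop:seedFromLabeledGraphs} produces the exchange relation with a square root: in the \bmref{bm1} case it reads $x_{c+1}x'_{c+1} = \bigl(\grid_{c+1,i}\grid_{c-1,j} + (\grid_{c,i-1}\grid_{c,i+1}\grid_{c,j-1}\grid_{c,j+1})^{1/2}\bigr)/f$, because half-arrows contribute half-integer exponents. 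You need \cref{lem:minorShortRelations} (the cross-color monomial relations $\grid_{c,a}\grid_{c,b+1} = \grid_{c,b}\grid_{c,a-1}$ with $b = -\up{c}(a)$) to show this square root is a genuine monomial, namely $\grid_{c,i-1}\grid_{c,j-1}$. Second, \eqref{eq:M1identity} involves \emph{only blue} grid minors, and applying your factor-and-cancel scheme to it directly does not produce the two monomials of the quiver-prescribed exchange relation; the paper has to multiply \eqref{eq:M1identity} by $\grid_{c+1,i-1}/\grid_{c+1,j+1}$ and use \cref{lem:minorShortRelations} again to obtain the mixed-color form \eqref{eq:geoRel}, whose left side is $\grid_{c,j}'\grid_{c,i}$ and whose right-side terms are the exchange monomials. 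Related to this, your identification of the five minors in \eqref{eq:M1identity} with the faces $F,A,B,C,D$ of \cref{fig:sq-face-lab} is off: in the \bmref{bm1} case the mutation square has one red and one blue bridge, so it does not sit inside a single planar projection, and the minors in \eqref{eq:M1identity} live at positions $c\pm 1$ and columns $j, j\pm 1$ in the blue projection only. In short, the skeleton of your argument is right, but without \cref{lem:minorShortRelations} the plan does not close.
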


\begin{proof}\ 
  \textbf{Case 1:} Suppose the mutation move is a \bmref{bm1} special solid move $j i \to i j$ on crossings $c, c+1$. Without loss of generality, we may assume $j<0$ and $i>0$. We see from \figref{fig:mut-ij-curves}(a) and 
  \cref{lem:ordViaSoap} that $x_{c+1}$ appears only in grid minors $\grid_{c, i}$ and $\grid_{c, j}$. Using \cref{prop:seedFromLabeledGraphs}, we find that the cluster variable $x'_{c+1}$ in $\mu_{c+1}(\seedbr)$ satisfies the exchange relation
	\begin{align}
	x_{c+1} x'_{c+1}&=(\grid_{c+1, i}\grid_{c-1, j} + (\grid_{c, i-1} \grid_{ c, i+1}\grid_{c, j-1} \grid_{c, j+1})^{1/2})/f\\
\label{eq:sqrt}		&=(\grid_{c+1, i}\grid_{c-1, j} + \grid_{c, i-1} \grid_{c, j-1})/f
	\end{align}
	where $f$ is the cluster monomial
	\[ f= \prod_{d \neq c+1} x_d^{\min(\ord_{V_d}(\grid_{c+1, i}\grid_{c-1, j}), \ord_{V_d}(\grid_{c, i-1} \grid_{c, j-1}))}. \]
The equality in~\eqref{eq:sqrt} follows from \cref{lem:minorShortRelations}: since $j=-\upu c(i)$ and $j-1=-\upu c(i+1)$, we get $\grid_{c,j+1}=\grid_{c,i-1}\grid_{c,j}/\grid_{c,i}$ and $\grid_{c,i+1}=\grid_{c,j-1}\grid_{c,i}/\grid_{c,j}$.
	
	On the other hand, using \eqref{eq:M1identity} and \cref{lem:minorShortRelations} together, we have that 
	\begin{equation}\label{eq:geoRel}
		\grid_{c, j}' \grid_{c, i}=\grid_{c+1, i} \grid_{c-1, j} + \grid_{c, i-1}\grid_{c, j-1}
	\end{equation}
	on $\BR_{\br}$, where $\grid_{c, j}'$ is the grid minor evaluated after the braid move. This can be seen by working on the Deodhar torus and multiplying \eqref{eq:M1identity} by $\grid_{c+1, i-1}/\grid_{c+1, j+1}$; one must also use $\grid_{c+1, j+1}=\grid_{c, j+1}$, which follows from the move being special solid.
	
	 To show that $p_{c+1}$ obeys the exchange relation, we will prove that the left-hand side of \eqref{eq:geoRel} is equal to $f x_{c+1}p_{c+1}$. It is easy to check that $x_{c+1}$ appears in $\grid_{c, i}$ and $p_{c+1}$ appears in $\grid_{c, j}'$. So all that remains is to show that for $d \neq c+1$,
	\[\ord_{V_d} (\grid_{c, j}'\grid_{c, i}) = \min(\ord_{V_d}(\grid_{c+1, i}\grid_{c-1, j}), \ord_{V_d}(\grid_{c, i-1} \grid_{c, j-1})). \]
	 \cref{eq:ord_min_lemma} implies an analogous statement for \eqref{eq:M1identity}; that is, the order of vanishing of the left-hand side on $V_d$ is the minimum of the orders of vanishing of the two terms on the right. Since \eqref{eq:geoRel} differs from \eqref{eq:M1identity} by a Laurent monomial in cluster variables, we obtain the desired equality for \eqref{eq:geoRel}. 
	 
	 \textbf{Case 2:} Suppose the mutation move is a \bmref{bm3} fully solid move $iji \to jij $ on crossings $c-1, c, c+1$. We assume $i>0$; the $i<0$ case is similar. The proof strategy is similar to Case 1, so we will be brief. By \cref{prop:combGeoMatch}, the only red grid minors $x_{c+1}$ appears in are $\Delta_{c, i}=\Delta_{c-1, i}$. So by \cref{prop:seedFromLabeledGraphs}, the exchange relation for $x'_{c+1}$ is 
	 \[	x_{c+1} x'_{c+1}=(\grid_{c+1, i} \grid_{c-2, j} + \grid_{c+1, j} \grid_{c-2, i})/f\]
	 where $f$ is the cluster monomial	 \[f= \prod_{d \neq c+1} x_d^{\min(\ord_{V_d}(\grid_{c+1, i} \grid_{c-2, j}), \ord_{V_d} (\grid_{c+1, j} \grid_{c-2, i}))}.\]
	 
	 On the other hand, \eqref{eq:M2identity} and \eqref{eq:ord_min_lemma} together imply that the right-hand side of the exchange relation is equal to $x_{c+1} p_{c+1}$.
\end{proof}

\subsection{Move \texorpdfstring{\bmref{bm5}}{(B5)} rescales the seed}

In this section, we prove \cref{thm:inducedIso} for \bmref{bm5}. In particular, we show \cref{thm:inducedIso} for $\br=(-i) \gamma$ and $\br'=i \gamma$ where $i \in I$, which also implies the theorem for $\br=i \gamma$, $\br'=(-i) \gamma$ and $i \in I$.

If the first crossing of $\br=i\gamma$ is hollow, this is straightforward (\cref{lemma:straightforward}). If the first crossing is solid, we show that $\seedbr$ and $(\pbr, \Qbrp)$ differ only in a single frozen variable and arrows involving that frozen variable, in a way that preserves the cluster algebra (\cref{prop:preserves}).

\begin{lemma}\label{lemma:straightforward}
	Suppose $\br= (-i) \gamma$ and $\br'= i \gamma$ for some positive double braid $\gamma$. If the first crossing is hollow, then $\seedbr = (\pbr, \Qbrp)$.
\end{lemma}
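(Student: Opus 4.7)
My approach is to show that under the hypothesis, the color swap is invisible to all seed data. First I will verify that hollowness transfers from $\br$ to $\brp$. Since $u=w_0$ we have $\up{0}=\id$, so the recursion $\up{0}=s_i\demR\up{1}$ combined with $\up{0}\neq\up{1}$ forces $\up{1}=s_i$. Because $\up{c}$ for $c\ge 1$ is computed purely from the common suffix $\gamma$, we have $u'^{(1)}=s_i$ as well, and the corresponding recursion for $\brp$ gives $u'^{(0)}=s_i\demL s_i=\id\neq s_i=u'^{(1)}$, so the first crossing of $\brp$ is also hollow. Hence $\Jos=\Josp$, with identical mutable/frozen partitions.

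Next I will match the two ice quivers. A hollow crossing contributes no bridge, so $\Gubr$ and $\Gubrp$ have the same set of bridges and, as abstract ribbon graphs, the same ribbon structure; they differ only in how the strands run between $\PD(\up{0})$ and $\PD(\up{1})$, which affects neither the ribbon structure nor the intersection form. Exactly as in the proof of \cref{prop:bm5_bm4}, the surfaces $\Subr$ and $\Subrp$ are canonically identified (up to swapping the labels of the marked points $i$ and $i+1$), and the relative cycles $\Cycle_c$ for $c\in\Jos$ become the same cycles on this common surface. Since the intersection pairing~\eqref{eq:bice_dfn} does not see the labels of marked points, we obtain $\Qbr=\Qbrp$ as ice quivers. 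For the clusters, the map $\moveiso$ of \cref{prop:isoVar} uses the identity $z_{i^*}(t)^{-1}h\dw_0=h'\dw_0\z_i(t')$ precisely to express that $X_1$ and $Y_1$ represent the same points of $G/U_+$ before and after the move; thus $m$ preserves all weighted flags $X_c, Y_c$ with $c\ge 1$, all double cosets $Z_c$ with $c\ge 1$, and hence all grid minors $\grid_{c,h}$ with $c\ge 1$. Since the first crossing is hollow, every chamber minor $\crossing{c}=\grid_{c-1,i_c}$ with $c\in\Jos$ satisfies $c\ge 2$, so $c-1\ge 1$; hence $m^{*}$ preserves the full collection of chamber minors.

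To conclude $m^{*}(p_c)=x_c$, I will use \cref{lem:ordViaSoap}: each grid minor (and in particular each chamber minor) factors uniquely as a monomial in the cluster variables, and the factorization is governed by which face at position $c\ge 1$ lies inside which red/blue projection of a disk $\Disk_d$. Since the relevant cycles agree for $\br$ and $\brp$, the two factorization systems agree, so the chamber-minor equalities of the previous step promote to cluster-variable equalities. The main obstacle is the bookkeeping in the preceding paragraph: one must verify that $m$ literally preserves the weighted flags at position~$1$ (not merely the underlying flags in $G/B_+$), which is exactly what the scalar identity $z_{i^*}(t)^{-1}h\dw_0=h'\dw_0\z_i(t')$ provides. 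Once this is in place, the ``straightforwardness'' of the lemma reflects the fact that the modified first position contributes no bridge and hence no chamber minor.
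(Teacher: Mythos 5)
Your proposal is correct and follows essentially the same approach as the paper's proof: establish that hollowness transfers, argue that the surface and quiver are unchanged, observe that the chamber minors pull back to chamber minors, and then invoke the (unitriangular) factorization of chamber minors into cluster variables to conclude $x_c = p_c$. You supply a few details the paper states without elaboration — notably the explicit recursion check that the first crossing of $\br'$ is also hollow, and the observation that the scalar identity behind the move \bmref{bm5} makes $m$ preserve the weighted flags $X_c,Y_c$ (hence $Z_c$ and all grid minors) for $c \geq 1$, which combined with $1 \notin \Jos$ immediately gives that all chamber minors are preserved.
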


\begin{proof}
	Note that the first crossing is hollow in $\br$ if and only if it is hollow in $\br'$. The color of the hollow crossing does not matter in the definition of the surface $\Subr$ and does not affect the interaction of any mutable cycle with any other cycle, so $\Subr=\Subrp$ and $\Qbr=\Qbrp$. Further, the upper triangular matrix relating \cham minors and cluster variables is the same for both seeds, and all \cham minors pull back to \cham minors. This implies $x_c =p_c$ for all $c \in \Jos$.
\end{proof}

We now assume that the first crossings of $\br$ and $\br'$ are solid. We would like to apply the following proposition to the two seeds at hand.

\begin{proposition}[{\cite{Fraser}, cf. \cite[Proposition 5.11]{LS}}]\label{prop:preserves}
	Suppose $(\x, \Qt)$ and $(\x', \Qtp)$ are two seeds in $\mathcal{F}$ such that 
	\begin{itemize}
		\item $\Qt$ and $\Qtp$ are ice quivers on the same set of vertices, whose mutable parts coincide,
		\item the mutable variables in $\x$ and $\x'$ are the same, and the two sets of frozen variables are related by an invertible monomial transformation, and 
		\item for each mutable vertex $c$, 
		\[\prod_{v \in \Qt} x_v^{\#\text{arrows } v \to c \text{ in } \Qt}= \prod_{v \in \Qtp} (x_v')^{\#\text{arrows } v \to c \text{ in } \Qtp}. \]
	\end{itemize}
	Then $\A(\x, \Qt)= \A(\x', \Qtp)$.
\end{proposition}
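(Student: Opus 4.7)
The plan is to establish $\A(\x, \Qt) = \A(\x', \Qtp)$ by an induction on mutation sequences. I will show that for every finite sequence $(c_1, \dots, c_k)$ of mutable vertices, the two iterated seeds $\mu_{c_k}\cdots\mu_{c_1}(\x, \Qt)$ and $\mu_{c_k}\cdots\mu_{c_1}(\x', \Qtp)$ continue to satisfy hypotheses~(1)--(3). Granted this propagation, the mutable cluster variables produced from each seed agree at every stage (by hypothesis~(2) applied to the terminal seed), so both cluster algebras contain the same set of cluster variables. Since the frozen variables in $\x$ and $\x'$ are related by an invertible monomial transformation, they generate the same subring after inverting the frozens, and we conclude $\A(\x, \Qt) = \A(\x', \Qtp)$.

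The inductive step reduces to a single mutation at a mutable vertex $c$. Let $(\bx, Q)$ and $(\bx', Q')$ denote the two seeds after $k-1$ mutations, satisfying the three hypotheses by the inductive assumption. Hypothesis~(1) is preserved because Fomin--Zelevinsky matrix mutation on the mutable part depends only on the mutable submatrix, which coincides in $Q$ and $Q'$. For hypothesis~(2) at $c$, the exchange relations read $\bx_c \cdot \bx_c^{\mathrm{new}} = M_+ + M_-$ and $\bx_c' \cdot (\bx_c')^{\mathrm{new}} = M'_+ + M'_-$; hypothesis~(3), read as equality of both exchange monomials $M_\pm = M'_\pm$, together with $\bx_c = \bx_c'$, forces $\bx_c^{\mathrm{new}} = (\bx_c')^{\mathrm{new}}$. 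All other cluster variables are unchanged by mutation at $c$, and the invertible monomial relation between the frozen clusters persists trivially.

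The main technical step, and the anticipated obstacle, is verifying hypothesis~(3) for the mutated seeds at each mutable vertex $d \neq c$. Using the Fomin--Zelevinsky arrow-mutation rule, the arrow count $\#(v \to d)$ in $\mu_c(Q)$ decomposes into (i) preexisting arrows $v \to d$ with $v \neq c$, (ii) reversed arrows when $v = c$, and (iii) newly created arrows coming from paths $v \to c \to d$, followed by cancellation of any $2$-cycles. Translating this multiplicatively, the product $\prod_v \bx_v^{\#(v \to d)}$ after mutation equals a specific Laurent monomial in the pre-mutation products $\prod_v \bx_v^{\#(v \to d)}$ and $\prod_v \bx_v^{\#(v \to c)}$ together with $\bx_c^{\mathrm{new}}$, with exponents that depend only on the pre-mutation subquiver on $\{c,d\}$. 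The identical identity holds for the primed seed with the same exponents (since the mutable subquivers agree), and combining the inductive hypothesis with $\bx_c^{\mathrm{new}} = (\bx_c')^{\mathrm{new}}$ makes the two resulting Laurent monomials equal. This establishes hypothesis~(3) for the mutated seeds and closes the induction.
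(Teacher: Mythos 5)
The step where you read hypothesis~(3) as giving both $M_+=M'_+$ \emph{and} $M_-=M'_-$ is a genuine gap, and it is where the argument fails. The third bullet asserts only that the \emph{incoming} products $\prod_{v\to c}x_v$ coincide; it is silent about the outgoing products, and the two are independent once arrows to and from frozen vertices are allowed to differ between $\Qt$ and $\Qtp$. Concretely: take one mutable vertex $c$ and two frozen vertices $f_1,f_2$, let $\Qt$ have the single arrow $f_1\to c$, let $\Qtp$ have arrows $f_1\to c$ and $c\to f_2$, and use identical clusters. All three bullets hold (the mutable part has no arrows, and both incoming products equal $x_{f_1}$), but the exchange at $c$ produces $(x_{f_1}+1)/x_c$ in $\A(\x,\Qt)$ and $(x_{f_1}+x_{f_2})/x_c$ in $\A(\x',\Qtp)$; examining the $x_c^{-1}$-coefficient in the Laurent expansion shows $(x_{f_1}+1)/x_c\notin\A(\x',\Qtp)$, so the two cluster algebras differ. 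Thus the bullet as literally printed cannot be all that the cited Fraser/Lam--Speyer statement assumes; it is shorthand for the $\widehat y$-variable condition $\prod_v x_v^{b_{vc}}=\prod_v(x'_v)^{b'_{vc}}$ (equivalently, for both the incoming and outgoing products agreeing). Your proof silently upgrades the written bullet to this stronger condition; without that upgrade the key conclusion $\bx_c^{\mathrm{new}}=(\bx_c')^{\mathrm{new}}$ does not follow.

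There is a second, related issue. Even under the $\widehat y$-reading, the exchange relation gives only $\bx_c^{\mathrm{new}}/(\bx_c')^{\mathrm{new}}=M_-/M'_-$, which is a Laurent monomial in the frozen variables, not necessarily $1$. This still suffices for $\A(\x,\Qt)=\A(\x',\Qtp)$, since frozen monomials are units in both algebras, but it means the invariant ``mutable cluster variables agree on the nose'' (your bullet~(2)) does not propagate and your induction does not close as set up. The invariant that does propagate is ``mutable cluster variables agree up to Laurent monomials in the frozens, the $\widehat y$-variables agree, and the mutable submatrices agree''; preservation of the $\widehat y$-equality under mutation --- which you correctly identify as the main technical step but only sketch --- follows because the $\widehat Y$-pattern mutation rule involves only the mutable exchange matrix, which is unchanged by hypothesis~(1). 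To repair the argument you should either state explicitly that you are assuming both $M_\pm=M'_\pm$ (in which case your inductive invariant does propagate, and the propagation of that pair of equalities under mutation still needs to be written out), or weaken bullet~(2) in the induction to agreement up to frozen units and track those units.
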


We first analyze the relationship between the clusters $\xbr$ and $\pbr$.

\begin{lemma}\label{lem:bm5Equalities}
	Suppose $\br= (-i) \gamma$ and $\br'= i \gamma$ and the first crossing is solid. Then for $(c, h) \neq (0, \pm i)$, $x_d$ appears in $\grid_{c, h}$ if and only if $p_d$ appears in $\grid_{c, h}$. Further, $x_c = p_c$ for $c \neq 1$.
\end{lemma}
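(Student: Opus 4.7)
The plan is to exploit the explicit formula for the isomorphism $m:\BR_\br\to\BR_{\brp}$ built in \cref{prop:isoVar}, which shows that the weighted flag pairs $(X_c,Y_c)$ for $c\geq 1$ literally coincide (via the identity $z_{i^*}(t)^{-1}h\dot w_0 = h'\dot w_0\z_i(t')$), while $(X_0,Y_0)$ changes in a gauge-controlled way. So $m^*Z_c=Z_c$ for $c\geq 1$, hence $m^*\grid_{c,h}(\brp)=\grid_{c,h}(\br)$ for every $c\geq 1$ and $h$. At $c=0$ we have $u^{(0)}=\id$, so red and blue grid minors coincide and reduce to truncated products of diagonal entries of $h$ (respectively $h'$); because $h'$ differs from $h$ only by swapping diagonal entries at positions $i^*$ and $i^*+1$, these truncated products agree except when the truncation level equals $i$, i.e., except at $(c,h)=(0,\pm i)$.

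Next I would establish that $m$ identifies Deodhar hypersurfaces: $m(V_d)=V'_d$ for every $d\in\Jos=\Josp$. For mutable $d$, the defining condition \eqref{eq:deodharHyp} at $c\geq 1$ involves only the preserved pairs and the same APS permutations (the $(u,d)$-APS agrees for $c\geq 1$), while the condition at $c=0$ reduces to the generic $w_0$-relation which holds near the torus, so the two hypersurfaces coincide. For frozen $d$, I would handle $d=1$ specially by noting that, since the first crossing is solid, $u^{(1)}=\id$, so the Demazure products $v^{(0)\langle 1\rangle}(\br)=s_i\ast\id=s_i=\id\ast s_i=v^{(0)\langle 1\rangle}(\brp)$ agree; hence $V_1$ and $V'_1$ are both the closure of the locus where the analogous (preserved) grid minor $\grid_{0,\pm i}$ vanishes, and they correspond under $m$. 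For other frozen $d>1$, one argues as in the mutable case using that the APS at $c\geq 1$ agrees.

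Part~(1) of the lemma is then immediate: for $(c,h)\neq(0,\pm i)$, we have $\grid_{c,h}(\br)=m^*\grid_{c,h}(\brp)$, and the order-of-vanishing computations along $V_d$ and $V'_d$ match by the above correspondence, so \cref{lem:ordViaSoap} gives the equivalence. For part~(2), I would run downward induction on $c\in\Jos$. For $c\neq 1$, the \cham minor $\Delta_c=\grid_{c-1,i_c}$ lies at position $(c-1,i_c)$ with $c-1\geq 1$, hence is preserved by $m^*$ and, by part~(1), factors as the same monomial in cluster variables:
\[
\Delta_c \;=\; x_c\prod_{d>c} x_d^{q_{c,d}},\qquad m^*\Delta'_c \;=\; p_c\prod_{d>c} p_d^{q_{c,d}},
\]
with identical exponents $q_{c,d}=\ord_{V_d}\Delta_c=\ord_{V'_d}\Delta'_c$. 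By the induction hypothesis $x_d=p_d$ for $d>c$, and equality of the two left-hand sides forces $x_c=p_c$.

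The main obstacle is the frozen case in the hypersurface-correspondence step, in particular $d=1$: a priori the APS at $c=0$ uses $s_i\demR$ in $\br$ and $\demL s_i$ in $\brp$, which in general give different permutations, so the Bruhat-cell conditions on $(X_0,Y_0)$ could be genuinely different. The resolution is that the hypothesis ``first crossing is solid'' forces $u^{(1)}=\id$, at which point the two Demazure products collapse to the same element $s_i$, and then the two definitions of $V_1$ become compatible with the gauge shift built into $m$. All remaining work is a careful but routine tracking of how $h\mapsto h'$ interacts with the diagonal structure of $Z_0$.
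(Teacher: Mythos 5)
Your proof attacks the lemma by trying to identify the Deodhar hypersurfaces under the braid-variety isomorphism $m$, i.e., to show $m(V_d)=V'_d$, and then transporting orders of vanishing.  This works fine for mutable $d$ (the condition of \eqref{eq:deodharHyp} at $c=0$ is the trivial relation $X_0\Lwrel{w_0}Y_0$, and the conditions at $c\geq 1$ match since $(X_c,Y_c)$ and the APS agree there). But it breaks down for frozen $d$, and the patch you sketch does not close the gap.

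The issue is structural: for frozen $d$, the Deodhar hypersurface $V_d$ is defined inside $\BigBR_{\br}$, not $\BR_{\br}$, and in fact $V_d\cap\bigBR_{\br}=\varnothing$ because the condition at $c=0$ is $X_0\Lwrel{w_0 v_{(0)}\apd{d}}Y_0$ with $v_{(0)}\apd{d}\neq\id$. The isomorphism $m$ is only defined on $\bigBR_\br$, so ``$m(V_d)=V'_d$'' is not even a well-posed statement there, and one would need to produce and control a birational extension of $m$ to $\BigBR_\br$. Moreover, for frozen $d>1$ the two APS values at $c=0$ genuinely need not agree: they are $s_i\demR v_1$ and $v_1\demL s_i$ with $v_1:=v_{(1)}\apd{d}$ (here $c=1\neq d$, so the recursion uses the Demazure \emph{quotient}, not the product), and these can differ — e.g.\ $v_1=s_is_{i+1}$ gives $s_{i+1}$ versus $s_is_{i+1}$. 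So ``argue as in the mutable case'' does not apply. (Your $d=1$ computation is correct in outcome, because there the recursion at $c=0$ uses the Demazure \emph{product}, but the write-up says $\demR$ and $\demL$, which are quotients — a small terminological slip.)

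The paper avoids all of this by never comparing hypersurfaces. It uses \cref{prop:combGeoMatch}, which makes ``$x_d$ appears in $\grid_{c,h}$'' a purely combinatorial statement depending only on $d$ and the suffix $i_{c+1}\cdots i_m$; for $c\geq 1$ the suffixes of $\br$ and $\brp$ coincide, and for $c=0$ with $|h|\neq i$ one uses \cref{lem:manyMinorsStable} together with $\grid_{0,h}=\grid_{0,-h}$ (valid since $u\pd{0}=\id$) to rewrite $\grid_{0,h}$ as a grid minor at $c=1$. That reduction step is the missing ingredient in your proposal: without it the frozen $d$ case (and especially $d>1$) is genuinely open. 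Your explicit $Z_0$ computation showing the grid-minor \emph{functions} match away from $(0,\pm i)$ is correct and a helpful concretization, and your downward-induction argument for $x_c=p_c$ with $c\neq 1$ is essentially the same triangularity argument the paper uses.
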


\begin{proof} Note that for $(c, h) \neq (0, \pm i)$, $\grid_{c, h}=\grid_{c, h}'$.
	
	For the first claim: by \cref{prop:combGeoMatch}, the appearance of a cluster variable $x_d$ in a grid minor $\grid_{c, h}$ depends only on $d$ and the suffix $i_{c+1} \dots i_{m}$ of $\br$. The same statement holds for $p_d$, $\grid_{c, h}'=\grid_{c,h}$ and $\br'$. Since for $c>0$, the suffixes of $\br$ and $\br'$ coincide, this implies the first claim for all grid minors with $c>0$. For $c=0$, by \cref{lem:manyMinorsStable}, each blue grid minor for $c=0$ besides $\grid_{0, -|i|}$ is equal to a blue grid minor for $c=1$. It follows from the definition that $\grid_{0, h} = \grid_{0, -h}$, so this gives the first claim for grid minors with $(c,h) \neq (0, \pm i)$.
	
	The second claim follows from the first, since cluster variables are unitriangularly related to \cham minors, and the first claim implies the unitriangular matrices for the two seeds differ only in the first row (or column).
\end{proof}

\begin{lemma}
	Suppose $\br= (-i) \gamma$ and $\br'= i \gamma$ with $i \in I$ and the first crossing is solid. Then $p_1=x_1^{-1} M$ where $M$ is a Laurent monomial in the frozen variables of $\xbr$ other than $x_1$.
\end{lemma}

\begin{proof} For $h \in H$, let $h_j:=h_{j,j}$.
	
	Gauge-fix as in the proof of \cref{prop:isoVar}, so $Z_0=h \dot{w}_0$ and $Z_0'=h' \dot{w}_0$, where $h'$ is obtained from $h$ by swapping the positions of $h_{n-i+1}$ and $h_{n-i}$. It is easy to see that $\grid_{0, a}=\grid_{0, -a}= h_n \cdots h_{n-|a|+1}$, and similarly for $\grid_{0, a}'$. Now, because $x_1$ appears in $\grid_{0, -i}$ but not $\grid_{0, -(i+1)}$, we have $h_{n-i}=x_1^{-1} N$ where $N$ is a Laurent monomial in the other frozen variables of $\xbr$. Similarly, because $p_1$ appears in $\grid_{0, i}'$ and not $\grid_{0, i-1}'$, we have $h'_{n-i+1}=h_{n-i}=p_1 N'$ where $N'$ is a Laurent monomial in the other frozen variables of $\pbr$. So $p_1 = x_1^{-1} N/N'$. Since all frozen variables in $\pbr \setminus \{p_1\}$ are equal to frozen variables of $\xbr$, we have proved the claim.
\end{proof}

\begin{lemma}Suppose $\br= (-i) \gamma$ and $\br'= i \gamma$ with $i \in I$ and the first crossing is solid.
  For each mutable crossing $c \in \Jmuts$, we have
  \begin{equation}\label{eq:prod=prod}
    \prod_{d \in \Qbr} x_d^{\#\text{arrows } d \to c \text{ in } \Qbr}= \prod_{d \in \Qbrp} p_d^{\#\text{arrows } d \to c \text{ in } \Qbrp}.
  \end{equation}
\end{lemma}

\begin{proof}
The result follows straightforwardly from \cref{prop:seedFromLabeledGraphs}. The quiver $P^+_{\br}$ is obtained from $P^+_{\brp}$ by adding the half-arrows coming from the leftmost red bridge of $\pired(\Gbr)$; the quiver $P^-_{\brp}$ is obtained from $P^-_{\br}$ by adding the half-arrows coming from the leftmost blue bridge of $\piblue(\Gbrp)$. Together with \cref{lem:bm5Equalities}, this implies that the quivers $\Qbr$ and $\Qbrp$ can be obtained from a third quiver $\Qt$ by adding a vertex $x_1$ or $p_1$ and adding the half-arrows contributed by the leftmost bridge.

As before, for $h \in H$, let $h_j:=h_{j,j}$. Gauge-fix as in \cref{prop:isoVar}, so $Z_0=h \dot{w}_0$ and $Z_0'=h' \dot{w}_0$, where $h'$ is obtained from $h$ by swapping the positions of $h_{n-i+1}$ and $h_{n-i}$.

Fix $c \in \Jmuts$, and let $L$ and $R$ denote the left- and right-hand sides of~\eqref{eq:prod=prod}. If $x_c$ does not appear in $\grid_{1, i}$, then there are no arrows between $x_c$ and $x_1$ and the arrows $x_d \to x_c$ in $\Qbr$ are in bijection with arrows $p_d \to p_c$ in $\Qbrp$. We find that~\eqref{eq:prod=prod} follows from \cref{lem:bm5Equalities}.

If $x_c$ does appear in $\grid_{1, i}$, then the arrows in $P^+_{\br}$ around the leftmost red bridge contribute
\[\frac{(\grid_{0, i+1} \grid_{0, i-1})^{1/2}}{\grid_{0, i}}= \frac{h_{n-i}^{1/2}}{h_{n-i+1}^{1/2}}\]
to $L$.

From \cref{lem:bm5Equalities}, we know $p_c$ appears in $\grid_{1, -i}'=\grid_{1,i}$. The half-arrows in $P^-_{\brp}$ around the leftmost blue bridge contribute 
\[\frac{\grid_{0, -i}'}{(\grid_{0, -(i+1)} \grid_{0, -(i-1)})^{1/2}}= \frac{h_{n-i}^{1/2}}{h_{n-i+1}^{1/2}}\]
to $R$.

Now, if we divide $L$ and $R$ by $(h_{n+i-1}/h_{n-i})^{1/2}$, we obtain two expressions that are equal. Indeed, the expressions are monomials in $\xbr \setminus \{x_1\}$ and $\pbr \setminus \{p_1\}$, respectively, and exponents in both expressions are determined by arrows in $\Qt$.
\end{proof}

\section{Proof of Theorem~\ref{thm:main}}\label{sec:proofmain}

In this section, we continue to assume $u=\wo$.  Recall that to each double braid word $\br \geq \wo$, we have associated a cluster algebra $\A_\br \subset \CC(\BR_{\br})$. By \cref{thm:inducedIso}, this cluster algebra is invariant under \bmref{bm1}--\bmref{bm5}.  

We will prove that $\C[\BR_{\br}]$ is equal to $\A_\br$ by induction on $m- \ell(\wo)$, where $m$ is the number of letters in $\br$. The key step of the argument is \cite[Theorem 4.10]{GLSBS2}. For the convenience of the reader, before recalling this theorem, we first outline how to translate between the conventions for seeds in \cite{GLSBS2} and those here. 

Let $\OmegaOp_\br$ be the seed obtained from the seed defined in \cite[Section 1.2]{GLSBS2} for $G= \SL_n$ by reversing all arrows.

\begin{lemma}\label{lem:seed-same-as-gen-type}
	The seed $\seedbr$ defined in \eqref{eq:seed} is identical to the seed $\OmegaOp_{\br}$.
\end{lemma}
\begin{proof}
	\cref{def:cluster-var} is identical to \cite[Definition 2.23]{GLSBS2}. So all that remains is to show that the quivers agree.
	
	The quiver for $\OmegaOp_\br$ is defined in \cite[Section 2.8]{GLSBS2} in terms of 2-forms $\om_{\br, c}$. 
	For $c \in \Jo$, let $i := i_c$. Then $\om_{\br, c}$ is defined as
	\begin{equation}\label{eq:crossing-2-forms}
		(\dlog \grid_{c-1, i} - \frac{1}{2}\dlog \grid_{c, i-1} - \frac{1}{2}\dlog \grid_{c, i+1}) \wedge  (\dlog \grid_{c, i} - \frac{1}{2} \dlog \grid_{c, i-1} - \frac{1}{2} \dlog \grid_{c, i+1})
	\end{equation}
	if $i \in I$ and as the negative of \eqref{eq:crossing-2-forms} if $i \in -I$; see \cite[Equation~(2.21)]{GLSBS2}.%
	 Note that $\om_{\br, c}$ encodes the half-arrows between grid minors shown in \cref{fig:half-arrows-and-minors}: rewriting $\om_{\br, c}$ as a sum over $\dlog \grid_{d, h} \wedge \dlog \grid_{d', h'}$, we see that the negative of the coefficient of each $\dlog \grid_{d, h} \wedge \dlog \grid_{d', h'}$ gives precisely the number of half-arrows from $\Cham_{d, h}$ to $\Cham_{d', h'}$ in \cref{fig:half-arrows-and-minors}.
	
	The quiver of $\OmegaOp_{\br}$ is defined by taking the sum $\omop_{\br}:=-\sum_{c \in \Jo} \om_{\br, c}$, expressing each grid minor as a Laurent monomial in the cluster variables, and then rewriting in the following way: 
	\[\omop_{\br}= \sum_{i < j \in \Jo} B_{ij} \dlog x_i \wedge \dlog x_j.\]
	The number of arrows from $x_i$ to $x_j$ (for at least one of $x_i,x_j$ mutable) is $B_{ij}$. This process is the algebraic counterpart of the combinatorial procedure described in \cref{prop:seedFromLabeledGraphs}. That is, $B_{ij}$ also gives the number of arrows from $x_i$ to $x_j$ in the quiver resulting from the procedure of \cref{prop:seedFromLabeledGraphs}. As the proposition asserts, this quiver is exactly $\Qbr$, the quiver of $\seedbr$.
\end{proof}

We now state the key part of our inductive step.

\begin{theorem}[{\cite[Theorem 4.10]{GLSBS2}}]\label{thm:inductive-step-from-gen-type}
	Suppose $\br=ii \br'$ is a double braid word on positive letters and $\br \geq \wo$. If $\C[\BR_{\br'}] = \A_{\br'}$ and $\C[\BR_{i\br'}] = \A_{i\br'}$
then $\C[\BR_{ \br}]= \A_{\br}$.
\end{theorem} 
\begin{remark}
If one of the first two indices of $\br=ii\br'$ is hollow then $\BR_{\br'}$ is empty and the assumptions of \cref{thm:inductive-step-from-gen-type} only need to be checked for $\BR_{i\br'}$.
\end{remark}
\begin{remark}
In~\cite[Theorem 4.10]{GLSBS2}, we additionally assumed that the quivers $\Qice_{\br'}$ and $\Qice_{i\br'}$ are sink-recurrent and really full rank. These conditions are satisfied by \cref{prop:sinkrecImpliesLocAcyc,prop:full_rank}. Alternatively, these properties can be deduced by induction during the proof of \cref{thm:main} below: if $\Qice_{\br'}$ and $\Qice_{i\br'}$ are sink-recurrent and really full rank then by~\cite[Theorem~4.10, Remark~4.11]{GLSBS2}, so is $\Qice_{ii\br'}$.
\end{remark}

 \begin{proof}[Proof of \cref{thm:main}]
 	We induct on $|\Jo|= m - \ell(\wo)$. The base case, when $|\Jo|=0$, is trivial, as in that case $\BR_{\br}$ is a point.
 	
 	By \cref{thm:invariance}, if $\A_{\br}= \C[\BR_\br]$, then we also have $\A_{\gamma}= \C[\BR_\gamma]$ for all $\gamma$ related to $\br$ by a sequence of braid moves. The argument in the beginning of \cref{thm:sinkrec} shows that every double braid word is related by a sequence of braid moves to a word of the form $\br=ii \br'$ consisting of positive letters. So it suffices to consider a braid word $\br=ii\br'$ with all positive letters.

Assume $\br=ii\br'$. By the inductive hypothesis, we have $\C[\BR_{\br'}] = \A_{\br'}$ and $\C[\BR_{i\br'}] = \A_{i\br'}$. Applying \cref{thm:inductive-step-from-gen-type} gives $\C[\BR_{\br}] = \A_{\br}$.
 \end{proof}

\section{Applications and relations to previous work}\label{sec:applications}

\subsection{Curious Lefschetz}\label{sec:CL}
Let $X$ be a smooth, affine, complex algebraic variety of dimension $d$.  Then the cohomology $H^*(X) = H^*(X,\C)$ has a mixed Hodge structure, which gives a Deligne (weak) splitting $H^k(X) = \bigoplus_{p,q} H^{k,(p,q)}(X)$.  We say that $X$ is of \emph{mixed Tate type} if $H^k(X) = \bigoplus_{p} H^{k,(p,p)}(X)$.  We say that $X$ satisfies the \emph{curious Lefschetz property} if $X$ is of mixed Tate type and there is a class $[\gamma] \in H^{2,(2,2)}(X)$ such that cup product induces isomorphisms
$$
[\gamma]^{d-p}: H^{p+s,(p,p)}(X) \xrightarrow{\sim} H^{2d-p+s,(2d-p,2d-p)}(X)
$$
for all $p$ and $s$.  Curious Lefschetz implies the \emph{curious \Poincare symmetry}
\begin{equation*}%
  \dim H^{p+s,(p,p)}(X) = \dim H^{2d-p+s,(2d-p,2d-p)}(X).
\end{equation*}
In \cite{LS} it is shown that a large class of cluster varieties satisfy curious Lefschetz, and the following result is a consequence.  %

\begin{theorem}\label{thm:CL}
Suppose that $\Qice$ is an ice quiver that is sink-recurrent, full rank, and has an even number of vertices.  Then the cluster variety $X(\Qice)$ satisfies curious Lefschetz.
\end{theorem}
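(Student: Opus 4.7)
The plan is to deduce this from the main curious Lefschetz theorem for cluster varieties proved in \cite{LS}, so the task reduces to verifying its hypotheses for $X(\Qice)$. Recall that the theorem of Lam--Speyer applies to smooth affine cluster varieties whose associated cluster algebra equals its upper cluster algebra and whose exchange matrix has full rank (in the strong sense required there, namely really full rank), under the additional condition that the ambient dimension has the correct parity for the curious Lefschetz isomorphism to be formulated between the asserted Hodge pieces.

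First I would verify smoothness and affineness of $X(\Qice)$. Since $\Qice$ is sink-recurrent, \cref{prop:sinkrecImpliesLocAcyc} gives that $\Acal(\Qice)$ is locally acyclic, and by Muller's theorem \cite{Mul} this implies both $\Acal(\Qice) = \Ucal(\Qice)$ and smoothness of the associated spectrum; thus $X(\Qice) = \Spec \Acal(\Qice)$ is a smooth affine complex variety of dimension equal to the number of vertices of $\Qice$. Second, really full rank is assumed as one of the hypotheses of the theorem. Third, the even parity hypothesis in \cite{LS} matches our assumption that $\Qice$ has an even number of vertices, since the dimension $d = |V(\Qice)|$ then satisfies the necessary parity for the curious Lefschetz statement to apply as formulated.

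With these three ingredients in place---smooth affine cluster variety, really full rank exchange matrix, locally acyclic so that $\Acal = \Ucal$, and even dimension---I would invoke the corresponding theorem of \cite{LS} directly to conclude that $X(\Qice)$ is of mixed Tate type, admits a distinguished class $[\gamma] \in H^{2,(2,2)}(X(\Qice))$ (typically produced from the cluster Donaldson--Thomas transformation, or equivalently from a log-symplectic form on the cluster $\Xcal$-variety passed through cluster duality), and that cup product with $[\gamma]^{d-p}$ induces the required isomorphisms between $H^{p+s,(p,p)}$ and $H^{2d-p+s,(2d-p,2d-p)}$.

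The main technical obstacle, and the only place where care is genuinely needed, is matching our local acyclicity plus really full rank hypotheses to the precise technical conditions under which the main theorem of \cite{LS} is stated, since \cite{LS} phrases its hypotheses in terms of the Louise property rather than sink-recurrence. This is handled by the observation used in the proof of \cref{prop:sinkrecImpliesLocAcyc}: sink-recurrent quivers admit a cover by isolated cluster localizations, which is exactly the structural input that the \cite{LS} argument uses from the Louise property to produce the Lefschetz class and to propagate the isomorphism through the cover. Thus the same inductive cover-by-localizations argument, applied to our sink-recurrent setting, yields the statement.
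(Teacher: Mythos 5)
Your proposal identifies the right strategy---rerun the inductive argument of \cite{LS} with sink-recurrence in place of the Louise property---but the way you phrase the key observation hides a potential gap. You claim that ``sink-recurrent quivers admit a cover by isolated cluster localizations, which is exactly the structural input'' of the \cite{LS} argument. That is not quite the structural input: the gluing step the paper actually uses, \cite[Theorem~3.5]{LS}, requires that $U$, $V$, \emph{and} $U\cap V$ each satisfy curious Lefschetz for the fixed class $[\gamma]$ in order to conclude that $X = U\cup V$ does. A flat cover by isolated localizations does not by itself control the pairwise intersection.

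What actually makes the induction close is the recursive structure in \Cref{def:sinkrec}: the two-piece cover $X_1, X_2$ from the proof of \Cref{prop:sinkrecImpliesLocAcyc} has the property that $X_1$, $X_2$, and $X_1 \cap X_2$ are all cluster localizations of $X$ with sink-recurrent mutable parts (namely $Q-\{s\}$, $Q-\NtQ$, and $Q-(\NtQ\cup\{s\})$), so by induction each of the three satisfies curious Lefschetz with respect to the restriction of the GSV form $[\gamma]$ of $X$. The base case is \cite[Proposition~8.2]{LS}, which establishes curious Lefschetz for full rank isolated cluster varieties with respect to the GSV class. Your proof should cite these two results explicitly and verify the $U\cap V$ hypothesis via the recursive definition rather than appeal to a cover by isolated pieces; with that made precise, your argument coincides with the paper's.
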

The even-dimensional hypothesis is not a significant restriction since we can always add an extra isolated frozen vertex.  See also \cite[Theorem 8.3]{LS}.

\begin{proof}
Let $X$ be a smooth, affine complex algebraic variety, and $X = U \cup V$ be an open covering of $X$, and $[\gamma] \in H^{2,(2,2)}(X)$.  By \cite[Theorem 3.5]{LS}, if all three $(U,[\gamma]),(V,[\gamma]),(U \cap V,[\gamma])$ satisfy curious Lefschetz then so does $(X,[\gamma])$.  

By \cite[Proposition 8.2]{LS}, full rank isolated cluster varieties satisfy the curious Lefschetz property with respect to a (nearly canonical) \emph{GSV form} $[\gamma]$; cf.~\cite{GSV}.

Let $X = \Spec \A(\Qice)$ be a sink-recurrent, full rank cluster variety and let $X_1 = \Spec \A_1$ and $X_2 = \Spec \A_2$ give an open cover of $X$, as in the proof of \cref{prop:sinkrecImpliesLocAcyc}.  Both $X_1$ and $X_2$ are sink-recurrent, and $X_1 \cap X_2$ is also a cluster localization with mutable quiver $Q-(\NtQ\cup\{s\})$, which is sink recurrent.  Thus, by induction we may assume that $X_1,X_2,X_1 \cap X_2$ all satisfy curious Lefschetz with respect to (the respective restriction of) the GSV form $[\gamma]$ of $X$.  Thus $(X,[\gamma])$ satisfies curious Lefschetz as well.
\end{proof}
We do not know whether the Louise condition \cite{LS} is satisfied for the quivers in this work.  The cohomology of open Richardson varieties are particularly interesting because of the relations to both Category $\mathscr{O}$ and to knot homology~\cite{GL_qtcat}; see also~\cite{GHM}.

\begin{corollary}
Even-dimensional braid Richardson varieties $\BR_{\ubr}$ satisfy the curious Lefschetz property and thus the curious \Poincare symmetry.  Odd-dimensional braid varieties satisfy the curious \Poincare symmetry.
\end{corollary}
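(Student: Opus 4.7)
The plan is to derive the corollary as a direct application of \cref{thm:CL}, combined with the main results established for $\BR_{\ubr}$ in \cref{thm:main}, \cref{thm:sinkrec}, and \cref{prop:full_rank}. First I would observe that by \cref{thm:main}, the coordinate ring $\C[\BR_{\ubr}]$ is isomorphic to the cluster algebra $\A(\seedubr)$, so that $\BR_{\ubr} = \Spec \A(\Qubr)$ is a cluster variety in the sense of \cref{sec:CL}. The ice quiver $\Qubr$ is \sinkrec by \cref{thm:sinkrec} and really full rank by \cref{prop:full_rank}, and its number of vertices $|\Jo| = \ell(\br) - \ell(u) = d(u,\br)$ equals the complex dimension of $\BR_{\ubr}$.

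For the even-dimensional case, I would simply invoke \cref{thm:CL} applied to $\Qubr$: since $d(u,\br)$ is even, the hypotheses are satisfied, so $\BR_{\ubr}$ satisfies the curious Lefschetz property, and the curious \Poincare symmetry follows formally.

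For the odd-dimensional case, I would reduce to the even-dimensional one using the standard trick noted after \cref{thm:CL}. Namely, let $\Qubr^+$ be the ice quiver obtained from $\Qubr$ by adjoining a single extra isolated frozen vertex. Then $\A(\Qubr^+) \cong \A(\Qubr) \otimes_\C \C[t^{\pm1}]$, so the associated cluster variety is $\BR_{\ubr} \times \Cx$, which is smooth, affine, and of even dimension $d(u,\br)+1$. The quiver $\Qubr^+$ is still \sinkrec and really full rank (an isolated frozen vertex affects neither property, by \cref{prop:locAcycMutableEnough} and inspection of the exchange matrix). Applying \cref{thm:CL} then yields that $\BR_{\ubr} \times \Cx$ satisfies curious Lefschetz, and hence curious \Poincare symmetry. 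Since $\Cx$ is of mixed Tate type with cohomology concentrated in bidegrees $(0,(0,0))$ and $(1,(1,1))$, the \Kunneth decomposition
\begin{equation*}
H^{k,(p,p)}(\BR_{\ubr}\times \Cx) \;\cong\; H^{k,(p,p)}(\BR_{\ubr}) \;\oplus\; H^{k-1,(p-1,p-1)}(\BR_{\ubr})
\end{equation*}
is compatible with the curious \Poincare symmetry of $\BR_{\ubr} \times \Cx$ (with $d$ replaced by $d+1$), and the symmetry for $\BR_{\ubr}$ can be extracted by induction on $p+s$ or by a direct comparison of the two summands on each side.

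The main conceptual obstacle is essentially already absorbed into \cref{thm:CL}; the only genuinely new point is verifying that augmenting $\Qubr$ by an isolated frozen vertex preserves the hypotheses of \cref{thm:CL} and that the corresponding cluster variety is $\BR_{\ubr} \times \Cx$, which is routine from the definitions. The subtler bookkeeping is to confirm that the GSV class used in the proof of \cref{thm:CL} behaves well under this product, but since the isolated vertex contributes a trivial factor to the GSV form, this follows from the discussion in~\cite{LS}.
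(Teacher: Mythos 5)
Your proposal is correct and takes essentially the same approach as the paper: the even-dimensional case is a direct application of \cref{thm:CL} to $\Qubr$ (using \cref{thm:main}, \cref{thm:sinkrec}, and \cref{prop:full_rank}), and the odd-dimensional case invokes the remark after \cref{thm:CL} about adjoining an isolated frozen vertex. Your explicit K\"unneth bookkeeping for extracting the symmetry of $\BR_{\ubr}$ from that of $\BR_{\ubr}\times\Cx$ is a correct fleshing-out of a step the paper leaves implicit (and which indeed reduces to an alternating telescoping identity that forces the discrepancy to vanish).
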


\subsection{Braid Richardson links}
By applying \cref{rmk:u=w0} and the moves~\bmref{bm1}--\bmref{bm4}, we reduce to considering pairs $u\leq \br$ such that $\br$ has only positive indices.
 Then we can consider a \emph{braid Richardson link} $L_{u,\br}$ obtained as the braid closure of $\br\cdot \beta(u)^{-1}$,
 where $\br$ is viewed as a positive braid, and $\beta(u)$ is the positive braid lift of $u$. 
 This construction generalizes the \emph{Richardson links} of~\cite{GL_qtcat}.  At the same time, a link $L_G$ inside $\R^3$ can be constructed from a plabic graph $G$; see \cite{STWZ,FPST,GL_plabic}.  The construction can be extended to give a link $L_{\Gubr}$ for our 3D plabic graphs $\Gubr$, using the following convention for bridges and for hollow crossings:
 \begin{center}
   \includegraphics*[width=0.5\textwidth]{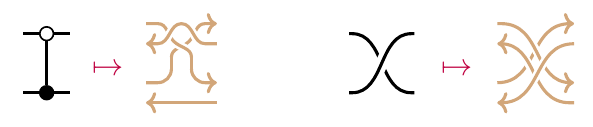}
 \end{center}
The construction in \cite[Section~4.1]{GL_plabic} can be extended to our setting to show that the braid Richardson link $L_{u,\br}$ agrees with the plabic graph link $L_{\Gubr}$.

If we treat the strands of $L_{\Gubr}$ as the boundaries of ribbons, as suggested in the figure above, 
the convention for hollow crossings forces the ribbons to intersect in $\R^3$.
 There is a natural way to alter the surface $\Subr$ locally at hollow crossings to obtain a Seifert surface $\Seif(L_{u,\br})$ for $L_{u,\br}$.  However, $\Subr$ and $\Seif(L_{u,\br})$ are different as abstract surfaces.  Nevertheless, one can draw versions of the relative cycles $(\Cycle_c)_{c\in\Jo}$ on $\Seif(L_{u,\br})$ rather than on $\Subr$, preserving their intersection numbers.

The surface $\Seif(L_{u,\br})$ has the ``wrong" homology, so an alternative is to consider a different embedding $\Subr'$ of the surface $\Subr$ in $\R^4$, defined so that there are no intersections of ribbons at hollow crossings.  %
The surface $\Subr'$ is diffeomorphic to $\Subr$, but it will now have the ``correct" link $L_{\ubr}$ as its boundary in $\R^3$.

\subsection{Point count}
Let $Q$ be a mutable quiver.  Following~\cite{GL_plabic}, define the \emph{point count rational function} $R(Q;q)$ to be the function $q \mapsto \#X(\Qice)({\mathbb F}_q)/(q-1)^a$ where ${\mathbb F}_q$ denotes a finite field with $q$ elements and $\Qice$ is a really full rank ice quiver with mutable part $Q$ and $a$ frozen vertices.  

Suppose $Q$ is sink-recurrent.  Then the covering considered in the proof of \cref{prop:sinkrecImpliesLocAcyc} shows that $R(Q;q)$ is a rational function not depending on the choice of $\Qice$; see~\cite[Section 5.3]{GL_plabic}.  We obtain the following variant of  \cite[Conjecture 2.8]{GL_plabic}.

\begin{theorem}\label{thm:pc}
Let $u\leq \br$, and suppose that the 3D plabic graph $\Gubr$ has $\conn(\Gubr)$ connected components. Then we have
\begin{equation*}%
  R(\Qubrmut;q)=(q-1)^{\conn(\Gubr)-1} \Ptop_{L_{u,\br}}(q),
\end{equation*}
where $\Ptop_{L_{u,\br}}(q)$ is obtained from the top $a$-degree term of the \emph{HOMFLY polynomial $\HOMP(L_{u,\br};a,z)$} by substituting $a:=\qqi$ and $z:=\qq-\qqi$.
\end{theorem}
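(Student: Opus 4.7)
The plan is to prove the identity by induction on $d(u,\br) = \ell(\br) - \ell(u)$, matching a geometric point-count recursion on the braid variety side to the HOMFLY skein relation on the link side. Both sides of the claim are invariant under the moves \bmref{bm1}--\bmref{bm5}: the left-hand side by \cref{thm:invariance}, \cref{prop:bm5_bm4}, and the well-definedness of $R(Q;q)$ on mutation classes of sink-recurrent quivers discussed above; the right-hand side since braid moves and cyclic rotation preserve the braid closure up to isotopy. Using \cref{rmk:u=w0} and imitating the reduction in the proof of \cref{thm:sinkrec}, I would reduce to the case $u = w_0$ with $\br = i\,i\,\br'$ for some positive letter $i$, where both initial crossings are solid.

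The geometric input, from \cref{sec:proofmain}, is the open-closed decomposition $\BR_\br = U \sqcup V$, where $U = \{x_2 \neq 0\} \cong \BR_{i\br'} \times \mathbb{C}^\times$ and $V = \{x_2 = 0\} \cong \BR_{\br'} \times \mathbb{C}$. Counting $\mathbb{F}_q$-points gives
\[
\#\BR_\br(\mathbb{F}_q) \;=\; (q-1)\cdot \#\BR_{i\br'}(\mathbb{F}_q) \;+\; q\cdot \#\BR_{\br'}(\mathbb{F}_q).
\]
Combining this with \cref{thm:main} and \cref{prop:full_rank}, I would write $R(Q^{\mut}_{\star}; q) = \#\BR_{\star}(\mathbb{F}_q)/(q-1)^{|J^{\fro}_{\star}|}$ for each of the three braid words and track how $|J^{\fro}|$ and $\conn(G_{\star})$ change under the passage $\br' \to i\br' \to ii\br'$. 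This bookkeeping is controlled by the sink-recurrent reduction in \cref{thm:sinkrec}: the mutable sink $d=2$ of $Q^{\mut}_{ii\br'}$ is deleted in the passage to $Q^{\mut}_{i\br'}$, and its in-neighborhood $N$ becomes frozen (along with $d$) in the passage to $Q^{\mut}_{\br'}$. This converts the identity above into a corresponding recursion for $R$.

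The link-theoretic input is the HOMFLY skein relation applied at the first of the two initial $s_i$-crossings of the positive braid $\br\cdot \beta(u)^{-1}$. Since $s_i^{-1}\cdot s_i = e$, the positive, negative, and oriented resolutions of this crossing give the braid closures $L_{u,ii\br'}$, $L_{u,\br'}$, and $L_{u,i\br'}$ respectively, whence
\[
a\,P(L_{u, ii\br'}) \;-\; a^{-1}\,P(L_{u, \br'}) \;=\; z\,P(L_{u, i\br'}).
\]
Extracting the top-$a$-degree term of both sides and then substituting $a=\qqi$, $z = \qq - \qqi$ yields a recursion for $\Ptop_{L}(q)$ which, after multiplying by $(q-1)^{\conn(\Gubr)-1}$ and accounting for the change in $\conn$ when a bridge is added or removed, matches the geometric recursion for $R$. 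The base case is $\br$ a reduced word for $w_0$: here $\Qubrmut$ is empty so $R=1$, while $L_{u,\br}$ is a split union of $\conn(\Gubr) = n$ unknots whose top-degree HOMFLY term $a^{n-1}/z^{n-1}$ produces, after the substitution, exactly $(q-1)^{-(n-1)}$, matching the factor $(q-1)^{\conn(\Gubr)-1}$.

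The principal obstacle will be the top-$a$-degree bookkeeping on the HOMFLY side: the skein identity for $P$ descends to an identity for the top-degree term only if all three terms attain their expected maximal $a$-degrees simultaneously, so that no unintended cancellation occurs upon extraction. This is an HOMFLY-positivity statement that should follow from the positive-braid representation of $\br\cdot \beta(u)^{-1}$ together with the Franks--Williams--Morton inequality for positive braid closures, paralleling the analysis for plabic fences in~\cite[Section~5]{GL_plabic} and the HOMFLY computations on braid varieties in~\cite{Mellit_cell}. Once the HOMFLY side is shown to obey the expected recursion with the correct leading coefficients, matching it term-by-term with the geometric recursion is a direct comparison using the explicit description of the two initial bridges from \cref{sec:proofmain}.
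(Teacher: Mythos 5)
Your strategy is essentially the one the paper uses — the paper simply compresses it to a few sentences by citing \cite{GLTW} for the point-count recursion and \cite{GL_qtcat} for the HOMFLY comparison, whereas you are reconstructing both steps inline from the open--closed decomposition $\BR_{\br}=U\sqcup V$ of \cref{sec:proofmain} and the skein relation at one of the two initial $s_i$-crossings. That reconstruction is correct in outline: $U\cong\BR_{i\br'}\times\Cx$ and $V\cong\BR_{\br'}\times\C$ yield
\[
\#\BR_{\br}(\mathbb F_q)=(q-1)\,\#\BR_{i\br'}(\mathbb F_q)+q\,\#\BR_{\br'}(\mathbb F_q),
\]
and the skein identity with $a=q^{-1/2}$, $z=q^{1/2}-q^{-1/2}$ gives the same $(q-1)$- and $q$-coefficients.

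There are, however, two places where your sketch glosses over something that genuinely has to be checked. First, the bookkeeping you describe as ``accounting for the change in $\conn$'' is not innocuous: the normalization on the left involves $(q-1)^{-a}$ with $a=|\Jfro|$, which is \emph{not} stable under the passage $\br\to\br'$ ($a_{\br'}=a_{\br}-1+|\NtQ|$, where $\NtQ$ is the in-neighborhood of the sink $d=2$ in $Q_{u,\br}$, since those vertices become frozen in $Q_{u,\br'}$), while the normalization on the right involves $(q-1)^{\conn-1}$. For your recursion-matching to close, one needs a combinatorial identity of the shape $\conn(\Gubr)-\conn(\Gubrpp)=|\NtQ|-1$ relating the change in connected components to the in-degree of the sink, and this has to be proved, not assumed. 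Second, your appeal to the Franks--Williams--Morton inequality ``for positive braid closures'' does not directly apply: after reducing to $u=w_0$ with $\br$ positive, the braid $\br\cdot\beta(w_0)^{-1}$ has negative crossings and is not a positive braid. A non-cancellation argument for extracting the top $a$-degree term is still needed, but it has to be made in the style of the Richardson-link analysis in \cite{GL_qtcat} (or \cite{Mellit_cell}), which is precisely why the paper cites \cite[Theorem~2.1]{GL_qtcat} rather than invoking MFW directly. With those two points either proved or explicitly deferred to the same references, the argument closes.
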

\begin{proof}
By \cref{thm:sinkrec,prop:full_rank} and our main result \cref{thm:main}, it follows that $R(\Qubrmut;q)$ is equal to $(q-1)^{-a} \#\BR_{u,\br}({\mathbb F}_q)$, where $a$ is the number of frozen variables in $\Qubr$.  A recursion for the point count of $\BR_{\ubr}$ is given in \cite[Section 5]{GLTW}.  The comparison with the top $a$-degree term of the HOMFLY polynomial proceeds in the same way as the proof of \cite[Theorem 2.1]{GL_qtcat}.
\end{proof}
The special case of \cref{thm:pc} when $\br$ is a reduced word for some permutation $w\in\Sn$ follows from our main results combined with~\cite[Theorem~1.11]{GL_qtcat}.  See~\cite{GL_qtcat,GL_cat_combin,GL_plabic} for further details and examples.

\subsection{Relation of 3D plabic graphs to Postnikov's plabic graphs}\label{sec:postn-plab-graphs}
We explain how our 3D plabic graphs generalize the plabic graphs of~\cite{Pos}. A permutation $w\in\Sn$ is called \emph{$k$-Grassmannian} if $w(1)<\dots<w(n-k)$ and $w(n-k+1)<\dots<w(n)$. We denote by $\Skn$ the set of $k$-Grassmannian permutations. Thus, any reduced word for $w\in\Skn$ ends with $s_{n-k}$, and all reduced words for $w$ are related by commutation moves. It is well known that $k$-Grassmannian permutations are in bijection with Young diagrams that fit inside a $k\times(n-k)$ rectangle. We draw Young diagrams in English notation.
 A \emph{Le-diagram} is a way of placing a dot in some of the boxes of $\la$ so that whenever a box of $\la$ has a dot above it in the same column and to the left of it in the same row, it must also contain a dot. We will find it useful to note the contrapositive formulation: If a box of $\la$ is empty, then either all the boxes to its left are empty, or all the boxes above it are empty (or both).
 
  For each dot, we draw horizontal and vertical line segments connecting it to the southeastern boundary of $\la$. An example of a Le-diagram $\Gamma$ is shown in \figref{fig:Guw-Le}(left). One can convert a Le-diagram $\Gamma$ into a plabic graph $G(\Gamma)$ using the local rule \begin{tikzpicture}[baseline=(Z.base)]
   \coordinate(Z) at (0,0);
\node(A) at (0,0.05){\igrw[0.15]{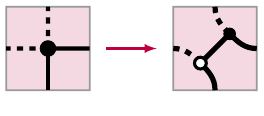}};
 \end{tikzpicture}. The graph $G(\Gamma)$ is drawn in a disk which we identify with the Young diagram $\la$. Thus, $G(\Gamma)$ has $n$ boundary vertices on the southeastern boundary of $\la$, and it has a unique northwestern boundary face which we denote $F_0$. Rotating $\Gamma$ by $135^\circ$ clockwise, replacing all empty boxes with crossings, and all dots with black-white bridges, we obtain a graph $\Guw$, where $\bw$ is a reduced word for $w$ and $u\leq w$ in the Bruhat order on $\Sn$; see \cref{fig:Guw-Le}.

\begin{figure}
\setlength{\tabcolsep}{-5pt}
\begin{tabular}{cc}
\begin{tikzpicture}[baseline=(Z.base)]
\coordinate(Z) at (0,0);
\node(A) at (0,-0.12){\igrw[0.55]{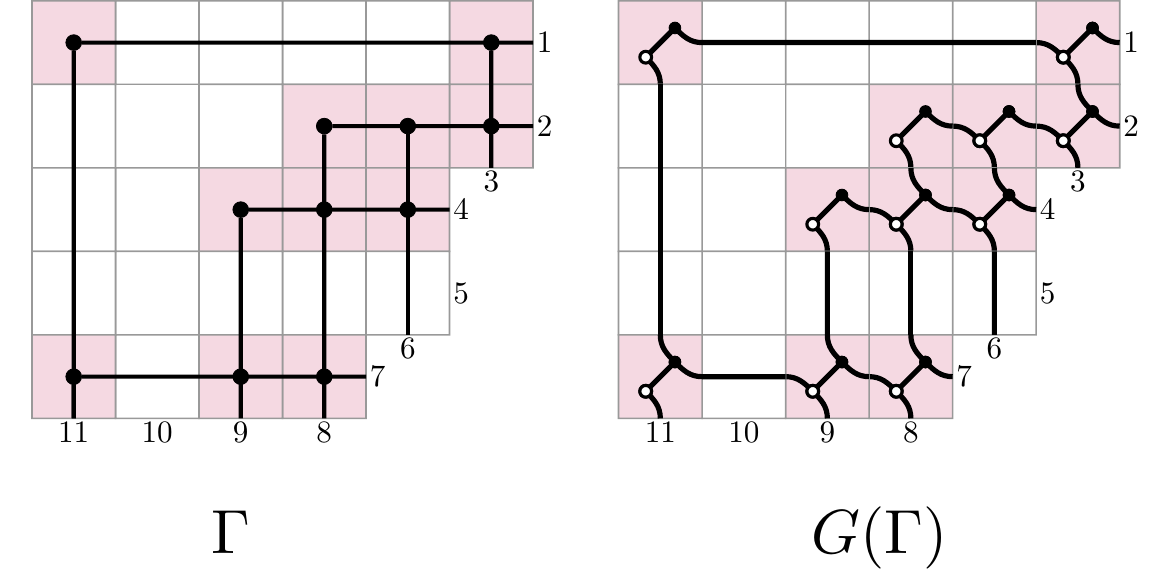}};
\end{tikzpicture}
  &
\begin{tikzpicture}[baseline=(Z.base)]
\coordinate(Z) at (0,0);
\node(A) at (0,0){\igrw[0.45]{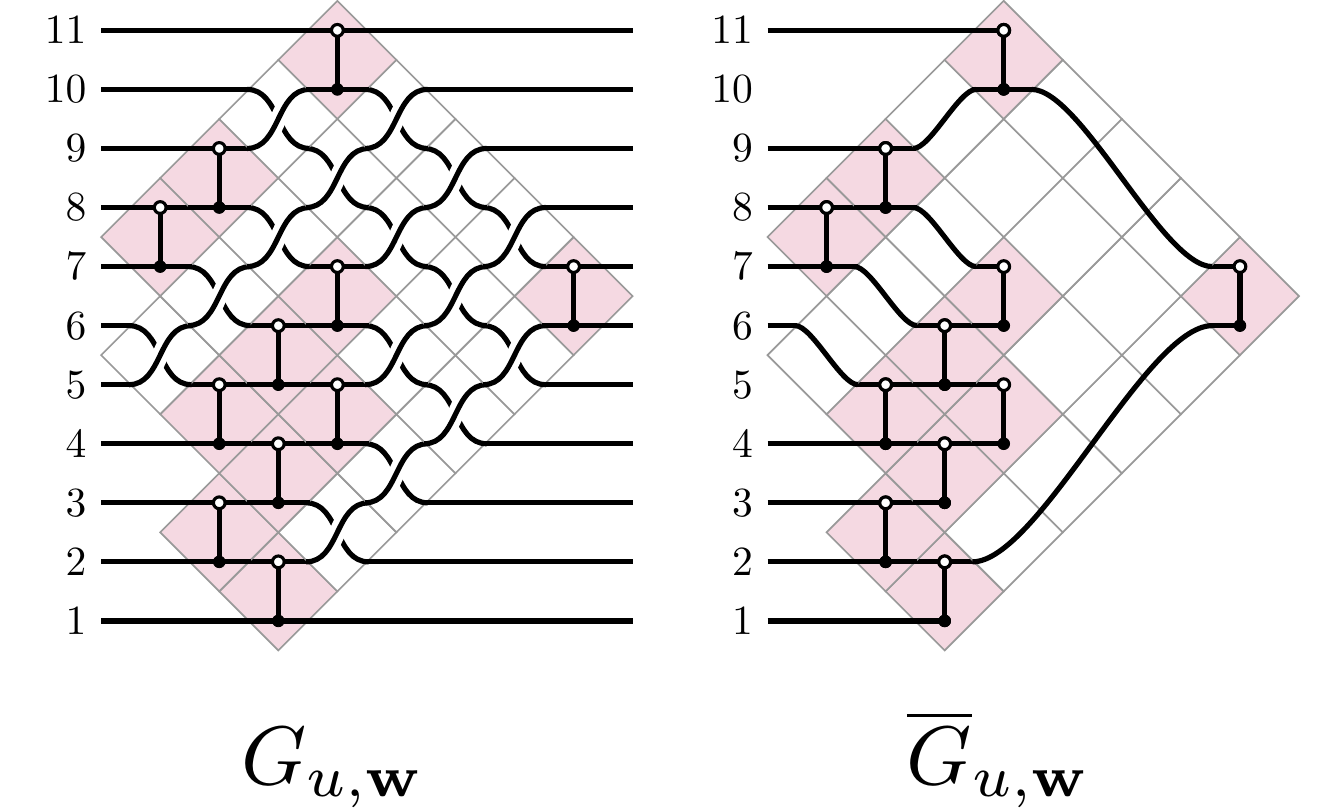}};
\end{tikzpicture}
\end{tabular}
  \caption{\label{fig:Guw-Le} Left: converting a Le-diagram $\Gamma$ into a plabic graph $G(\Gamma)$. Right: the graphs $\Guw$ and $\Guwar$ for $(u,w)$ corresponding to $\Gamma$.}
\end{figure}

\begin{proposition}
Let $\Gamma$ be a Le-diagram and let $u\leq w$ be the corresponding pair of permutations. Then we have 
\begin{equation*}%
G(\Gamma)=\Guwar.  
\end{equation*}
 Moreover, each relative cycle $\Cycle_c$ of $\Guwar$ traverses the boundary of a face of $G(\Gamma)$ in the counterclockwise direction, and this gives a bijection between the relative cycles $(\Cycle_c)_{c\in\Jow}$ and all faces of $G(\Gamma)$ except $F_0$.
\end{proposition}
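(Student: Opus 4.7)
The plan is to verify both statements by unfolding the definitions in the Grassmannian setting, using the correspondence between Le-diagrams and $u$-PDSs due to Marsh--Rietsch and Karpman.

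First, I would fix a reduced word $\bw$ for $w$ obtained by reading the boxes of $\la$ (rotated by $135^\circ$) in the order compatible with the plabic graph construction of $G(\Gamma)$, so that boxes of $\la$ correspond bijectively to letters of $\bw$. Under this identification, the dots of $\Gamma$ are in bijection with the indices $\Jow$ of the $u$-PDS in $\bw$ (solid crossings), and the empty boxes are in bijection with $[m] \setminus \Jow$ (hollow crossings), which form a reduced word for $u$. This identification is the content of Karpman's theorem, referenced in \cref{sec:gen}.

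For Part 1, I would match the constructions of $G(\Gamma)$ and $\Guwar$ box by box in the tilted Young diagram. At each empty box, the local rule producing $G(\Gamma)$ yields an actual crossing of two strands, matching the hollow crossing contributed to the wiring diagram underlying $\Guw$. At each dotted box, the local rule producing $G(\Gamma)$ inserts a short edge with a white endpoint and a black endpoint together with two strands passing straight through; under the red projection $\pired$, this is exactly the local picture of the bridge $\bridge_c$ in $\Guw$. Deleting the $n$ unmarked degree-$1$ vertices corresponding to $\PD(\up m)$ (equivalently, suppressing the short southeastern pendants in Karpman's description) produces $\Guwar$, which therefore coincides with $G(\Gamma)$.

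For Part 2, I would prove by induction on $c - r$ that the monotone multicurve $\bgamma\pu{c,r}$ is a single monotone curve lying on the upper-left portion of the boundary of the face $F_c$ of $G(\Gamma)$ lying immediately to the northwest of the bridge $\bridge_c$. The base case $r = c-1$ is the single segment $\bgamma\pu{c,c-1}$ connecting the two dots joined by $\bridge_c$, which is an edge on $\partial F_c$. For the inductive step, I would use the propagation rules of \cref{fig:propag}: because the wiring diagram of the Grassmannian $w$ is planar and each face $F_c$ of $G(\Gamma)$ is a region bordered by segments of $w$-strands together with a single bridge, the propagation rule at each successive crossing $r$ either extends the curve along $\partial F_c$ or truncates it once it has reached the northwestern end of $\partial F_c$, but never strays from this boundary. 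Combining the resulting curve with the bottom-right portion of $\partial F_c$ (traced out below the bridge $\bridge_c$ and possibly extending to the southeastern boundary of $\la$ when $c$ is frozen), and using the counterclockwise orientation convention of \cref{sec:rel_cycles}, we obtain that $\Cycle_c = \partial F_c$ oriented counterclockwise.

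Finally, the map $c \mapsto F_c$ is injective because distinct bridges in $G(\Gamma)$ bound distinct faces on their northwest side, and it surjects onto the set of all faces of $G(\Gamma)$ except $F_0$: indeed $F_0$ lies entirely along the northwestern boundary of $\la$ and therefore contains no bridge on its southeastern boundary. The main technical obstacle is the inductive step in Part 2 --- verifying exhaustively that, within the staircase structure of a Le-diagram, the propagation rules of \cref{fig:propag} never cut, split, or produce additional components of the monotone multicurve as $r$ decreases, so that $\Cycle_c$ remains a single cycle bounding the face $F_c$.
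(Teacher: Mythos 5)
Your Part 1 argument has a genuine gap. You claim that at each empty box the local rule producing $G(\Gamma)$ ``yields an actual crossing of two strands.'' This is false: $G(\Gamma)$ is a planar plabic graph with no crossings. Indeed, a crossing at an empty box $(i,j)$ would require both a horizontal line (forcing a dot to the left in row $i$) and a vertical line (forcing a dot above in column $j$), and the Le-condition would then force a dot at $(i,j)$, contradicting emptiness. So the local pictures of $G(\Gamma)$ and $\Guw$ do \emph{not} match at empty boxes: $\Guw$ has a (hollow) crossing there, $G(\Gamma)$ has none. This also makes your argument internally inconsistent: if $G(\Gamma)$ matched $\Guw$ box by box, you would be proving $G(\Gamma)=\Guw$, not $G(\Gamma)=\Guwar$, and these differ precisely by the $n$ pendant edges.

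The observation your proof is missing is the one that actually drives the paper's argument: the Le-condition implies that on each strand of $\Guw$, every solid crossing (bridge) precedes every hollow crossing. Consequently, all hollow crossings lie on the $n$ pendant edges running from the last bridge on each strand to the endpoint in $\PD(\up m)$ --- and these are exactly the $n$ edges deleted when passing to $\Guwar$. It is only after this deletion that the crossings vanish and the graph becomes planar, at which point the local matching with $G(\Gamma)$ succeeds. Without this step, the first equality is unproved. Your Part 2 is structured reasonably (induct on the propagation rules to show each $\Cycle_c$ traces the boundary of the face northwest of $\bridge_c$), and the paper itself is terse here; but you explicitly flag the inductive step as unverified, and that verification, plus fixing Part 1, would be needed for a complete proof.
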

\begin{proof}
From the contrapositive formula of the definition of a Le-diagram, for any hollow crossing $c$ of $\Guw$, at least one of the strands passing through $c$ does not participate in any solid crossings to the right of $c$.
 In particular, all hollow crossings disappear when we pass from $\Guw$ to $\Guwar$, and thus it follows that $G(\Gamma)=\Guwar$; see also~\cite[Figure~5]{Karpman2} and~\cite[Figure~7]{GL_cluster}. An example is given in \figref{fig:Guw-Le}(right).

To compute the relative cycles in $\Guw$, we may use the propagation rules in \cref{fig:intro:propag}. From here, the statement that the relative cycles correspond to the faces of $G(\Gamma)$ follows immediately.
\end{proof}

\subsection{Comparison with previously defined chamber minors for Richardson and double Bruhat cells}\label{sec:comparison} %
As we mentioned in the introduction, braid varieties include double Bruhat cells, open positroid varieties, and open Richardson varieties, cluster structures on which have been studied previously in many works including~\cite{FZ_double,GY,Scott,SSBW,GL_cluster,Lec,Ing}. We briefly explain how our chamber minors relate to chamber minors defined in some of these previous works. %

Let $g\mapsto g^{-\iota}$ be the involutive automorphism of $G$ defined by
\begin{equation*}%
	x_i(t)\mapsto x_i(-t),\quad y_i(t)\mapsto y_i(-t),\quad \ds_i\mapsto \ds_i^{-1},\quad h\mapsto h,
\end{equation*}
for all $i\in I$, $t\in \C$, and $h\in \H$. This map is a composition of the involution $g\mapsto g^\iota$ studied in~\cite[Section~2.1]{FZ_double} with the involution $g\mapsto g^{-1}$ (these two involutions commute). The properties of the involution $g\mapsto g^{-\iota}$ in relation to total positivity were first studied in~\cite[Section~6.2]{GL_twist}. Since $G=\SL_n$, one can check that for a matrix $g=(g_{i,j})_{i,j\in[n]}$, we have $g^{-\iota}=((-1)^{i+j}g_{i,j})_{i,j\in[n]}$. 

We will use the following relations for relative positions of weighted flags:
\begin{equation}\label{eq:MR_move}
	U_+\Lrel{\textcolor{red}{s_{i}}} \ds_i^{-1}U_+, \quad \alphacheck_i(1/t)U_+ \Lrel{\textcolor{red}{s_{i}}} y_i(-t)U_+, \quad \alphacheck_{i^\ast}(t) \d\wo ^{-1}U_+ \Rrel{\textcolor{blue}{s_{i^\ast}}} x_{i^\ast}(-t) \d\wo ^{-1}U_+,
\end{equation}
for all $t\in \Cx$ and $i\in I$, where $\alphacheck_i$ was defined in~\eqref{eq:gen_dfn}.

\subsubsection{Open Richardson varieties}\label{sec:gracie_compare}
Observe that the map $gB_+\mapsto g^{-\iota}B_+$ preserves the subsets $B_-uB_+$, $B_+wB_+$, and therefore yields an involutive automorphism of $\Rich_u^w$. Choose a reduced word\footnote{We denote the reduced word for $w$ by $\br$ as opposed to $\bw$ in order to make the visual differences between the varieties $\Rich_u^w$ and $\BR_{\ubr}$ more apparent.} $\br$ for $w$ and consider the isomorphism between an open Richardson and a braid Richardson variety
\begin{equation}\label{eq:Rich_BR_iota}
	\Rich_u^w\xrasim \BR_{\ubr},\quad gB_+\mapsto (\Xbul,\Ybul),
\end{equation}
where $(\Xbul,\Ybul)$ satisfies the following conditions:
\begin{equation*}%
	\pi(X_0)=B_+,\quad \pi(X_m)=g^{-\iota}B_+, \quad\text{and}\quad Y_0=Y_1=\cdots =Y_m=\d\wo ^{-1} U_+.
\end{equation*}
As explained in the proof of \cref{prop:braidRich}, these conditions determine the tuple $(\Xbul,\Ybul)\in\BR_{\ubr}$ uniquely. We explain how to do this explicitly when the element $gB_+$ is \emph{MR-parametrized}; such parametrizations were introduced in~\cite{MR} in relation to total positivity for flag varieties~\cite{Lus2}. Let
\begin{equation}\label{eq:MR_param}
	g:= g_1\cdots g_m,
	\qquad \text{where} \qquad
	g_c = \begin{cases} \ds_{i_c} & \mbox{if $c \notin \Jo$,} \\
		y_{i_c}(t_c) &\mbox{if $c \in \Jo$.} \end{cases}
\end{equation}
Here $\bt=(t_1,\dots,t_m)$ consists of some nonzero parameters. We get
\begin{equation*}%
	g^{-\iota}=g_1^{-\iota}\cdots g_m^{-\iota},
	\qquad \text{where} \qquad
	g_c^{-\iota} = \begin{cases} \ds_{i_c}^{-1} & \mbox{if $c \notin \Jo$,} \\
		y_{i_c}(-t_c) &\mbox{if $c \in \Jo$.} \end{cases}
\end{equation*}
We first check that we may set $X_m:=g^{-\iota}U_+$, i.e., that $X_m \Lrel{\wo  u} Y_m$. Indeed, it is well known~\cite[Equation~(2.8)]{GL_twist} that $g\in U_-\du$. Thus, $g^{-\iota}\in U_- \ddot{u}$, where for a reduced word $u=s_{j_1}\cdots s_{j_l}$, we set 
$\ddot{u}:=\du^{-\iota}=((u^{-1})^{\bigcdot})^{-1}=\ds_{j_1}^{-1}\cdots \ds_{j_l}^{-1}$. The element $\ddot{u}$ satisfies $\d\wo \ddot{u}=(\wo u)^{\bigcdot}$. It follows that $\d\wo \cdot (X_m,Y_m)=(\d\wo g^{-\iota}U_+,U_+)$, where $\d\wo g^{-\iota}U_+\in U_+(\wo u)^{\bigcdot}U_+$, and thus indeed $X_m \Lrel{\wo  u} Y_m$. We now may compute $X_{m-1},\dots,X_0$ iteratively using $X_{c-1} \Lrel{\textcolor{red}{s_{i_c}}} X_c$ together with the relations in~\eqref{eq:MR_move}.  Comparing our \cham minors with the chamber minors of~\cite{Ing}, we arrive at the following result.
\begin{proposition}\label{prop:Ing_iso}
	The isomorphism~\eqref{eq:Rich_BR_iota} sends the chamber minors of~\cite{Ing} to the \cham minors $(\crossing{c})_{c\in \Jo}$ from \cref{dfn:cham}. In particular, all \cham and grid minors from \cref{dfn:grid_minors,dfn:cham} take positive values on the image of the totally positive part $\Rtp_u^w\subset\Rich_u^w$ under~\eqref{eq:Rich_BR_iota}. %
\end{proposition}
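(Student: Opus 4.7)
The plan is to verify the identification of chamber minors on the Zariski-dense subset of MR-parametrized elements of $\Rich_u^w$ and then deduce positivity of all grid minors. Fix a reduced word $\br$ for $w$ and let $\bu = (u\pd{0}, \dots, u\pd{m})$ be its $u$-PDS.

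First, starting from an MR-parametrization $g = g_1 \cdots g_m$ as in~\eqref{eq:MR_param}, I would compute $X_c \in G/U_+$ recursively for $c = m, m-1, \dots, 0$ by inverting the relations in~\eqref{eq:MR_move}: each hollow crossing contributes a factor $\ds_{i_c}^{-1}$, while each solid crossing contributes $y_{i_c}(-t_c)$ together with a torus factor $\alphacheck_{i_c}(1/t_c)$ that propagates leftward through subsequent factors. Since $Y_c = \dw_0^{-1} U_+$ for all $c$, this produces an explicit expression
\[
Z_{c-1} = \dw_0 \cdot \eta_{c-1}(\bt) \cdot g_1^{-\iota} \cdots g_{c-1}^{-\iota} \pmod{U_+ \times U_+}
\]
with $\eta_{c-1}(\bt) \in H$ a Laurent monomial in the $t_d$. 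Expanding the chamber minor $\crossing{c} = \Delta_{c-1, i_c}(Z_{c-1})$ in these coordinates, the row- and column-index sets conjugated by $w_0 u\pd{c-1}$ collapse the minor to a single monomial in those $t_d$ whose strand is traced out by the recursion from $(c-1, i_c)$ back to the boundary. This is precisely the product defining Ingermanson's chamber minor at the chamber $\Cham_c$ to the left of $\bridge_c$, so the two functions agree on this dense locus, hence everywhere. (Alternatively, one can match both formulas against~\eqref{eq:Chmin} applied to the $x_d$ attached via the 3D plabic graph.)

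For positivity, the Marsh--Rietsch parametrization~\cite{MR} identifies $\Rtp_u^w$ with $\{t_d > 0 : d \in \Jo\}$ under~\eqref{eq:MR_param}. Since each chamber minor $\crossing{c}$ is, by the previous paragraph, a subtraction-free monomial in the $t_d$, it takes positive values on the image of $\Rtp_u^w$. For a general grid minor $\grid_{c, \pm h}$, \cref{lem:manyMinorsStable} shows that it agrees with a chamber minor $\crossing{c'}$ lying on the same face of $\pired(\Gubr)$ or $\piblue(\Gubr)$; alternatively, inducting on $c$ and using that $Z_c$ differs from $Z_{c-1}$ by a single elementary factor, one checks that every grid minor is a Laurent monomial in chamber minors with positive coefficients. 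Either way, positivity propagates from chamber minors to all grid minors.

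The main obstacle will be the careful sign bookkeeping in the first paragraph: the involution $g \mapsto g^{-\iota}$ introduces the signs $(-1)^{i+j}$, the lifts $\dw_0$ and $\du$ introduce permutation signs, and the torus factors $\alphacheck_{i_c}(1/t_c)$ produced at solid steps must be commuted past later factors using the braid identities. The nontrivial content is that all of these signs cancel exactly so that $\crossing{c}$ equals Ingermanson's chamber minor on the nose, not merely up to a sign. Once this matching is established on the MR locus, both statements of the proposition follow formally.
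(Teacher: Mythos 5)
Your proposal follows essentially the same route the paper takes, which the paper leaves implicit (the paragraph before the proposition describes exactly your MR-parametrization computation, and the match to Ingermanson's minors is made by comparison, citing the monomial transformation of \cite[Example~11.7]{GL_twist} in \cref{ex:grid_gracie}). You correctly identify that the real content is the sign bookkeeping needed to show the two chamber-minor functions agree on the nose; the paper does not spell this out either.

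One local error: \cref{lem:manyMinorsStable} does \emph{not} imply that every grid minor agrees with a chamber minor lying on the same face of the relevant projection. That lemma only says a crossing of a given color fixes all but one grid minor of that color. A grid minor whose face is open to the right of $\pired(\Gubr)$ or $\piblue(\Gubr)$ equals the boundary value $\grid_{m,h}=1$ rather than any $\crossing{c'}$, so the first of your two suggested paths for positivity of general grid minors does not hold as stated. Your ``alternatively'' clause is the correct argument and is exactly \cref{lem:gridCrossingMono}: propagate from the right boundary where $\grid_{m,h}=1$, and observe that the recursion there exhibits every grid minor as a Laurent monomial in the chamber minors with coefficient $+1$, so positivity of the $\crossing{c}$ propagates to all grid minors. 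With that correction, your skeleton matches the paper's intent, and the remaining work is precisely the sign cancellation you flag at the end.
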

\begin{remark}
	The fact that the red grid minors of $Z_c$ take positive values when restricted to the subset $\Rtp_u^w\subset\Rich_u^w$ is a reflection of the fact that the \emph{reversal map} $gB_+\mapsto \d\wo g^{-\iota}B_+$ preserves total positivity; see~\cite[Proposition~6.4]{GL_twist}.
\end{remark}
\begin{remark}
	For a comparison between the chamber minors of~\cite{Ing,Lec,MR}, see~\cite[Section~11]{GL_twist}.
\end{remark}

\begin{remark}%
  Let $\br=\bw$ be a reduced word for $w\in\Sn$. Comparing the half-arrow description in \cref{prop:seedFromLabeledGraphs} to~\cite[Definition~VII.2]{Ing} and applying \cref{prop:Ing_iso}, it follows that the isomorphism~\eqref{eq:Rich_BR_iota} sends Ingermanson's seed to our seed $\seedubr$. Thus, our cluster structure on the open Richardson variety $\Rich_u^w$ recovers the upper cluster structure on $\Rich_u^w$ constructed in~\cite{Ing}.
\end{remark}

\begin{figure}
	\begin{tabular}{c}
		\igrw[1.0]{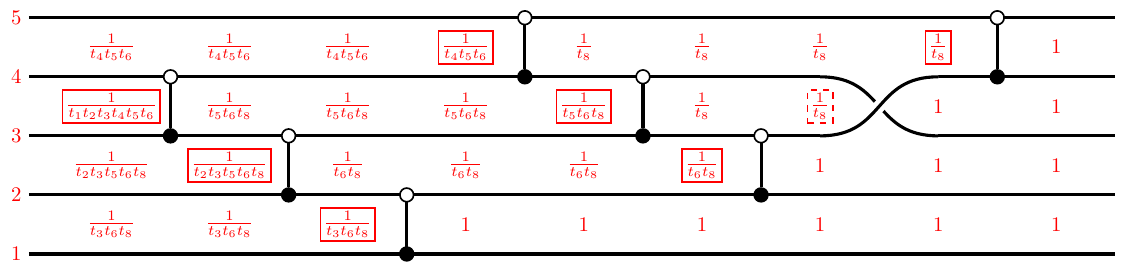}\\
		(a) open Richardson variety (\cref{ex:grid_gracie})\\[10pt]
		\igrw[1.0]{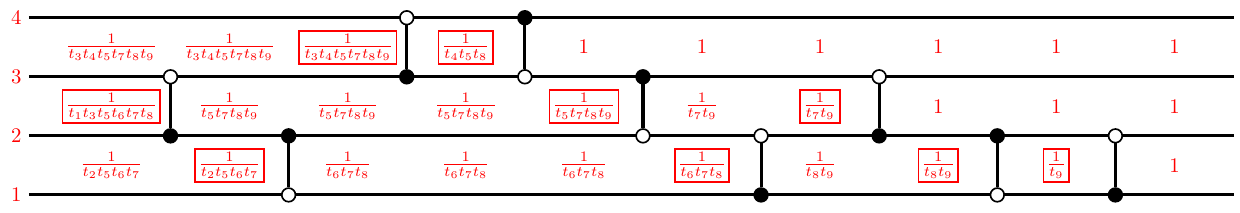}\\
		(b) double Bruhat cell (\cref{ex:grid_FZ})
	\end{tabular}
	\caption{\label{fig:grid_vs} Comparing grid minors to chamber minors of~\cite{Ing,FZ_double}.}
\end{figure}
\begin{example}\label{ex:grid_gracie}
	Consider the running example of~\cite{Ing}: $w=s_3s_2s_1s_4s_3s_2s_3s_4$ and $u=s_3$, which was already considered in the introduction (\cref{fig:gracie_intro}). Applying the parametrization~\eqref{eq:MR_param} and computing $(\Xbul,\Ybul)$ from it using the above algorithm, we arrive at the following sequence of matrices $Z_c=Y_c^{-1}X_c$:
	\def\XYscl{0.768}
	\begin{equation*}%
		\text{\scalebox{\XYscl}{$Z_{0}=\smat{0 & 0 & 0 & 0 & t_{4} t_{5} t_{6} \\
					0 & 0 & 0 & -t_{1} t_{2} t_{3} & 0 \\
					0 & 0 & \frac{t_{8}}{t_{1} t_{4}} & 0 & 0 \\
					0 & -\frac{1}{t_{2} t_{5}} & 0 & 0 & 0 \\
					\frac{1}{t_{3} t_{6} t_{8}} & 0 & 0 & 0 & 0}$,}}\quad
		\text{\scalebox{\XYscl}{$Z_{1}=\smat{0 & 0 & 0 & 0 & t_{4} t_{5} t_{6} \\
					0 & 0 & t_{1} t_{2} t_{3} & -\frac{t_{8}}{t_{4}} & 0 \\
					0 & 0 & t_{2} t_{3} & 0 & 0 \\
					0 & -\frac{1}{t_{2} t_{5}} & 0 & 0 & 0 \\
					\frac{1}{t_{3} t_{6} t_{8}} & 0 & 0 & 0 & 0}$,}}
		\quad \dots, \quad
		\text{\scalebox{\XYscl}{$Z_{8}=\smat{0 & -t_{4} t_{5} t_{6} & t_{4} & t_{4} t_{5} - t_{8} & 1 \\
					t_{1} t_{2} t_{3} & -t_{1} t_{2} - t_{1} t_{6} - t_{5} t_{6} & 1 & t_{1} + t_{5} & 0 \\
					t_{2} t_{3} & -t_{2} - t_{6} & 0 & 1 & 0 \\
					t_{3} & -1 & 0 & 0 & 0 \\
					1 & 0 & 0 & 0 & 0}$.}}
	\end{equation*}
	The associated grid minors are computed in \figref{fig:grid_vs}(a). Applying the monomial transformation in~\cite[Example~11.7]{GL_twist}, we see that these minors coincide with the ones given in~\cite[Figure~7.8]{Ing}.
\end{example}

\subsubsection{Double Bruhat cells}
For $w,v\in \Sn$, consider the \emph{double Bruhat cell} $\Gvw$ and the \emph{reduced double Bruhat cell} $\Lvw$ defined by
\begin{equation*}%
	\Gvw:=B_+wB_+\cap B_-vB_- \quad\text{and}\quad \Lvw:=\Gvw/\H.
\end{equation*}
Let $\br=(i_1,i_2,\dots,i_m)$, where $m=\ell(w)+\ell(v)$, be a double reduced word for $(w,v)$; that is, it is a shuffle of a reduced word for $w$ on positive indices and a reduced word for $v$ on negative indices. 
The following map is an isomorphism by an argument similar to~\cite[Proposition~2.1]{WY}:
\begin{equation}\label{eq:Bruh_to_BR}
	\Lvw\xrasim \BR_{\id,\br},\quad g\mapsto (\Xbul,\Ybul),
\end{equation}
where $(\Xbul,\Ybul)$ satisfies the following conditions:
\begin{equation*}%
	\pi(X_0)=B_+,\quad X_m=g^{-\iota}U_+, \quad \pi(Y_0)=B_-,\quad \text{and}\quad Y_m=g^{-\iota}\d\wo ^{-1} U_+.
\end{equation*}
Applying the relations~\eqref{eq:MR_move}, we compute $X_{m-1},\dots,X_0$ and $Y_{m-1},\dots,Y_0$ iteratively. In order to compare our minors to the minors considered in~\cite{FZ_double,BFZ}, let us choose a parametrization
\begin{equation*}%
	g:= g_1\cdots g_m,
	\qquad \text{where} \qquad
	g_c = \begin{cases} y_{i_c}(t_c) & \mbox{if $i_c>0$,} \\
		x_{|i_c|}(t_c) &\mbox{if $i_c<0$.} \end{cases}
\end{equation*}
The nonzero parameters $t_1,\dots,t_m$ are expressed as monomials in the cluster variables of~\cite{FZ_double,BFZ} computed on the \emph{twisted matrix} $\xivw(g)$; see~\cite[Definition~1.5]{FZ_double}. Given a matrix $x\in B_-B_+$, let us denote by $([x]_-,[x]_0,[x]_+)\in U_-\times \H\times U_+$ its LDU factorization. One can check that for any $g\in \Gvw$, the matrix $g_0:=[\dw^{-1}g]_0$ is well defined.

It follows from the definition of the map~\eqref{eq:Bruh_to_BR} that we have $Z_c\in \H \d\wo$ for each $c\in[0,m]$. In particular, the red and the blue grid minors coincide: $\Delta_{c,h}=\Delta_{c,-h}$ for all $c,h$. We leave the verification of the following result to an interested reader.
\begin{proposition}
	Under the isomorphism~\eqref{eq:Bruh_to_BR}, the grid minors $\Delta_{c,h}$ are equal to the chamber minors of~\cite[Section~4.5]{FZ_double} evaluated at $g_0\xivw(g)$. In particular, all \cham and grid minors from \cref{dfn:grid_minors,dfn:cham} take positive values on the image of the totally positive part $\Gvwtp\subset\Gvw$ under~\eqref{eq:Bruh_to_BR}.
\end{proposition}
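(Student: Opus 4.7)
The plan is to verify the claim by explicitly propagating the parametrization of $X_\bullet, Y_\bullet$ backwards from $c=m$ to $c=0$ using the relations in~\eqref{eq:MR_move}, and then matching the resulting expressions for $Z_c = Y_c^{-1} X_c$ with the Fomin--Zelevinsky twist construction. The key preliminary observation, already noted in the excerpt, is that $Z_c \in H \dot{w}_0$ for every $c \in [0,m]$: this follows inductively from $Z_m = \dot{w}_0 \in H \dot{w}_0$ together with the fact that each step of the propagation multiplies $X_c$ (resp.\ $Y_c$) by an element whose effect on $Z_c$ is the left or right action of a factor $\alpha^\vee_{|i_c|}(t_c^{\pm 1})$ coming from~\eqref{eq:MR_move}. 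Writing $Z_c = h_c \dot{w}_0$, every grid minor~\eqref{eq:DeltaGrid} or~\eqref{eq:DeltaGrid_blue} becomes a character of $h_c$, hence a Laurent monomial in $t_1, \ldots, t_m$ (up to an overall sign from $\dot{w}_0$). In particular, $\Delta_{c,h} = \Delta_{c,-h}$ as claimed.

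Second, I would recall the Fomin--Zelevinsky setup: for a double reduced word $\br$, the chamber minors of~\cite[Section~4.5]{FZ_double} are specific flag minors of the twisted matrix $\xi^{w,v}(g)$, indexed by the chambers of the wiring diagram of $\br$. The twist $\xi^{w,v}$ is built via an iterative LDU-type procedure that, in our language, corresponds to exactly the iterative rewriting $U_+ \to \alphacheck_i(1/t)U_+$ and its $y$-dual used in~\eqref{eq:MR_move}. The element $g_0 = [\dot{w}^{-1} g]_0$ is the torus factor correcting between the ambient conventions of~\cite{FZ_double} (where the twist lives in $B_- B_+$ and one takes $L\!D\!U$ factors) and our convention (where $Z_c \in H \dot w_0$). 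Thus, the computation of $h_c$ as a product of $\alpha^\vee$-factors should reproduce, chamber by chamber, the monomial expansion of the FZ chamber minors of $g_0 \xi^{w,v}(g)$.

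The explicit comparison would proceed inductively along $\br$, using the well-known commutation relations
\[
\alpha^\vee_i(s)\, y_j(t) = y_j(t s^{-\langle \alpha_j, \alpha_i^\vee\rangle})\,\alpha^\vee_i(s), \qquad
x_i(s)\alpha^\vee_j(t) = \alpha^\vee_j(t)\, x_i(s t^{\langle \alpha_i, \alpha_j^\vee\rangle}),
\]
to move all torus factors to the left. At each red (resp.\ blue) crossing $c$, the $h$th grid minor of $Z_c$ records the product of torus weights from the crossings $\le c$ whose strand passes below row $h$ (resp.\ column $h$) in the permutation diagram at time $c$, which is precisely the combinatorial rule defining the FZ chamber minor attached to the chamber immediately left of the crossing (up to the constant $g_0$). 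The positivity statement is then immediate: on $G^{w,v}_{>0}$ the $t_c$ are positive reals, and the grid minors, being positive Laurent monomials in $t_1, \ldots, t_m$, take positive values on the image in $\BR_{\id,\br}$.

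The main obstacle will be the bookkeeping: tracking the chamber-labeling bijection between our indexing $(c,h) \in [m]\times I$ and the chamber combinatorics used in~\cite{FZ_double}, together with the signs introduced by $\dot{w}_0$ and the involution $g \mapsto g^{-\iota}$. These signs appear uniformly on both sides and therefore cancel, but verifying this cancellation requires a careful comparison of the two sign conventions; the rest of the argument is then essentially mechanical.
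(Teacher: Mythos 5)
The paper explicitly declines to prove this proposition (``We leave the verification of the following result to an interested reader''), so there is no paper proof to compare against; I will instead assess the proposal on its own merits. Your overall strategy --- gauge-fix, observe $Z_c = h_c \dot{w}_0$ for each $c$, track how $h_c$ changes by a coweight factor at each crossing, and then match characters of $h_c$ against the Fomin--Zelevinsky chamber minors evaluated at $g_0\xi^{w,v}(g)$ --- is the natural one and is what the surrounding text and the remark following the proposition suggest. (In particular, the remark after the proposition gives the crossing-based monomial description of the chamber minors on $g_0\xi^{w,v}(g)$, which is exactly what a completed version of your computation would need to match.)

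There are, however, a few concrete errors and gaps. First, your second commutation relation is wrong: the correct identity is
\[
x_i(s)\,\alphacheck_j(t) \;=\; \alphacheck_j(t)\, x_i\!\bigl(s\, t^{-\langle \alpha_i,\alpha_j^\vee\rangle}\bigr),
\]
with a minus sign in the exponent (check $SL_2$: $x(s)\alphacheck(t) = \alphacheck(t)x(s/t^2)$). Second, the torus factor absorbed into $h_c$ at a red crossing with letter $i$ is $\alphacheck_{i^\ast}(1/t_c)$, not $\alphacheck_{|i_c|}(t_c^{\pm1})$: the relation $\alphacheck_i(1/t)U_+ \Lrel{s_i} y_i(-t)U_+$ from \eqref{eq:MR_move} describes the $X$-side propagation, but forming $Z_c = Y_c^{-1}X_c$ introduces a $\dot{w}_0$-conjugation which replaces $i$ by $i^\ast$. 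Getting this index right matters because the grid minor $\grid_{c,h}$ is a specific character of $h_c$, and the $\star$-involution is exactly what aligns the $h$-indexing with the chamber combinatorics of \cite{FZ_double}. Third, the phrase ``crossings $\le c$'' should be ``crossings $> c$'': $h_c$ is built by propagating from $c=m$ back to $c$, so it records the parameters at crossings to the right of $c$, which is also what the paper's crossing-based remark says. Finally, you assert but do not carry out the comparison with the FZ chamber minors; to close the argument one should use the strand description of \cite[Theorem~4.11]{FZ_double} and verify that the $g_0$-correction precisely gauge-fixes the rightmost chambers to $1$, so that each chamber minor becomes $(\prod t_k)^{-1}$ over the appropriate set of crossings, matching the character of $h_c$. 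With these corrections the sketch becomes a correct proof; as written it has sign and indexing errors that a direct computation would expose.
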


\begin{example}\label{ex:grid_FZ}
	We consider the running example of~\cite[Section~4.5]{FZ_double}, except that we ignore the $\H$-part factors in their decomposition (marked by green points in~\cite[Figure~4]{FZ_double}). Thus, we have
	\begin{equation*}%
		\beta=(2,-1,3,-3,-2,1,2,-1,1),\quad w=s_2s_3s_1s_2s_1,\quad v=s_1s_3s_2s_1,\quad u=\id.
	\end{equation*}
	The matrices $g$, $g_0$, and $g_0\xivw(g)$ are given by
	\def\XYscl{0.768}
	\begin{align*}%
		\text{\scalebox{\XYscl}{$g= \smat{
					t_{2} t_{5} t_{7} t_{9} + t_{2} t_{6} t_{8} t_{9} + t_{2} t_{6} + t_{2} t_{9} + t_{8} t_{9} + 1 & t_{2} t_{5} t_{7} + t_{2} t_{6} t_{8} + t_{2} + t_{8} & t_{2} t_{5} & 0 \\
					t_{5} t_{7} t_{9} + t_{6} t_{8} t_{9} + t_{6} + t_{9} & t_{5} t_{7} + t_{6} t_{8} + 1 & t_{5} & 0 \\
					t_{1} t_{5} t_{7} t_{9} + t_{1} t_{6} t_{8} t_{9} + t_{1} t_{6} + t_{1} t_{9} + t_{7} t_{9} & t_{1} t_{5} t_{7} + t_{1} t_{6} t_{8} + t_{1} + t_{7} & t_{1} t_{5} + 1 & t_{4} \\
					t_{3} t_{7} t_{9} & t_{3} t_{7} & t_{3} & t_{3} t_{4} + 1
				}$,}}
		\quad
		\text{\scalebox{\XYscl}{$g_0= \smat{
					t_{3} t_{7} t_{9} & 0 & 0 & 0 \\
					0 & \frac{t_{1} t_{6}}{t_{9}} & 0 & 0 \\
					0 & 0 & \frac{1}{t_{6} t_{7}} & 0 \\
					0 & 0 & 0 & \frac{1}{t_{1} t_{3}}
				}$,}}\quad\text{and}\\
		\text{\scalebox{\XYscl}{$g_0\xivw(g)= \smat{
					\frac{t_{2} t_{5} t_{7} t_{9} + t_{2} t_{6} t_{8} t_{9} + t_{2} t_{6} + t_{2} t_{9} + t_{8} t_{9} + 1}{t_{2} t_{5}} & \frac{t_{8} t_{9} + 1}{t_{8}} & \frac{1}{t_{4} t_{5} t_{8}} & 1 \\
					\frac{t_{2} t_{6} + 1}{t_{2} t_{5} t_{9}} & \frac{1}{t_{8} t_{9}} & \frac{1}{t_{4} t_{5} t_{8} t_{9}} & \frac{1}{t_{9}} \\
					\frac{1}{t_{2} t_{5} t_{6} t_{7}} & \frac{1}{t_{6} t_{7} t_{8}} & \frac{t_{6} t_{8} + 1}{t_{4} t_{5} t_{6} t_{7} t_{8}} & \frac{t_{6} t_{8} + 1}{t_{6} t_{7}} \\
					0 & \frac{t_{2}}{t_{1} t_{3} t_{8}} & \frac{t_{1} t_{5} t_{8} + t_{2} t_{6} t_{8} + t_{2} + t_{8}}{t_{1} t_{3} t_{4} t_{5} t_{8}} & \frac{t_{1} t_{3} t_{4} t_{5} t_{8} + t_{1} t_{5} t_{8} + t_{2} t_{6} t_{8} + t_{2} + t_{8}}{t_{1} t_{3}}
				}$.}}
	\end{align*}
	One can check that the chamber minors from~\cite[Figure~6]{FZ_double} computed on the matrix $g_0\xivw(g)$ coincide with the red grid minors computed in \figref{fig:grid_vs}(b).
\end{example}
\begin{remark}
	The chamber minors of~\cite{FZ_double} computed on the matrix $\xivw(g)$ have a description in terms of strands in a double wiring diagram; see~\cite[Theorem~4.11]{FZ_double}. If one computes them on the matrix $g_0\xivw(g)$ instead, one gets a similar description: each chamber minor equals $\left(\prod t_k\right)^{-1}$, where the product is taken over \emph{all} crossings which are to the right of the chamber and such that the chamber is located vertically between the two strands participating in the crossing. In other words, the transformation $\xivw(g)\mapsto g_0\xivw(g)$ gauge-fixes to $1$ the chamber minors corresponding to the chambers open on the right.
\end{remark}

\bibliographystyle{alpha}
\bibliography{type_a}

\end{document}